\documentclass[10pt, a4wide]{article}
\usepackage[margin=1.9cm]{geometry}

\usepackage[english]{babel}
\usepackage{hyperref}
\usepackage{amssymb,amsmath,amsthm}

\usepackage{placeins}
\usepackage{subfigure}
\usepackage{psfrag}
\usepackage{graphicx}
\usepackage{epstopdf}
\usepackage{mathpazo}
\usepackage{todonotes}

\usepackage{dsfont}
\usepackage{enumitem}
\usepackage{microtype}
\DisableLigatures[f]{encoding = *}

\usepackage[ampersand]{easylist}
\ListProperties(Progressive*=3ex)


\renewcommand{\b}{\beta}
\renewcommand{\d}{\delta}
\newcommand{\e}{\varepsilon}
\newcommand{\h}{\eta}
\renewcommand{\o}{\omega}
\renewcommand{\t}{\tau}
\renewcommand{\l}{\lambda}
\newcommand{\s}{\sigma}

\newcommand{\G}{\Gamma}
\newcommand{\tG}{\widetilde{\Gamma}}
\renewcommand{\L}{\Lambda}
\newcommand{\ev}{e}
\newcommand{\ov}{o}
\newcommand{\cA}{\mathcal{A}}
\newcommand{\cR}{R_{A}(x)}
\newcommand{\cC}{\mathcal{C}}
\newcommand{\cI}{\mathcal{I}}
\newcommand{\cX}{\mathcal{X}}
\newcommand{\cF}{\mathcal{F}}
\newcommand{\cZ}{\mathcal{Z}}
\newcommand{\cM}{\mathcal{M}}
\renewcommand{\ss}{\cX^s}
\newcommand{\ms}{\cX^m}

\newcommand{\N}{\mathbb N}
\newcommand{\R}{\mathbb R}

\newcommand{\binf}{\b \to \infty}
\newcommand{\limb}{\lim_{\binf}}

\newcommand{\ed}{\,{\buildrel d \over =}\,}
\newcommand{\cd}{\xrightarrow{d}}
\newcommand{\xt}{X_t}
\newcommand{\xtn}{\{\xt \}_{t \in \N}}

\newcommand{\rmexp}{\mathrm{Exp}}
\newcommand{\pa}{\partial}

\newcommand{\teo}{\tau^{\mathbf{e}}_{\mathbf{o}}}
\newcommand{\toe}{\tau^{\mathbf{o}}_{\mathbf{e}}}
\newcommand{\ee}{\mathbf{e}}
\newcommand{\oo}{\mathbf{o}}
\newcommand{\E}{\mathbb E}
\newcommand{\pr}[1]{\mathbb P_{\b} \Big ( #1 \Big )}
\newcommand{\TM}{\Theta_{\mathrm{max}}(x,A)}
\newcommand{\Tm}{\Theta_{\mathrm{min}}(x,A)}
\newcommand{\Dm}{\Psi_{\mathrm{min}}(x,A)}
\newcommand{\DM}{\Psi_{\mathrm{max}}(x,A)}
\newcommand{\tha}{\tau^x_{A}}
\newcommand{\st}{~\mathbin{\lvert}~}

\newcommand{\Tha}{\mathrm{T}_{A}(x)}
\newcommand{\FT}{\mathfrak{T}_{A}(x)}
\newcommand{\Ome}{\Omega_{x,A}}
\newcommand{\Vtj}{\Omega^{\mathrm{vtj}}_{x,A}}
\newcommand{\Opt}{\Omega^{\mathrm{opt}}_{x,A}}
\newcommand{\ca}{C_{A}(x)}
\newcommand{\cp}{C^+_{A}(x)}
\newcommand{\xtbb}{\{X^\b_t \}_{t \in \N}}

\newtheorem{thm}{Theorem}[section]
\newtheorem{cor}[thm]{Corollary}
\newtheorem{lem}[thm]{Lemma}
\newtheorem{prop}[thm]{Proposition}


\usepackage[affil-it]{authblk}
\title{Hitting times asymptotics for hard-core interactions on grids} 
\date{\today}
\author[1]{F.R.~Nardi}
\author[1]{A.~Zocca\thanks{Corresponding author. Email address: \texttt{a.zocca@tue.nl}}}
\author[1,2]{S.C.~Borst}
\affil[1]{Department of Mathematics and Computer Science, Eindhoven University of Technology, The Netherlands}
\affil[2]{Department of Mathematics of Networks and Communications, Alcatel-Lucent Bell Labs, NJ}

\begin{document}

\maketitle

\begin{abstract}
We consider the hard-core model with Metropolis transition probabilities on finite grid graphs and investigate the asymptotic behavior of the first hitting time between its two maximum-occupancy configurations in the low-temperature regime. In particular, we show how the order-of-magnitude of this first hitting time depends on the grid sizes and on the boundary conditions by means of a novel combinatorial method. Our analysis also proves the asymptotic exponentiality of the scaled hitting time and yields the mixing time of the process in the low-temperature limit as side-result. In order to derive these results, we extended the model-independent framework in~\cite{MNOS04} for first hitting times to allow for a more general initial state and target subset.
\end{abstract}

\textbf{Keywords:} hard-core model; hitting times; Metropolis Markov chains; finite grid graphs; mixing times; low temperature.


\section{Introduction}
\label{sec1}

{\bf Hard-core lattice gas model.} In this paper we consider a stochastic model where particles in a finite volume dynamically interact subject to hard-core constraints and study the first hitting times between admissible configurations of this model. This model was introduced in the chemistry and physics literature under the name ``hard-core lattice gas model'' to describe the behavior of a gas whose particles have non-negligible radii and cannot overlap~\cite{GF65,vdBS94}. We describe the spatial structure in terms of a finite undirected graph $\L$ of $N$ vertices, which represents all the possible sites where particles can reside. The hard-core constraints are represented by edges connecting the pairs of sites that cannot be occupied simultaneously. We say that a particle configuration on $\L$ is \textit{admissible} if it does not violate the hard-core constraints, i.e.~if it corresponds to an independent set of the graph $\L$.
The appearance and disappearance of particles on $\L$ is modeled by means of a single-site update Markov chain $\xtn$ with Metropolis transition probabilities, parametrized by the fugacity $\l \geq 1$. At every step a site $v$ of $\L$ is selected uniformly at random; if it is occupied, the particle is removed with probability $1/\l$; if instead the selected site $v$ is vacant, then a particle is created with probability $1$ if and only if all the neighboring sites at edge-distance one from $v$ are also vacant. Denote by $\mathcal I(\L)$ the collection of independent sets of $\L$. The Markov chain $\xtn$ is ergodic and reversible with respect to the \textit{hard-core measure with fugacity $\l$ on $\mathcal I(\L)$}, which is defined as
\begin{equation}
\label{eq:hcgb}
\mu_\l(I) := \frac{\l^{|I|}}{ Z_{\l}(\L)}, \quad I \in \mathcal{I}(\L),
\end{equation}
where $Z_{\l}(\L)$ is the appropriate normalizing constant (also called \textit{partition function}). The fugacity $\l$ is related to the \textit{inverse temperature} $\b$ of the gas by the logarithmic relationship $\log \l = \b$. 

We focus on the study of the hard-core model in the \textit{low-temperature} regime where $\l \to \infty$ (or equivalently $\binf$), so that the hard-core measure $\mu_\l$ favors maximum-occupancy configurations. In particular, we are interested in how long it takes the Markov chain $\xtn$ to ``switch'' between these maximum-occupancy configurations. Given a target subset of admissible configurations $A \subset \mathcal{I}(\L)$ and an initial configuration $x \not\in A$, this work mainly focuses on the study of the \textit{first hitting time} $\tha$ of the subset $A$ for the Markov chain $\xtn$ with initial state $x$ at time $t=0$.

\vspace{.2cm}
\noindent{\bf Two more application areas}. The hard-core lattice gas model is thus a canonical model of a gas whose particles have a non-negligible size, and the asymptotic hitting times studied in this paper provide insight into the rigid behavior at low temperatures. Apart from applications in statistical physics, our study of the hitting times is of interest for other areas as well.
The hard-core model is also intensively studied in the area of operations research in the context of communication networks \cite{K91}. In that case, the graph $\L$ represents a communication network where calls arrive at the vertices according to independent Poisson streams. The durations of the calls are assumed to be independent and exponentially distributed. If upon arrival of a call at a vertex $i$, this vertex and all its neighbors are idle, the call is activated and vertex $i$ will be busy for the duration of the call.
If instead upon arrival of the call, vertex $i$ or at least one of its neighbors is busy, the call is lost, hence rendering hard-core interaction. In recent years, extensions of this communication network model received widespread attention, because of the emergence of wireless networks. A pivotal algorithm termed CSMA~\cite{WK05} which is implemented for distributed resource sharing in wireless networks can be described in terms of a continuous-time version of the Markov chain studied in this paper. Wireless devices form a topology and the hard-core constraints represent the conflicts between simultaneous transmissions due to interference~\cite{WK05}. In this context $\L$ is therefore called \textit{interference graph} or \textit{conflict graph}. The transmission of a data packet is attempted independently by every device after a random back-off time with exponential rate $\l$, and, if successful, lasts for an exponentially distributed time with mean $1$. Hence, the regime $\l \to \infty$ describes the scenario where the competition for access to the medium becomes fiercer. The asymptotic behavior of the first hitting times between maximum-occupancy configurations provides fundamental insights into the average packet transmission delay and the temporal starvation which may affect some devices of the network, see~\cite{ZBvLN13}. 

A third area in which our results find application is discrete mathematics, and in particular for algorithms designed to find independent sets in graphs. The Markov chain $\xtn$ can be regarded as a Monte Carlo algorithm to approximate the partition function $Z_{\l}(\L)$ or to sample efficiently according to the hard-core measure $\mu_\l$ for $\l$ large. A crucial quantity to study is then the \textit{mixing time} of such Markov chains, which quantifies how long it takes the empirical distribution of the process to get close to the stationary distribution $\mu_\l$. Several papers have already investigated the mixing time of the hard-core model with Glauber dynamics on various graphs~\cite{BCFKTVV99,G08,Galvin2006a,R06}. By understanding the asymptotic behavior of the hitting times between maximum-occupancy configurations on $\L$ as $\l \to \infty$, we can derive results for the mixing time of the Metropolis hard-core dynamics on $\L$, which in general is smaller than
for the usual Glauber dynamics, as illustrated in~\cite{LEYY12}.

\vspace{.2cm}
\noindent{\bf Results for general graphs}. 
The Metropolis dynamics in which we are interested for the hard-core model can be put, after the identification $e^\b=\l$, in the framework of reversible \textit{Freidlin-Wentzel Markov chains} with Metropolis transition probabilities (see Section~\ref{sec2} for precise definitions). Hitting times for Freidlin-Wentzel Markov chains are central in the mathematical study of metastability. In the literature, several different approaches have been introduced to study the time it takes for a particle system to reach a stable state starting from a metastable configuration. Two approaches have been independently developed based on large deviations techniques: the \textit{pathwise approach}, first introduced in~\cite{CGOV84} and then developed in~\cite{OS95,OS96,OV05}, and the approach in~\cite{C91,C92,C99,CC97}. Other approaches to metastability are the \textit{potential theoretic approach}~\cite{BEGK02,BEGK02b} and, more recently introduced, the \textit{martingale approach}~\cite{BL10,BL11}, see~\cite{CNS14a} for a more detailed review.

In the present paper, we follow the pathwise approach, which has already been used to study many finite-volume models in a low-temperature regime, see~\cite{CN03,CN13,CNS08,HNOS03,HOS00,KO93,NOS05,NS91}, where the state space is seen as an \textit{energy landscape} and the paths which the Markov chain will most likely follow are those with a minimum energy barrier. In~\cite{OS95,OS96,OV05} the authors derive general results for first hitting times for the transition from metastable to stable states, the critical configurations (or bottlenecks) visited during this transition and the tube of typical paths. In~\cite{MNOS04} the results on hitting times are obtained with minimal model-dependent knowledge, i.e.~find all the metastable states and the minimal energy barrier which separates them from the stable states. We extend the existing framework~\cite{MNOS04} in order to obtain asymptotic results for the hitting time $\tha$ for \textit{any} starting state $x$, not necessarily metastable, and \textit{any} target subset $A$, not necessarily the set of stable configurations. In particular, we identify the two crucial exponents $\G_-(x,A)$ and $\G_+(x,A)$ that appear in the upper and lower bounds in probability for $\tha$ in the low-temperature regime. These two exponents might be hard to derive for a given model and, in general, they are not equal. However, we derive a sufficient condition that guarantees that they coincide and also yields the order-of-magnitude of the first moment of $\tha$ on a logarithmic scale. Furthermore, we give another slightly stronger condition under which the hitting time $\tha$ normalized by its mean converges in distribution to an exponential random variable.

\vspace{.2cm}
\noindent{\bf Results for rectangular grid graphs}. We apply these model-independent results to the hard-core model on rectangular grid graphs to understand the asymptotic behavior of the hitting time $\teo$, where $\ee$ and $\oo$ are the two configurations with maximum occupancy, where the particles are arranged in a checkerboard fashion on even and odd sites. Using a novel powerful combinatorial method, we identify the minimum energy barrier between $\ee$ and $\oo$ and prove absence of deep cycles for this model, which allows us to decouple the asymptotics for the hitting time $\teo$ and the study of the critical configurations. In this way, we then obtain sharp bounds in probability for $\teo$, since the two exponents coincide, and find the order-of-magnitude of $\E \teo$ on a logarithmic scale, which depends both on the grid dimensions and on the chosen boundary conditions. In addition, our analysis of the energy landscape shows that the scaled hitting time $\teo / \E \teo$ is exponentially distributed in the low-temperature regime and yields the order-of-magnitude of the mixing time of the Markov chain $\xtn$.

By way of contrast, we also briefly look at the hard-core model on complete $K$-partite graphs, which was already studied in continuous time in~\cite{ZBvL12}. While less relevant from a physical standpoint, the corresponding energy landscape is simpler than that for grid graphs and allows for explicit calculations for the hitting times between any pair of configurations. In particular, we show that whenever our two conditions are not satisfied, $\G_-(x,A) \neq \G_+(x,A)$ and the scaled hitting time is not necessarily exponentially distributed.

\section{Overview and main results}
\label{sec2}
In this section we introduce the general framework of Metropolis Markov chains and show how the dynamical hard-core model fits in it. We then present our two main results for the hitting time $\teo$ for the hard-core model on grid graphs and outline our proof method.

\subsection{Metropolis Markov chains}
\label{sub21}
Let $\cX$ be a finite state space and let $H: \cX \to \R$ be the \textit{Hamiltonian}, i.e.~a non-constant \textit{energy function}. We consider the family of Markov chains $\xtbb$ on $\cX$ with Metropolis transition probabilities $P_\b$ indexed by a positive parameter $\b$
\begin{equation}
\label{eq:mtp}
P_\b(x,y):=
\begin{cases}
q(x,y) e^{-\b [H(y)-H(x)]^+}, & \text{ if } x \neq y,\\
1-\sum_{z \neq x } P_\b(x,z), & \text{ if } x=y,
\end{cases}
\end{equation}
where $q: \cX \times \cX \to [0,1]$ is a matrix that does not depend on $\b$. The matrix $q$ is the \textit{connectivity function} and we assume it to be
\begin{itemize}
\item[$\bullet$] Stochastic, i.e.~$\sum_{y \in \cX} q(x,y) = 1$ for every $x \in \cX$;
\item[$\bullet$] Symmetric, i.e.~$q(x,y)=q(y,x)$ for every $x,y \in \cX$;
\item[$\bullet$] Irreducible, i.e.~for any $x,y \in \cX$, $x \neq y$, there exists a finite sequence $\o$ of states $\o_1,\dots,\o_n \in \cX$ such that $\o_1=x$, $\o_n=y$ and $q(\o_i,\o_{i+1})>0$, for $i=1,\dots, n-1$. We will refer to such a sequence as a \textit{path} from $x$ to $y$ and we will denote it by $\o: x \to y$. 
\end{itemize}
We call the triplet $(\cX, H, q)$ an \textit{energy landscape}. The Markov chain $\xtbb$ is reversible with respect to the \textit{Gibbs measure}
\begin{equation}
\label{eq:gibbs}
\mu_\b(x):=\frac{e^{-\b H(x)}}{\sum_{y \in \cX} e^{-\b H(y)}}.
\end{equation}
Furthermore, it is well-known (see for example~\cite[Proposition 1.1]{C99}) that the Markov chain $\xtbb$ is aperiodic and irreducible on $\cX$. Hence $\xtbb$ is ergodic on $\cX$ with stationary distribution $\mu_\b$. 

For a nonempty subset $A \subset \cX$ and a state $x \in \cX$, we denote by $\tha$ the \textit{first hitting time} of the subset $A$ for the Markov chain $\xtbb$ with initial state $x$ at time $t=0$, i.e.
\[
\tha:=\inf \{ t >0 : X^{\b}_t \in A \st X_0^\b=x\}.
\]

Denote by $\ss$ the set of \textit{stable states} of the energy landscape $(\cX,H,q)$, that is the set of global minima of $H$ on $\cX$, and by $\ms$ the set of \textit{metastable states}, which are the local minima of $H$ in $\cX \setminus \ss$ with maximum stability level (see Section~\ref{sec3} for definition). The first hitting time $\tha$ is often called \textit{tunneling time} when $x$ is a stable state and the target set is some $A \subseteq \ss \setminus \{x\}$, or \textit{transition time from metastable to stable} when $x \in \ms$ and $A = \ss$.

\subsection{The hard-core model}
\label{sub22}
The hard-core model on a finite undirected graph $\L$ of $N$ vertices evolving according to the dynamics described in Section~\ref{sec1} can be put in the framework of Metropolis Markov chains. Indeed, we associate a variable $\s(v)\in \{0,1\}$ with each site $v \in \L$, indicating the absence ($0$) or the presence ($1$) of a particle in that site. Then the hard-core dynamics correspond to the Metropolis Markov chain determined by the energy landscape $(\cX,H,q)$ where
\begin{itemize}
\item[$\bullet$] The state space $\cX \subset \{0,1\}^{\L}$ is the set of \textit{admissible configurations} on $\Lambda$, i.e.~the configurations $\s \in \{0,1\}^\L$ such that $\s(v)\s(w)=0$ for every pair of neighboring sites $v,w$ in $\L$;
\item[$\bullet$] The energy of a configuration $\s \in \cX$ is $H(\s) := - \sum_{v \in \L} \s(v)$;
\item[$\bullet$] The connectivity function $q$ allows only for single-site updates (possibly void), i.e. for any $\s,\s'\in \cX$,
\[
q(\s,\s'):= \begin{cases}
\frac{1}{N}, & \text{if } |\{v \in \L \st \s(v) \neq \s'(v)\}|=1,\\
0, & \text{if } |\{v \in \L \st \s(v) \neq \s'(v)\}| > 1, \\
1 - \sum_{\h \neq \s} q(\s,\h), & \text{if } \s=\s'.
\end{cases}
\]
\end{itemize}
For $\l=e^\b$ the hard-core measure~\eqref{eq:hcgb} on $\L$ is precisely the Gibbs measure~\eqref{eq:gibbs} associated with the energy landscape $(\cX,H,q)$.

Our main focus in the present paper concerns the dynamics of the hard-core model on finite two-dimensional rectangular lattices, to which we will simply refer to as \textit{grid graphs}. More precisely, given two integers $K,L \geq 2$, we will take $\L$ to be a $K \times L$ grid graph with three possible boundary conditions: Toroidal (periodic), cylindrical (semiperiodic) and open. We denote them respectively by $T_{K,L}$, $C_{K,L}$ and $G_{K,L}$. Figure~\ref{fig:threegrids} shows an example of the three possible types of boundary conditions. 

\begin{figure}[!h]
\centering
\subfigure[Open grid $G_{9,7}$]{\includegraphics[scale=0.53]{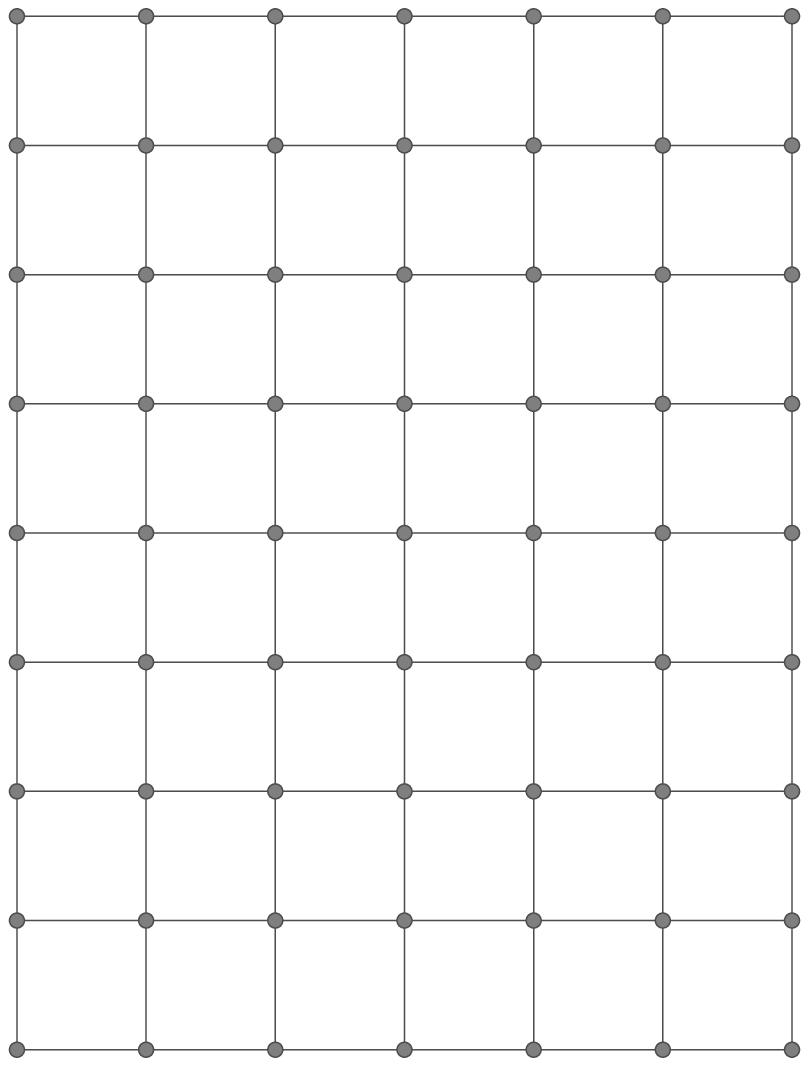}}
\hspace{0.9cm}
\subfigure[Cylindrical grid $C_{8,6}$]{\includegraphics[angle=90,scale=0.64]{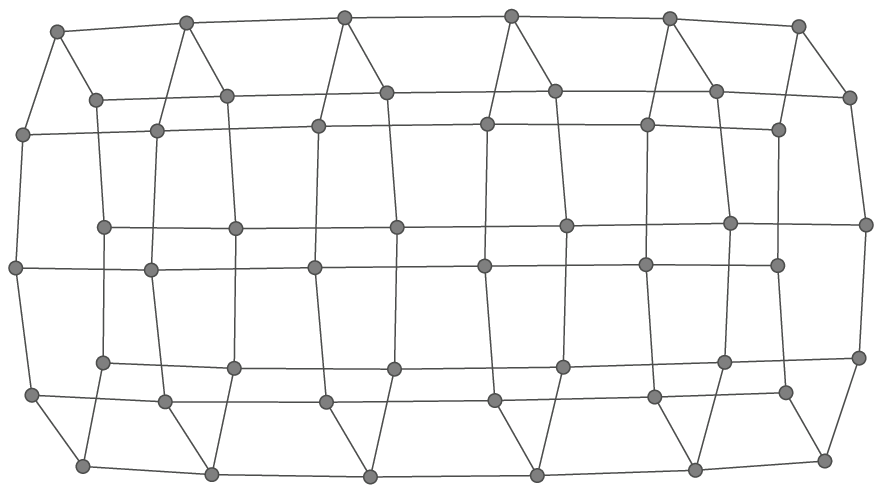}}
\hspace{0.9cm}
\subfigure[Toroidal grid $T_{8,12}$]{\includegraphics[scale=0.35]{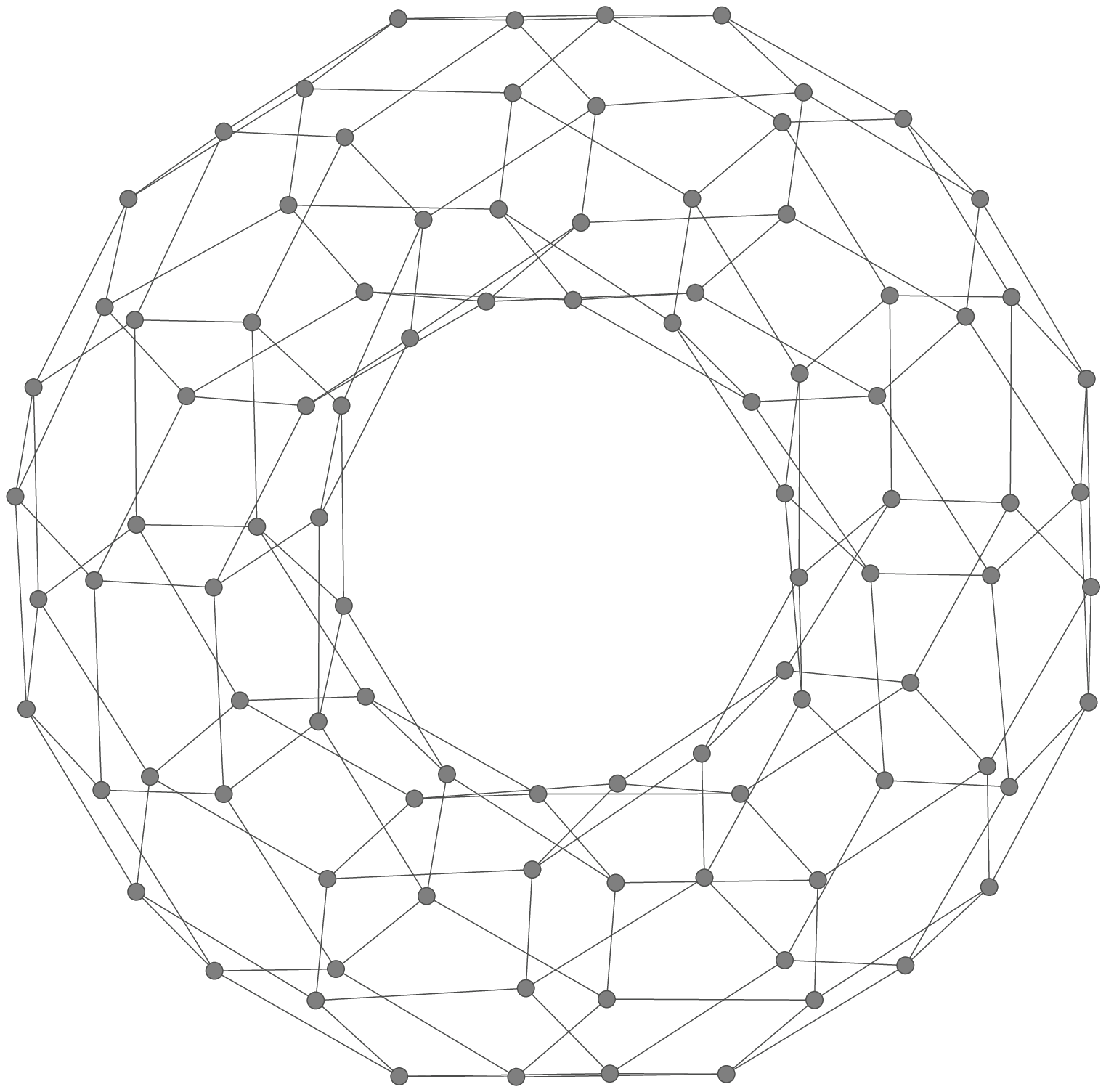}}
\caption{Examples of grid graphs with different boundary conditions}
\label{fig:threegrids}
\end{figure}

There are in total $N = KL$ sites in $\L$. Every site $v \in \L$ is described by its coordinates $(v_1, v_2)$, and since $\L$ is finite, we assume without loss of generality that the leftmost (respectively bottommost) site of $\L$ has the horizontal (respectively vertical) coordinate equal to zero. A site is called \textit{even} (\textit{odd}) if the sum of its two coordinates is even (odd, respectively) and we denote by $V_e$ and $V_o$ the collection of even sites and that of odd sites of $\L$, respectively.

The open grid $G_{K,L}$ is naturally a bipartite graph: All the neighbors in $\L$ of an even site are odd sites and vice versa. In contrast, the cylindrical and toroidal grids may not be bipartite, so that we further assume that $K$ is an even integer for the cylindrical grid $C_{K,L}$ and that both $K$ and $L$ are even integers for the toroidal grid $T_{K,L}$. Since the bipartite structure is crucial for our methodology, we will tacitly work under these assumptions for the cylindrical and toroidal grids in the rest of the paper. As a consequence, $T_{K,L}$ and $C_{K,L}$ are \textit{balanced} bipartite graphs, i.e.~$|V_e| =|V_o|$. The open grid $G_{K,L}$ has $|V_e| = \lceil KL/2 \rceil$ even sites and $|V_o| = \lfloor KL/2 \rfloor$ odd sites, hence it is a balanced bipartite graphs if and only if the product $K L $ is even. We denote by $\ee$ ($\oo$ respectively) the configuration with a particle at each site in $V_e$ ($V_o$ respectively). More precisely,
\[
\ee(v) = \begin{cases}
1 & \text{ if } v \in V_e,\\
0 & \text{ if } v \in V_o,
\end{cases}
\quad \text{ and } \quad 
\oo(v) = \begin{cases}
0 & \text{ if } v \in V_e,\\
1 & \text{ if } v \in V_o.
\end{cases}
\]
Note that $\ee$ and $\oo$ are admissible configurations for any choice of boundary conditions, and that $H(\ee)=-|V_e|=-\lceil KL/2 \rceil$ and $H(\oo)=-|V_o|=-\lfloor KL/2 \rfloor$. In the special case where $\L=G_{K,L}$ with $KL \equiv 1 \pmod 2$, $H(\ee) < H(\oo)$ and, as we will show in Section~\ref{sec5}, $\ss = \{ \ee\}$ and $\ms =\{ \oo\}$. In all the other cases, we have $H(\ee) = H(\oo)$ and $\ss = \{ \ee, \oo\}$; see Section~\ref{sec5} for details.

\subsection{Main results and proof outline}
\label{sub23}
Our first main result describes the asymptotic behavior of the \textit{tunneling time} $\teo$ for any rectangular grid $\L$ in the low-temperature regime $\binf$. In particular, we prove the existence and find the value of an exponent $\G(\L)>0$ that gives an asymptotic control in probability of $\teo$ on a logarithmic scale as $\binf$ and characterizes the asymptotic order-of-magnitude of the mean tunneling time $\E \teo$. We further show that the tunneling time $\teo$ normalized by its mean converges in distribution to an exponential unit mean random variable. 

\begin{thm}[Asymptotic behavior of the tunneling time $\teo$]\label{thm:teo}
Consider the Metropolis Markov chain $\xtbb$ corresponding to hard-core dynamics on a $K \times L$ grid $\L$ as described in Subsection~\ref{sub22}. There exists a constant $\G(\L) >0$ such that
\begin{itemize}[align=left]
\item[{\rm (i)}] For every $\e>0, \quad \limb \pr{ e^{\b (\G(\L)-\e)} < \teo < e^{\b (\G(\L)+\e)}} =1,$
\item[{\rm (ii)}] $\displaystyle \limb \frac{1}{\b} \log \E \teo = \G(\L),$
\item[{\rm (iii)}] $\displaystyle \frac{\teo}{\E \teo} \cd \rmexp(1), \quad \mathrm{as} \, \binf.$
\end{itemize}
In the special case where $\L=G_{K,L}$ with $K L \equiv 1 \pmod 2$, {\rm (i)}, {\rm (ii)}, and {\rm (iii)} hold also for the first hitting time $\toe$, but replacing $\G(\L)$ by $\G(\L)-1$.
\end{thm}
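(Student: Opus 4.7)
The plan is to apply the extended model-independent framework developed earlier in the paper to the pair $(\ee,\oo)$ in the energy landscape $(\cX,H,q)$ of Subsection~\ref{sub22}. That framework reduces each of (i)--(iii) to two ingredients: (a) identifying the \emph{communication height} $\G(\ee,\oo):=\min_{\o:\ee\to\oo}\max_{\eta\in\o}H(\eta)-H(\ee)$, which will be the candidate for $\G(\L)$; and (b) verifying the sufficient conditions stated in the general theorems, namely that $\G_-(\ee,\oo)=\G_+(\ee,\oo)$ and that no ``deep cycle'' obstructs the transition. Once these two ingredients are in place, (i) follows from the general probability bounds, (ii) from the control of $\E\tha$ on the logarithmic scale, and (iii) from the slightly stronger condition guaranteeing asymptotic exponentiality.

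First, I would carry out the combinatorial analysis of the energy landscape on a $K\times L$ grid in order to compute $\G(\L):=\G(\ee,\oo)$ exactly and exhibit at least one reference path $\o^{*}:\ee\to\oo$ attaining this height. Because $\cX$ is the set of independent sets of $\L$ and $H(\s)=-|\s|$, any path from $\ee$ to $\oo$ must at some point remove enough even-sublattice particles to free space for odd-sublattice ones, and the natural ``cheapest'' mechanism is to flip a single row or column from even to odd occupancy. I would encode configurations by how many such rows/columns have been flipped and, using the bipartite structure together with the boundary condition at hand, identify the minimum-energy bottleneck. Tracking this argument across the three boundary types (toroidal, cylindrical, open) and the parities of $K,L$ yields the explicit value of $\G(\L)$ together with a reference path $\o^{*}$. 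This step is the main obstacle, as the optimizer depends delicately on the boundary conditions and the parities, and one has to rule out all competing mechanisms rather than just one.

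Next, I would establish \emph{absence of deep cycles}: every configuration $\eta\notin\{\ee,\oo\}$ admits a path $\o:\eta\to\{\ee,\oo\}$ along which the energy stays strictly below $H(\ee)+\G(\L)$. The natural strategy is to produce an explicit ``repair'' procedure that, starting from an arbitrary admissible $\eta$, greedily removes the misplaced particles of the minority sublattice and fills in the missing ones on the majority sublattice, ordering the single-site updates so that the cumulative energy excess never exceeds that of one row/column flip. Bipartiteness of $\L$ (guaranteed for $C_{K,L}$ and $T_{K,L}$ by our parity assumptions, and automatic for $G_{K,L}$) is essential here. Combined with the lower bound from the previous step, this repair argument both upgrades the upper bound on the communication height into an equality $\G_-(\ee,\oo)=\G_+(\ee,\oo)=\G(\L)$ and verifies the no-deep-cycle hypothesis of the general framework.

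Finally, feeding these two inputs into the extended framework immediately yields (i), (ii), and (iii) with the constant $\G(\L)$ just identified. For the special case $\L=G_{K,L}$ with $KL\equiv 1\pmod 2$, we have $H(\oo)-H(\ee)=1$, $\ss=\{\ee\}$ and $\ms=\{\oo\}$, as will be shown in Section~\ref{sec5}; the very same repair and bottleneck analysis, applied now to paths $\o:\oo\to\ee$, gives a communication height equal to $\G(\L)-1$ relative to $H(\oo)$, and a second application of the extended framework delivers the analogous statements for $\toe$.
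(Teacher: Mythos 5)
Your overall strategy mirrors the paper's exactly: compute the communication height $\G(\L)=\Phi(\ee,\oo)-H(\ee)$ via a combinatorial bottleneck analysis, prove an absence-of-deep-cycles property by exhibiting a repair path from every configuration, and then invoke the model-independent framework (Assumption A for (i)--(ii) via Corollary~\ref{cor:cp} and Theorem~\ref{thm:l1}, Assumption B for (iii) via Theorem~\ref{thm:ae}). The repair procedure you sketch is precisely the paper's reduction algorithm, and the treatment of $\toe$ in the odd open-grid case via $H(\oo)-H(\ee)=1$ and $\Phi(\oo,\ee)=\Phi(\ee,\oo)$ is likewise the paper's route.

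There is, however, one genuine error in how you formulate the no-deep-cycle hypothesis. You claim that every $\eta\notin\{\ee,\oo\}$ admits a path to $\{\ee,\oo\}$ ``along which the energy stays strictly below $H(\ee)+\G(\L)$.'' This is an \emph{absolute} energy bound, and it is false for high-energy states: for the empty configuration $\mathbf{0}$ one has $H(\mathbf{0})=0$, which already exceeds $H(\ee)+\G(\L)=-\lceil KL/2\rceil+\G(\L)$ for all but trivially small grids, so no path starting at $\mathbf{0}$ can satisfy your bound. What the framework actually needs (via Proposition~\ref{prop:assa} and Proposition~\ref{prop:assb}) is a \emph{relative} depth bound, namely $\tG(\cX\setminus\{\ee,\oo\})<\G(\L)$, which by Lemma~\ref{lem:GAequiv1} amounts to showing $\Phi(\eta,\{\ee,\oo\})-H(\eta)\leq\G(\L)-1$ for every $\eta\notin\{\ee,\oo\}$. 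The distinction matters: cycle depth is measured from the bottom of each cycle, so a high-energy state may well lie above $H(\ee)+\G(\L)$ and still sit in a shallow cycle from which escape is fast. Your reduction procedure in fact delivers exactly the relative bound (each removal is followed by an addition, so the excess over $H(\eta)$ stays controlled), so once you restate the condition correctly the argument goes through and the rest of the proof is sound.
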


Theorem~\ref{thm:teo} relies on the analysis of the hard-core energy landscape for grid graphs and novel results for hitting times in the general Metropolis Markov chains context. We first explain these new model-independent results and, afterwards, we give details about the properties we proved for the energy landscape of the hard-core model.

The framework~\cite{MNOS04} focuses on the most classical metastability problem, which is the characterization of the transition time $\tau^{\h}_{\ss}$ between a metastable state $\h \in \ms$ and the set of stable states $\ss$. However, the starting configuration for the hitting times we are interested in, is not always a metastable state and the target set is not always $\ss$. In fact, the classical results can be applied for the hard-core model on grids for the hitting time $\toe$ only in the case of an $K \times L$ grid with open boundary conditions and odd side lengths, i.e.~$KL \equiv 1 \pmod 2$. Many other interesting hitting times are not covered by the literature, including:
\begin{itemize}
\item[$\bullet$] The hitting time $\teo$ when $\L$ is a $K \times L$ grid with open boundary conditions and odd side lengths, i.e.~$KL \equiv 1 \pmod 2$, which is a transition from the unique stable state $\ee$ to the metastable state $\oo$;
\item[$\bullet$] The hitting times $\teo \ed \toe$ when $\L$ is an $K \times L$ grid with $KL \equiv 0 \pmod 2$ for \textit{any} boundary conditions, since the configurations $\ee$ and $\oo$ are both stable states;
\vspace{0.1cm}
\item[$\bullet$] The hitting time between any pair of local minima when $\L$ is a complete $K$-partite graph.
\end{itemize}
We therefore generalize the classical pathwise approach~\cite{MNOS04} to study the first hitting time $\tha$ for a Metropolis Markov chain for \textit{any pair of starting state $x$ and target subset $A$}. The interest of extending these results to the tunneling time between two stable states was already mentioned in~\cite{MNOS04,OV05}, but our framework is even more general and we could study $\tha$ for any pair $(x,A)$, e.g.~the transition between a stable state and a metastable one.

Our analysis relies on the classical notion of \textit{cycle}, which is a maximal connected subset of states lying below a given energy level. The exit time from a cycle in the low-temperature regime is well-known in the literature~\cite{C99,CC97,CNS14a,OS95,OV05} and is characterized by the \textit{depth} of the cycle, which is the minimum energy barrier that separates the bottom of the cycle from its external boundary. The usual strategy presented in the literature to study the first hitting time from $x \in \ms$ to $A = \ss$ is to look at the decomposition into maximal cycles of the relevant part of the energy landscape, i.e.~$\cX \setminus \ss$. The first model-dependent property one has to prove is that the starting state $x$ is metastable, which guarantees that there are no cycles in $\cX \setminus \ss$ deeper than the maximal cycle containing the starting state $x$, denoted by $\ca$. In this scenario, the time spent in maximal cycles different from $\ca$, and hence the time it takes to reach $\ss$ from the boundary of $\ca$, is comparable to or negligible with respect to the exit time from $\ca$, making the exit time from $\ca$ and the first hitting time $\tha$ of the same order.

In contrast, for a general starting state $x$ and target subset $A$ \textit{all} the maximal cycles of $\cX \setminus A$ can potentially have a non-negligible impact on the transition from $x$ to $A$ in the low-temperature regime. By analyzing these maximal cycles and the possible \textit{cycle-paths}, we can establish bounds in probability of the hitting time $\tha$ on a logarithmic scale, i.e.~obtain a pair of exponents $\G_-(x,A),\G_+(x,A)$ such that for every $\e>0$
\[
\limb \pr{e^{\b(\G_-(x,A)-\e)} \leq \tha \leq e^{\b(\G_+(x,A)+\e)}} =1.
\]
The sharpness of the exponents $\G_-(x,A)$ and $\G_+(x,A)$ crucially depends on how precisely one can determine which maximal cycles are likely to be visited and which ones are not, see Section~\ref{sec3} for further details. Furthermore, we give a sufficient condition (see Assumption A in Section~\ref{sec3}), which is the \textit{absence of deep typical cycles}, which guarantees that $\G_-(x,A)=\G=\G_+(x,A)$, proving that the random variables $\frac{1}{\b} \log \tha$ converge in probability to $\G$ as $\binf$, and that $\limb \frac{1}{\b} \log \E \tha = \G$. In many cases of interest, one could show that Assumption A holds for the pair $(x,A)$ without detailed knowledge of the typical paths from $x$ to $A$. Indeed, by proving that the model exhibits \textit{absence of deep cycles} (see Proposition~\ref{prop:assa}), similarly to~\cite{MNOS04}, also in our framework the study of the hitting time $\tha$ is decoupled from an exact control of the typical paths from $x$ to $A$. More precisely, one can obtain asymptotic results for the hitting time $\tha$ in probability, in expectation and in distribution without the detailed knowledge of the critical configuration or of the tube of typical paths. Proving the absence of deep cycles when $x \in \ms$ and $A = \ss$ corresponds precisely to identifying the set of metastable states $\ms$, while, when $x \in \ss$ and $A = \ss \setminus \{x\}$, it is enough to show that the energy barrier that separates any state from a state with lower energy is not bigger than the energy barrier separating any two stable states.

Moreover, we give another sufficient condition (see Assumption B in Section~\ref{sec3}), called \textit{``worst initial state''} assumption, to show that the hitting time $\tha$ normalized by its mean converges in distribution to an exponential unit mean random variable. However, checking Assumption B for a specific model can be very involved, and hence we provide a stronger condition (see Proposition~\ref{prop:assb}), which includes the case of the tunneling time between stable states and the classical transition time from a metastable to a stable state. The hard-core model on complete $K$-partite graphs is used as an example to illustrate scenarios where Assumption A or B is violated, $\G_-(x,A) \neq \G_+(x,A)$ and the asymptotic result for $\E \tha$ of the first moment and the asymptotic exponentiality of $\tha/ \E \tha$ do not hold.

In the case of the hard-core model on a rectangular grid $\L$, we develop a powerful combinatorial approach which shows the absence of deep cycles (Assumption A) for this model, concluding the proof of Theorem~\ref{thm:teo}. Furthermore, it yields the value of the energy barrier $\G(\L)$ between $\ee$ and $\oo$, which turns out to depend both on the grid size and on the chosen boundary conditions. This is illustrated by the next theorem, which is our second main result.

\begin{thm}[The exponent $\G(\L)$ for rectangular grids]\label{thm:gamma}
Let $\L$ a $K\times L$ rectangular grid. Then the energy barrier $\G(\L)$ between $\ee$ and $\oo$ appearing in Theorem~\ref{thm:teo} takes the values 
\[
\G(\L)=
\begin{cases}
\min \{ K, L\} +1 & \text{ if } \L = T_{K,L},\\
\min \{ \lceil K/2 \rceil, \lceil L/2 \rceil \} +1 & \text{ if } \L = G_{K,L},\\
\min \{ K/2, L\} +1 & \text{ if } \L = C_{K,L}.
\end{cases}
\]
\end{thm}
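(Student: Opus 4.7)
The plan is to identify $\G(\L)$ as the communication height between $\ee$ and $\oo$, namely $\G(\L) = \Phi(\ee,\oo) - H(\ee)$, where $\Phi(\ee,\oo) = \min_{\omega:\ee\to\oo}\max_{\eta \in \omega} H(\eta)$ is the minimax energy over all admissible single-site paths; this is the exponent that feeds into Theorem~\ref{thm:teo}. Since $H(\sigma)=-|\sigma|$, the task reduces to a purely combinatorial one: determine the minimum, over all such paths $\omega:\ee\to\oo$, of the maximum particle deficit $|V_e|-|\eta|$ attained at some $\eta\in\omega$.

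For the \textbf{upper bound} $\G(\L)\leq$~RHS, I would exhibit an explicit reference path by a ``sliding strip'' construction. Fix a sweep direction and progressively grow a strip $R$ in which all occupied sites have been flipped from the even to the odd phase; to keep the configuration admissible at every step, the boundary of $R$ must be separated from its complement by an entirely empty buffer row or column, otherwise a pair of adjacent even- and odd-parity sites would both be occupied. On $T_{K,L}$ every sweep direction is periodic, so two buffers are required (one on each side of $R$), at a particle cost of $\min\{K,L\}$; on $C_{K,L}$ sweeping in the open direction uses a single buffer (cost $K/2$) while sweeping in the periodic direction requires two (cost $L$), producing the minimum $\min\{K/2,L\}$; on $G_{K,L}$ a single buffer in the chosen sweep direction always suffices, giving $\min\{\lceil K/2\rceil,\lceil L/2\rceil\}$. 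An extra unit of deficit is unavoidable at each shift of the strip — one extra site of the new buffer row must be temporarily emptied before the first odd particle of the new line can be inserted — which accounts for the ``$+1$''.

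The \textbf{lower bound} $\G(\L)\geq$~RHS is the main obstacle and is the combinatorial heart of the argument. The key structural claim is: \emph{any admissible configuration $\eta$ visited by a path from $\ee$ to $\oo$, once the path is ``genuinely mixed'' in a suitable sense, must contain an entirely empty ``bridge'' that separates $\L$ into two pieces, in one of which the occupied sites belong to $V_e$ and in the other to $V_o$.} The strategy is to define an appropriate progress functional on configurations (the naive choice $|\eta\cap V_o|-|\eta\cap V_e|$ is too crude and must be refined to incorporate the spatial layout), invoke an intermediate-value argument along the path to extract a mixed configuration $\eta^*$, and then apply a topological/combinatorial analysis to $\eta^*$ showing that the empty bridge must be a non-contractible cycle on $T_{K,L}$, connect the two open sides on $G_{K,L}$, or do one of the two analogous things on $C_{K,L}$. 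Counting the even-parity sites on any such bridge delivers the main term of the lower bound, and the remaining ``$+1$'' comes from the existence of at least one odd-parity particle required to certify the mixed status of $\eta^*$.

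Combining the matching upper and lower bounds case by case gives the stated values of $\G(\L)$. The bridge lemma — in particular, finding the right formulation of ``mixed'' that forces a separator of the claimed minimum length uniformly across the three boundary conditions — is, I expect, by far the hardest step and the place where the novel combinatorial method alluded to in the introduction really enters.
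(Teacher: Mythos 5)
Your upper-bound sketch (sliding strip / reduction algorithm) is essentially the paper's construction in Propositions~\ref{prop:refpatht} and~\ref{prop:refpatho}, and the framing in terms of the communication height $\Phi(\ee,\oo)-H(\ee)$ together with the absence-of-deep-cycles condition matches the paper's use of Theorems~\ref{thm:toricel}--\ref{thm:cycleel}. The real divergence --- and the gap --- is in the lower bound. Your ``bridge lemma'' (a mixed configuration must contain an \emph{entirely empty} non-contractible separator) is a substantially stronger structural claim than what the paper actually uses, and you do not prove it; neither the progress functional nor the intermediate-value step is made precise. More importantly, even if the empty-separator claim were established, a simple count shows it gives the wrong constant: on $T_{K,L}$ with $K\le L$, two empty columns cost exactly $K$ wasted even sites (with the rest of the torus packed optimally into $\ee$- and $\oo$-phases between them), which yields $U\ge K$ but not $U\ge K+1$. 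Your heuristic for the extra unit (``one odd-parity particle required to certify the mixed status'') does not add a unit of wastage --- $\oo$ itself has only odd particles and zero wastage --- so it cannot supply the missing $+1$.

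The paper's argument avoids all of this by pivoting on a weaker and cheaper notion of ``bridge'': a row (or column) is a bridge if it agrees perfectly with $\ee$ or with $\oo$, and Lemma~\ref{lem:rw}/\ref{lem:bw} show a row has zero wastage iff it is a bridge. The critical moment is not a vaguely ``mixed'' configuration but the \emph{first} configuration $\o_{m^*}$ along the path that displays an odd bridge. Its predecessor $\o_{m^*-1}$ then provably has no bridge (of either parity) in any row, forcing wastage $\ge 1$ in every one of the $K$ rows, \emph{and} the row containing the newly added particle $v^*$ has wastage $\ge 2$ (since $v^*$ and another odd site in that row are empty, together with the even neighbors of $v^*$). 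Summing rows gives $K+1$ directly, with the ``odd cross'' case handled by stepping back one more configuration to $\o_{m^*-2}$. This is where the extra unit actually comes from, and it is invisible to a separator-counting argument. To rescue your route you would need both to prove the empty-separator lemma rigorously (a nontrivial topological statement about admissible hard-core configurations, with different proofs for the three boundary conditions) and to find an independent source for the additional unit of wastage, likely by analyzing the single-site update at which the separator first becomes complete --- at which point you have essentially rediscovered the paper's first-odd-bridge argument.
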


The crucial idea behind the proof of Theorem~\ref{thm:gamma} is that along the transition from $\ee$ to $\oo$, there must be a critical configuration where for the first time an entire row or an entire column coincides with the target configuration $\oo$. In such a critical configuration particles reside both in even and odd sites and, due to the hard-core constraints, an interface of empty sites should separate particles with different parities. By quantifying the ``inefficiency'' of this critical configuration we get the minimum energy barrier that has to be overcome for the transition from $\ee$ to $\oo$ to occur. The proof is then concluded by exhibiting a path that achieves this minimum energy and by exploiting the absence of other deep cycles in the energy landscape.

Lastly, we show that by understanding the global structure of an energy landscape $(\cX, H, q)$ and the maximum depths of its cycles, we can also derive results for the mixing time of the corresponding Metropolis Markov chains $\xtbb$, as illustrated in Subsection~\ref{sub38}. In particular, our results show that in the special case of an energy landscape with multiple stable states and without other deep cycles, the hitting time between any two stable states and the mixing time of the chain are of the same order-of-magnitude in the low-temperature regime. This is the case also for the Metropolis hard-core dynamics on grids, see Theorem~\ref{thm:mixgap} in Section~\ref{sec5}.

The rest of the paper is structured as follows. Section~\ref{sec3} is devoted to the model-independent results valid for a general Metropolis Markov chain, which extend the classical framework~\cite{MNOS04}. The proofs of these results are rather technical and therefore deferred to Section~\ref{sec4}. In Section~\ref{sec5} we develop our combinatorial approach to analyze the energy landscapes corresponding to the hard-core model on grids. We finally present in Section~\ref{sec6} our conclusions and indicate future research directions.


\section{Asymptotic behavior of hitting times for Metropolis Markov chains}
\label{sec3}
In this section we present model-independent results valid for any Markov chains with Metropolis transition probabilities~\eqref{eq:mtp} defined in Subsection~\ref{sub21}. In Subsection~\ref{sub31} we introduce the classical notion of a \textit{cycle}. 
If the considered model allows only for a very rough energy landscape analysis, well-known results for cycles are shown to readily yield upper and lower bounds in probability for the hitting time $\tha$: indeed, one can use the depth of the initial cycle $\ca$ as $\G_-(x,A)$ (see Propositions~\ref{prop:plow}) and the maximum depth of a cycle in the partition of $\cX \setminus A$ as $\G_+(x,A)$ (see Proposition~\ref{prop:pup}). 
If one has a good handle on the model-specific \textit{optimal paths} from $x$ to $A$, i.e.~those paths along which the maximum energy is precisely the min-max energy barrier between $x$ and $A$, sharper exponents can be obtained, as illustrated in Proposition~\ref{prop:plowup1}, by focusing on the \textit{relevant cycle}, where the process $\xtbb$ started in $x$ spends most of its time before hitting the subset $A$. 
We even further sharpen these bounds in probability for the hitting time $\tha$ with Proposition~\ref{prop:plowup2} by studying the \textit{tube of typical paths from $x$ to $A$} or \textit{standard cascade}, a task that in general requires a very detailed but local analysis of the energy landscape. 
To complete the study of the hitting time in the regime $\binf$, we prove in Subsection~\ref{sub35} the convergence of the first moment of the hitting time $\tha$ on a logarithmic scale under suitable assumptions (see Theorem~\ref{thm:l1}) and give in Subsection~\ref{sub36} sufficient conditions for the scaled hitting time $\tha / \E \tha$ to converge in distribution as $\binf$ to an exponential unit mean random variable, see Theorem~\ref{thm:ae}. Furthermore, we illustrate in detail two special cases which fall within our framework, namely the classical transition from a metastable state to a stable state and the tunneling between two stable states, which is the relevant one for the model considered in this paper. In Subsection~\ref{sub37} we briefly present the hard-core model on a complete $K$-partite graph, which is an example of a model where the asymptotic exponentiality of the scaled hitting times does not always hold. Lastly, in Subsection~\ref{sub38} we present some results for the mixing time and the spectral gap of Metropolis Markov chains and show how they are linked with the critical depths of the energy landscape.

In the rest of this section and in Section~\ref{sec4}, $\xtn$ will denote a general Metropolis Markov chain with energy landscape $(\cX, H, q)$ and inverse temperature $\b$, as defined in Subsection~\ref{sub21}.

\subsection{Cycles: Definitions and classical results}
\label{sub31}
We recall here the definition of cycle and present some well-known properties.

Recall that a path $\o: x \to y$ has been defined in Subsection~\ref{sub21} as a finite sequence of states $\o_1,\dots,\o_n \in \cX$ such that $\o_1=x$, $\o_n=y$ and $q(\o_i,\o_{i+1})>0$, for $i=1,\dots, n-1$. Given a path $\o=(\o_1,\dots,\o_n)$ in $\cX$, we denote by $|\o|:=n$ its \textit{length} and define its \textit{height} or \textit{elevation} by
\begin{equation}
\label{eq:defheight}
\Phi_\o:=\max_{i=1,\dots,|\o|} H(\o_i).
\end{equation}
A subset $A \subset \cX$ with at least two elements is \textit{connected} if for all $x,y \in A$ there exists a path $\o: x \to y$, such that $\o_i \in A$ for every $i=1,\dots,|\o|$.
Given a nonempty subset $A \subset \cX$ and $x \not \in A$, we define $\Ome$ as the collection of all paths $\o: x \to y$ for some $y \in A$ that do not visit $A$ before hitting $y$, i.e.
\begin{equation}
\label{eq:defOme}
\Ome:=\{\o: x \to y \st y \in A, \, \, \o_{i} \not\in A \, \, \forall \, i < |\o|\}.
\end{equation}
We remark that only the endpoint of each path in $\Ome$ belongs to $A$. The \textit{communication energy} between a pair $x,y\in\cX$ is the minimum value that has to be reached by the energy in every path $\o: x \to y$, i.e.
\begin{equation}
\label{eq:ch}
\Phi(x,y) := \min_{\o : x\to y} \Phi_\o. 
\end{equation}
Given two nonempty disjoint subsets $A,B \subset \cX$, we define the communication energy between $A$ and $B$ by
\begin{equation}
\label{eq:chAB}
\Phi(A,B) := \min_{x \in A, y\in B} \Phi(x,y).
\end{equation}
Given a nonempty set $A \subset\cX$, we define its \textit{external boundary} by
\[
\pa A := \{y \notin A \st \exists \, x\in A ~:~ q(x,y)>0 \}.
\]
For a nonempty set $A \subset\cX$ we define its \textit{bottom} $\cF(A)$ as the set of all minima of the energy function $H(\cdot)$ on $A$, i.e.
\[
\cF(A) := \{y\in A : H(y)=\min_{x \in A} H(x)\}.
\]
Let $\ss:=\cF(\cX)$ be the set of \textit{stable states}, i.e.~the set of states with minimum energy. Since $\cX$ is finite, the set $\ss$ is always nonempty. Define the \textit{stability level} $\mathcal V_x$ of a state $x \in \cX$ by
\begin{equation}
\label{eq:sl}
\mathcal V_x := \Phi(x,\cI_{x}) - H(x),
\end{equation}
where $\cI_{x}:=\{z \in \cX : H(z)<H(x)\}$ is the set of states with energy lower than $x$. We set $\mathcal V_x:=\infty$ if $\cI_x$ is empty, i.e.~when $x$ is a stable state. The set of \textit{metastable states} $\ms$ is defined as 
\begin{equation}
\label{eq:ms}
\ms:=\{x\in\cX : \mathcal V_x = \max_{z \in \cX \setminus \ss } \mathcal V_z\}.
\end{equation}
We call a nonempty subset $C \subset \cX$ a \textit{cycle} if it is either a singleton or it is a connected set such that
\begin{equation}
\label{eq:defcycle}
\max_{x \in C} H(x)< H(\cF(\pa C)).
\end{equation}
A cycle $C$ for which condition~\eqref{eq:defcycle} holds is called \textit{non-trivial cycle}. If $C$ is a non-trivial cycle, we define its \textit{depth} as
\begin{equation}
\label{eq:defdepth}
\G(C):=H(\cF(\pa C)) - H(\cF(C)).
\end{equation}
Any singleton $C=\{x\}$ for which condition~\eqref{eq:defcycle} does not hold is called \textit{trivial cycle}. We set the depth of a trivial cycle $C$ to be equal to zero, i.e.~$\G(C)=0$. Given a cycle $C$, we will refer to the set $\cF(\pa C)$ of minima on its boundary as its \textit{principal boundary}. Note that 
\[
\Phi(C,\cX \setminus C)=
\begin{cases}
H(x) & \text{ if } C=\{x\} \text{ is a trivial cycle,}\\
H(\cF(\pa C)) & \text{ if } C \text{ is a non-trivial cycle.}
\end{cases}
\]
In this way, we have the following alternative expression for the depth of a cycle $C$, which has the advantage of being valid also for trivial cycles:
\begin{equation}
\label{eq:Gequiv}
\G(C)= \Phi(C,\cX \setminus C) - H(\cF(C)).
\end{equation}
The next lemma gives an equivalent characterization of a cycle.
\begin{lem}
\label{lem:cycleequiv}
A nonempty subset $C \subset \cX$ is a cycle if and only if it is either a singleton or it is connected and satisfies
\[
\max_{x,y \in C} \Phi(x,y)< \Phi(C,\cX\setminus C).
\]
\end{lem}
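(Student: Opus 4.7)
The plan is to prove both directions of the equivalence separately, with the singleton case being immediate on both sides. The main preliminary observation I would establish at the outset is that, whenever $C \subset \cX$ is connected, the internal communication energy coincides with the top energy of $C$, i.e.
\[
\max_{x,y \in C} \Phi(x,y) \;=\; \max_{z \in C} H(z).
\]
The inequality ``$\leq$'' is clear: any two states of $C$ can be joined by a path entirely contained in $C$, and such a path has height at most $\max_{z \in C} H(z)$. The inequality ``$\geq$'' follows by choosing $v^{*} \in C$ attaining the maximum of $H$ on $C$ and any $w \in C$: every path from $v^{*}$ to $w$ starts at $v^{*}$, hence already has height at least $H(v^{*})$.

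For the forward direction (cycle $\Rightarrow$ inequality), I would take $C$ to be a non-trivial cycle so that $\max_{z \in C} H(z) < H(\cF(\pa C))$. The preliminary observation then controls the left-hand side. For the right-hand side, any path from a state in $C$ to a state in $\cX \setminus C$ must, at its first exit step, land in $\pa C$, and hence has height at least $H(\cF(\pa C))$; minimizing over endpoints gives $\Phi(C,\cX \setminus C) \geq H(\cF(\pa C))$. Chaining these inequalities produces the desired strict separation.

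For the backward direction (inequality $\Rightarrow$ cycle), I would argue by contradiction. Suppose $C$ is connected and satisfies the displayed inequality, but fails to be a cycle, so that $\max_{z \in C} H(z) \geq H(\cF(\pa C))$. Pick $v^{*} \in C$ of maximum energy and $z^{*} \in \cF(\pa C)$; by the very definition of external boundary there exists $x^{*} \in C$ with $q(x^{*},z^{*}) > 0$. Concatenating a path from $v^{*}$ to $x^{*}$ staying inside $C$ (which exists by connectedness and has height at most $H(v^{*})$) with the single step from $x^{*}$ to $z^{*}$ (whose height $\max(H(x^{*}),H(z^{*}))$ is also at most $H(v^{*})$, using both $H(x^{*}) \le H(v^{*})$ and $H(z^{*}) = H(\cF(\pa C)) \le H(v^{*})$ by assumption) yields a path from $v^{*}$ to $z^{*} \in \cX \setminus C$ of height at most $H(v^{*})$. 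Therefore
\[
\Phi(C,\cX \setminus C) \;\leq\; H(v^{*}) \;=\; \max_{x,y \in C} \Phi(x,y),
\]
contradicting the hypothesis, so $C$ must be a cycle.

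The main obstacle is the path-concatenation step in the backward direction, since it is the one place where the argument genuinely needs more than soft manipulations of $\Phi$: it is there that the connectedness of $C$ and the neighbor-existence built into the definition of $\pa C$ must be combined in order to produce an \emph{upper} bound on $\Phi(C,\cX \setminus C)$ by the top energy of $C$ and thereby turn the assumed strict inequality into the defining inequality of a cycle.
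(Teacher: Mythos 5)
Your proof is correct and takes essentially the same approach as the paper: the paper's own "proof" is the one-sentence observation that the claim follows from the definitions of $\Phi$, $\pa C$, and cycle together with the identity $\max_{x,y\in C}\Phi(x,y)=\max_{z\in C}H(z)$ for connected $C$, and your write-up supplies precisely the missing details (the two inequalities proving the height identity, the first-exit bound $\Phi(C,\cX\setminus C)\geq H(\cF(\pa C))$ for the forward direction, and the concatenated path through a minimal boundary state for the backward direction).
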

The proof easily follows from definitions~\eqref{eq:ch}, \eqref{eq:chAB} and~\eqref{eq:defcycle} and the fact that if $C$ is not a singleton and is connected, then
\begin{equation}
\label{eq:height}
\max_{x,y \in C} \Phi(x,y) = \max_{x \in C} H(x).
\end{equation}

We remark that the equivalent characterization of a cycle given in Lemma~\ref{lem:cycleequiv} is the ``correct definition'' of a cycle in the case where the transition probabilities are not necessarily Metropolis but satisfy the more general \textit{Friedlin-Wentzell condition}
\begin{equation}
\label{eq:fw}
\limb -\frac{1}{\b} \log P_{\b}(x,y) = \Delta(x,y) \quad \forall\, x,y \in \cX,
\end{equation}
where $\Delta(x,y)$ is an appropriate \textit{rate function} $\Delta: \cX^2 \to \R^+ \cup \{\infty\}$. The Metropolis transition probabilities correspond to the case (see~\cite{CNS14b} for more details) where 
\[
\Delta(x,y)=
\begin{cases}
[H(y) - H(x)]^+ & \text{ if } q(x,y)>0,\\
\infty & \text{ otherwise.}
\end{cases}
\]

The next theorem collects well-known results for the asymptotic behavior of the exit time from a cycle as $\b$ becomes large, where the depth $\G(C)$ of the cycle plays a crucial role. 

\begin{thm}[Properties of the exit time from a cycle]\label{thm:exitcycle}
Consider a non-trivial cycle $C \subset \cX$.
\begin{enumerate}
\item[{\rm(i)}] For any $x \in C$ and for any $\e>0$, there exists $k_1>0$ such that for all $\b$ sufficiently large
\[
\pr{\t^{x}_{\pa C} < e^{\b (\G(C) - \e)} }\leq e^{-k_1 \b}.
\]
\item[{\rm(ii)}] For any $x \in C$ and for any $\e>0$, there exists $k_2>0$ such that for all $\b$ sufficiently large
\[
\pr{\t^{x}_{\pa C} > e^{\b (\G(C) + \e)} }\leq e^{-e^{k_2 \b}}.
\]
\item[{\rm(iii)}] For any $x,y \in C$, there exists $k_3>0$ such that for all $\b$ sufficiently large
\[
\pr{\t^{x}_{y} > \t^{x}_{\pa C} }\leq e^{-k_3 \b}.
\]
\item[{\rm(iv)}] There exists $k_4>0$ such that for all $\b$ sufficiently large
\[
\sup_{x \in C} \pr{X_{\t^{x}_{\pa C}} \not\in \cF(\pa C)} \leq e^{- k_4 \b}.
\]
\item[{\rm(v)}] For any $x \in C$, $\e>0$ and $\e'>0$, for all $\b$ sufficiently large
\[
\pr{\t^{x}_{\pa C} < e^{\b (\G(C) + \e)}, \, X_{\t^{x}_{\pa C}} \in \cF(\pa C) } \geq e^{- \e' \b}.
\]
\item[{\rm(vi)}] For any $x \in C$, any $\e>0$ and all $\b$ sufficiently large
\[
e^{\b(\G(C) - \e)} < \E \t^{x}_{\pa C} < e^{\b (\G(C) + \e)}.
\]
\end{enumerate}
\end{thm}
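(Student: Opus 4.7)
I would organize the proof around a \emph{renewal analysis on attempts} anchored at the bottom $\cF(C)$, since the six claims are really different quantitative consequences of a single picture. The first step is a reduction: from any $x\in C$, the chain reaches $\cF(C)$ in at most $|C|$ steps with probability bounded below uniformly in $\b$. This follows because the cycle condition \eqref{eq:defcycle} guarantees the existence of a monotone-energy descent path from $x$ to $\cF(C)$ inside $C$, along which each Metropolis step is taken with $\b$-independent probability at least $1/|\cX|$. After this reduction, it suffices to treat $x\in\cF(C)$.

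The key quantitative input is the \emph{attempt estimate}. Define an \emph{attempt} as the trajectory segment starting from $y\in\cF(C)$ and ending at the first return to $\cF(C)$ or first exit from $C$. For $T_\e:=e^{\b\e/3}$, I would establish that for all $\b$ large enough,
\[
e^{-\b(\G(C)+\e/4)} \;\leq\; \pr{X_{T_\e}\in\pa C \mid X_0=y} \;\leq\; e^{-\b(\G(C)-\e/4)},
\]
and that a single attempt has length exceeding $T_\e$ with probability $o_\b(1)$. The lower bound is obtained by exhibiting an explicit path from $y$ to $\cF(\pa C)$ through a minimal saddle, which pays the barrier $\G(C)$ exactly once; the upper bound uses reversibility together with the Gibbs-measure identity $\mu_\b(\cF(\pa C))/\mu_\b(\cF(C))=e^{-\b\G(C)}$ to bound the equilibrium probability of being outside $C$ after $T_\e$ steps.

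Once the attempt estimate is in place, the six claims follow by elementary counting combined with the strong Markov property. For (i), the number of attempts completed by time $e^{\b(\G(C)-\e)}$ is at most that many, and each succeeds with probability at most $e^{-\b(\G(C)-\e/4)}$; a union bound gives $e^{-3\b\e/4}$. For (ii), let $N:=e^{\b(\G(C)+\e)}/T_\e$; the probability that none of these $N$ independent attempts exits $C$ is bounded by $(1-e^{-\b(\G(C)+\e/4)})^N\leq \exp(-e^{\b\d})$ for some $\d>0$, yielding double-exponential decay. Part (iii) follows because within a single attempt any fixed $y\in C$ is visited with probability at least $|\cX|^{-|C|}e^{-\b(\max_{z\in C} H(z) - H(\cF(C)))}$, and \eqref{eq:defcycle} forces $\max_{z\in C} H(z) < H(\cF(\pa C))$, so this per-attempt visit probability strictly exceeds the per-attempt exit probability on the exponential scale. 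For (iv), reversibility implies that the hitting mass on a boundary state $z\in\pa C$ is of order $e^{-\b(H(z)-H(\cF(\pa C)))}$, so non-principal boundary states jointly contribute at most $e^{-k_4\b}$. Part (v) is the conjunction of (ii), (iv) and the per-attempt positive-probability lower bound. Finally, (vi) follows by integrating the tail estimates of (i) and (ii), using the strong Markov property to convert $\pr{\t^x_{\pa C}>nT}$ into $\pr{\t^x_{\pa C}>T}^n$ with $T=e^{\b(\G(C)+\e)}$.

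The main obstacle I foresee is establishing the ``short attempt'' bound $\pr{\text{attempt length}>T_\e}=o_\b(1)$ without invoking (ii) circularly. The resolution is a direct strong-Markov iteration: an excursion that neither exits $C$ nor revisits $\cF(C)$ within $|C|$ consecutive steps has a per-block continuation probability at most $1-|\cX|^{-|C|}$, by the descent-path argument used in the master reduction, so attempt lengths have a geometric tail that is comfortably dominated by $T_\e$.
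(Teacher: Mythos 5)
The paper itself does not prove Theorem~\ref{thm:exitcycle}: it simply cites parts (i)--(iii) to~\cite[Theorem 6.23]{OV05}, (iv) to~\cite[Corollary 6.25]{OV05}, (v) to~\cite[Theorem 2.17]{MNOS04} and (vi) to~\cite[Proposition 3.9]{OS95}. Your renewal-at-the-bottom strategy is indeed the right circle of ideas and is broadly similar in spirit to how these references argue, but two of the steps you state as if they were easy are in fact false as stated.

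First, the master reduction relies on ``the existence of a monotone-energy descent path from $x$ to $\cF(C)$ inside $C$.'' The cycle condition~\eqref{eq:defcycle} only controls the energy at the boundary; it does not rule out local minima of $H$ inside $C \setminus \cF(C)$. A simple example: $\cX$ a path $a\text{--}b\text{--}c\text{--}d\text{--}e$ with $H(a)=H(e)=10$, $H(b)=1$, $H(c)=2$, $H(d)=0$, and $C=\{b,c,d\}$. Then $C$ is a non-trivial cycle, $\cF(C)=\{d\}$, but $b$ is a local minimum in $C$, so any path from $b$ to $d$ must first go uphill to $c$, and the one-step probability of that ascent is $e^{-\b}$, not $\b$-independent. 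So the claimed uniform-in-$\b$ lower bound on reaching $\cF(C)$ in $|C|$ steps is wrong, and the same flaw undermines your final paragraph: the per-block continuation probability is not bounded by $1-|\cX|^{-|C|}$ but by $1-e^{-\b\gamma}$ for some $\gamma>0$ (the depth of the deepest sub-cycle of $C\setminus\cF(C)$). Second, and for the same reason, the ``short attempt'' claim $\pr{\text{attempt length} > T_\e}=o_\b(1)$ with $T_\e = e^{\b\e/3}$ fails whenever $\e/3 < \gamma$: a single excursion can fall into a sub-cycle of depth $\gamma$ and stay there for $e^{\b(\gamma\pm\d)}$ steps with probability bounded away from zero (this is exactly part (ii) applied to the sub-cycle), which can be far larger than $T_\e$ for small $\e$. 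Since the theorem must hold for every $\e>0$, your attempt-length control does not go through. The literature avoids this by inducting on the nested cycle structure (Lemma~\ref{lem:tree}), establishing the exit estimates first for the innermost cycles and then climbing the tree; fixing your argument essentially amounts to building that induction rather than a one-shot renewal at $\cF(C)$. The remaining pieces (the explicit-path lower bound, the reversibility argument for (iv), the derivation of (v) as the near-trivial intersection of (ii) and (iv), and integrating tails for (vi)) are all reasonable sketches, but they rest on the two flawed steps above.
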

The first three properties can be found in~\cite[Theorem 6.23]{OV05}, the fourth one is~\cite[Corollary 6.25]{OV05} and the fifth one in~\cite[Theorem 2.17]{MNOS04}. The sixth property is given in~\cite[Proposition 3.9]{OS95} and implies that
\begin{equation}\label{eq:meanexittime}
\limb \frac{1}{\b} \log \E \t^{x}_{\pa C} = \G(C).
\end{equation}
The third property states that, given that $C$ is a cycle, for any starting state $x \in C$, the Markov chain $\xtn$ visits any state $y \in C$ before exiting from $C$ with a probability exponentially close to one. This is a crucial property of the cycles and in fact can be given as alternative definition, see for instance~\cite{C99,CC97}. The equivalence of the two definitions has been proved in~\cite{CNS14b} in greater generality for a Markov chain satisfying the Friedlin-Wentzell condition~\eqref{eq:fw}. Leveraging this fact, many properties and results from~\cite{C99} will be used or cited.\\

We denote by $\cC(\cX)$ the set of cycles of $\cX$. The next lemma, see~\cite[Proposition 6.8]{OV05}, implies that the set $\cC(\cX)$ has a tree structure with respect to the inclusion relation, where $\cX$ is the root and the singletons are the leafs. 

\begin{lem}[Cycle tree structure]\label{lem:tree}
Two cycles $C, C' \in \cC(\cX)$ are either disjoint or comparable for the inclusion relation, i.e.~$C \subseteq C'$ or $C' \subseteq C$. 
\end{lem}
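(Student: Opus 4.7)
The plan is to argue by contradiction using the alternative characterization of a cycle given in Lemma~\ref{lem:cycleequiv}. Suppose that $C, C' \in \cC(\cX)$ satisfy $C \cap C' \neq \emptyset$ but neither $C \subseteq C'$ nor $C' \subseteq C$. First I would dispose of the trivial case: if either cycle is a singleton, say $C = \{z\}$, then $z \in C \cap C'$ immediately forces $C = \{z\} \subseteq C'$, contradicting the incomparability assumption. So one may assume both $C$ and $C'$ contain at least two elements, and hence the strict inequalities $\max_{a,b \in C} \Phi(a,b) < \Phi(C, \cX \setminus C)$ and $\max_{a,b \in C'} \Phi(a,b) < \Phi(C', \cX \setminus C')$ of Lemma~\ref{lem:cycleequiv} genuinely apply.

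Next, I would pick witnesses $x \in C \setminus C'$, $y \in C' \setminus C$, and $z \in C \cap C'$ and estimate the communication energy $\Phi(x,z)$ from above and below. Since $x, z \in C$, the first strict inequality gives $\Phi(x,z) < \Phi(C, \cX \setminus C)$. Conversely, any path from $z \in C'$ to $x \notin C'$ must at some point enter $\pa C'$, so its elevation is at least $H(\cF(\pa C')) = \Phi(C', \cX \setminus C')$, where the identification is valid because $C'$ is non-trivial (this is recorded in the excerpt just above the lemma). Minimising over such paths yields $\Phi(x,z) \geq \Phi(C', \cX \setminus C')$. The two bounds combine into the strict inequality $\Phi(C', \cX \setminus C') < \Phi(C, \cX \setminus C)$.

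The argument is then closed by running the very same reasoning on the pair $(y,z)$ with the roles of $C$ and $C'$ interchanged, which produces $\Phi(C, \cX \setminus C) < \Phi(C', \cX \setminus C')$. This directly contradicts the previous inequality, so no overlapping and incomparable pair of cycles can exist, completing the proof. I do not anticipate any serious obstacle: the argument is essentially a short symmetry plus a boundary-crossing observation, and the only points needing a small amount of care are the boundary-crossing lower bound and the singleton edge case, both of which are dispatched in one line using material already at hand.
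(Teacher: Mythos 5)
Your proof is correct. The paper does not actually prove this lemma; it simply cites \cite[Proposition 6.8]{OV05}, so there is no in-paper argument to compare against. Your argument is self-contained and clean: after dispatching the singleton case, you bound $\Phi(x,z)$ strictly from above by $\Phi(C,\cX\setminus C)$ using Lemma~\ref{lem:cycleequiv} (both endpoints in $C$), and from below by $\Phi(C',\cX\setminus C')$ using the boundary-crossing observation (one endpoint inside $C'$, one outside), then you run the symmetric estimate on $\Phi(y,z)$ to produce the contradictory pair of strict inequalities $\Phi(C',\cX\setminus C') < \Phi(C,\cX\setminus C)$ and $\Phi(C,\cX\setminus C) < \Phi(C',\cX\setminus C')$. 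The only point worth being explicit about when writing this up is the reason the identification $\Phi(C',\cX\setminus C') = H(\cF(\pa C'))$ is licensed: once singletons are excluded, a cycle with two or more elements is automatically non-trivial by the paper's definition~\eqref{eq:defcycle}, and the identity then follows from the displayed formula for $\Phi(C,\cX\setminus C)$ given just before Lemma~\ref{lem:cycleequiv}. You already noted this, so there is nothing to fix.
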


Lemma~\ref{lem:tree} also implies that the set of cycles to which a state $x \in \cX$ belongs is totally ordered by inclusion. Furthermore, we remark that if two cycles $C,C' \in \cC(\cX)$ are such that $C \subseteq C'$, then $\G(C) \leq \G(C')$; this latter inequality is strict if and only if the inclusion is strict.


\subsection{Classical bounds in probability for hitting time~\texorpdfstring{$\tha$}{}}
\label{sub32}

In this subsection we start investigating the first hitting time $\tha$. Thus, we will tacitly assume that the \textit{target set} $A$ is a nonempty subset of $\cX$ and the \textit{initial state} $x$ belongs to $\cX \setminus A$. Moreover, without loss of generality, we will henceforth assume that
\begin{equation}
\label{eq:wloga}
A= \{ y \in \cX \st \forall \, \o: x \to y \quad \o \cap A \neq \emptyset \},
\end{equation}
which means that we add to the original target subset $A$ all the states in $\cX$ that cannot be reached from $x$ without visiting the subset $A$. Note that this assumption does not change the distribution of the first hitting time $\tha$, since the states which we may have added in this way could not have been visited without hitting the original subset $A$ first.

Given a nonempty subset $A \subset \cX$ and $x \in \cX$, we define the \textit{initial cycle} $\ca$ by
\begin{equation}
\label{eq:defca}
\ca:=\{x\} \cup \{z \in \cX : \Phi(x,z) < \Phi(x,A)\}.
\end{equation}
If $x \in A$, then $\ca = \{x\}$ and thus is a trivial cycle. If $x \not\in A$, the subset $\ca$ is either a trivial cycle (when $\Phi(x,A) = H(x)$) or a non-trivial cycle containing $x$, if $\Phi(x,A) > H(x)$. 
In any case, if $x \not\in A$, then $\ca \cap A = \emptyset$. For every $x\in \cX$, we denote by $\G(x,A)$ the depth of the initial cycle $\ca$, i.e.
\[
\G(x,A):=\G(\ca).
\]
Clearly if $\ca$ is trivial (and in particular when $x \in A$), then $\G(x,A)= 0$. Note that by definition the quantity $\G(x,A)$ is always non-negative, and in general
\[
\G(x,A) = \Phi(x,A) - H(\cF(\ca)) \geq \Phi(x,A)-H(x),
\]
with equality if and only if $x \in \cF(\ca)$.\\ 

If $x \not\in A$, then the initial cycle $\ca$ is, by construction, the maximal cycle (in the sense of inclusion) that contains the state $x$ and has an empty intersection with $A$. Therefore any path $\o: x \to A$ has at some point to exit from $\ca$, by overcoming an energy barrier not smaller than its depth $\G(x,A)$. The next proposition gives a probabilistic bound for the hitting time $ \tha$ by looking precisely at this \textit{initial ascent} up until the boundary of $\ca$.
\begin{prop}[Initial-ascent bound]\label{prop:plow}
Consider a nonempty subset $A \subset \cX$ and $x \not\in A$. For any $\e>0$ there exists $\kappa >0$ such that for $\b$ sufficiently large
\begin{equation}\label{eq:plow}
\pr{ \tha < e^{\b (\G(x,A)-\e)}} < e^{-\kappa \b}.
\end{equation}
\end{prop}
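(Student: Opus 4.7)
The plan is to exploit the fact that $\ca$ is, by construction, disjoint from $A$, so any trajectory from $x$ to $A$ must first exit $\ca$; this reduces the inequality for $\tha$ to an inequality for the exit time $\t^x_{\pa \ca}$, to which Theorem~\ref{thm:exitcycle}(i) applies directly.

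First, I would check that $\ca \cap A = \emptyset$ when $x \not\in A$. Indeed, if some $z \in A$ lay in $\ca$, then either $z=x$, contradicting $x \not\in A$, or $\Phi(x,z) < \Phi(x,A)$, which is impossible since $z \in A$ forces $\Phi(x,z) \geq \Phi(x,A)$ by the definition~\eqref{eq:chAB}. Consequently, any path realizing $\tha$ must leave $\ca$ before hitting $A$, so almost surely
\[
\tha \geq \t^{x}_{\pa \ca}.
\]
In particular, for every $t>0$,
\[
\pr{\tha < t} \leq \pr{\t^{x}_{\pa \ca} < t}.
\]

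Next, I would split on whether $\ca$ is trivial. If $\ca$ is non-trivial, then by definition $\G(x,A) = \G(\ca) > 0$, and Theorem~\ref{thm:exitcycle}(i) applied to the cycle $\ca$ yields, for every $\e>0$, some constant $k_1>0$ such that for all $\b$ large
\[
\pr{\t^{x}_{\pa \ca} < e^{\b(\G(\ca) - \e)}} \leq e^{-k_1 \b},
\]
and combining with the previous display gives the claim with $\kappa = k_1$. If instead $\ca$ is trivial, then $\G(x,A)=0$, so $e^{\b(\G(x,A) - \e)} = e^{-\b \e} < 1$ for all $\b$ large enough; since $\tha \geq 1$ (the chain starts outside $A$ at time $0$), the event $\{\tha < e^{-\b \e}\}$ is empty, and the bound holds with any $\kappa > 0$.

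This proof is essentially a one-line reduction once the disjointness $\ca \cap A = \emptyset$ is observed, and the main conceptual point (rather than a technical obstacle) is that the depth $\G(x,A)$ of the initial cycle really does lower-bound the hitting time exponent, because the chain has no shortcut to $A$ that avoids climbing out of $\ca$. No genuinely hard step arises here: all the work is done by Theorem~\ref{thm:exitcycle}(i), and the trivial-cycle case is handled by the trivial observation that discrete hitting times are at least $1$.
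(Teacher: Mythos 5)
Your proof is correct and follows essentially the same route as the paper, which reduces the claim to Theorem~\ref{thm:exitcycle}(i) via the stochastic domination $\tha \geq_{\mathrm{st}} \t^x_{\pa\ca}$ (a consequence of $\ca\cap A=\emptyset$). The only difference is that you spell out the trivial-cycle case explicitly, which the paper leaves implicit; this is a harmless and correct addition.
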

The proof is essentially adopted from~\cite{OV05} and follows easily from Theorem~\ref{thm:exitcycle}(i), since by definition of $\ca$, we have that $\tha \geq_\textrm{st} \t^x_{\pa \ca}$.\\

Before stating an upper bound for the tail probability of the hitting time $\tha$, we need some further definitions. Given a nonempty subset $B \subset \cX$, we denote by $\cM(B)$ the \textit{collection of maximal cycles that partitions $B$}, i.e.
\begin{equation}
\label{eq:defcM}
\cM(B):= \{ C \in \cC(\cX) ~:~ C \subseteq B, \, C \text{ maximal} \}.
\end{equation}
Lemma~\ref{lem:tree} implies that every nonempty subset $B \subset \cX$ has a partition into maximal cycles and hence guarantees that $\cM(B)$ is well defined. Note that if $C \in \cC(\cX)$ is itself a cycle, then $\cM(C)=\{C\}$. The importance of the notion of initial cycle besides Proposition~\ref{prop:plow} is partially explained by the next lemma.
\begin{lem}{\rm ~\cite[Lemma 2.26]{MNOS04}}\label{lem:partition}
Given a nonempty subset $A \subset \cX$, the collection $\{\ca\}_{x \in \cX \setminus A}$ of initial cycles is the partition into maximal cycles of $\cX \setminus A$, i.e.
\[
\cM(\cX \setminus A) = \{\ca\}_{x \in \cX \setminus A}.
\]
\end{lem}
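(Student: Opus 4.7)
The plan is to reduce the statement to three facts to verify in sequence: (a) each $\ca$ with $x \notin A$ is a cycle contained in $\cX \setminus A$; (b) every such $\ca$ is maximal among cycles in $\cX \setminus A$; (c) distinct initial cycles are disjoint, and every element of $\cM(\cX\setminus A)$ arises as some $\ca$.

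First I would handle (a). Discarding the trivial singleton case (which arises when $\Phi(x,A) = H(x)$), I would verify connectedness by observing that any $y \in \ca \setminus \{x\}$ admits a path $\o : x \to y$ with $\Phi_\o < \Phi(x,A)$, whose intermediate states $\o_i$ inherit $\Phi(x,\o_i) \leq \Phi_\o < \Phi(x,A)$ and thus lie in $\ca$. For the cycle condition~\eqref{eq:defcycle}, I would use $H(y) \leq \Phi(x,y) < \Phi(x,A)$ for $y \in \ca$ on one side, and the one-step relation $\Phi(x,z) \leq \max(\Phi(x,y), H(z))$ valid for some neighbor $y \in \ca$ of $z \in \pa \ca$, together with the definitional bound $\Phi(x,z) \geq \Phi(x,A)$ for $z \notin \ca$, on the other. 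These force $H(z) \geq \Phi(x,A)$ and hence $H(\cF(\pa \ca)) \geq \Phi(x,A) > \max_{y \in \ca} H(y)$, as required. Containment in $\cX \setminus A$ is immediate from the definition.

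Next, for (b), I would argue by contradiction. Suppose $C$ is a cycle with $\ca \subsetneq C \subseteq \cX \setminus A$ and choose $y \in C \setminus \ca$, so that $\Phi(x,y) \geq \Phi(x,A)$ by the definition of $\ca$. Lemma~\ref{lem:cycleequiv} applied to $C$ yields $\Phi(x,y) < \Phi(C, \cX \setminus C)$; and since $A \subseteq \cX \setminus C$, any path from $x \in C$ to $A$ must leave $C$, giving $\Phi(C, \cX \setminus C) \leq \Phi(x,A)$. Chaining these inequalities produces $\Phi(x,A) \leq \Phi(x,y) < \Phi(x,A)$, a contradiction.

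Finally, for (c), coverage $\bigcup_{x \notin A} \ca = \cX \setminus A$ is clear because each $x \in \ca$. If two initial cycles share a state, Lemma~\ref{lem:tree} makes them comparable for inclusion, and the maximality proved in (b) forces them to coincide. Conversely, given $C \in \cM(\cX \setminus A)$ and any $x \in C$, the same comparability argument applied to $C$ and $\ca$, both maximal in $\cX \setminus A$, yields $C = \ca$. The delicate step I expect is the boundary bound in (a): the passage from $\Phi$-bounds to $H$-bounds on $\pa \ca$ is what secures $H(\cF(\pa \ca)) \geq \Phi(x,A)$, and once this is in hand the rest of the argument runs mechanically via the tree structure of Lemma~\ref{lem:tree}.
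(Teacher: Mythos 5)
Your proof is correct in all its steps. The paper itself does not prove this lemma — it cites it verbatim as Lemma 2.26 from the reference [MNOS04] — so there is no proof in the paper to compare against. Your argument is self-contained and follows the natural structure one would expect: (a) the boundary bound $H(z) \geq \Phi(x,A)$ for $z \in \pa \ca$, derived from the one-step inequality $\Phi(x,z) \leq \max(\Phi(x,y), H(z))$ combined with $\Phi(x,y) < \Phi(x,A)$ for $y \in \ca$, is exactly the pivotal step, and you identify it as such; (b) maximality via Lemma~\ref{lem:cycleequiv} and the observation $\Phi(C, \cX \setminus C) \leq \Phi(x,A)$ when $A \subseteq \cX \setminus C$ is clean; and (c) disjointness and coverage follow mechanically from the tree structure of Lemma~\ref{lem:tree} once maximality is established. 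This is a complete and valid proof of the cited lemma.
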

We can extend the notion of depth to subsets $B \subsetneq \cX$ which are not necessarily cycles by using the partition of $B$ into maximal cycles. More precisely, we define the \textit{maximum depth $\tG(B)$} of a nonempty subset $B \subsetneq \cX$ as the maximum depth of a cycle contained in $B$, i.e.
\begin{equation}
\label{eq:deftG}
\tG(B) := \max_{C \in \cM(B)} \G(C).
\end{equation}
Trivially $\tG(C) = \G(C)$ if $C \in \cC(\cX)$. The next lemma gives two equivalent characterizations of the maximum depth $\tG(B)$ of a nonempty subset $B \subsetneq \cX$.
\begin{lem}[Equivalent characterizations of the maximum depth]\label{lem:GAequiv1}
Given a nonempty subset $B \subsetneq \cX$,
\begin{equation}\label{eq:GAequiv1}
\tG(B) = \max_{x \in B} \G(x,\cX \setminus B)= \max_{x \in B} \Big \{ \min_{y \in \cX \setminus B}  \Phi(x,y) - H(x) \Big \}.
\end{equation}
\end{lem}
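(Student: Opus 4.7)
The plan is to reduce both equalities to Lemma~\ref{lem:partition} applied with target set $A = \cX \setminus B$, which identifies $\cM(B)$ with the collection $\{C_{\cX \setminus B}(x)\}_{x \in B}$ of initial cycles. The first equality is then essentially by definition: each maximal cycle in $\cM(B)$ is the initial cycle $C_{\cX \setminus B}(x)$ of any of its elements, so maximizing $\G(C)$ over $C \in \cM(B)$ is the same as maximizing $\G(x, \cX \setminus B)$ over $x \in B$. What remains is the second equality, for which I would establish the two inequalities separately.

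For the inequality $\max_{x \in B}\{\min_{y \in \cX \setminus B}\Phi(x,y)-H(x)\} \leq \tG(B)$, I would fix any $x \in B$, set $C := C_{\cX\setminus B}(x)$, and use the definition of the depth of the initial cycle, $\G(x,\cX\setminus B) = \Phi(x,\cX\setminus B) - H(\cF(C))$, together with the fact that $H(\cF(C)) \leq H(x)$ (since $x \in C$). This directly gives $\min_{y \in \cX \setminus B}\Phi(x,y) - H(x) \leq \G(C) \leq \tG(B)$, and taking the maximum over $x \in B$ closes the inequality.

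For the reverse inequality, I would pick a cycle $C^\star \in \cM(B)$ achieving $\G(C^\star) = \tG(B)$ and choose a bottom state $x^\star \in \cF(C^\star)$, so that $H(x^\star) = H(\cF(C^\star))$. The key observation is that, since $C^\star \subseteq B$, any path from $x^\star$ to $\cX \setminus B$ must pass through $\cX \setminus C^\star$, so $\Phi(x^\star,\cX \setminus B) \geq \Phi(x^\star,\cX\setminus C^\star)$. Moreover, for $x^\star \in \cF(C^\star)$ one has $\Phi(x^\star,\cX\setminus C^\star) = \Phi(C^\star, \cX \setminus C^\star)$, which together yields $\min_{y \in \cX\setminus B}\Phi(x^\star,y) - H(x^\star) \geq \G(C^\star) = \tG(B)$, as needed.

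The main obstacle is the step $\Phi(x^\star,\cX\setminus C^\star) = \Phi(C^\star,\cX\setminus C^\star)$, which I would justify via Lemma~\ref{lem:cycleequiv}: if $C^\star$ is non-trivial and connected, then $\max_{x,y\in C^\star} \Phi(x,y) < \Phi(C^\star,\cX\setminus C^\star)$, so any optimal path from some $z \in C^\star$ to $\cX\setminus C^\star$ can be prepended by an internal path from $x^\star$ to $z$ in $C^\star$ without changing its elevation; the singleton case is trivial. Everything else in the argument is bookkeeping from the definitions in~\eqref{eq:ch}, \eqref{eq:chAB}, \eqref{eq:defdepth}, and~\eqref{eq:Gequiv}.
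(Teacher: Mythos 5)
Your proof is correct, and the paper in fact leaves Lemma~\ref{lem:GAequiv1} without an explicit proof, evidently regarding it as an immediate consequence of Lemma~\ref{lem:partition} together with the identity $\G(x,A)=\Phi(x,A)-H(\cF(\ca))$ recorded just after~\eqref{eq:defca}; your route via Lemma~\ref{lem:partition} is exactly the intended one. One small simplification for the reverse inequality: once you fix $C^\star\in\cM(B)$ of maximal depth and $x^\star\in\cF(C^\star)$, Lemma~\ref{lem:partition} already identifies $C^\star=C_{\cX\setminus B}(x^\star)$, so $x^\star\in\cF(C_{\cX\setminus B}(x^\star))$ and the paper's remark after~\eqref{eq:defca} gives $\Phi(x^\star,\cX\setminus B)-H(x^\star)=\G(x^\star,\cX\setminus B)=\G(C^\star)=\tG(B)$ directly, without needing the detour through $\Phi(x^\star,\cX\setminus C^\star)$ and Lemma~\ref{lem:cycleequiv}. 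Your detour is nonetheless valid: the inclusion $\cX\setminus B\subseteq\cX\setminus C^\star$ does give $\Phi(x^\star,\cX\setminus B)\geq\Phi(x^\star,\cX\setminus C^\star)$, and the equality $\Phi(x^\star,\cX\setminus C^\star)=\Phi(C^\star,\cX\setminus C^\star)$ for $x^\star$ in the bottom does follow from connectedness of a non-trivial cycle and Lemma~\ref{lem:cycleequiv}.
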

In view of Lemma~\ref{lem:GAequiv1}, $\tG(B)$ is the maximum initial energy barrier that the process started inside $B$ possibly has to overcome to exit from $B$. As illustrated by the next proposition, one can get a (super-)exponentially small upper bound for the tail probability of the hitting time $\tha$, by looking at the maximum depth $\tG(\cX \setminus A)$ of the complementary set $\cX \setminus A$, where the process resides before hitting the target subset $A$.

\begin{prop}[Deepest-cycle bound]\label{prop:pup}~{\rm \cite[Proposition 4.19]{C99}}
Consider a nonempty subset $A \subsetneq \cX$ and $x \not\in A$. For any $\e>0$ there exists $\kappa' >0$ such that for $\b$ sufficiently large
\begin{equation}\label{eq:pup}
\pr{ \tha > e^{\b (\tG(\cX \setminus A)+\e)}} < e^{-e^{\kappa' \b}}.
\end{equation}
\end{prop}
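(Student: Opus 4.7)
The plan is to iterate the super-exponential cycle-exit tail bound of Theorem~\ref{thm:exitcycle}(ii) along the sequence of maximal cycles of $\cX\setminus A$ visited by the chain, exploiting the partition of $\cX\setminus A$ provided by Lemma~\ref{lem:partition}. Set $T:=e^{\b(\tG(\cX\setminus A)+\e)}$, $T_0:=e^{\b(\tG(\cX\setminus A)+\e/2)}$, and $M:=\lceil e^{\b\e/4}\rceil$, so that $MT_0\leq T$ for $\b$ large.

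First I would establish a uniform single-cycle bound. By Lemma~\ref{lem:partition}, for every $y\in\cX\setminus A$ the initial cycle $C_{A}(y)$ is a maximal cycle of $\cX\setminus A$, and therefore has depth $\G(C_{A}(y))\leq\tG(\cX\setminus A)$. Applying Theorem~\ref{thm:exitcycle}(ii) with margin $\e/2$ and taking the worst constant over the finitely many cycles in $\cM(\cX\setminus A)$, one obtains $k>0$ such that, for $\b$ sufficiently large,
\[
\sup_{y\in\cX\setminus A}\pr{\t^{y}_{\pa C_{A}(y)} > T_0}\leq e^{-e^{k\b}}.
\]

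Next I would decompose the trajectory using the strong Markov property at successive cycle-exit times. Define $\sigma_0:=0$, $y_0:=x$, and inductively $\sigma_{j+1}:=\sigma_j+\t^{y_j}_{\pa C_{A}(y_j)}$, $y_{j+1}:=X_{\sigma_{j+1}}$, terminating at $J:=\min\{j\geq 0:y_j\in A\}$, so that $\tha=\sigma_J$. On the event $\{J\leq M\}\cap\{\sigma_{j+1}-\sigma_j\leq T_0\ \forall\,j<J\}$ one has $\tha\leq MT_0\leq T$, hence a union bound combined with the previous display yields
\[
\pr{\tha > T}\;\leq\;\pr{J>M}\;+\;M\,e^{-e^{k\b}}.
\]
The second term is already doubly exponentially small in $\b$, so it remains to show that $\pr{J>M}$ is of the same order.

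The main obstacle is thus controlling the number of cycle-exits $J$: a priori the skeleton chain $(y_j)_j$ could hop many times among the maximal cycles of $\cX\setminus A$ before entering $A$. This is resolved by combining two ingredients, namely the tree structure of Lemma~\ref{lem:tree}, which arranges $\cM(\cX\setminus A)$ hierarchically with $A$ at the root, and Theorem~\ref{thm:exitcycle}(iv), which forces each exit to land on the principal boundary of the just-exited cycle with probability at least $1-e^{-k_4\b}$. Together these produce an effective drift of $(y_j)_j$ towards $A$ and a geometric-like tail for $J$ with parameters essentially independent of $\b$, so that $\pr{J>M}$ is also doubly exponentially small in $\b$. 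Combining the two contributions yields the claimed bound $e^{-e^{\kappa'\b}}$ for a suitable $\kappa'>0$, following the line of argument of \cite[Proposition~4.19]{C99}.
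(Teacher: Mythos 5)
The paper does not supply its own proof of Proposition~\ref{prop:pup}; it is cited verbatim to~\cite[Proposition~4.19]{C99}. However, the paper does give a closely analogous argument in its proofs of Propositions~\ref{prop:plowup1} and~\ref{prop:plowup2}, which makes it possible to compare strategies. There the device is \emph{not} to iterate at the random cycle-exit times as you propose, but rather to establish a uniform lower bound $\inf_{z\in\cX\setminus A}\pr{\t^z_A\le e^{\b(\tG(\cX\setminus A)+\e/2)}}\ge e^{-\b\e'|\cX|}$ for any $\e'>0$ (obtained from Lemma~\ref{lem:exitvtjcp} together with Theorem~\ref{thm:exitcycle}(v) applied to a vtj-connected cycle-path of at most $|\cX|$ cycles), and then to iterate the Markov property at the \emph{deterministic} times $k\,e^{\b(\tG+\e/2)}$, $k=1,\dots,e^{\b\e/2}$. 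This sidesteps the need to control the number of cycle exits $J$ altogether, because the iteration variable is the number of deterministic time blocks, not the number of excursions.

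Your attempt does run into a genuine gap precisely at the step you yourself flag as the obstacle: bounding $\pr{J>M}$. Two claims in that step are incorrect. First, Lemma~\ref{lem:tree} does \emph{not} arrange $\cM(\cX\setminus A)$ ``hierarchically with $A$ at the root.'' It gives a tree on the set of \emph{all} cycles under \emph{inclusion}, with $\cX$ at the root; the maximal cycles of $\cX\setminus A$ are pairwise incomparable siblings in that tree, so the lemma provides no ordering of them relative to $A$ and no directed graph structure on which a drift towards $A$ could act. Second, exiting a cycle $C$ through $\cF(\pa C)$ (Theorem~\ref{thm:exitcycle}(iv)) does not, by itself, move the skeleton chain ``closer'' to $A$: $\cF(\pa C)$ may be adjacent to another maximal cycle far from $A$, and nothing in the two ingredients you cite prevents the skeleton from oscillating among a few cycles. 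In particular, the assertion of a ``geometric-like tail for $J$ with parameters essentially independent of $\b$'' is not substantiated: the per-excursion probability of making progress towards $A$ does depend on $\b$ (through the $(1-e^{-k_4\b})$ factors and through Metropolis transition weights at the exit state), and making this quantitative essentially amounts to reproving the $e^{-\b\e'|\cX|}$ bound via a vtj-connected cycle-path, at which point the deterministic-time iteration is both simpler and sufficient. To repair your argument, replace the control of $J$ by the following: use Lemma~\ref{lem:exitvtjcp} to produce, from every $z\in\cX\setminus A$, a vtj-connected cycle-path in $\cM(\cX\setminus A)$ ending at $A$; apply Theorem~\ref{thm:exitcycle}(v) cycle by cycle to bound below by $e^{-\b\e'|\cX|}$ the probability of traversing it within $e^{\b(\tG+\e/2)}$ steps; and then iterate the Markov property at the fixed block length $e^{\b(\tG+\e/2)}$ to obtain $\bigl(1-e^{-\b\e'|\cX|}\bigr)^{e^{\b\e/2}}\le e^{-e^{\b(\e/2-\e'|\cX|)}}$, and choose $\e'<\e/(2|\cX|)$.
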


By definition we have $\G(x,A) \leq \tG(\cX \setminus A)$, but in general $\G(x,A) \neq \tG(\cX \setminus A)$ and neither bound presented in this subsection is actually tight, so we will proceed to establish sharper but more involved bounds in the next subsection.


\subsection{Optimal paths and refined bounds in probability for hitting time~\texorpdfstring{$\tha$}{}}
\label{sub33}
The quantity $\G(x,A)$ appearing in Proposition~\ref{prop:plow} only accounts for the energy barrier that has to be overcome starting from $x$, but there is such an energy barrier for every state $z \not\in A$ and it may well be that to reach $A$ it is inevitable to visit a state $z$ with $\G(z,A) > \G(x,A)$. Similarly, also the exponent $\tG(\cX \setminus A)$ appearing in Proposition~\ref{prop:pup} may not be sharp in general. For instance, the maximum depth $\tG(\cX \setminus A)$ could be determined by a deep cycle $C$ in $\cX \setminus A$ that cannot be visited before hitting $A$ or that is visited with a vanishing probability as $\binf$. In this subsection, we refine the bounds given in Propositions~\ref{prop:plow} and~\ref{prop:pup} by using the notion of \textit{optimal path} and identifying the subset of the state space $\cX$ in which these optimal paths lie.

Given a nonempty subset $A \subset \cX$ and $x \not\in A$, define the \textit{set of optimal paths} $\Opt$ as the collection of all paths $\o \in \Ome$ along which the maximum energy $\Phi_\o$ is equal to the communication height between $x$ and $A$, i.e.
\begin{equation}\label{eq:defopt}
\Opt := \{ \o \in \Ome ~:~ \Phi_\o = \Phi(x,A) \}.
\end{equation}
Define the \textit{relevant cycle} $\cp$ as the minimal cycle in $\cC(\cX)$ such that $\ca \subsetneq \cp$, i.e.
\begin{equation}
\label{def:cp}
\cp:= \min\{ C \in \cC(\cX) \st \ca \subsetneq C \}. 
\end{equation}
The cycle $\cp$ is well defined, since the cycles in $\cC(\cX)$ that contain $x$ are totally ordered by inclusion, as remarked after Lemma~\ref{lem:tree}. By construction, $\cp \cap A \neq \emptyset$ and thus $\cp$ contains at least two states, so it has to be a non-trivial cycle. The minimality of $\cp$ with respect to the inclusion gives that 
\[
\max_{z \in \cp} H(z)=\Phi(x,A),
\]
and then, by using Lemma~\ref{lem:cycleequiv}, one obtains
\begin{equation}\label{eq:c+phi}
\Phi(x,A) < H(\cF(\pa \cp)).
\end{equation}

The choice of the name \textit{relevant cycle} for $\cp$ comes from the fact that all paths the Markov chain will follow to go from $x$ to $A$ will almost surely not exit from $\cp$ in the limit $\binf$. Indeed, for the relevant cycle $\cp$ Theorem~\ref{thm:exitcycle}(iii) reads
\begin{equation}
\label{eq:noexit}
\limb \pr{\tha < \tau^{x}_{\pa \cp}} =1.
\end{equation}
The next lemma, which is proved in Section~\ref{sec4}, states that an optimal path from $x$ to $A$ is precisely a path from $x$ to $A$ that does not exit from $\cp$. 
\begin{lem}[Optimal path characterization]\label{lem:equivopt}
Consider a nonempty subset $A \subset \cX$ and $x \not\in A$. Then
\[
\o \in \Opt \quad \Longleftrightarrow \quad \o \in \Ome \quad \mathrm{and} \quad \o \subseteq \cp.\]
\end{lem}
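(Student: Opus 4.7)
\textbf{Proof plan for Lemma~\ref{lem:equivopt}.} The plan is to treat the two implications separately, exploiting the two properties of $\cp$ that were highlighted just before the lemma: the identity $\max_{z \in \cp} H(z) = \Phi(x,A)$ (which follows from the minimality of $\cp$ together with Lemma~\ref{lem:cycleequiv}) and the strict inequality $\Phi(x,A) < H(\cF(\pa \cp))$ from~\eqref{eq:c+phi}. Note also that $x \in \ca \subsetneq \cp$ by definition of $\cp$.

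For the direction $\omega \in \Ome, \, \omega \subseteq \cp \Rightarrow \omega \in \Opt$, I would argue as follows. Since $\omega \subseteq \cp$, every state $\omega_i$ satisfies $H(\omega_i) \leq \max_{z \in \cp} H(z) = \Phi(x,A)$, so $\Phi_\omega \leq \Phi(x,A)$. Conversely, $\omega$ is a path from $x$ to some $y \in A$, and by truncating at the first visit to $A$ (which does not increase the height) we obtain an element of $\Ome$ that witnesses $\Phi(x,A) = \min_{\omega' \in \Ome} \Phi_{\omega'} \leq \Phi_\omega$. Therefore $\Phi_\omega = \Phi(x,A)$, i.e., $\omega \in \Opt$.

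For the reverse direction $\omega \in \Opt \Rightarrow \omega \subseteq \cp$, I would argue by contradiction. Suppose $\omega \in \Opt$ but some state of $\omega$ lies outside $\cp$. Since $\omega_1 = x \in \cp$, there exists an index $j$ with $\omega_{j-1} \in \cp$ and $\omega_j \notin \cp$. Because $q(\omega_{j-1},\omega_j) > 0$, by the definition of $\pa \cp$ we have $\omega_j \in \pa \cp$, whence $H(\omega_j) \geq H(\cF(\pa \cp)) > \Phi(x,A)$, using~\eqref{eq:c+phi}. But then $\Phi_\omega \geq H(\omega_j) > \Phi(x,A)$, contradicting $\omega \in \Opt$. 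Hence $\omega \subseteq \cp$, concluding the proof.

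The only real technical point, and thus the one I expect to be the most delicate, is the boundary-crossing step in the contradiction argument: one must carefully observe that a path starting inside $\cp$ and leaving $\cp$ is forced to enter $\pa \cp$ (as opposed to jumping over it), which is guaranteed precisely because transitions of the chain are supported on $q$-edges and $\pa \cp$ is defined through the same connectivity $q$. Everything else reduces to combining the two displayed characterizations of $\cp$ recalled in the first paragraph.
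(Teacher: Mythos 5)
Your proof is correct and follows essentially the same approach as the paper: the reverse direction uses $\max_{z \in \cp} H(z) = \Phi(x,A)$ (minimality of $\cp$) and the forward direction uses $\Phi(x,A) < H(\cF(\pa \cp))$ from~\eqref{eq:c+phi}. The paper states the forward direction in one line (``an optimal path cannot exit from $\cp$''), while you fill in the first-exit-index detail showing the path must cross $\pa \cp$; this is the same argument spelled out, and your truncation remark in the other direction is a small extra step not strictly needed since $\o \in \Ome$ already gives $\Phi(x,A) \leq \Phi_\o$ directly from the definition of communication height.
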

Lemma~\ref{lem:equivopt} implies that the relevant cycle $\cp$ can be equivalently defined as
\begin{equation}
\label{eq:cpequiv}
\cp= \Big \{ y \in \cX \st \Phi(x,y) \leq \Phi(x,A) \Big \} = \Big \{ y \in \cX ~:~ \Phi(x,y) < \Phi(x,A) + \d_0/2 \Big \},
\end{equation}
where $\d_0$ is the minimum energy gap between an optimal and a non-optimal path from $x$ to $A$, i.e.
\[
\d_0=\d_0(x,A):= \min_{\o \in \Ome \setminus \Omega^{\mathrm{opt}}_{x,A}} \Phi_\o - \Phi(x,A).
\]
In view of Lemma~\ref{lem:equivopt} and~\eqref{eq:noexit}, the Markov chain started in $x$ follows in the limit $\binf$ almost surely an optimal path in $\Opt$ to hit $A$. It is then natural to define the following quantities for a nonempty subset $A \subset \cX$ and $x \not\in A$: 
\begin{equation}
\label{eq:defDm}
\Dm:=\min_{\o \in \Opt} \max_{z \in \o} \G(z,A),
\end{equation}
and 
\begin{equation}
\label{eq:defDM}
\DM:=\max_{\o \in \Opt} \max_{z \in \o} \G(z,A).
\end{equation}
Definition~\eqref{eq:defDm} implies that every optimal path $\o \in \Opt$ has to enter at some point a cycle in $\cM(\cX \setminus A)$ of depth at least $\Dm$, while definition~\eqref{eq:defDM} means that every cycle visited by any optimal path $\o \in \Opt$ has depth less than or equal to $\DM$.

An equivalent characterization for the energy barrier $\DM$ can be given, but we first need one further definition. Define $\cR$ as the subset of states which belong to at least one optimal path in $\Opt$, i.e.
\begin{equation}
\label{eq:defcR}
\cR:=\{ y \in \cX \st \exists \, \o \in \Opt ~:~ y \in \o\}.
\end{equation}
Note that $A \cap \cR \neq \emptyset$, since the endpoint of each path in $\Ome$ belongs to $A$, by definition~\eqref{eq:defOme}. In view of Lemma~\ref{lem:equivopt}, $\cR \subseteq \cp$. We remark that this latter inclusion could be strict, since in general $\cR \neq \cp$. Indeed, there could exist a state $y \in \cp$ such that all paths $\o: x \to y$ that do not exit from $\cp$ always visit the target set $A$ before reaching $y$, and thus they do not belong to $\Opt$ (see definitions~\eqref{eq:defOme} and~\eqref{eq:defopt}), see Figure~\ref{fig:cR}.
\vspace{-0.2cm}
\begin{figure}[!ht]
\centering
\subfigure[The subset $\cR$ (in light gray)]{
\includegraphics[scale=0.9]{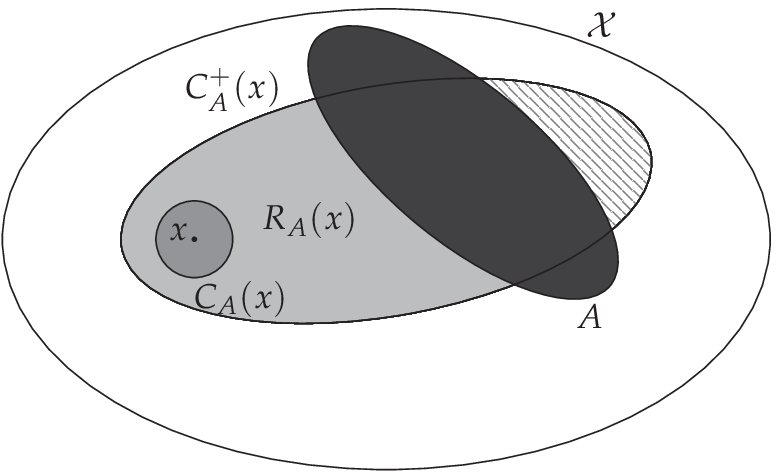}
}
\hspace{0.2cm}
\subfigure[The partition into maximal cycles of $\cR$, including the initial cycle $\ca$ (in dark gray)]{
\includegraphics[scale=0.9]{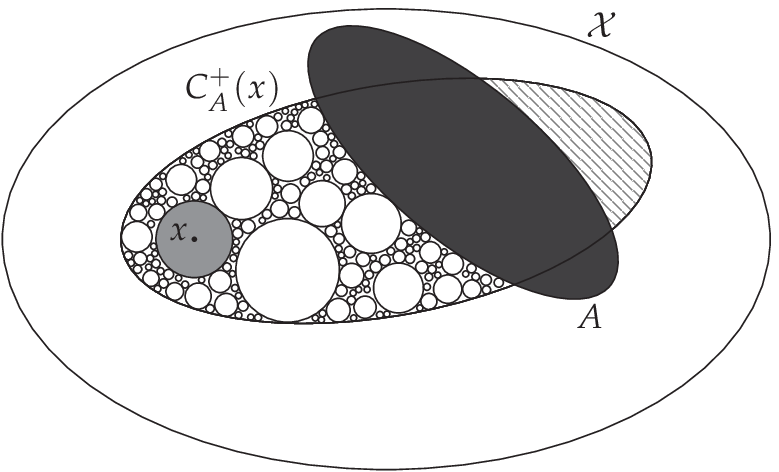}
}
\caption{Example of an energy landscape $\cX$ with highlighted the subset $A$ (in black), the relevant cycle $\cp$ and the subset $\cp \setminus (\cR \cup A)$ (with diagonal mesh)}
\label{fig:cR}
\end{figure}
\FloatBarrier
The next lemma characterizes the quantity $\DM$ as the maximum depth of the subset $\cR \setminus A$ (see definition~\eqref{eq:deftG}).
\begin{lem}[Equivalent characterization of $\DM$]\label{lem:equivDM}
\begin{equation}
\DM= \tG(\cR \setminus A).
\end{equation}
\end{lem}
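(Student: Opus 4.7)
The plan is to reduce both sides of the identity to the common quantity $\max_{z \in \cR \setminus A} \G(z, A)$. By definitions~\eqref{eq:defDM} and~\eqref{eq:defcR}, $\DM = \max_{z \in \cR} \G(z, A)$; since $\G(y, A) = 0$ whenever $y \in A$ (the initial cycle being trivial) and since $x \in \cR \setminus A$ ensures the restricted set is nonempty, this equals $\max_{z \in \cR \setminus A} \G(z, A)$. For the right-hand side, definition~\eqref{eq:deftG} gives $\tG(\cR \setminus A) = \max_{C \in \cM(\cR \setminus A)} \G(C)$, so I would show that $\cM(\cR \setminus A) = \{C_A(z) : z \in \cR \setminus A\}$; combined with the identity $\G(C_A(z)) = \G(z, A)$, this closes the computation.

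To identify $\cM(\cR \setminus A)$ with $\{C_A(z) : z \in \cR \setminus A\}$, I would combine Lemma~\ref{lem:partition}---which gives $\cM(\cX \setminus A) = \{C_A(z)\}_{z \in \cX \setminus A}$---with the structural inclusion $C_A(z) \subseteq \cR$ for every $z \in \cR \setminus A$. Once this inclusion is available, each such $C_A(z)$ lies inside $\cR \setminus A$ and inherits maximality there from its maximality in $\cX \setminus A$: a cycle of $\cR \setminus A$ strictly containing $C_A(z)$ would also be a cycle of $\cX \setminus A$ strictly containing it, contradicting Lemma~\ref{lem:partition}. Hence the partitioning cycles of $\cR \setminus A$ are exactly the distinct $C_A(z)$ with $z \in \cR \setminus A$.

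The main obstacle is proving the inclusion $C_A(z) \subseteq \cR$ for $z \in \cR \setminus A$, which I would handle by an explicit detour argument. Fix $\o \in \Opt$ with $z \in \o$ and split $\o = \o_1 \cdot \o_2$ at $z$; for a given $w \in C_A(z)$, pick a path $\pi : z \to w$ of minimum height $\Phi(z,w)$. By symmetry of $q$, the reversal $\pi^{-1} : w \to z$ has the same height, so the concatenation $\o' := \o_1 \cdot \pi \cdot \pi^{-1} \cdot \o_2$ is a valid path $x \to A$. Its height satisfies
\[
\Phi_{\o'} \leq \max\{\Phi_{\o_1}, \Phi(z,w), \Phi_{\o_2}\} \leq \Phi(x,A),
\]
because $\Phi_{\o_1}, \Phi_{\o_2} \leq \Phi_{\o} = \Phi(x,A)$ and $w \in C_A(z)$ forces $\Phi(z,w) \leq \Phi(z,A) \leq \Phi(x,A)$ directly from definition~\eqref{eq:defca}; the opposite bound $\Phi_{\o'} \geq \Phi(x,A)$ is automatic, so $\Phi_{\o'} = \Phi(x,A)$. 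Moreover $\o' \in \Ome$: the detour $\pi \cdot \pi^{-1}$ cannot leave $C_A(z)$, since a minimum-height path from $z$ to another state of $C_A(z)$ has height strictly below the exit height of the non-trivial cycle $C_A(z)$ by Lemma~\ref{lem:cycleequiv}, and then it stays inside $C_A(z) \cap A = \emptyset$; while $\o_1$ and $\o_2$ avoid $A$ except at the final endpoint because $\o \in \Ome$. Hence $\o' \in \Opt$ passes through $w$, giving $w \in \cR$. The degenerate case $C_A(z) = \{z\}$ forces $w = z$ and requires no detour, so the inclusion holds in all cases.
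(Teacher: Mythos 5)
Your proof is correct. Note first that the paper states Lemma~\ref{lem:equivDM} without proof and Section~\ref{sec4} does not supply one, so there is no in-paper argument to compare against. You correctly reduce both sides to $\max_{z \in \cR \setminus A} \G(z,A)$ and isolate the genuine content of the lemma: the inclusion $C_A(z) \subseteq \cR$ for every $z \in \cR \setminus A$, which ensures that the maximal cycles partitioning $\cR \setminus A$ are the full initial cycles $C_A(z)$ and not smaller truncations of them inside $\cR$. Your detour construction for that inclusion is sound, but two of its ingredients are used a bit implicitly and deserve to be spelled out. First, the bound $\Phi(z,A) \leq \Phi(x,A)$ is not an immediate consequence of definition~\eqref{eq:defca}, as your write-up suggests; it holds because $z$ lies on an optimal path, so the suffix $\o_2$ already gives $\Phi(z,A) \leq \Phi_{\o_2} \leq \Phi(x,A)$ (this is precisely the observation the paper records when deriving~\eqref{eq:PsizPsix} in the proof of Proposition~\ref{prop:plowup1}, so your construction is very much in the paper's spirit). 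Second, the confinement of the detour $\pi \cdot \pi^{-1}$ inside $C_A(z)$ rests on the standard fact that any path exiting a non-trivial cycle $C$ must visit $\pa C$ and hence reach height at least $\Phi(C,\cX\setminus C)=H(\cF(\pa C))$; combined with $\Phi(z,w) < \Phi(z,A) = \Phi(C_A(z),\cX\setminus C_A(z))$ for $w \neq z$ in $C_A(z)$ (the last equality coming from~\eqref{eq:Gequiv} and the identity $\G(C_A(z))=\Phi(z,A)-H(\cF(C_A(z)))$), this does keep $\pi$ inside $C_A(z)$, which is disjoint from $A$. With those two points made explicit, and the trivial-cycle case handled as you do, the argument is complete.
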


Using the two quantities $\Dm$ and $\DM$, we can better control in probability the hitting time $\tha$, as stated in the next proposition, which is proved in Section~\ref{sec4}.

\begin{prop}[Optimal paths depth bounds]\label{prop:plowup1}
Consider a nonempty subset $A \subset \cX$ and $x \in \cX \setminus A$. For any $\e>0$ there exists $\kappa>0$ such that for $\b$ sufficiently large
\begin{equation}
\label{eq:plow1}
\pr{\tha < e^{\b (\Dm-\e)}} < e^{- \kappa \b},
\end{equation}
and
\begin{equation}
\label{eq:pup1}
\pr{\tha > e^{\b (\DM+\e)}} < e^{- \kappa \b}.
\end{equation}
\end{prop}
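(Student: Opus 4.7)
Both bounds reduce to iterated applications of Theorem~\ref{thm:exitcycle} to the maximal cycles of $\cX\setminus A$, combined with the relevant-cycle confinement~\eqref{eq:noexit}, which together with Lemma~\ref{lem:equivopt} forces the observed trajectory (with high probability) to be an optimal path in $\Opt$.

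For the lower bound~\eqref{eq:plow1}, I will condition on the high-probability event $\{\tha < \tau^x_{\pa \cp}\}$ granted by~\eqref{eq:noexit}. On this event the realized trajectory $(X_0,\dots,X_{\tha})$ lies in $\cp$ and, by Lemma~\ref{lem:equivopt}, belongs to $\Opt$. By the very definition~\eqref{eq:defDm} of $\Dm$, every optimal path must visit some state $z$ with $\G(z,A)\geq \Dm$; equivalently, by Lemma~\ref{lem:partition}, the trajectory necessarily enters some maximal cycle $C\in \cM(\cX\setminus A)$ with $\G(C)\geq \Dm$. Since $C\cap A=\emptyset$, the chain must exit $C$ before hitting $A$, and Theorem~\ref{thm:exitcycle}(i) applied to $C$ (started at the first entry time into $C$, using the strong Markov property) shows that this exit time is at least $e^{\b(\Dm-\e)}$ with probability at least $1-e^{-k_1\b}$. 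Taking a union bound over the finitely many candidate cycles $C\in\cM(\cX\setminus A)$ of depth $\geq\Dm$ yields~\eqref{eq:plow1}.

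For the upper bound~\eqref{eq:pup1}, the key tool is Lemma~\ref{lem:equivDM}, which rewrites $\DM=\tG(\cR\setminus A)$ and hence bounds the depth of every cycle in $\cM(\cR\setminus A)$ by $\DM$. I will argue inductively on the sequence of cycles visited by the chain. Starting from $\ca\in\cM(\cR\setminus A)$, Theorem~\ref{thm:exitcycle}(ii) gives $\tau^x_{\pa\ca}\leq e^{\b(\DM+\e/2)}$ with super-exponentially small failure probability, while Theorem~\ref{thm:exitcycle}(iv) ensures that with probability $\geq 1-e^{-k_4\b}$ the exit occurs through the principal boundary $\cF(\pa\ca)$, which one verifies lies in $\cR\cup A$ via the characterization~\eqref{eq:cpequiv} of $\cp$ and the definition~\eqref{eq:defcR} of $\cR$. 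If the exit state lies in $A$ we are done; otherwise the strong Markov property lets us restart in another cycle of $\cM(\cR\setminus A)$, again of depth at most $\DM$, and iterate. Since $|\cM(\cR\setminus A)|\leq|\cX|$, at most $|\cX|$ iterations suffice, and a union bound yields $\tha\leq |\cX|\cdot e^{\b(\DM+\e/2)}\leq e^{\b(\DM+\e)}$ for $\b$ large with the required exponentially small exceptional probability.

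The main obstacle I expect is the inductive step of the upper bound: one must verify that the chain does not ``leak'' into $\cp\setminus(\cR\cup A)$ or out of $\cp$ altogether, i.e.\ that $\cF(\pa C)\subseteq \cR\cup A$ for every maximal cycle $C\in\cM(\cR\setminus A)$ visited along the way. This is a structural fact about optimal paths that needs to be extracted from Lemma~\ref{lem:equivopt} and the equivalent description~\eqref{eq:cpequiv}, and it is the point where one must be careful to combine the finitely many applications of parts~(ii) and~(iv) of Theorem~\ref{thm:exitcycle} into a single uniform high-probability statement compatible with the exponent $\DM+\e$.
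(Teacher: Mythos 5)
Your lower-bound argument for~\eqref{eq:plow1} is essentially the paper's: the paper sums over entry states in $\cZ_{\mathrm{opt}}:=\{z\in\cR\setminus A\st \G(z,A)\geq\Dm\}$ where you take a union bound over deep cycles, but the two are equivalent in substance and both correctly combine the relevant-cycle confinement~\eqref{eq:noexit} with Theorem~\ref{thm:exitcycle}(i).

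The upper-bound argument for~\eqref{eq:pup1}, however, has a genuine gap. The claim that \emph{``since $|\cM(\cR\setminus A)|\leq|\cX|$, at most $|\cX|$ iterations suffice''} is false: the Markov chain can re-enter the same maximal cycle arbitrarily many times before hitting $A$, so the number of cycle excursions is a random variable that is \emph{not} bounded by $|\cX|$. Indeed, in the low-temperature regime the typical behaviour is that the chain bounces in and out of the deeper cycles many times before finding the right exit, so the sequence of visited cycles is generally much longer than $|\cX|$ and is not even almost-surely finite in a way that admits a deterministic bound. Consequently the proposed bound $\tha\leq |\cX|\cdot e^{\b(\DM+\e/2)}$ does not hold with high probability, and the union bound over ``the finitely many iterations'' has nothing to attach to. Your structural claim that $\cF(\pa C)\subseteq\cR\cup A$ for $C\in\cM(\cR\setminus A)$ is in fact correct (it follows from~\eqref{eq:cpequiv} and the definition of $\cR$), but it does not repair the counting error.

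The paper circumvents this by a different strategy: it does \emph{not} attempt to bound the number of cycle excursions. Instead, using Lemma~\ref{lem:exitvtjcp} to produce a {\rm vtj}-connected cycle-path from any $z\in\cR\setminus A$ to $A$, whose cycles all have depth $\leq\DM$, and invoking Theorem~\ref{thm:exitcycle}(v) on each of those cycles, it shows
\[
\inf_{z\in\cR\setminus A}\pr{\t^z_A\leq e^{\b(\DM+\e/2)}}\;\geq\;e^{-\b\e'|\cX|}
\]
for any $\e'>0$. This is a (possibly exponentially small) but \emph{uniform positive} lower bound on the success probability of a single ``attempt,'' and then iterating the Markov property at the times $k\,e^{\b(\DM+\e/2)}$ for $k=1,\dots,e^{\b\e/2}$ yields
\[
\pr{\tha>e^{\b(\DM+\e)},\,\tha<\t^x_{\pa\cp}}\;\leq\;\bigl(1-e^{-\b\e'|\cX|}\bigr)^{e^{\b\e/2}}\;\leq\;e^{-e^{\b(\e/2-\e'|\cX|)}},
\]
which is super-exponentially small for $\e'$ sufficiently small. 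To fix your argument you would need to replace the iteration-count claim with this kind of renewal estimate: establish a uniform lower bound on hitting $A$ within one time block, then raise the complementary probability to the power of the number of time blocks.
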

This proposition is in fact a sharper result than Propositions~\ref{prop:plow} and~\ref{prop:pup}, since
\begin{equation}
\label{eq:phiineq}
\G(x,A) \leq \Dm \leq \DM \leq \tG(\cX \setminus A).
\end{equation}
Indeed, since the starting state $x$ trivially belongs to every path in $\Opt$, we have that $\G(x,A) \leq \max_{z \in \o} \G(z,A)$ for every $\o \in \Opt$ and thus $\G(x,A) \leq \Dm$. Furthermore, since by definition $\cp \setminus A \subseteq \cX \setminus A$, Lemma~\ref{lem:equivDM} yields that $\DM \leq \tG(\cX \setminus A)$.\\

It follows from~\eqref{eq:phiineq} that, if $\G(x,A) = \tG(\cX \setminus A)$, then $\Dm =\DM$. However, in general, the exponents $\Dm$ and $\DM$ are not equal and may not be sharp either, as illustrated in the fictitious energy landscape in Figure~\ref{fig:notsharp}.
\vspace{-0.3cm}
\begin{figure}[!ht]
\centering
\subfigure[Energy profile of the energy landscape with the initial cycle $\ca$ (in grey) and the relevant cycle $\cp$ (below the dashed black line)]{\vspace{-1cm} \includegraphics[scale=0.87]{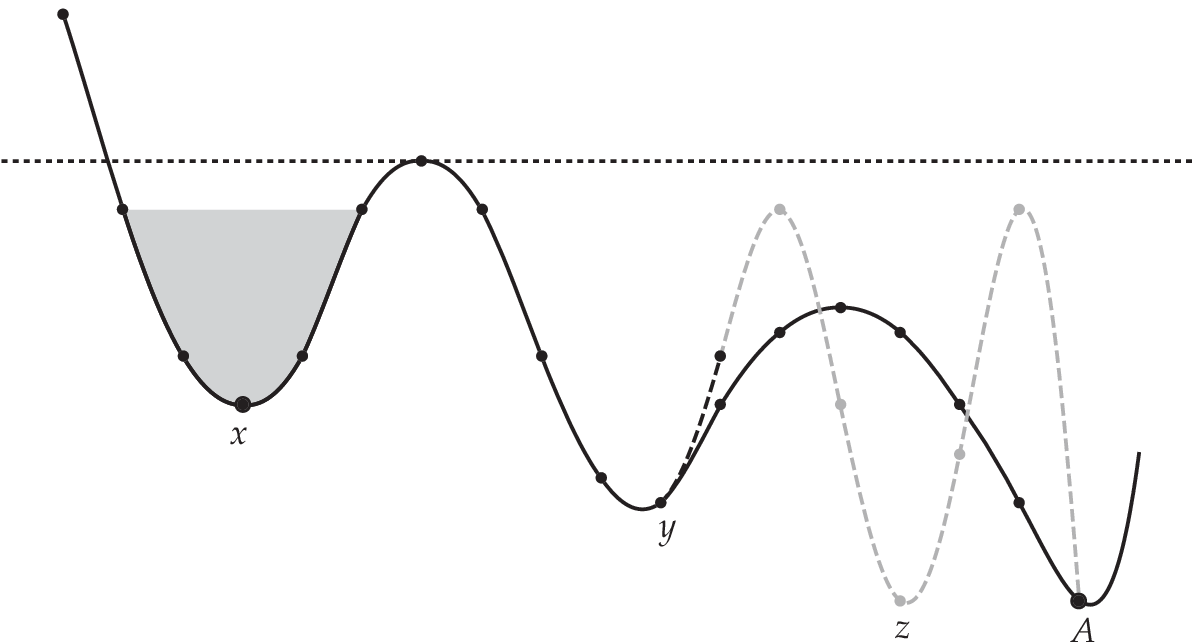}}
\\
\subfigure[Partition into maximal cycles of $\cX\setminus A$ for the same energy landscape]{\hspace{-0.5cm} \includegraphics[scale=0.86]{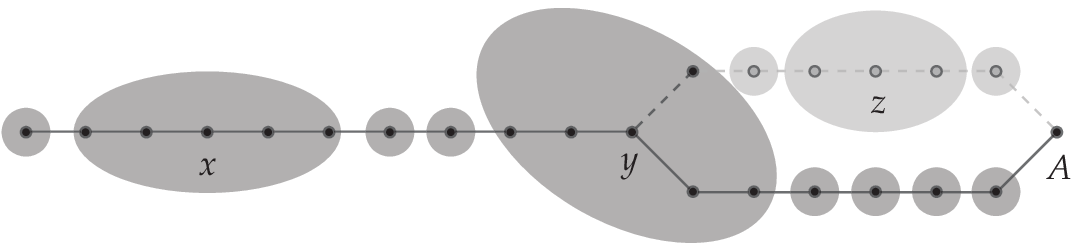}}
\caption{An example energy landscape for which $\DM$ is not sharp}
\label{fig:notsharp}
\end{figure}
\FloatBarrier 
In this example, there are two paths to go from $x$ to $A$: The path $\o$ which goes from $x$ to $y$ and then follows the solid path until $A$, and the path $\o'$, which goes from $x$ to $y$ and then follows the dashed path through $z$ and eventually hitting $A$. Note that $\Phi_{\o} = \Phi_{\o'} = \Phi(x,A)$, so both $\o$ and $\o'$ are optimal paths from $x$ to $A$. By inspection, we get that $\DM=\G(z,A)$. However, the path $\o'$ does not exit the cycle $\cC_{A}(y)$ passing by its principal boundary and, in view of Theorem~\ref{thm:exitcycle}(iv), it becomes less likely than the other path as $\binf$. In fact, the transition from $x$ to $A$ is likely to occur on a smaller time-scale than suggested by the upper bounds in Proposition~\ref{prop:plowup1} and in particular the exponent $\DM$ is not sharp in this example.

In the next subsection, we will show that a more precise control in probability of the hitting time $\tha$ is possible, at the expense of a more involved analysis of the energy landscape.

\subsection{Sharp bounds for hitting time~\texorpdfstring{$\tha$}{} using typical paths}
\label{sub34}
As illustrated at the end of the previous subsection, the exponents $\Dm$ and $\DM$ which control in probability the hitting time $\tha$ may not be sharp in general. In this subsection we obtain exponents that are potentially sharper than $\Dm$ and $\DM$ by looking in more detail at the cycle decomposition of $\cp \setminus A$ and by identifying inside it the \textit{tube of typical paths from $x$ to $A$}. In particular, we focus on how the process moves from two maximal cycles in the partition of $\cp \setminus A$ and determine which of these transitions between maximal cycles are the most likely ones. 

Some further definitions are needed. We introduce the notions of \textit{cycle-path} and a way of mapping every path $\o \in \Ome$ into a cycle-path $\cC_\o$. Recall that for a nonempty subset $A \subset \cX$, $\pa A$ is its external boundary and $\cF(A)$ is its bottom, i.e.~the set of the minima of the energy function $H$ in $A$. A \textit{cycle-path} is a finite sequence $(C_1,\dots, C_m)$ of (trivial and non-trivial) cycles $C_1,\dots, C_m \in \cC(\cX)$ such that 
\[
C_{i} \cap C_{i+1} = \emptyset \quad \text{ and } \quad \pa C_i \cap C_{i+1} \neq \emptyset, \quad \text{ for every } i=1,\dots,m-1.
\]
It can be easily proved that, in a cycle-path $(C_1,\dots, C_m)$, if $C_i$ is a non-trivial cycle for some $i=1,\dots,m$, then its predecessor $C_{i-1}$ and successor $C_{i+1}$ (if any) are trivial cycles, see~\cite[Lemma 2.5]{CNS14a}. We can consider the collection $\mathcal{P}_{x,A}$ of cycle-paths that lead from $x$ to $A$ and consist of maximal cycles in $\cX \setminus A$ only, namely
\begin{equation}
\label{eq:defcyclepath}
\mathcal{P}_{x,A} := \{\text{cycle-path } (C_1,\dots, C_m) \st \, C_1,\dots,C_m \in \cM(\cX \setminus \cA), \, x \in C_1, \, \pa C_m \cap A \neq \emptyset \}.
\end{equation}

We define a mapping $\Ome \to \mathcal{P}_{x,A}$ by assigning to a path $\o=(\o_1,\dots, \o_n) \in \Ome$ the cycle-path $\cC_\o=(C_1,\dots,C_{m(\o)}) \in \mathcal{P}_{x,A}$ as follows. Set $t_0 = 1$, $C_1 = \ca$, and then define recursively
\[
t_i := \min\{ k > t_{i-1} \st \o_{k} \not \in C_i \} \quad \text{ and } \quad C_{i+1}:=C_{A}(\o_{t_i}).
\]
The path $\o$ is a finite sequence and $\o_n \in A$, so there exists an index $m(\o) \in \N$ such that $\o_{t_{m(\o)}}=\o_n \in A$ and there the procedure stops. The way the sequence $(C_1,\dots,C_{m(\o)})$ is constructed shows that it is indeed a cycle-path. Moreover, by using the notion of initial cycle $C_A(\cdot)$ to define $C_1,\dots,C_{m(\o)}$, they are automatically maximal cycles in $\cM(\cX\setminus A)$. Lastly, the fact that $\o \in \Ome$ implies that $x \in C_1$ and that $\pa C_{m(\o)} \cap A \neq \emptyset$, hence $\cC_\o \in \mathcal{P}_{x,A}$ and the mapping is well-defined. We remark that this mapping is not injective, since two different paths in $\Ome$ can be mapped into the same cycle-path in $\mathcal{P}_{x,A}$. In fact, a single cycle-path groups together all the paths that visit the same cycles (the same number of times and in the same order). Cycle-paths are the correct mesoscopic objects to investigate while studying the transition $x \to A$: Indeed one neglects in this way the microscopic dynamics of the process and focuses only on the relevant mesoscopic transitions from one maximal cycle to another.

Furthermore, we note that for a given path $\o \in \Ome$, the maximum energy barrier along $\o$ is the maximum depth in its corresponding cycle-path $\cC_\o$, i.e.
\[
\max_{z \in \o} \G(z,A) = \max_{C \in \cC_\o} \G(C).
\]
We say that a cycle-path $(C_1,\dots, C_m)$ is \textit{connected via typical jumps to} $A$ or simply {\rm vtj}\textit{-connected} to $A$ if 
\[
\cF(\pa C_i) \cap C_{i+1} \neq \emptyset, \quad \text{ for every } i=1,\dots,m-1, \quad \text{ and } \quad \cF(\pa C_m) \cap A \neq \emptyset.
\]
The next lemma, presented in~\cite{CNS14b}, guarantees that there always exists a cycle-path from the initial cycle $C_A(x)$ that is vtj-connected to $A$ for any nonempty target subset $A \subset \cX$ and $x \not\in A$.

\begin{lem}{\rm \cite[Proposition 3.22]{CNS14b}}\label{lem:exitvtjcp}
For any nonempty subset $A \subset \cX$ and $x \not\in A$, there exists a cycle-path $\cC^*=(C_1,\dots,C_{m^*})$ {\rm vtj}-connected to $A$ with $x \in C_1$ and $C_1, \dots, C_m^* \subset \cX \setminus A$.
\end{lem}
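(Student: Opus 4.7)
My plan is to build $\cC^*$ from an optimal path in $\Opt$, refining it so that every cycle-exit lands on the principal boundary. First I would pick any $\omega^*\in\Opt$ (this set is non-empty since $\Phi(x,A)$ is attained by some element of $\Ome$) and form its associated cycle-path $\cC_{\omega^*}=(C_1,\dots,C_{m(\omega^*)})$ as defined just before the lemma. By construction $C_1=\ca\ni x$, every $C_i$ lies in $\cM(\cX\setminus A)$, and the endpoint of $\omega^*$ lies in $A\cap\pa C_{m(\omega^*)}$, so $\cC_{\omega^*}\in\mathcal{P}_{x,A}$ already satisfies all the conclusions of the lemma except possibly the vtj property $\cF(\pa C_i)\cap C_{i+1}\ne\emptyset$ and $\cF(\pa C_{m(\omega^*)})\cap A\ne\emptyset$.

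The main tool is a local rerouting that does not raise the elevation of $\omega^*$. Suppose $\omega^*$ exits $C_i$ at some $z\in\pa C_i\setminus\cF(\pa C_i)$, so that $H(z)>H(\cF(\pa C_i))$. Choose any $z'\in\cF(\pa C_i)$ and any $u\in C_i$ adjacent to $z'$; such a $u$ exists because $z'\in\pa C_i$. Since $C_i$ is a cycle, any two of its states are joined by an internal path of elevation at most $\max_{y\in C_i}H(y)<H(\cF(\pa C_i))\le H(z)\le\Phi_{\omega^*}=\Phi(x,A)$, by Lemma~\ref{lem:cycleequiv} together with~\eqref{eq:height}. Consequently $\omega^*$ can be rerouted inside $C_i$ so as to exit through $z'$, producing a new path whose maximum elevation is still $\Phi(x,A)$ and which therefore remains in $\Opt$; its associated cycle-path may differ from $\cC_{\omega^*}$ at later indices, but is again in $\mathcal{P}_{x,A}$.

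The step I expect to be most delicate is iterating this surgery in a way that terminates. By Lemma~\ref{lem:equivopt}, every optimal path---including each intermediate modification---remains inside the relevant cycle $\cp$, so all cycles produced belong to the finite family $\cM(\cp\setminus A)$. Picking the initial $\omega^*$ to minimise a suitable potential over $\Opt$ (the precise choice requires some care since a single local surgery may alter later cycles in the decomposition) forces the minimiser to have all its cycle-exits on the corresponding principal boundaries: any violation would admit a strictly improving surgery, contradicting minimality. The associated cycle-path of such a minimiser is therefore vtj-connected to $A$, and taking $\cC^*$ to be this cycle-path completes the proof.
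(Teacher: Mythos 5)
This lemma is not proved in the paper at all: it is imported verbatim from \cite[Proposition~3.22]{CNS14b}, and the surrounding text only inspects that external proof to note that the resulting cycles are the maximal cycles $C_A(\cdot)$. So there is no internal argument to compare yours against; what matters is whether your sketch would stand on its own.

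It would not, as written. Your surgery idea is reasonable, but the two places you flag as delicate are exactly where the mathematical content lives, and neither is resolved. First, after rerouting $\omega^*$ inside $C_i$ so that it exits at $z'\in\cF(\pa C_i)$, you cannot simply reuse the tail of $\omega^*$ (it started at the old exit $z$, not at $z'$); you must argue that some continuation from $z'$ to $A$ exists that keeps the whole path in $\Opt$. This is in fact true, via $z'\in\cp$ and hence $\Phi(z',A)\le\Phi(x,A)$ by \eqref{eq:cpequiv}, but you never say it, and without it the claim ``remains in $\Opt$'' is unjustified. Second, and more seriously, the termination argument is deferred to ``a suitable potential,'' which is left unspecified precisely because no obvious choice works: a single reroute at $C_i$ can completely change the downstream cycle decomposition, introducing new non-principal exits deeper in the path, so quantities like ``number of violations'' or ``position of the first violation'' need not decrease. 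In fact the resulting cycle-path can revisit cycles (the definition only forbids $C_i=C_{i+1}$), so you also need an argument that a vtj-connected cycle-path never loops back; that fact is true and hinges on the alternation between trivial and non-trivial cycles noted just before \eqref{eq:defcyclepath} (cf.~\cite[Lemma~2.5]{CNS14a}) together with the identity $H(\cF(\pa C_A(w)))=\Phi(w,A)$ for non-trivial $C_A(w)$, but nothing of the sort appears in your sketch. As it stands, the proposal restates the difficulty rather than resolving it.
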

By inspecting the proof of~\cite[Proposition 3.22]{CNS14b}, one notices that the given cycle-path $\cC^*=(C_1,\dots,C_{m^*})$ consists only of maximal cycles in $\cX \setminus A$, i.e.~$C_1,\dots, C_{m^*} \in \cM(\cX \setminus A)$, and in particular $C_1 = C_A(x)$. Hence $\cC^* \in \mathcal{P}_{x,A}$ and therefore the collection $\mathcal{P}_{x,A}$ is not empty.

We define $\o \in \Ome$ to be a \textit{typical path from $x$ to $A$} if its corresponding cycle-path $\cC_\o$ is {\rm vtj}-connected to $A$, and we denote by $\Vtj$ the collection of all typical paths from $x$ to $A$, i.e.
\[
\Vtj:=\{\o \in \Ome \st \o \, \textrm{ is typical}\}.
\]
The existence of a {\rm vtj}-connected cycle-path $\cC^*=(C_1,\dots,C_{m^*}) \in \mathcal{P}_{x,A}$ guarantees that 
\[
\Vtj \neq \emptyset.
\]
Indeed, take $y_0=x$, $y_i \in \cF(\pa C_i) \cap C_{i+1}$, $i=1,\dots, m^*-1$ and $y_{m^*} \in \cF(C_{m^*}) \cap A$ and consider a path $\o^*$ that visits precisely the saddles $y_0, \dots, y_{m^*}$ in this order and stays in cycle $C_i$ between the visit to $y_{i-1}$ and $y_i$. Then $\o^*$ is a typical path from $x$ to $A$. The next lemma gives an equivalent characterization for a typical path from $x$ to $A$.
\begin{lem}[Equivalent characterization of a typical path]\label{lem:equivvtj}
Consider a nonempty subset $A \subset \cX$ and $x \not\in A$. Then
\[
\o \in \Vtj \quad \Longleftrightarrow \quad \o \in \Ome \quad \mathrm{and} \quad \Phi(\o_{i+1},A) \leq \Phi(\o_i,A) \quad \forall \, i=1,\dots, |\o|.
\]
\end{lem}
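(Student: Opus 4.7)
The plan is to establish both directions by first noting the key structural fact that $\Phi(\cdot, A)$ is constant on each maximal cycle of $\cM(\cX \setminus A)$: for $C \in \cM(\cX \setminus A)$ and any $z, z' \in C$, Lemma~\ref{lem:cycleequiv} together with $A \subseteq \cX \setminus C$ gives $\Phi(z, z') < \Phi(C, \cX \setminus C) \leq \Phi(z', A)$, so $\Phi(z, A) \leq \max(\Phi(z, z'), \Phi(z', A)) = \Phi(z', A)$, and symmetry yields equality. Denote this common value $\phi(C)$. Along $\o$ with cycle-path $\cC_\o = (C_1, \dots, C_m)$, the quantity $\Phi(\o_i, A)$ equals $\phi(C_{j(i)})$ for $\o_i \notin A$, where $j(i)$ tracks the index of the cycle containing $\o_i$. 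Consequently, the monotonicity condition $\Phi(\o_{i+1}, A) \leq \Phi(\o_i, A)$ is automatic within any single cycle, and the entire lemma reduces to proving that $\phi(C_{j+1}) \leq \phi(C_j)$ at every cycle transition holds if and only if $\cC_\o$ is vtj-connected to $A$.

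For the forward direction, I would leverage the vtj assumption to pick $y_j \in \cF(\pa C_j) \cap C_{j+1}$ with $H(y_j) = H(\cF(\pa C_j))$ for each $j = 1, \dots, m-1$, and $y_m \in \cF(\pa C_m) \cap A$ for the terminal step. From any $z \in C_j$, a path to $A$ can be constructed by concatenating (i) a descent inside $C_j$ to a neighbor of $y_j$ of energy at most $H(y_j)$, using the cycle-connectivity provided by Lemma~\ref{lem:cycleequiv}; (ii) a single jump to $y_j \in C_{j+1}$; and (iii) an optimal continuation from $y_j$ to $A$ of max height $\phi(C_{j+1})$. This yields $\phi(C_j) \leq \max(H(\cF(\pa C_j)), \phi(C_{j+1}))$; combined with the anchor $\phi(C_m) = H(\cF(\pa C_m))$ and a downward induction on $j$, and using the structural fact from \cite[Lemma~2.5]{CNS14a} that non-trivial cycles alternate with trivial ones inside a cycle-path, this produces $\phi(C_j) \geq \phi(C_{j+1})$ at every transition, so $\Phi$ is non-increasing along $\o$.

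For the reverse direction I argue by contrapositive: suppose $\cC_\o$ is not vtj and let $i$ be the smallest index where the vtj condition fails, so either $\cF(\pa C_i) \cap C_{i+1} = \emptyset$ (for $i < m$) or $\cF(\pa C_m) \cap A = \emptyset$ (for $i = m$). In either case the actual transition state $\o_{t_i}$ lies in $(\pa C_i \cap C_{i+1}) \setminus \cF(\pa C_i)$ (or in $(\pa C_m \cap A) \setminus \cF(\pa C_m)$), which forces $H(\o_{t_i}) > H(\cF(\pa C_i))$; since $\o_{t_i} \in C_{i+1}$ we obtain $\phi(C_{i+1}) \geq H(\o_{t_i}) > H(\cF(\pa C_i))$. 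Routing from $C_i$ instead through the cheaper state $\cF(\pa C_i)$ — either directly into $A$ or into another maximal cycle — produces an upper bound on $\phi(C_i)$ strictly below $\phi(C_{i+1})$, yielding $\Phi(\o_{t_i}, A) > \Phi(\o_{t_i-1}, A)$ and contradicting the hypothesized monotonicity. The chief technical obstacle is making this alternative-route bound rigorous when $\cF(\pa C_i)$ lies in a cycle $D \neq C_{i+1}$: this requires iterating the cycle-path analysis inside $D$ via the cycle-tree structure from Lemma~\ref{lem:tree}, while trivial cycles — whose defining inequality $\max_{w \in C} H(w) < H(\cF(\pa C))$ fails — must be handled separately using the unified depth formula~\eqref{eq:Gequiv} so that the starting-state height $H(\o_i)$ is properly absorbed into the bound on $\phi(C_i)$.
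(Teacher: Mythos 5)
Your overall plan---working with the cycle-path $\cC_\o$ and using that $\Phi(\cdot,A)$ is constant on each maximal cycle of $\cM(\cX\setminus A)$---is the same underlying idea as the paper's proof, which rests on the identity $\Phi(\o_i,A)=H(\cF(\pa C_A(\o_i)))$. However, the "forward" direction (vtj $\Rightarrow$ monotone) as you present it does not close. From the concatenated-path construction you correctly obtain $\phi(C_j)\leq\max\bigl(H(\cF(\pa C_j)),\phi(C_{j+1})\bigr)$, but this is an \emph{upper} bound on $\phi(C_j)$; no amount of "downward induction," and no appeal to the alternation of trivial and non-trivial cycles, can convert an upper bound into the lower bound $\phi(C_j)\geq\phi(C_{j+1})$ you need. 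In fact no induction is required at all: the vtj assumption gives $y_j\in\cF(\pa C_j)\cap C_{j+1}$, and since $y_j$ is adjacent to $C_j$ with $H(y_j)=H(\cF(\pa C_j))$ one has directly $\phi(C_{j+1})=\Phi(y_j,A)\leq\max\bigl(\Phi(y_j,C_j),\Phi(C_j,A)\bigr)\leq\phi(C_j)$. The paper dispenses with this direction even more cheaply by arguing the contrapositive: if $\Phi(\o_{i+1},A)>\Phi(\o_i,A)$, then $\o_{i+1}$ cannot lie in $\cF(\pa C_k)$, because $\o_{i+1}\in\cF(\pa C_k)$ would force $\Phi(\o_{i+1},A)\leq H(\cF(\pa C_k))=\Phi(\o_i,A)$.

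For the reverse direction your "routing through the cheaper state" and "iterating the cycle-path analysis inside $D$" are redundant once the above identity is available: the paper's argument is simply that $\o_{i+1}\notin\cF(\pa C_k)$ forces $H(\o_{i+1})>H(\cF(\pa C_k))=\Phi(\o_i,A)$, hence $\Phi(\o_{i+1},A)\geq H(\o_{i+1})>\Phi(\o_i,A)$. You correctly observe that the bound $\phi(C_i)\leq H(\cF(\pa C_i))$, i.e.\ the identity $\Phi(z,A)=H(\cF(\pa C_A(z)))$, is clean only when $C_A(z)$ is non-trivial, but you do not actually repair the trivial case; this is a real gap and your closing sentence about "the unified depth formula" does not fill it. (Note that the paper's own write-up silently uses the same identity for the trivial-cycle case; the subtlety you flagged is genuine and applies to both presentations.)
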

The proof of this result is presented in Section~\ref{sec4}. In particular, Lemma~\ref{lem:equivvtj} shows that every typical path from $x$ to $A$ is an optimal path from $x$ to $A$, i.e.
\begin{equation}\label{eq:vtjopt}
\Vtj \subseteq \Opt,
\end{equation}
since if $\o \in \Vtj$, then $\Phi(\o_i,A) \leq \Phi(\o_1,A)=\Phi(x,A)$ for every $i=2,\dots,|\o|$ and thus $\Phi_\o = \Phi(x,A)$.

Let $\Tha$ be the \textit{tube of typical paths from $x$ to $A$}, which is defined as
\begin{equation}
\label{eq:defTha}
\Tha:=\{y \in \cX \st \exists \, \o \in \Vtj ~:~ y \in \o\}.
\end{equation}
In other words, $\Tha$ is the subset of states $y \in \cX$ that can be reached from $x$ by means of a typical path which does not enter $A$ before visiting $y$. The endpoint of every path in $\Vtj$ belongs to $A$, thus $\Tha \cap A \neq \emptyset$. Since by~\eqref{eq:vtjopt} every typical path is an optimal path, it follows from definitions~\eqref{eq:defcR} and~\eqref{eq:defTha} that 
\[
\Tha \subseteq \cR.
\]
The tube of typical paths can be visualized as the \textit{standard cascade} emerging from state $x$ and reaching eventually $A$, in the sense that it is the part of the energy landscape that would be wet if a water source is placed at $x$ and the water would ``find its way'' until the sink, that is subset $A$. This standard cascade possibly consists of basins/lakes (non-trivial cycles), saddle points (trivial cycles) and waterfalls (trivial cycles). From definition~\eqref{eq:defTha}, it follows that if $z \in \Tha$, then 
\begin{equation}
\label{eq:pzph}
\mathrm{T}_{A}(z) \subseteq \Tha.
\end{equation}

\begin{figure}[!ht]
\centering
\includegraphics[scale=1]{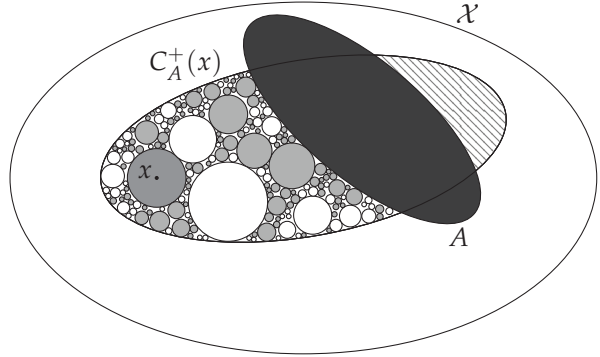}
\caption{Example of an energy landscape with the tube of typical $\Tha$ highlighted in gray}
\label{fig:Tha}
\end{figure}

Denote by $\FT$ the collection of cycles $C \in \cM(\cX \setminus A)$ for which there exists a {\rm vtj}-connected cycle-path $C_1,\dots,C_n \subset \cX \setminus A$ with $C_1=\ca$ and $C_n=C$, i.e.
\[
\FT:=\{C \in \cM(\cX \setminus A) \st \exists \, C_1,\dots,C_n \text{ {\rm vtj}-connected cycle-path with } C_1=\ca \text{ and } C_n = C\}.
\]
Note that the cycles in $\FT$ form the partition into maximal cycles of $\Tha \setminus A$, i.e.
\[
\FT = \cM(\Tha \setminus A),
\]
and that, by construction, there exists $C \in \FT$ such that $\cF(\pa C) \cap A \neq \emptyset$. The boundary of $\Tha$ consists of either states in $A$ or of states which belong to the non-principal part of the boundary of a cycle $C \in \FT$, that is $\pa C \setminus \cF(\pa C)$. In other words,
\begin{equation}
\label{eq:boundaryTha}
\pa \Tha \setminus A = \bigcup_{C \in \FT} (\pa C \setminus \cF(\pa C)).
\end{equation}
The typical paths in $\Vtj$ are the only ones with non-vanishing probability of being visited by the Markov chain $\xtn$ started in $x$ before hitting $A$ in the limit $\binf$, as illustrated by the next lemma which is proved in Section~\ref{sec4}.
\begin{lem}[Exit from the typical tube $\Tha$]\label{lem:exittube}
Consider a nonempty subset $A \subset \cX$ and $x \not\in A$. Then there exists $\kappa>0$ such that for $\b$ sufficiently large
\[
\pr{\t^x_{\pa \Tha} \leq \tha}\leq e^{- \kappa \b}.
\]
\end{lem}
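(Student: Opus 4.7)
The plan is to exploit the partition~\eqref{eq:boundaryTha} of $\pa \Tha \setminus A$ into the non-principal boundary pieces of the cycles in $\FT$, together with the ``most likely principal exit'' property of Theorem~\ref{thm:exitcycle}(iv).

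First I would introduce the successive cycle-exit times of the chain inside $\cX \setminus A$: let $\rho_0 := 0$ and inductively let $\rho_{k+1}$ be the first time after $\rho_k$ at which $\xtn$ leaves the maximal cycle of $\cM(\cX \setminus A)$ containing $X_{\rho_k}$. Let $N$ denote the random index at which the chain first reaches $A \cup (\pa \Tha \setminus A)$. On the event $\{\t^x_{\pa \Tha} \le \tha\}$, equation~\eqref{eq:boundaryTha} forces that at some step $k \le N$ the chain must exit some $C \in \FT$ into $\pa C \setminus \cF(\pa C)$. Applying the strong Markov property at time $\rho_{k-1}$ and using Theorem~\ref{thm:exitcycle}(iv), the conditional probability of such a non-principal exit, given that the chain is still in $\Tha \setminus A$ at time $\rho_{k-1}$, is at most $e^{-k_4 \b}$, so that a union bound gives
\[
\pr{\t^x_{\pa \Tha} \le \tha} \,\le\, \E[N] \cdot e^{-k_4 \b}.
\]

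The core of the work is then to bound $\E[N]$ by a function of $\b$ that grows strictly slower than $e^{k_4 \b}$. For this I would use the defining property of $\FT$ together with Lemma~\ref{lem:equivvtj}: every $C \in \FT$ admits a {\rm vtj}-cycle-path of length at most $|\FT|$ from $C$ to $A$ whose cycles lie entirely in $\FT$. Conditioning on every exit so far being principal, the induced process on $\FT \cup \{A\}$ is an absorbing finite-state Markov chain, and at each step it can descend one level along the prescribed {\rm vtj}-path with probability at least $e^{-\e' \b}$ for any $\e' > 0$; this lower bound is obtained by combining Theorem~\ref{thm:exitcycle}(iii) (to bring the chain to $\cF(C)$) with a single one-step Metropolis transition, of rate at least $(1/|\cX|) e^{-\b \G(C)}$, to the required state in $\cF(\pa C) \cap C_{\mathrm{next}}$. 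Chaining over the at most $|\FT|$ steps yields a per-attempt success probability of at least $e^{-|\FT| \e' \b}$, so that $\E[N] \le |\FT| \cdot e^{|\FT| \e' \b}$. Choosing $\e' < k_4 / |\FT|$, the global bound becomes $\pr{\t^x_{\pa \Tha} \le \tha} \le |\FT| \, e^{-(k_4 - |\FT| \e') \b}$, which is of the form $e^{-\kappa \b}$ for some $\kappa > 0$ and all $\b$ large enough.

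The main obstacle I expect is the single-state lower bound on the probability that a cycle exit lands on the specific $y \in \cF(\pa C) \cap C_{\mathrm{next}}$ needed to walk along the {\rm vtj}-path: Theorem~\ref{thm:exitcycle}(v) as stated only controls exits to the entire principal boundary $\cF(\pa C)$. I would handle this either by refining (v) through (iii), as sketched above, or, more cleanly, by iterating the strong Markov property cycle by cycle along a fixed {\rm vtj}-path from $\ca$ to $A$ without ever passing to the auxiliary quotient chain, and absorbing the resulting at most $|\FT|$ sub-exponential prefactors into the final exponent $\kappa$; this is harmless because $|\FT|$ is a combinatorial quantity of the energy landscape that does not depend on $\b$.
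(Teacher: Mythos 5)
Your route is genuinely different from the paper's and, as written, more complicated than it needs to be. The paper's proof does not introduce cycle-exit times $\rho_k$, does not invoke a counting variable $N$, and never needs a lower bound on any ``descent'' probability. It simply observes that, on the event $\{\t^x_{\pa \Tha} < \tha\}$, the exit state $X_{\t^x_{\pa \Tha}}$ must lie in $\pa C \setminus \cF(\pa C)$ for the unique $C \in \FT$ containing $X_{\t^x_{\pa \Tha}-1}$ (this is exactly what~\eqref{eq:boundaryTha} encodes), then decomposes the probability by this pair $(C,z)$ with $z \in C$, bounds each term by $\sup_{z' \in C}\pr{X_{\t^{z'}_{\pa C}} \notin \cF(\pa C)} \le e^{-k_C \b}$ via Theorem~\ref{thm:exitcycle}(iv), and sums over the finitely many cycles in $\FT$ and states in them. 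The crucial point your plan misses is that the decomposition is over \emph{which} cycle the bad exit occurs from, not over \emph{how many} cycle transitions have been made; the former is a finite, $\b$-independent index set, so no control on $\E[N]$ is ever required, and the final note $\pr{\t^x_{\pa\Tha}=\tha}=0$ handles the boundary case.

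Beyond being a detour, the step where you bound $\E[N]$ has a concrete gap. You claim a per-level descent probability of at least $e^{-\e'\b}$ for arbitrary $\e'>0$, but the mechanism you propose to prove it --- Theorem~\ref{thm:exitcycle}(iii) to reach $\cF(C)$ followed by ``a single one-step Metropolis transition of rate at least $(1/|\cX|)e^{-\b\G(C)}$'' --- only yields a success probability of order $e^{-\b\G(C)}$, which is not $e^{-\e'\b}$ for small $\e'$ unless $\G(C)=0$. To actually get a $e^{-\e'\b}$ lower bound for landing on a \emph{prescribed} state $y \in \cF(\pa C)\cap C_{\mathrm{next}}$ one would need a version of Theorem~\ref{thm:exitcycle}(v) refined to a single boundary state, which is exactly the obstacle you flag at the end; you gesture at two fixes but neither is carried out, and the second (``iterating the strong Markov property cycle by cycle'') would effectively re-derive what the paper already gives you for free by decomposing over $C\in\FT$ rather than over time. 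I would recommend abandoning the $\E[N]$ route entirely and using the direct decomposition.
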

Given a nonempty subset $A \subset \cX$ and $x \not\in A$, define the following quantities: 
\begin{equation}
\label{eq:defTm}
\Tm:=\min_{\o \in \Vtj} \max_{z \in \o} \G(z,A),
\end{equation}
and
\begin{equation}
\label{eq:defTM}
\TM:=\max_{\o \in \Vtj} \max_{z \in \o} \G(z,A).
\end{equation}
In other words, definition~\eqref{eq:defTm} means that every typical path $\o \in \Vtj$ has to enter at some point a cycle of depth at least $\Tm$. On the other hand, definition~\eqref{eq:defDM} implies that all cycles visited by any typical path $\o \in \Vtj$ have depth less than or equal to $\TM$. Hence, $\TM$ can equivalently be characterized as the maximum depth (see definition~\eqref{eq:deftG}) of the tube $\Tha$ of typical paths from $x$ to $A$, as stated by the next lemma.
\begin{lem}[Equivalent characterization of $\TM$]\label{lem:equivTM}
\begin{equation}
\label{eq:TMequiv}
\TM = \tG(\Tha \setminus A) = \max_{C \in \FT} \G(C).
\end{equation}
\end{lem}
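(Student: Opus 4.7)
The plan is to prove the two equalities in~\eqref{eq:TMequiv} separately. The second equality $\tG(\Tha \setminus A) = \max_{C \in \FT} \G(C)$ is immediate from the definition~\eqref{eq:deftG} of $\tG$ once we recall the already-observed identity $\FT = \cM(\Tha \setminus A)$, so all of the work goes into showing that $\TM = \max_{C \in \FT} \G(C)$, which we do by a pair of matching inequalities.

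For the upper bound $\TM \leq \max_{C \in \FT} \G(C)$, I would fix an arbitrary typical path $\o \in \Vtj$ with associated mesoscopic cycle-path $\cC_\o = (C_1,\dots,C_{m(\o)})$. The construction of $\cC_\o$ guarantees $C_1 = \ca$, each $C_i \in \cM(\cX \setminus A)$, and (because $\o \in \Vtj$) that $\cC_\o$ is vtj-connected to $A$; hence every $C_i$ belongs to $\FT$. Now for any $z \in \o$ with $z \not\in A$, the state $z$ lies in exactly one of the (pairwise disjoint) maximal cycles $C_i$, and by Lemma~\ref{lem:partition} the initial cycle $\ca(z) := C_{A}(z)$ is also the unique maximal cycle of $\cM(\cX\setminus A)$ containing $z$, so $C_A(z) = C_i \in \FT$. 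Therefore $\G(z,A) = \G(C_A(z)) \leq \max_{C \in \FT} \G(C)$. For $z \in \o \cap A$ (only the endpoint, by definition of $\Ome$) we have $C_A(z) = \{z\}$, which is trivial with depth zero and thus contributes nothing to the maximum. Taking the supremum first over $z \in \o$ and then over $\o \in \Vtj$ yields the desired upper bound.

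For the lower bound $\TM \geq \max_{C \in \FT} \G(C)$, I would start from an arbitrary $C \in \FT$ and build an explicit typical path passing through a lowest-energy state of $C$. By definition of $\FT$, there is a vtj-connected cycle-path $D_1,\dots,D_n$ in $\cM(\cX \setminus A)$ with $D_1 = \ca$ and $D_n = C$. Pick any $z^\star \in \cF(C)$; since $z^\star \in C \subset \cX \setminus A$, applying Lemma~\ref{lem:exitvtjcp} with initial state $z^\star$ produces a second vtj-connected cycle-path $E_1,\dots,E_{n'} \subset \cX \setminus A$ with $z^\star \in E_1$, forcing $E_1 = C_A(z^\star) = C = D_n$, and with $\cF(\pa E_{n'}) \cap A \neq \emptyset$. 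Concatenating and removing the duplicated cycle $C$ gives a vtj-connected cycle-path from $\ca$ to $A$ passing through $C$; choosing saddles $y_i \in \cF(\pa \cdot) \cap \cdot$ as in the construction right after Lemma~\ref{lem:exitvtjcp} (and routing the intermediate segment through $z^\star$ inside $C$) produces a path $\o^\star \in \Vtj$ with $z^\star \in \o^\star$. Since $C_A(z^\star) = C$, we get $\max_{z \in \o^\star} \G(z,A) \geq \G(z^\star,A) = \G(C)$, hence $\TM \geq \G(C)$; maximising over $C \in \FT$ closes the argument.

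The main obstacle is the lower bound step: one must verify that the concatenation of the two vtj-connected cycle-paths (the one reaching $C$ and the one leaving $C$) is itself vtj-connected, so that the resulting path indeed lies in $\Vtj$, and one must be careful that the chosen interior state $z^\star \in \cF(C)$ can genuinely be placed on a path that remains inside $C$ between its entry and its exit saddle — this is where one invokes the fact that any two states of a non-trivial cycle $C$ are connected below $\Phi(C,\cX \setminus C)$, together with the freedom in selecting the microscopic path representing a cycle-path. All the other steps reduce to bookkeeping with the definitions of $\Vtj$, $\FT$, and initial cycles.
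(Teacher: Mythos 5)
Your proof is correct, and it matches the argument the paper treats as immediate: the paper never proves Lemma~\ref{lem:equivTM} in Section~\ref{sec4}, instead relying implicitly on the earlier identity $\max_{z\in\o}\G(z,A)=\max_{C\in\cC_\o}\G(C)$ together with the definition of $\FT$ and the observed identity $\FT=\cM(\Tha\setminus A)$. You carefully verify both containments between $\{C : C\in\cC_\o \text{ for some } \o\in\Vtj\}$ and $\FT$ -- in particular the lower-bound direction, where concatenating the two vtj-connected cycle-paths (via Lemma~\ref{lem:exitvtjcp}) and routing through a bottom state of the target cycle is precisely the missing step -- which is exactly what is needed to justify the lemma.
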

Since by~\eqref{eq:vtjopt} every typical path from $x$ to $A$ is an optimal path from $x$ to $A$, definitions~\eqref{eq:defDm},~\eqref{eq:defDM},~\eqref{eq:defTm} and~\eqref{eq:defTM} imply that
\begin{equation}
\label{eq:nestedineq}
\Dm \leq \Tm \leq \TM \leq \DM.
\end{equation}
We now have all the ingredients needed to formulate the first refined result for the hitting time $\tha$, which is proved in Section~\ref{sec4}. The next proposition yields a control in probability for the hitting time $ \tha$ by looking at the \textit{shallowest-typical gorge} inside $\Tha$ that the process has to overcome to reach $A$ and at the \textit{deepest-typical gorge} inside $\Tha$ where the process has a non-vanishing probability to be trapped before hitting $A$.

\begin{prop}[Typical-cycles bounds]\label{prop:plowup2}
Consider a nonempty subset $A \subset \cX$ and $x \not\in A$. For any $\e>0$ there exists $\kappa >0$ such that for $\b$ sufficiently large
\begin{equation}\label{eq:plow2}
\pr{ \tha < e^{\b (\Tm-\e)}} < e^{-\kappa \b},
\end{equation}
and
\begin{equation}\label{eq:pup2}
\pr{ \tha > e^{\b (\TM+\e)}} < e^{-\kappa \b}.
\end{equation}
\end{prop}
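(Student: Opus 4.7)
The plan is to combine Lemma~\ref{lem:exittube}, which with high probability confines $\xtn$ to the typical tube $\Tha$ until it hits $A$, with the classical exit-time estimates of Theorem~\ref{thm:exitcycle} applied to the maximal cycles in $\FT = \cM(\Tha \setminus A)$. Since $\cX$ is finite, $|\FT|$ is bounded by a constant independent of $\b$, so any union bound over $\FT$ costs only a multiplicative constant that can be absorbed by slightly enlarging $\e$.

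For the lower bound~\eqref{eq:plow2}, I would first show that with probability at least $1-e^{-\kappa\b}$ the random trajectory of $\xtn$ from $x$ until the first visit to $A$ induces a vtj-connected cycle-path in $\FT$, and hence corresponds to some typical path $\o \in \Vtj$. This follows by combining Lemma~\ref{lem:exittube} (so the chain never leaves $\Tha$ and thus only visits cycles of $\FT$) with Theorem~\ref{thm:exitcycle}(iv) applied inductively at successive cycle-exit times via the strong Markov property, so that each exit occurs through the principal boundary. On this event, the definition~\eqref{eq:defTm} forces the associated typical path to visit some cycle $C^* \in \FT$ with $\G(C^*) \geq \Tm$. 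A final application of Theorem~\ref{thm:exitcycle}(i) to $C^*$, starting at the first entry time into $C^*$, yields that the residual exit time from $C^*$, and a fortiori $\tha$, exceeds $e^{\b(\Tm-\e)}$ with probability at least $1-e^{-\kappa\b}$.

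For the upper bound~\eqref{eq:pup2}, by Lemma~\ref{lem:equivTM} every cycle in $\FT$ has depth at most $\TM$. Theorem~\ref{thm:exitcycle}(v) then gives that, from any state in $C \in \FT$, with probability at least $e^{-\e'\b}$ the chain exits $C$ within time $e^{\b(\TM+\e/2)}$ through $\cF(\pa C)$, which by the definition of $\FT$ and~\eqref{eq:boundaryTha} lies either in $A$ or inside another cycle of $\FT$ adjacent to $C$ via a vtj-connection. Iterating this estimate over the finitely many cycles of $\FT$, while using Lemma~\ref{lem:exittube} to suppress exits from $\Tha$, produces a geometric argument in the spirit of the proof of Proposition~\ref{prop:pup} that yields $\pr{\tha > e^{\b(\TM+\e)}} < e^{-\kappa\b}$.

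The main obstacle will be rigorously formalizing the lower-bound claim that the chain's trajectory corresponds to a typical path, since $\xtn$ is not literally restricted to $\Tha$ and could in principle revisit cycles or exit through non-principal boundaries. Handling this cleanly requires a careful strong-Markov iteration at successive cycle-exit times, together with the uniform exit estimates of Theorem~\ref{thm:exitcycle}(iv) over the finite collection $\FT$ and Lemma~\ref{lem:exittube} to rule out the negligible trajectories that leave $\Tha$; by contrast, the upper bound is conceptually closer to Proposition~\ref{prop:pup} and should follow by transplanting that proof to the restricted state space $\Tha$.
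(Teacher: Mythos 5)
Your overall strategy matches the paper's: confine the chain to $\Tha$ via Lemma~\ref{lem:exittube} and then apply the exit-time estimates of Theorem~\ref{thm:exitcycle} to the cycles in $\FT$. Your upper bound is essentially the paper's argument verbatim (vtj-connected cycle-path from each $z\in\Tha\setminus A$ via Lemma~\ref{lem:exitvtjcp}, bounded-length excursions exiting through principal boundaries, iterate the Markov property at times $ke^{\b(\TM+\e/2)}$), so that part is fine.

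For the lower bound, however, you and the paper diverge on a subtle but consequential point. You try to show that the random trajectory is itself a typical path, by an ``inductive'' application of Theorem~\ref{thm:exitcycle}(iv) at every cycle-exit time. That runs into the difficulty you yourself flag: the number of cycle exits before $\tha$ is not bounded a priori, and each application of Theorem~\ref{thm:exitcycle}(iv) only gives an $e^{-k_4\b}$ bound per exit, so a naive product or union over an unbounded (or merely $e^{c\b}$-bounded) number of exits does not close. The paper avoids this by not claiming the trajectory is typical. Instead it introduces the set $\cZ_{\mathrm{vtj}}:=\{z\in\Tha\setminus A : \G(z,A)\ge\Tm\}$, observes that on $\{\tha<\t^x_{\pa\Tha}\}$ the process must visit $\cZ_{\mathrm{vtj}}$ before $A$, and then conditions on the first hitting time $\t^x_{\cZ_{\mathrm{vtj}}}$ to get the stochastic domination $\tha\ge_{\mathrm{st}}\t^z_A\ge_{\mathrm{st}}\t^z_{\pa C_A(z)}$ for some $z$ with $\G(z,A)\ge\Tm$, and then applies Theorem~\ref{thm:exitcycle}(i) \emph{once}. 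This requires only one cycle estimate plus the tube-confinement bound, so there is no inductive iteration to control. You would do well to replace your ``make the trajectory typical'' step with this single conditioning on $\t^x_{\cZ_{\mathrm{vtj}}}$; the rest of your structure then goes through and coincides with the paper.
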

The proof, which is a refinement of that of Proposition~\ref{prop:plowup1}, is presented in Section~\ref{sec4}. 

In general, the exponents $\Tm$ and $\TM$ may not be equal, as illustrated in the fictitious energy landscape in Figure~\ref{fig:notequal}. 
\begin{figure}[ht!]
\centering
\subfigure[Energy profile of the energy landscape with the initial cycle $\ca$ (in grey) and the relevant cycle $\cp$ (below the dashed black line)]{\includegraphics[scale=0.9]{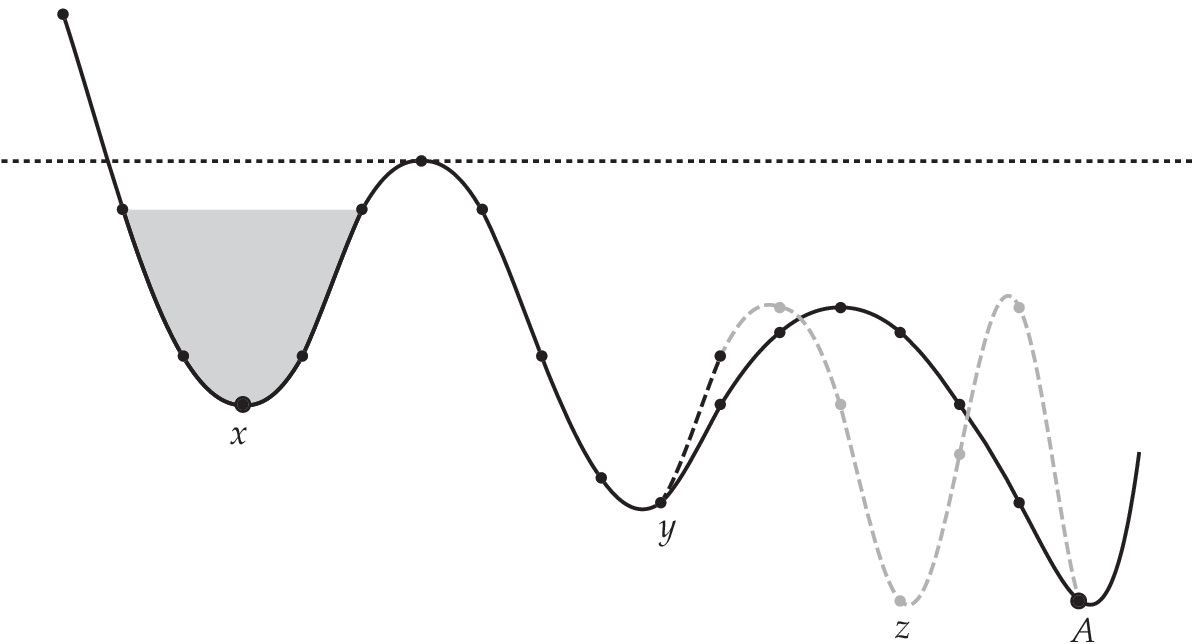}}
\\
\subfigure[Partition into maximal cycles of $\cX\setminus A$ for the same energy landscape]{\hspace{-0.5cm} \includegraphics[scale=0.89]{Dcycles.eps}}
\caption{An example energy landscape for which $\Tm < \TM$}
\label{fig:notequal}
\end{figure}
\FloatBarrier
Also in this example, there are two paths to go from $x$ to $A$: The path $\o$ which goes from $x$ to $y$ and then follows the solid path until $A$, and the path $\o'$, which goes from $x$ to $y$ and then follows the dashed path through $z$ and eventually hitting $A$. Both paths $\o$ and $\o'$ always move from a cycle to the next one visiting the principal boundary, hence they are both typical paths from $x$ to $A$. By inspection, we get that $\TM=\G(z,A)$, since the typical path $\o'$ visits the cycle $\cC_A(z)$. Using the path $\o$ we deduce that $\Tm = \G(y,A)$ and therefore $\Tm < \TM$.

If the two exponents $\Tm$ and $\TM$ coincide, then Proposition~\ref{prop:plowup2} gives a sharp control in probability for the hitting time $\tha$, as stated in the next corollary.
\begin{cor}\label{cor:cp}
Consider a nonempty subset $A \subset \cX$ and $x \not\in A$. Assume that 
\begin{equation}\label{eq:cpa}
\Tm = \Theta(x,A) =  \TM.
\end{equation}
Then, for any $\e>0$
\begin{equation}\label{eq:cp}
\limb \pr{ e^{\b (\Theta(x,A)-\e)} < \tha < e^{\b (\Theta(x,A)+\e)}} =1.
\end{equation}
\end{cor}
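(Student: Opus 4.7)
The corollary is an essentially immediate consequence of Proposition~\ref{prop:plowup2}, which has already been stated (and will be proved later in Section~\ref{sec4}). The plan is to combine the two one-sided bounds of that proposition under the hypothesis $\Tm = \Theta(x,A) = \TM$ via an elementary union-bound argument, then take $\binf$.

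More precisely, first I would fix $\e > 0$ arbitrarily. By Proposition~\ref{prop:plowup2} applied with the same $\e$, there exists $\kappa = \kappa(\e) > 0$ such that, for all $\b$ sufficiently large,
\[
\pr{\tha < e^{\b(\Tm - \e)}} < e^{-\kappa \b} \quad \text{and} \quad \pr{\tha > e^{\b(\TM + \e)}} < e^{-\kappa \b}.
\]
Next I would use the hypothesis~\eqref{eq:cpa}, which says $\Tm = \Theta(x,A) = \TM$, to substitute $\Theta(x,A)$ for $\Tm$ and for $\TM$ in the two displays above. The complement of the event appearing inside the limit~\eqref{eq:cp} is then contained in the union of the two events controlled above, so a union bound gives
\[
\pr{\tha \le e^{\b(\Theta(x,A) - \e)} \text{ or } \tha \ge e^{\b(\Theta(x,A) + \e)}} < 2 e^{-\kappa \b}.
\]
Taking $\binf$ yields~\eqref{eq:cp}.

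There is no real obstacle here: the only substantive work has already been encapsulated in Proposition~\ref{prop:plowup2}, and the corollary is just the observation that when the shallowest and deepest typical gorges along the tube $\Tha$ have the same depth, the two-sided asymptotic control collapses to a sharp logarithmic asymptotic for $\tha$. The single subtle point to flag is that the bound $\kappa$ furnished by Proposition~\ref{prop:plowup2} may in principle differ for the upper and lower tails, but taking the minimum of the two constants (still positive) suffices for the union bound above; this is a routine adjustment.
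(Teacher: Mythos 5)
Your proof is correct and takes exactly the route the paper leaves implicit: the corollary is stated immediately after Proposition~\ref{prop:plowup2} with no separate proof, precisely because it is the direct union-bound consequence you describe, specialized to $\Tm=\TM$. One tiny slip worth fixing: the complement of $\{e^{\b(\Theta(x,A)-\e)} < \tha < e^{\b(\Theta(x,A)+\e)}\}$ involves \emph{non-strict} inequalities, so it is not literally contained in the union of the two \emph{strictly}-tailed events that Proposition~\ref{prop:plowup2} controls with parameter $\e$; applying the proposition with $\e/2$ (or any $\e' < \e$) in place of $\e$ restores the containment and the rest of your argument goes through unchanged.
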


There are many examples of models and pairs $(x,A)$ for which $\Tm=\TM$. The most classical ones are the models that exhibit a \textit{metastable behavior}: If one takes $x \in \ms$ and $A = \ss$, then it follows that $\Tm = \mathcal V_x = \TM$ (recall the definition~\eqref{eq:sl} of stability level) and Corollary~\ref{cor:cp} holds, see also~\cite[Theorem 4.1]{MNOS04}.


\subsection{First moment convergence}
\label{sub35}

We now turn our attention to the asymptotic behavior of the mean hitting time $\E \tha $ as $\binf$. In particular, we will show that it scales (almost) exponentially in $\b$ and we will identify the corresponding exponent. There may be some sub-exponential pre-factors, but, without further assumptions, one can only hope to get results on a logarithmic scale, due to the potential complexity of the energy landscape. We remark that a precise knowledge of the tube of typical paths is not always necessary to derive the asymptotic order of magnitude of the mean hitting time $\E \tha$, as illustrated by Proposition~\ref{prop:assa}.

To prove the convergence of the quantity $\frac{1}{\b} \log \E \tha $, we need the following assumption.\\

\noindent \textbf{Assumption A (Absence of deep typical cycles)} \textit{Given the energy landscape $(X,H,q)$, we assume 
\begin{itemize}[align=left]
\item[(A1)] $\displaystyle \Tm=\Theta(x,A)=\TM$, and
\vspace{0.1cm}
\item[(A2)] $\displaystyle \Theta_{\mathrm{max}} (z,A) \leq \Theta(x,A)$ for every $z \in \cX \setminus A$.
\end{itemize}}
Condition (A1) says that every path $\o: x \to A$ visits one of the deepest typical cycles of the tube $\Tha$. Condition (A2) guarantees that by starting in another state $z \neq x$, the deepest typical cycle the process can enter is not deeper than those in $\Tha$. Checking the validity of Assumption A can be very difficult in general, but we give a sufficient condition in Proposition~\ref{prop:assa} which is satisfied in many models of interest, including the hard-core model on rectangular lattices presented in Subsection~\ref{sub22}, which will be revisited in Section~\ref{sec5}. We further remark that (A1) is precisely the assumption of Corollary~\ref{cor:cp}. Therefore, in the scenarios where Assumption A holds, we also have the asymptotic result~\eqref{eq:cp} in probability for the hitting time $\tha$.

The next theorem says that if Assumption A is satisfied, then the asymptotic order-of-magnitude of the mean hitting time $\E \tha $ as $\binf$ is $\Theta(x,A)$. 

\begin{thm}[First moment convergence]\label{thm:l1}
If Assumption A is satisfied, then
\[
\limb \frac{1}{\b} \log \E \tha = \Theta(x,A).
\]
\end{thm}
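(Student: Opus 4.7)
The plan is to establish the two matching bounds $\liminf_{\binf}\frac{1}{\b}\log \E\tha \geq \Theta(x,A)$ and $\limsup_{\binf}\frac{1}{\b}\log \E\tha \leq \Theta(x,A)$. The lower bound is a direct consequence of Assumption (A1) combined with Proposition~\ref{prop:plowup2}. The upper bound is the substantive part and will be obtained by upgrading the in-probability bound of Proposition~\ref{prop:plowup2} to an expectation bound via a standard restart-and-iterate argument based on the strong Markov property, where Assumption (A2) will play the essential role of giving a uniform-in-starting-state control.

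For the lower bound, fix $\e>0$. Since (A1) gives $\Tm=\Theta(x,A)$, Proposition~\ref{prop:plowup2} yields a $\kappa>0$ such that $\pr{\tha < e^{\b(\Theta(x,A)-\e)}} < e^{-\kappa\b}$ for all $\b$ large enough. Then by Markov's inequality in reverse,
\[
\E\tha \;\geq\; e^{\b(\Theta(x,A)-\e)}\,\pr{\tha \geq e^{\b(\Theta(x,A)-\e)}} \;\geq\; e^{\b(\Theta(x,A)-\e)}(1-e^{-\kappa\b}),
\]
so $\liminfb \tfrac{1}{\b}\log \E\tha \geq \Theta(x,A)-\e$, and letting $\e\to 0$ closes this half.

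For the upper bound, set $T_\b := \lceil e^{\b(\Theta(x,A)+\e)}\rceil$. The key step is to show that for $\b$ large,
\[
p_\b := \sup_{z\in\cX\setminus A}\pr{\tau^z_A > T_\b} \;\leq\; e^{-\kappa\b}
\]
for some $\kappa>0$. By (A1) we have $\TM=\Theta(x,A)$, and by (A2) together with the equivalent characterization of $\Theta_{\mathrm{max}}(z,A)$ as a maximum cycle depth along typical paths, $\TM[z,A]\leq \Theta(x,A)$ for every $z\in\cX\setminus A$. Applying Proposition~\ref{prop:plowup2} to each pair $(z,A)$ with $z\in\cX\setminus A$ furnishes constants $\kappa_z>0$ with $\pr{\tau^z_A > e^{\b(\TM[z,A]+\e)}}<e^{-\kappa_z\b}$, hence a fortiori $\pr{\tau^z_A > T_\b}<e^{-\kappa_z\b}$; finiteness of $\cX$ lets us take $\kappa:=\min_{z\in\cX\setminus A}\kappa_z>0$.

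Given this uniform tail estimate, the strong Markov property applied at the times $T_\b, 2T_\b,\dots$ yields, for every $n\in\N$,
\[
\pr{\tha > nT_\b} \;\leq\; p_\b^{\,n},
\]
since on $\{\tha>kT_\b\}$ the chain sits at some state $X_{kT_\b}\in\cX\setminus A$, from which the conditional probability of failing to reach $A$ in the next $T_\b$ steps is at most $p_\b$. Summing gives
\[
\E\tha \;=\; \sum_{k\geq 0}\pr{\tha > k} \;\leq\; T_\b\sum_{n\geq 0}\pr{\tha > nT_\b} \;\leq\; \frac{T_\b}{1-p_\b} \;\leq\; 2T_\b,
\]
for $\b$ large enough that $p_\b\leq 1/2$. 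Taking logarithms, dividing by $\b$, and letting $\b\to\infty$ followed by $\e\to 0$ yields $\limsupb \tfrac{1}{\b}\log\E\tha\leq \Theta(x,A)$, completing the proof.

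The only delicate point is the passage from the in-probability bound to the uniform bound $p_\b\leq e^{-\kappa\b}$: one must verify that (A2), read through Lemma~\ref{lem:equivTM}, really does control $\TM[z,A]$ for \emph{every} $z\in\cX\setminus A$, and that the finiteness of $\cX$ gives a single rate $\kappa$. Once this is in place, the geometric-tails argument is mechanical.
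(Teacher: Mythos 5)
Your proof is correct and the substance is the same as the paper's: both proofs obtain the lower bound directly from Proposition~\ref{prop:plowup2} plus (A1), and both obtain the upper bound by combining (A2) with Proposition~\ref{prop:plowup2} applied to every starting state $z \in \cX \setminus A$ and iterating the Markov property over windows of length $\approx e^{\b(\Theta(x,A)+\e)}$. The only difference is presentational: the paper packages this restart estimate as a uniform-integrability lemma for $Y^x_A(\b) = \tha e^{-\b(\Theta(x,A)+\e)}$ and then combines it with convergence in probability, whereas you sum the geometric tail directly to bound $\E\tha \leq T_\b/(1-p_\b)$ — a slightly more elementary route to the same conclusion.
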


\noindent In many models of interest, calculating $\tG(\cX \setminus A)$ is easier than calculating $\Tm$ or $\TM$. Indeed, even if $\tG(\cX \setminus A)$ is a quantity that still requires a global analysis of the energy landscape, one needs to compute just the communication height $\Phi(z,A)$ between any state $z \in \cX \setminus A$ and the target set $A$, without requiring a full understanding of the complex cycle structure of the energy landscape. Besides this fact, the main motivation to look at the quantity $\tG(\cX \setminus A)$ is that it allows to give a sufficient condition for Assumption A, as illustrated in the following proposition.

\begin{prop}[Absence of deep cycles]\label{prop:assa}
If
\begin{equation}\label{eq:suffcondA}
\Phi(x,A) - H(x) = \tG(\cX \setminus A),
\end{equation}
then Assumption A holds.
\end{prop}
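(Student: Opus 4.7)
The plan is to show that the hypothesis collapses the entire chain of inequalities
\[
\Phi(x,A)-H(x) \;\leq\; \G(x,A) \;\leq\; \Dm \;\leq\; \Tm \;\leq\; \TM \;\leq\; \DM \;\leq\; \tG(\cX\setminus A).
\]
The first inequality is the elementary one recorded right after definition~\eqref{eq:defca}, using that $x\in \ca$ and hence $H(\cF(\ca)) \leq H(x)$; the last two come from~\eqref{eq:phiineq} following Proposition~\ref{prop:plowup1}; the middle three are~\eqref{eq:nestedineq}. Under the hypothesis $\Phi(x,A)-H(x) = \tG(\cX\setminus A)$, the two endpoints of the chain coincide, forcing all intermediate quantities to be equal. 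In particular $\Tm = \TM$, which is condition (A1) of Assumption A, and the common value $\Theta(x,A)$ equals $\tG(\cX\setminus A)$.

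It remains to verify (A2). For an arbitrary $z \in \cX \setminus A$, Lemma~\ref{lem:equivTM} applied with $z$ in place of $x$ gives $\Theta_{\mathrm{max}}(z,A) = \tG(\mathrm{T}_A(z)\setminus A)$. Since $\mathrm{T}_A(z)\setminus A \subseteq \cX\setminus A$, the desired bound will follow from the monotonicity property $\tG(B_1) \leq \tG(B_2)$ whenever $B_1 \subseteq B_2$. This monotonicity is not stated as a lemma in the excerpt but is immediate from Lemma~\ref{lem:tree}: any cycle $C \in \cM(B_1)$ is a cycle contained in $B_2$, so by maximality it is contained in some $C' \in \cM(B_2)$, and depth is non-decreasing under inclusion of cycles (as remarked right after Lemma~\ref{lem:tree}). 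Taking the maximum over $C \in \cM(B_1)$ gives $\tG(B_1) \leq \tG(B_2)$. Applying this with $B_1 = \mathrm{T}_A(z)\setminus A$ and $B_2 = \cX\setminus A$ yields $\Theta_{\mathrm{max}}(z,A) \leq \tG(\cX\setminus A) = \Theta(x,A)$, establishing (A2).

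The argument is essentially a bookkeeping exercise, because the substantial work is already packaged into the inequality chain built up in Subsections~\ref{sub33}--\ref{sub34}. The only ingredient not literally cited is the monotonicity of $\tG$ in its argument, but this is a one-line consequence of the cycle tree structure. Consequently I do not anticipate any real obstacle beyond assembling these pieces in the correct order.
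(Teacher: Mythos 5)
Your proof is correct and follows essentially the same strategy as the paper: collapse the inequality chain $\Phi(x,A)-H(x) \leq \Tm \leq \TM \leq \tG(\cX\setminus A)$ to get (A1), then observe that $\Theta_{\mathrm{max}}(z,A) \leq \tG(\cX\setminus A)$ for every $z\notin A$. The paper dispatches the second step with ``by definition of $\tG(\cX\setminus A)$'' — since $\Theta_{\mathrm{max}}(z,A)$ is a maximum of depths $\G(w,A)$ over states $w$ on typical paths, all of which lie in $\cX\setminus A$ except the endpoint where the depth is zero — whereas you route it through Lemma~\ref{lem:equivTM} and a monotonicity argument for $\tG$; both are valid, yours is just a bit more explicit.
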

\begin{proof}
From the inequality 
\[
\Phi(x,A) -H(x) \leq \Tm \leq \TM \leq \tG(\cX \setminus A),
\]
we deduce that $\Tm = \TM$ and (A1) is proved. Moreover, by definition of $\tG(\cX \setminus A)$, we have $\Theta_{\mathrm{max}}(z,A) \leq \tG(\cX \setminus A)$ for every $z \in \cX \setminus A$. This inequality, together with the fact that $\TM = \tG(\cX \setminus A)$, proves that (A2) also holds and thus assumption A is satisfied. \qed
\end{proof}
\noindent We now present two interesting scenarios for which~\eqref{eq:suffcondA} holds.
\subsubsection*{Example 1 (metastability scenario)}
Suppose that
\[
x \in \ms \quad \text{ and } \quad A = \ss.
\]
In this first scenario, $\tha$ is the classical transition time between a metastable state and a stable state, a widely studied object in the statistical mechanics literature (see, e.g.~\cite{MNOS04}). Assumption A is satisfied in this case by applying Proposition~\ref{prop:assa}, since condition~\eqref{eq:suffcondA} holds: The equality $\Phi(x,\ss) -H(x) = \tG(\cX \setminus \ss)$ follows from the assumption $x \in \ms$, which means that there are no cycles in $\cX \setminus \ss$ that are deeper than $\cC_{\ss}(x)$.

\subsubsection*{Example 2 (tunneling scenario)}
Suppose that $x \in \ss$, $A = \ss \setminus \{x\}$ and
\begin{equation}
\label{eq:adw}
\Phi(z,A) -H(z) \leq \Phi(x,A)-H(x) \qquad \forall \, z \in \cX \setminus \{ x \}.
\end{equation}
In the second scenario, the hitting time $\tha$ is the \textit{tunneling time} between any pair of stable states. Assumption~\eqref{eq:adw} says that every cycle in the energy landscape which does not contain a stable state has depth strictly smaller than the cycle $\ca$ and we generally refer to this property as \textit{absence of deep cycles}. This condition immediately implies that~\eqref{eq:suffcondA} holds, i.e.~$\tG(\cX \setminus A)= \Phi(x,A)-H(x)$, and hence in this scenario assumption A holds, thanks to Proposition~\ref{prop:assa}.

The hard-core model on grids introduced in Subsection~\ref{sub22} falls precisely in this second scenario and, by proving the validity of Assumption A, we will get both the probability bounds~\eqref{eq:cp} and the first-moment convergence for the tunneling time $\teo$.


\subsection{Asymptotic exponentiality}
\label{sub36}
We now present a sufficient condition for the scaled random variable $\tha / \E \tha$ to converge in distribution to an exponential unit mean random variable as $\binf$. Define
\begin{equation}
\label{eq:theta}
\Theta_*(x,A):=\limb \frac{1}{\b} \log \E \tha.
\end{equation}
If Assumption A holds, then we know that $\Theta(x,A)=\Theta_*(x,A)$, but the result presented in this section does not require the exact knowledge of $\Theta_*(x,A)$. We prove asymptotic exponentiality of the scaled hitting time under the following assumption.\\

\noindent \textbf{Assumption B (``Worst initial state'')} \textit{Given an energy landscape $(X,H,q)$, we assume that 
\begin{equation}
\label{eq:assb}
\Theta_*(x,A) > \tG(\cX \setminus (A \cup \{x\})).
\end{equation}
}

This assumption guarantees that the following ``recurrence'' result holds: From any state $z \in \cX$ the Markov chain reaches the set $A \cup \{x\}$ on a time scale strictly smaller than that at which the transition $x \to A$ occurs. Indeed, Proposition~\ref{prop:pup} gives that for any $\e >0$
\[
\limb \sup_{z \in \cX} \pr{\t^z_{\{x\} \cup A} > e^{\b(\tG(\cX \setminus (A \cup \{x\}))+\e)}} =0.
\]
We can informally say that Assumption B requires $x$ to be the ``worst initial state'' for the Markov chain when the target subset is $A$.

Proposition~\ref{prop:assb} gives a sufficient condition for Assumption B to hold, which is satisfied in many models of interest, in particular in the hard-core model on grid graphs described in Subsection~\ref{sub22}.

\begin{thm}[Asymptotic exponentiality]\label{thm:ae}
If Assumption B is satisfied, then 
\begin{equation}\label{eq:ae}
\frac{\tha}{ \E \tha} \cd \rmexp(1), \quad \binf.
\end{equation}
More precisely, there exist two functions $k_1(\b)$ and $k_2(\b)$ with $\limb k_1(\b)=0$ and $\limb k_2(\b)=0$ such that for any $s>0$
\[
\Big | \pr{\frac{ \tha }{\E \tha} > s} - e^{-s} \Big | \leq k_1(\b) e^{-(1-k_2(\b))s}.
\]
\end{thm}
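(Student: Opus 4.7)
The plan is to exploit the time-scale separation provided by Assumption B through a regeneration argument based on returns to the initial state $x$. Fix $\gamma$ with
\[
\tG(\cX\setminus(A\cup\{x\})) < \gamma < \Theta_*(x,A),
\]
which is possible by Assumption B, and set $T:=e^{\b\gamma}$, so that $T/\E\tha\to 0$ by~\eqref{eq:theta}, while $T$ lies on a scale much larger than $e^{\b\tG(\cX\setminus(A\cup\{x\}))}$. Applying Proposition~\ref{prop:pup} with target set $A\cup\{x\}$ and arbitrary starting state $z\in\cX$, there exists $\kappa>0$ such that for $\b$ large
\[
\sup_{z\in\cX}\pr{\tau^z_{A\cup\{x\}} > T} \leq e^{-e^{\kappa\b}},
\]
so that the chain hits $\{x\}\cup A$ on the fast scale $T$ from anywhere in $\cX$.

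The next step is to decompose $\tha$ into iid excursions away from $x$. Let $\sigma_0:=0$ and, recursively, let $\sigma_{i+1}$ be the first time after $\sigma_i$ at which the chain visits $\{x\}\cup A$ after having previously left $\{x\}$. Setting $N:=\min\{i\geq 1:X^\b_{\sigma_i}\in A\}$, one has $\tha=\sigma_N$. By the strong Markov property at each $\sigma_i$, the increments $L_i:=\sigma_i-\sigma_{i-1}$ and outcomes $Y_i:=X^\b_{\sigma_i}$ form an iid sequence of pairs under $\mathbb P_\b$; $N$ is geometric with parameter $p:=\pr{Y_1\in A}$, and Wald's identity gives $\E\tha=p^{-1}\E L_1$. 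Combined with the recurrence bound above, this yields $\E L_1 = O(T)$ and hence $p=\E L_1/\E\tha$ is exponentially small in $\b$.

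Finally, I would derive the exponential limit. Writing $\tha=\sum_{i=1}^N L_i$, I would compare the event $\{\tha>s\E\tha\}$ to the geometric event $\{N>s/p\}$. Since each $L_i$ is at most $T$ with probability $1-e^{-e^{\kappa\b}}$, a union bound over the $\lceil s/p\rceil$ relevant excursions shows that cumulative length fluctuations are negligible compared with $\E\tha$. As $p\to 0$, the geometric tail satisfies $(1-p)^{\lfloor s/p\rfloor}=e^{-s(1+o(1))}$, which yields $\pr{\tha/\E\tha>s}\to e^{-s}$. To obtain the quantitative bound, one would take $k_1(\b)$ of order $e^{-\kappa\b}$ to absorb the per-excursion recurrence failure and $k_2(\b)$ of order $p$ to absorb the discretization error between $(1-p)^n$ and $e^{-np}$.

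The main obstacle is uniformity of the bound for large $s$: the number of excursions considered grows linearly in $s$, so the cumulative failure probability must remain exponentially small relative to $e^{-s}$. The form $k_1(\b)e^{-(1-k_2(\b))s}$ is calibrated precisely for this tradeoff, with the slightly degraded exponent absorbing the accumulated per-excursion cost while $k_1(\b)\to 0$ captures the overall recurrence failure on scale $T$. A careful bookkeeping via the inductive comparison $\pr{\tha>(n+1)T}\approx(1-p_T)\pr{\tha>nT}$, where $p_T$ is the probability of hitting $A$ within time $T$ starting from $x$, then delivers the desired estimate.
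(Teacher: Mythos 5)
The paper's own proof is quite short: it simply verifies the hypotheses of a black-box theorem, \cite[Theorem 2.3]{FMNS14}, which it restates. Concretely, Assumption B allows one to choose $\e>0$ with $\tG(\cX\setminus(A\cup\{x\}))+\e<\Theta_*(x,A)$, Proposition~\ref{prop:pup} then shows that the pair $(x,A)$ satisfies the recurrence condition $\mathrm{Rec}(e^{\b(\tG(\cX\setminus(A\cup\{x\}))+\e)},e^{-e^{\kappa\b}})$, and the two hypotheses of the cited theorem (recurrence time negligible compared with $\E\tha$, recurrence error vanishing) follow immediately. You instead attempt to reprove the cited theorem from scratch, which is more ambitious, and your identification of the key ingredients --- the time-scale separation $\tG(\cX\setminus(A\cup\{x\}))<\gamma<\Theta_*(x,A)$ and the recurrence bound from Proposition~\ref{prop:pup} --- matches the paper's.

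However, your primary argument (the regeneration at $x$) contains a genuine gap. You define $\sigma_{i+1}$ so that the increment $L_{i+1}=\sigma_{i+1}-\sigma_i$ includes the holding time at $x$ before the chain first leaves, and then you assert that ``combined with the recurrence bound'' one gets $\E L_1=O(T)$ and hence $p=\E L_1/\E\tha\to 0$. The recurrence bound controls $\tau^z_{\{x\}\cup A}$ uniformly in $z$, but is vacuous for $z=x$ (the hitting time is zero), and it gives no control whatsoever on the \emph{escape} time from $x$. In the Metropolis setting with $x$ a local energy minimum, all neighbors of $x$ have strictly higher energy, so the per-step exit probability from $x$ is of order $e^{-\b\Delta}$ with $\Delta=\min_{y:q(x,y)>0}(H(y)-H(x))>0$, and the holding time at $x$ is of order $e^{\b\Delta}$. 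One can easily build Metropolis chains satisfying Assumption B in which $\Delta=\Theta_*(x,A)>\tG(\cX\setminus(A\cup\{x\}))$; then $\E L_1\asymp e^{\b\Delta}\gg T=e^{\b\gamma}$ for any $\gamma<\Delta$, and the Wald ratio $p=\E L_1/\E\tha$ is bounded away from $0$ rather than exponentially small. (The theorem's conclusion still holds in such cases --- a geometric sum of asymptotically exponential excursions is again asymptotically exponential --- but that is a different mechanism from the one your argument invokes, and it is precisely the case your ``$p\to 0$ plus geometric $N$'' reasoning does not cover.) The time-block iteration $\pr{\tha>(n+1)T}\approx(1-p_T)\pr{\tha>nT}$ that you mention in passing at the end is the argument that actually works, because it uses the recurrence bound to re-localize the chain near $\{x\}\cup A$ at the start of each block regardless of how long the chain has been holding at $x$; but as written your main line of reasoning hinges on the unjustified estimate $\E L_1=O(T)$, so either that estimate needs a separate hypothesis ruling out slow exits from $x$, or the regeneration-at-$x$ decomposition should be replaced by the block decomposition (or by the direct citation of \cite[Theorem 2.3]{FMNS14}, as the paper does).
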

The proof, presented in Section~\ref{sec4}, readily follows from the consequences of Assumption B discussed above and by applying~\cite[Theorem 2.3]{FMNS14}, 

We now present a condition which guarantees that Assumption B holds and show that it holds in two scenarios similar to those described in the previous subsection.
\begin{prop}[``The initial cycle $\ca$ is the unique deepest cycle'']\label{prop:assb}
If 
\begin{equation}
\label{eq:suffcondB}
\G(x,A) > \tG(\cX \setminus (A \cup \{x\})),
\end{equation}
then Assumption B is satisfied.
\end{prop}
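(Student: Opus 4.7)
The plan is to establish a logarithmic lower bound on $\E \tha$ using the initial-ascent estimate (Proposition~\ref{prop:plow}), which already isolates the depth $\G(x,A)$ of $\ca$, and then read off Assumption~B directly from hypothesis~\eqref{eq:suffcondB}. The whole argument is a short reduction: the hard analytic work is already done by Proposition~\ref{prop:plow}, whose proof is in turn based on Theorem~\ref{thm:exitcycle}(i).

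Concretely, fix $\e > 0$ and apply Proposition~\ref{prop:plow}: there exists $\kappa = \kappa(\e) > 0$ such that, for all sufficiently large $\b$,
\[
\pr{\tha \geq e^{\b(\G(x,A)-\e)}} \geq 1 - e^{-\kappa\b}.
\]
A Markov-type truncation then gives
\[
\E \tha \geq e^{\b(\G(x,A)-\e)} \, \pr{\tha \geq e^{\b(\G(x,A)-\e)}} \geq e^{\b(\G(x,A)-\e)}\bigl(1 - e^{-\kappa\b}\bigr).
\]
Taking logarithms, dividing by $\b$, and passing to the limit yields $\liminfb \tfrac{1}{\b}\log \E \tha \geq \G(x,A) - \e$; since $\e > 0$ was arbitrary, we conclude
\[
\Theta_*(x,A) \geq \G(x,A).
\]
Combined with the standing hypothesis~\eqref{eq:suffcondB}, this gives $\Theta_*(x,A) \geq \G(x,A) > \tG(\cX \setminus (A \cup \{x\}))$, which is exactly~\eqref{eq:assb}.

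The only delicate point is bookkeeping around the definition~\eqref{eq:theta} of $\Theta_*(x,A)$ as a genuine limit: the argument above produces only a $\liminf$ lower bound on $\tfrac{1}{\b}\log \E \tha$. This is not a real obstacle, because to verify Assumption~B in the form needed by Theorem~\ref{thm:ae} it is enough to have $\liminfb \tfrac{1}{\b}\log \E \tha$ strictly above $\tG(\cX \setminus (A \cup \{x\}))$; the recurrence consequence of Assumption~B used in the proof of Theorem~\ref{thm:ae} (namely that $\t^z_{\{x\} \cup A}$ is, uniformly in $z$, on a strictly shorter time scale than $\E \tha$) only requires this one-sided control combined with the super-exponential upper bound of Proposition~\ref{prop:pup} applied to the set $\cX \setminus (A \cup \{x\})$. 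Hence no additional ingredient is needed, and the proof concludes with the single inequality displayed above.
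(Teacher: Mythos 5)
Your proof is correct, and it reaches the same conclusion by a slightly more self-contained route than the paper. The paper declares the proposition ``immediate from~\eqref{eq:phiineq} and~\eqref{eq:nestedineq}'', which together give $\G(x,A)\leq\Dm\leq\Tm$, and leaves implicit the step from this cycle-depth chain to a bound on the exponent of the first moment; filling that in, one would invoke~\eqref{eq:plow2} of Proposition~\ref{prop:plowup2} plus a Markov truncation exactly as in the lower-bound half of the proof of Theorem~\ref{thm:l1}. You instead bypass the $\Dm,\Tm$ chain entirely and apply Proposition~\ref{prop:plow} directly, which already isolates $\G(x,A)$ as the initial-ascent exponent; the Markov truncation then gives $\liminfb\tfrac{1}{\b}\log\E\tha\geq\G(x,A)$ with no reference to the typical-path machinery. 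This is a mild simplification. Your closing remark is also accurate and addresses a genuine imprecision in the paper: the definition~\eqref{eq:theta} of $\Theta_*(x,A)$ presupposes that the limit exists, whereas the argument only produces a $\liminf$; as you note, the proof of Theorem~\ref{thm:ae} only needs $\limb R(\b)/\E\tha=0$, i.e.\ that $\limb\tfrac{1}{\b}\log R(\b)$ lies strictly below $\liminfb\tfrac{1}{\b}\log\E\tha$, so a one-sided bound is sufficient and the existence of the limit is never actually required for Assumption~B to do its job.
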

The proof of this proposition is immediate from~\eqref{eq:phiineq} and~\eqref{eq:nestedineq}. We remark that if condition~\eqref{eq:suffcondB} holds, then the initial cycle $\ca$ is the unique deepest cycle in $\cX \setminus A$. Condition~\eqref{eq:suffcondB} is stronger than~\eqref{eq:assb}, but often easier to check, since one does not need to compute the exact value of $\Theta_*(x,A)$, but only the depth $\G(x,A)$ of the initial cycle $\ca$. We now present two scenarios of interest.


\subsubsection*{Example 3 (unique metastable state scenario)}
Suppose that 
\[
\ms = \{ z \}, \quad A = \ss, \quad \text{ and } \quad x \in \cC_{A}(z).
\]
We remark that this scenario is a special case of the metastable scenario presented in Example 1 in Subsection~\ref{sub35}. This scenario was already mentioned in~\cite{MNOS04}, in the discussion following Theorem 4.15, but we briefly discuss here how to prove asymptotic exponentiality within our framework. Indeed, we have that 
\[
\G(x,\ss) = \G(\cC_{\ss}(z)) = \tG(\cX \setminus \ss),
\]
thanks to the fact that $z$ is the configuration in $\cX \setminus \ss$ with the maximum stability level, which means that $\cC_{\ss}(z)$ is the deepest cycle in $\cX \setminus \ss$. Moreover, the fact that $z$ is the unique metastable state, implies that 
\[
\tG(\cX \setminus \ss) > \tG(\cX \setminus (\ss \cup \{z\})),
\]
since every configuration in $\cX \setminus (\ss \cup \{z\})$ has stability level strictly smaller than $\mathcal V_z$.

\subsubsection*{Example 4 (two stable states scenario)}
Suppose that
\[
\ss = \{ s_1,s_2\}, \quad A = \{s_2\}, \quad x \in \cC_{A}(s_1) \quad \text{ and } \quad \tG(\cX \setminus \{s_1,s_2\}) < \Phi(s_1,s_2) - H(s_1).
\]
This scenario is a special case of the tunneling scenario presented in Example 2 in Subsection~\ref{sub35}. In this case condition~\eqref{eq:suffcondB} is obviously satisfied. In particular, it shows that the scaled tunneling time $\t^{s_1}_{s_2}$ between two stable states in $\cX$ is asymptotically exponential whenever $\ss = \{s_1,s_2\}$ and the condition $\tG(\cX \setminus \{s_1,s_2\}) < \Phi(s_1,s_2) - H(s_1)$ is satisfied.

In Section~\ref{sec5} we will show that for the hard-core model on grids Assumption B holds, being precisely in this scenario, and obtain in this way the asymptotic exponentiality of the tunneling time between the two unique stable states.

\subsection{An example of non-exponentiality}
\label{sub37}
Assumption B is a rather strong assumption. In fact, for many models and for most of choices of $x$ and $A$, the scaled hitting time $\tha / \E \tha$ does not have an exponential distribution in the limit $\binf$. Moreover, we do not claim that Assumption B is necessary to have asymptotically exponentiality of the scaled hitting time $\tha / \E \tha$. However, we will now show that for the hard-core model on $K$-partite graphs Assumption B does not hold and that the model exhibits non-exponentially distributed scaled hitting times.

Take $\L$ to be a complete $K$-partite graph. This means that the sites in~$\L$ can be partitioned into $K$~disjoint sets $\mathcal B_1,\dots, \mathcal B_K$ called \textit{components}, such that two sites are connected by an edge if and only if they belong to different components, see Figure~\ref{fig:kpar}.

\begin{figure}[!ht]
\centering
\subfigure[$K$-partite graph $\L$ with $K=5$]{\label{fig:kpar} \includegraphics[scale=0.22]{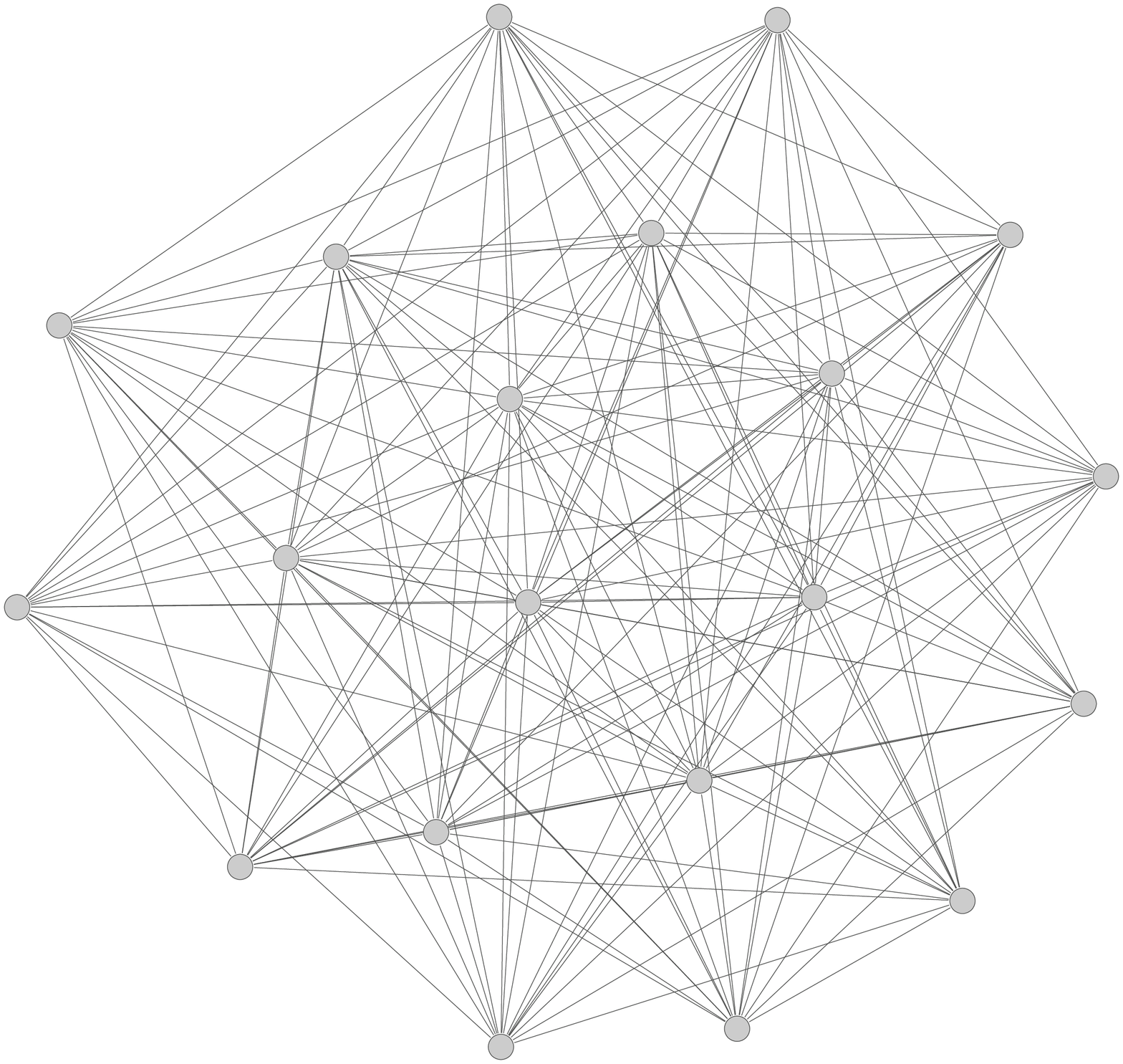}}
\hspace{0.5cm}
\subfigure[State space $\cX$ corresponding to the graph $\L$]{\label{fig:cXkpar} \includegraphics[scale=0.36]{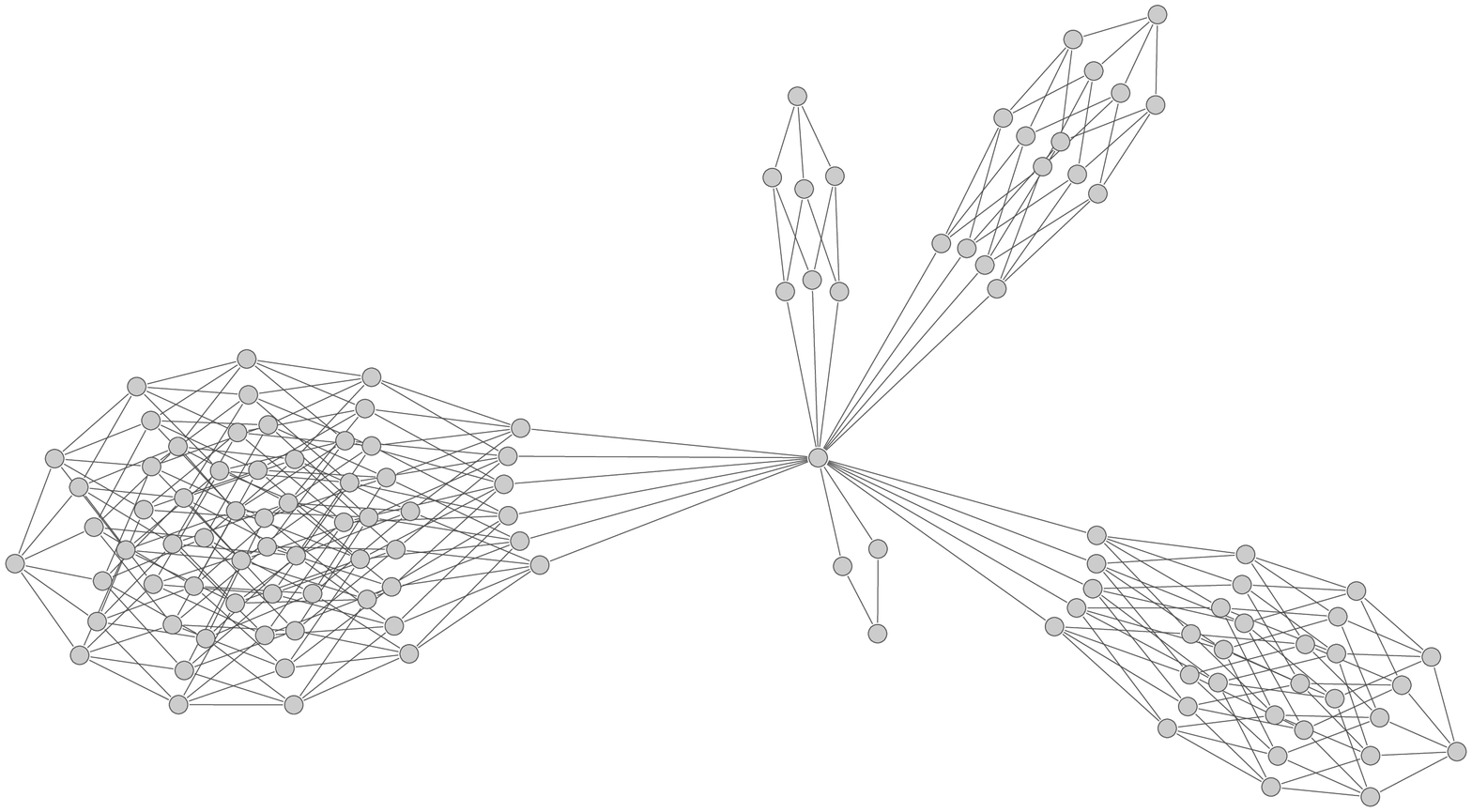}}
\caption{Exampe of a $K$-partite graph $\L$ and of the resulting energy landscape for the hard-core model on $\L$}
\end{figure}

This choice for $\L$ results in a simpler state space $\cX$, for which a detailed analysis is possible. Moreover, for the same model the asymptotic behavior of the first hitting times between maximal-occupancy configurations is already well understood, see~\cite{ZBvL12}. Before stating the results, we need some further definitions. Let $L_k$ be the size of the $k$-th component $\mathcal B_k$, for $k=1,\dots,K$. Clearly the total number of sites in $\L$ is $N=\sum_{k=1}^K L_k$. Define $L_{\text{max}}:=\max_{k=1,\dots,K} L_k$. For $k=1,\dots, K$ define the configuration $\s_k \in \cX$ as
\[
\s_k(v)=
\begin{cases}
1 & \text{ if } v \in \mathcal B_k,\\
0 & \text{ otherwise.}
\end{cases}
\]
The configurations $\{\s_1,\dots,\s_K \}$ are all the local minima of the energy function $H$ on the state space $\cX$. Moreover $\s_k$ is a stable state if and only if $L_k = L_\text{max}$. In addition, denote by $\mathbf{0}$ the configuration in $\cX$ where all the sites are empty, i.e.~the configuration such that $\mathbf{0}(v)=0$ for every $v \in \L$. Given $k_1,k_2 \in \{1,\dots, K\}$, $k_1 \neq k_2$, we take $\s_{k_1}$ and $\s_{k_2}$ as starting and target configurations, respectively. Define $L_*=L_*(k_2):= \max_{k\neq k_2} L_k$ and let $K_*=K_*(k_2):=\{ k \neq k_2 : L_k = L_*\}$ be the set of indices of the components of size $L_*$ different from $k_2$. 

In~\cite{ZBvL12} the same model has been considered, but in continuous time; the results therein (Theorems~IV.1 and IV.2) can be translated to discrete time as follows. Given two functions $f(\b)$ and $g(\b)$, we write $f \sim g$ as $\binf$ when $\limb f(\b)/g(\b) =1$.
\begin{prop}[First moment convergence of the hitting time $\t^{\s_{k_1}}_{\s_{k_2}}$]\label{prop:l1star} For any $k_1,k_2 \in \{1,\dots,K\}$ with $k_1 \neq k_2$, the first hitting time $\t^{\s_{k_1}}_{\s_{k_2}}$ satisfies
\[
\E  \t^{\s_{k_1}}_{\s_{k_2}} \sim N \left ( \frac{\mathds{1}_{\{k_1 \in K_*\}} }{L_*}+ \frac{|K_*|}{L_{k_2}} \right ) e^{\b L_*}, \quad \binf.
\]
In particular,
\[
\limb \frac{1}{\b} \log \E  \t^{\s_{k_1}}_{\s_{k_2}} = L_*.
\]
\end{prop}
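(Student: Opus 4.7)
The plan is to exploit the very transparent structure of the hard-core energy landscape on a complete $K$-partite graph: the only admissible configurations whose particles form a nonempty independent set are those whose support lies in a single component $\mathcal B_k$. Hence $\cX$ decomposes as a star centred at $\mathbf 0$: setting $\mathcal Y_k := \{\s\in\cX\setminus\{\mathbf 0\}:\mathrm{supp}(\s)\subseteq\mathcal B_k\}$, one has $\cX=\{\mathbf 0\}\cup\bigcup_{k=1}^{K}\mathcal Y_k$ with the $\mathcal Y_k$'s disjoint, and every path in the state-space graph connecting distinct branches must pass through $\mathbf 0$. Each $\mathcal Y_k$ is itself a cycle $C_k$ with $\cF(C_k)=\{\s_k\}$, depth $\G(C_k)=L_k$ and principal boundary $\pa C_k=\{\mathbf 0\}$; in particular the dynamics amounts to a sequence of excursions out of $\mathbf 0$ into the various branches, and I would analyse it as such.

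For the sharp prefactor of $\E\t^{\s_k}_{\mathbf 0}$ I would use the permutation invariance of the hard-core dynamics on sites of the same component to lump $C_k\cup\{\mathbf 0\}$ by cardinality, reducing it to a birth--death chain on $\{0,\dots,L_k\}$ with birth probabilities $b_j=(L_k-j)/N$ and death probabilities $d_j=j\,e^{-\b}/N$. Its unnormalised stationary weights are $\pi_j=\binom{L_k}{j}e^{\b j}$, and the classical first-passage formula
\[
\E\t^{L_k}_{0}=\sum_{i=1}^{L_k}\frac{1}{d_i\pi_i}\sum_{j=i}^{L_k}\pi_j
\]
is dominated, term by term as $\binf$, by its $i=1$ contribution $(N/L_k)\,e^{\b L_k}$; all the remaining summands are smaller by an exponential factor $e^{-\b}$. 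This yields $\E\t^{\s_k}_{\mathbf 0}\sim (N/L_k)\,e^{\b L_k}$.

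The final step is a renewal argument over excursions from $\mathbf 0$. At $\mathbf 0$ every site has all its neighbours vacant, so the next step places a particle in $\mathcal B_k$ with probability $L_k/N$; from this one-particle state a further birth--death computation shows that the chain reaches $\s_k$ before returning to $\mathbf 0$ with probability $1-O(e^{-\b})$. Excursions that enter a shallow branch ($L_k<L_*$) cost $O(e^{\b L_k})=o(e^{\b L_*})$ each and, since the geometric number of excursions before absorption at $\s_{k_2}$ has mean $N/L_{k_2}$, they contribute only $o(e^{\b L_*})$ in total. The expected number of excursions ending at $\s_k$ equals $L_k/L_{k_2}$ for each $k\neq k_2$, so those with $k\in K_*$ contribute $\sum_{k\in K_*}(L_k/L_{k_2})\cdot(N/L_*)\,e^{\b L_*}=N|K_*|/L_{k_2}\cdot e^{\b L_*}$. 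Adding the initial escape from $\s_{k_1}$, which contributes $(N/L_*)\,e^{\b L_*}$ precisely when $L_{k_1}=L_*$ and $o(e^{\b L_*})$ otherwise, produces the announced asymptotic, and the logarithmic corollary follows at once.

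The main technical obstacle is that Theorem~\ref{thm:l1} yields only the logarithmic scaling, whereas the statement is a sharp $\sim$-asymptotic; the prefactor must be extracted by the explicit birth--death computation above, which crucially relies on the full site symmetry of the complete $K$-partite graph. A secondary point is the uniform control of the subdominant contributions of shallow excursions: I would handle it by combining $\E\t^{\s_k}_{\mathbf 0}=O(e^{\b L_k})$ from Theorem~\ref{thm:exitcycle}(vi) with the geometric bound on the number of excursions out of $\mathbf 0$, whose success parameter is bounded below by $L_{k_2}/N>0$ uniformly in $\b$. The translation of the continuous-time statement of~\cite{ZBvL12} to discrete time merely inflates every expectation by the factor $N$ of uniform site sampling per step, which is what produces the $N$ in the prefactor.
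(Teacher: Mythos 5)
Your proposal takes a genuinely different route from the paper. The paper gives no proof of this proposition at all: it simply states that Theorems~IV.1 and IV.2 of the cited continuous-time analysis~\cite{ZBvL12} ``can be translated to discrete time,'' leaving the translation implicit. You instead supply a self-contained discrete-time argument: exploiting the star shape of $\cX$ (branches $C_k$ meeting only at $\mathbf 0$), lumping each branch $C_k\cup\{\mathbf 0\}$ by cardinality into a birth--death chain on $\{0,\dots,L_k\}$, computing the sharp prefactor $\E\t^{\s_k}_{\mathbf 0}\sim (N/L_k)e^{\b L_k}$ from the exact first-passage formula, and then assembling the total via a renewal/excursion decomposition at $\mathbf 0$. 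Your lumping is legitimate (the Metropolis transition rates from any $j$-particle configuration in $\mathcal B_k$ depend only on $j$), the stationary weights and first-passage computation are correct, and the excursion accounting reproduces the stated prefactor $N\bigl(\mathds{1}_{\{k_1\in K_*\}}/L_* + |K_*|/L_{k_2}\bigr)$. The trade-off is clear: the paper's one-line citation is shorter but delegates the substance to an external reference; your argument is longer but explains where the prefactor comes from and would make the discrete-time statement independently checkable.

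A few places in your sketch would need tightening in a full write-up. The step ``they contribute only $o(e^{\b L_*})$ in total'' for shallow branches should be phrased as a Wald/renewal-reward computation (expected number of deep excursions into branch $k$ is $L_k/L_{k_2}$, each costing $\sim(N/L_k)e^{\b L_k}$, giving $(N/L_{k_2})e^{\b L_k}=o(e^{\b L_*})$ per branch, finitely many branches), rather than multiplying the total expected number of round trips $N/L_{k_2}$ by a single $O(e^{\b L_k})$ bound, since the per-round-trip cost depends on which branch is entered. Also, Theorem~\ref{thm:exitcycle}(vi) only gives $\E\t^{\s_k}_{\mathbf 0}< e^{\b(L_k+\e)}$, which is weaker than $O(e^{\b L_k})$; that is enough for the subdominant estimate after choosing $\e<L_*-L_k$, but it is cleaner simply to invoke your own birth--death asymptotic uniformly for all branches. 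Neither issue is a gap in the method; both are routine to patch.
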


\begin{prop}[Asymptotic distribution of the hitting time $\t^{\s_{k_1}}_{\s_{k_2}}$]\label{prop:aestar} Take $k_1,k_2 \in \{1,\dots,K\}$ such\\that $k_1 \neq k_2$. If $k_1 \in K_*$, then
\[
\frac{\t^{\s_{k_1}}_{\s_{k_2}}}{\E  \t^{\s_{k_1}}_{\s_{k_2}}} \cd \rmexp(1), \quad \binf.
\]
Instead, if $k_1 \not\in K_*$, then
\[
\frac{\t^{\s_{k_1}}_{\s_{k_2}}}{\E  \t^{\s_{k_1}}_{\s_{k_2}}} \cd Z, \quad \binf,
\]
where $ Z \ed \sum_{i=1}^M Y_i$ and $(Y_i)_{i \geq 1}$ are {\rm i.i.d.}~exponential unit mean random variables and $M$ is an independent random variable with geometric distribution $\mathbb P (M=n) =
(1 - p)^n p$ for $n\in \N \cup \{0\}$ with $p=\frac{L_{k_2}}{|K_*|L_*+L_{k_2}}$.
\end{prop}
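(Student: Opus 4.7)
\textbf{Proof proposal for Proposition~\ref{prop:aestar}.}
The approach is to reduce to the continuous-time analysis of~\cite{ZBvL12} by exploiting the very special structure of the state space. Because a site of $\mathcal B_k$ is adjacent to every site of $\mathcal B_l$ for $l\neq k$, every admissible configuration has particles in at most one component, so $\cX$ decomposes into $K$ ``arms'' (one per component) joined only at the empty configuration $\mathbf 0$. The unique local minimum in the $k$-th arm is $\s_k$; the arm itself is a cycle in the sense of Section~\ref{sec3} with boundary $\{\mathbf 0\}$ and depth $L_k$, so by Theorem~\ref{thm:exitcycle}(vi) the escape time from $\s_k$ has mean of order $e^{\b L_k}$. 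Moreover, from $\mathbf 0$ the Metropolis chain picks a uniform site in $\L$ and always accepts, reaching $\{v\}$ with probability $1/N$; once at $\{v\}$ with $v\in\mathcal B_j$, a birth-and-death comparison inside the $j$-th arm shows that the chain fills up $\mathcal B_j$ and reaches $\s_j$ before returning to $\mathbf 0$ with probability $1-o(1)$ as $\binf$.

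The backbone of the proof is then the coarse-grained Markov chain on $\{\s_1,\dots,\s_K\}$ obtained by recording the successive local minima visited between returns to $\mathbf 0$. By the strong Markov property at these returns, the destinations are asymptotically i.i.d.~with $\s_j$ chosen with probability $L_j/N+o(1)$. Sojourns at ``shallow'' minima $\s_l$ with $L_l<L_*$ have mean $O(e^{\b L_l})=o(e^{\b L_*})$, and via Proposition~\ref{prop:pup} combined with a union bound over the polynomially many excursions needed before hitting $\s_{k_2}$, they contribute negligibly to $\t^{\s_{k_1}}_{\s_{k_2}}$ after normalization. Conditioning on the next visited minimum lying in the ``absorbing'' set $K_*\cup\{k_2\}$, the probability of landing at $\s_{k_2}$ is exactly $p=L_{k_2}/(|K_*|L_*+L_{k_2})$, independently across deep visits.

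For $k_1\in K_*$ the chain starts already at a deep minimum, so the total time decomposes as a sum of i.i.d.~exponential deep sojourns (all with common asymptotic mean $T(\b)$ of order $e^{\b L_*}$). The number of such sojourns is geometric starting from $1$ with success parameter $p$, and a geometric-from-$1$ sum of i.i.d.~exponentials is itself exponential; normalizing by $\E\t^{\s_{k_1}}_{\s_{k_2}}$ (whose asymptotics are given by Proposition~\ref{prop:l1star}) then yields $\rmexp(1)$ in the limit. For $k_1\notin K_*$ the initial sojourn at $\s_{k_1}$ has mean $O(e^{\b L_{k_1}})=o(e^{\b L_*})$ and vanishes under the normalization; afterwards, with probability $p$ the chain hits $\s_{k_2}$ without entering any deep cycle (contributing $M=0$), and with probability $1-p$ it enters a deep minimum and the Case~1 dynamics take over. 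Combining gives $M\sim\mathrm{Geom}(p)$ on $\N\cup\{0\}$ and the limit $Z=\sum_{i=1}^M Y_i$.

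The main obstacle is making the coarse-graining fully quantitative. The technical work consists of: (i) controlling the negligible contribution of shallow sojourns via a union bound on the $O(1/p)$-many excursions to $\mathbf 0$, using Proposition~\ref{prop:pup}; (ii) proving the asymptotic independence and exponentiality of successive deep sojourns by regenerating the chain at the hitting times of $\mathbf 0$ via the strong Markov property, combined with Theorem~\ref{thm:exitcycle}(vi); and (iii) matching the sojourn pre-factor so that after normalization by $\E\t^{\s_{k_1}}_{\s_{k_2}}$ the limiting distribution is exactly $Z$.
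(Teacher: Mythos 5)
The paper does not give a proof of this proposition: it simply states that the result follows by translating Theorems~IV.1 and IV.2 of~\cite{ZBvL12} from continuous to discrete time. So there is no "paper's own proof" to compare against, and your argument is a genuine from-scratch construction that the paper avoids.

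Your coarse-graining is exactly the right one for this model. The observation that $\cX$ decomposes into $K$ arms joined at $\mathbf 0$ (since every admissible configuration occupies at most one component), the use of the strong Markov property at visits to $\mathbf 0$ to make the successive local minima asymptotically i.i.d.\ with $\mathbb P(\s_j) = L_j/N + o(1)$, the computation $p = L_{k_2}/(|K_*|L_* + L_{k_2})$ by conditioning on the absorbing set $K_*\cup\{k_2\}$, and the geometric count of deep sojourns (starting from $1$ if $k_1\in K_*$, from $0$ otherwise) are all correct and yield the stated compound distribution. One can also sanity-check the prefactors: each deep sojourn has mean asymptotic to $(N/L_*)e^{\b L_*}$, and the expected number of deep sojourns is $1/p$ or $(1-p)/p = |K_*|L_*/L_{k_2}$ in the two cases, which multiplies out to match Proposition~\ref{prop:l1star} in both cases.

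Two places need tightening. First, you appeal to Theorem~\ref{thm:exitcycle}(vi) to justify that deep sojourns are "i.i.d.\ exponential", but (vi) only controls the \emph{mean} of the exit time on a logarithmic scale; the asymptotic exponentiality of the sojourn at a deep $\s_l$ is a separate fact, which one gets either by applying Theorem~\ref{thm:ae}/\cite{FMNS14} to the single cycle $C_{\{\mathbf 0\}}(\s_l)$ (where condition~\eqref{eq:suffcondB} holds trivially since a single arm has a unique nontrivial maximal cycle), or by citing the classical exponential law for cycle exit times. Second, when $k_1\in K_*$ the first deep sojourn is taken at $\s_{k_1}$ itself, and the coarse-grained chain can revisit $\s_{k_1}$; your argument implicitly treats visits to $\s_{k_1}$ on an equal footing with visits to other $\s_l$, $l\in K_*$, which is fine since $L_{k_1}=L_*$, but it is worth stating explicitly that revisits to the starting minimum are counted as fresh deep sojourns. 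With those additions the argument is complete and, unlike the paper, does not defer to the continuous-time result of~\cite{ZBvL12}.
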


As illustrated in Figure~\ref{fig:cXkpar}, the energy landscape consists of $K$ cycles, one for each component of $\L$, and one trivial cycle $\{\mathbf{0}\}$ which links all the others. The depth of each of the cycles is equal to the size of the corresponding component of $\L$. All the paths from $\s_{k_1}$ to $\s_{k_2}$ must at some point exit from the cycle corresponding to component $k_1$, at whose bottom lies $\s_{k_1}$. After hitting the configuration $\mathbf{0}$, they can go directly into the target cycle, i.e.~the one at which bottom lies $\s_{k_2}$, or they may fall in one of the other $K-1$ cycles. Formalizing these simple considerations, we can prove the following proposition.

\begin{prop}[Structural properties of the energy landscape]\label{prop:kparss}
For any $k_1,k_2 \in \{1,\dots, K\}$, $k_1 \neq k_2$,
\[
\G(\s_{k_1},\{\s_{k_2}\}) = L_{k_1} = \Psi_{\mathrm{min}}(\s_{k_1},\{\s_{k_2}\}),
\]
and 
\[
\Psi_{\mathrm{max}}(\s_{k_1},\{\s_{k_2}\}) =  L_* =  \tG(\cX \setminus \{\s_{k_2}\}).
\]
\end{prop}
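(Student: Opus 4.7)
The plan is to exploit the simple structure of the state space: since $\Lambda$ is complete $K$-partite, every independent set is contained in a single component, and so $\mathcal{X} = \{\mathbf{0}\} \cup \bigsqcup_{k=1}^{K} \mathcal{X}_k^*$, where $\mathcal{X}_k^* := \{\sigma \in \mathcal{X} \setminus \{\mathbf{0}\} : \mathrm{supp}(\sigma) \subseteq \mathcal{B}_k\}$. Any trajectory of the chain that crosses from $\mathcal{X}_k^*$ to $\mathcal{X}_{k'}^*$ with $k \neq k'$ must pass through $\mathbf{0}$, the unique global maximum of $H$ on $\mathcal{X}$; this forces $\Phi(\sigma_{k_1}, \sigma_{k_2}) \geq H(\mathbf{0}) = 0$, and the explicit path that empties $\sigma_{k_1}$ site by site and then fills $\mathcal{B}_{k_2}$ site by site achieves this bound, giving $\Phi(\sigma_{k_1}, \sigma_{k_2}) = 0$. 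More generally, the same ``up-then-down'' monotone trick shows that $\Phi(\sigma, \sigma') \leq -\min(|\sigma|,|\sigma'|) < 0$ whenever $\sigma, \sigma'$ both lie in the same $\mathcal{X}_k^*$.

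Write $A := \{\sigma_{k_2}\}$. These bounds identify the initial cycle as $C_A(\sigma_{k_1}) = \mathcal{X}_{k_1}^*$: for $z \in \mathcal{X}_{k_1}^*$ one has $\Phi(\sigma_{k_1}, z) < 0 = \Phi(\sigma_{k_1}, A)$, while for $z \notin \mathcal{X}_{k_1}^*$ one has $\Phi(\sigma_{k_1}, z) \geq H(\mathbf{0}) = 0$. A single-site flip from any $y \in \mathcal{X}_{k_1}^*$ either stays in $\mathcal{X}_{k_1}^*$ (sites outside $\mathcal{B}_{k_1}$ have an occupied neighbor in $\mathcal{B}_{k_1}$ and so cannot be activated) or, when $|y|=1$, lands on $\mathbf{0}$. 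Hence $\partial C_A(\sigma_{k_1}) = \{\mathbf{0}\}$, and
\[
\Gamma(\sigma_{k_1}, A) = H(\mathbf{0}) - H(\sigma_{k_1}) = L_{k_1}.
\]
Exactly the same reasoning gives $\Gamma(y, A) = L_k$ for every $y \in \mathcal{X}_k^*$ with $k \neq k_2$; on the other hand $\Gamma(\mathbf{0}, A) = 0$ since $\Phi(\mathbf{0}, A) = H(\mathbf{0})$, and $\Gamma(y, A) = 0$ for every $y \in \mathcal{X}_{k_2}^*$ because the monotone filling of $\mathcal{B}_{k_2}$ starting from $y$ gives $\Phi(y, \sigma_{k_2}) = H(y)$, so $\{y\}$ is a trivial initial cycle.

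For $\Psi_{\min}(\sigma_{k_1}, A)$, the monotone path $\omega^*$ from the first paragraph lies in $\Omega^{\mathrm{opt}}_{\sigma_{k_1}, A}$ since $\Phi_{\omega^*} = 0 = \Phi(\sigma_{k_1}, A)$; along $\omega^*$ every visited state falls into one of the three classes catalogued above, giving $\max_{z \in \omega^*} \Gamma(z, A) = L_{k_1}$. The matching lower bound is immediate from $\Psi_{\min}(\sigma_{k_1}, A) \geq \Gamma(\sigma_{k_1}, A) = L_{k_1}$. For the second displayed equation I would first show that $R_A(\sigma_{k_1}) = \mathcal{X}$: for any $z \in \mathcal{X}$ the path $\sigma_{k_1} \to \mathbf{0} \to z \to \mathbf{0} \to \sigma_{k_2}$ (with the obvious shortcut that stays inside $\mathcal{X}_{k_2}^*$ whenever $z \in \mathcal{X}_{k_2}^*$) has maximum energy $0$ and never visits $\sigma_{k_2}$ before its last step. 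By Lemma~\ref{lem:equivDM} this yields $\Psi_{\max}(\sigma_{k_1}, A) = \widetilde{\Gamma}(\mathcal{X} \setminus \{\sigma_{k_2}\})$, and Lemma~\ref{lem:GAequiv1} reduces the right-hand side to $\max_{y \neq \sigma_{k_2}} \Gamma(y, A)$, which by the catalogue above equals $\max_{k \neq k_2} L_k = L_*$, attained at every $\sigma_k$ with $k \in K_*$. The main technical nuisance is the case analysis for $\Gamma(y, A)$ across the different regions of $\mathcal{X}$, but the monotone structure of $H$ inside each component makes each case essentially immediate and no deep combinatorial argument is needed.
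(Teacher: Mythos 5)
Your proof is correct and follows exactly the line the paper sketches informally before the proposition ("the energy landscape consists of $K$ cycles, one for each component, and one trivial cycle $\{\mathbf{0}\}$ which links all the others..."); the paper itself offers no formal proof, so your argument — identifying each $\mathcal{X}_k^*$ as the initial cycle $C_A(\cdot)$ of its members with $\partial \mathcal{X}_k^* = \{\mathbf{0}\}$, cataloguing $\Gamma(y,A)$ over the three regions of $\mathcal{X}$, then invoking Lemmas~\ref{lem:equivDM} and~\ref{lem:GAequiv1} together with $R_A(\sigma_{k_1}) = \mathcal{X}$ — is precisely the intended formalization, with all the key checks (single-site updates out of $\mathcal{X}_k^*$ only reach $\mathbf{0}$; every path between distinct components passes through $\mathbf{0}$; all paths are optimal since $H \leq 0 = H(\mathbf{0})$) correctly carried out.
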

In particular, if $k_1 \not\in K_*(k_2)$, then it follows from Propositions~\ref{prop:l1star} and~\ref{prop:kparss} that 
\[
L_* = \limb \frac{1}{\b} \E \t^{\s_{k_1}}_{\s_{k_2}} = \Theta(\s_{k_1},\{\s_{k_2}\}) \not < \tG(\cX \setminus \{\s_{k_1}, \s_{k_2}\}) = L_*.
\]
Hence Assumption B is not satisfied for the the pair $(\s_{k_1},\{\s_{k_2}\})$. Indeed there exists another configuration $\s_{k'}$, for some $k' \in K_*(k_2)$, $k' \neq k_1$, for which the recurrence probability
\[
\pr{ \tau^{\s_{k' }}_{\{\s_{k_1},\, \s_{k_2}\}} > e^{\b(L_{k_1} + \e)}}
\]
does not vanish as $\binf$, since component $\mathcal B_{k'}$ has size $L_* > L_{k_1}$. As illustrated in Proposition~\ref{prop:aestar}, the scaled hitting time $\t^{\s_{k_1}}_{\s_{k_2}}/\E  \t^{\s_{k_1}}_{\s_{k_2}}$ does not converge in distribution to an exponential random variable with unit mean as $\binf$.

\subsection{Mixing time and spectral gap}
\label{sub38}
In this subsection we focus on the long-run behavior of the Metropolis Markov chain $\xtbb$ and in particular examine the rate of convergence to the stationary distribution. We measure the rate of convergence in terms of the total variation distance and the mixing time, which describes the time required for the distance to stationarity to become small. More precisely, for every $0 < \e < 1$, we define the \textit{mixing time} $t^{\mathrm{mix}}_\b(\epsilon)$ by
\[
t^{\mathrm{mix}}_\b(\epsilon):=\min\{ n \geq 0 \st \max_{x \in \cX} \| P^n_\b(x,\cdot) - \mu_\b(\cdot) \|_{\mathrm{TV}} \leq \e \},
\]
where $\| \nu - \nu' \|_{\mathrm{TV}}:=\frac{1}{2} \sum_{x \in \cX} |\nu(x)-\nu'(x)|$ for any two probability distributions $\nu,\nu'$ on $\cX$. Another classical notion to investigate the speed of convergence of Markov chains is the \textit{spectral gap}, which is defined as 
\[
\rho_\b := 1-a_{\b}^{(2)},
\]
where $1=a_{\b}^{(1)} > a_{\b}^{(2)} \geq \dots \geq a_\b^{(|\cX|)} \geq -1$ are the eigenvalues of the matrix $(P_\b(x,y))_{x,y \in \cX}$. The spectral gap can be equivalently defined using the Dirichlet form associated with the pair $(P_\b, \mu_\b)$, see~\cite[Lemma 13.12]{LPW09}.
The problem of studying the convergence rate towards stationarity for a Friedlin-Wentzell Markov chain has already been studied in~\cite{C99,HS88,Miclo2002,Scoppola1993a}. In particular, in~\cite{C99} the authors characterize the order of magnitude of both its mixing time and spectral gap in terms of certain ``critical depths'' of the energy landscape associated with the Friedlin-Wentzell Markov chain. We summarize the results in the context of Metropolis Markov chains in the next proposition.

\begin{prop}[Mixing time and spectral gap for Metropolis Markov chains]\label{prop:mix}
For any $0 < \epsilon < 1$ and any $s\in \ss$, 
\begin{equation}
\label{eq:mixrho}
\limb \frac{1}{\b} \log t^{\mathrm{mix}}_\b(\epsilon) = \tG( \cX \setminus \{s\}) = \limb -\frac{1}{\b} \log \rho_\b.
\end{equation}
Furthermore, there exist two positive constants $0 < c_1 \leq c_2 < \infty$ independent of $\b$ such that
\begin{equation}
\label{eq:rho}
c_1 e^{-\b \tG( \cX \setminus \{s\})} \leq \rho_\b \leq c_2 e^{-\b \tG( \cX \setminus \{s\})} \qquad \forall \, \b \geq 0.
\end{equation}
\end{prop}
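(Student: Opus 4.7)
The plan is to deduce both statements from the classical characterization of the spectral gap and mixing time of reversible Freidlin--Wentzell chains~\cite{C99,HS88,Scoppola1993a}, after identifying the ``critical depth'' of those works with our quantity $\tG(\cX\setminus\{s\})$. A preliminary check is that $\tG(\cX\setminus\{s\})$ either takes the same value for every $s\in\ss$ or at least that the constants $c_1,c_2$ can be chosen uniformly in $s$; by Lemma~\ref{lem:GAequiv1},
\[
\tG(\cX\setminus\{s\}) \;=\; \max_{z\neq s}\bigl(\Phi(z,s)-H(z)\bigr),
\]
and using $H(s)=H(s')$ for all $s,s'\in\ss$ together with the symmetry of $\Phi$ and a short optimal-path argument, the value of this maximum is readily seen to be insensitive to the choice of $s\in\ss$, and when $|\ss|\ge 2$ it is attained (also) by the tunnelling barrier $\Phi(s,s')-H(s)$ between stable states.

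For~\eqref{eq:rho} I would use the Dirichlet-form characterization of the spectral gap. The upper bound $\rho_\b\le c_2 e^{-\b\tG(\cX\setminus\{s\})}$ is obtained by taking the test function $f=\mathds{1}_{C^*}$, where $C^*$ is a maximal cycle of $\cX\setminus\{s\}$ realizing the depth $\tG(\cX\setminus\{s\})$; the Metropolis form~\eqref{eq:mtp} and reversibility extract the factor $e^{-\b\G(C^*)}$ from the Dirichlet form, while $\mathrm{Var}_{\mu_\b}(f)$ is of order one since $C^*$ can be chosen so that both $\mu_\b(C^*)$ and $\mu_\b(\cX\setminus C^*)$ stay bounded away from $0$ and $1$ (otherwise one replaces $C^*$ with a union of suitably chosen maximal cycles of the same depth). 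The matching lower bound $\rho_\b\ge c_1 e^{-\b\tG(\cX\setminus\{s\})}$ follows from the renormalization-of-cycles machinery of~\cite[Ch.~6]{C99}; alternatively, it can be derived by combining a Poincaré-type inequality $\rho_\b^{-1}\le C\max_{x}\E\,\t^x_{\{s\}}$ with the hitting-time upper bound $\max_x\E\,\t^x_{\{s\}}\le e^{\b(\tG(\cX\setminus\{s\})+o(1))}$, which is itself a consequence of Theorem~\ref{thm:exitcycle}(vi) and the cycle partition given by Lemma~\ref{lem:partition}.

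For the mixing time I would apply the standard sandwich (see e.g.~\cite[Thms.~12.3--12.4]{LPW09})
\[
\tfrac{1}{2}\bigl(\rho_\b^{-1}-1\bigr)\log\tfrac{1}{2\e} \;\le\; t^{\mathrm{mix}}_\b(\e) \;\le\; \rho_\b^{-1}\log\tfrac{1}{\e\,\mu_\b^{\min}}.
\]
Since $\mu_\b^{\min}\ge|\cX|^{-1}e^{-\b(\max H-\min H)}$, the logarithm on the right is $O(\b)$, so dividing by $\b$, letting $\binf$, and invoking~\eqref{eq:rho} gives the first equality in~\eqref{eq:mixrho}; the second equality is immediate from~\eqref{eq:rho}.

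The main obstacle will be the spectral gap lower bound, i.e.~matching the ``critical depth'' of Catoni (phrased in~\cite{C99} in terms of renormalized cycles around the stable states) with our $\tG(\cX\setminus\{s\})$ as defined through the partition into maximal cycles of $\cX\setminus\{s\}$. When $|\ss|>1$ this is subtle because removing a single $s$ leaves cycles containing the other stable states which can be very deep, so the critical depth is controlled by the tunnelling barrier between stable states rather than by the deepest ``non-stable'' cycle; once this identification is carried out via Lemma~\ref{lem:GAequiv1} and~\eqref{eq:Gequiv}, the remaining estimates are standard.
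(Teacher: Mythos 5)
Your route is genuinely different from the paper's. The paper does not argue through a Dirichlet-form test function or an LPW09 sandwich at all: it cites~\cite[Theorem 5.1]{C99} and~\cite[Proposition 2.1]{Miclo2002} for the two limits in~\eqref{eq:mixrho}, and~\cite[Theorem 2.1]{HS88} for the two-sided bounds~\eqref{eq:rho}, and the entire content of its proof is (i) identifying the critical depths $H_2$ and $H_3$ of~\cite{C99} with $\tG(\cX\setminus\{s\})$, and (ii) verifying the hypothesis of~\cite[Theorem 5.1]{C99} that the null-cost graph on $\ss$ has an aperiodic component, which the authors reduce to showing $\limb -\tfrac{1}{\b}\log P_\b(s,s)=0$ for some $s\in\ss$ via a careful lower bound on $P_\b(s,s)$. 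Your approach trades all of that for a hands-on variational argument plus the spectral gap/mixing time sandwich; that is a legitimate alternative strategy, but several of its steps as written have gaps.

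The most serious gap is the range of $\e$. The LPW09 lower bound $t^{\mathrm{mix}}_\b(\e)\ge(\rho_\b^{-1}-1)\log\tfrac{1}{2\e}$ is vacuous for $\e\ge 1/2$, whereas the proposition asserts the limit for \emph{every} $\e\in(0,1)$. Since $t^{\mathrm{mix}}_\b(\e)$ is decreasing in $\e$, the sandwich gives you nothing at all for $\e$ close to $1$, and there is no monotonicity trick that recovers the lower bound there. The paper avoids the issue entirely by invoking~\cite[Theorem 5.1]{C99}, which gives $\limb\tfrac{1}{\b}\log t^{\mathrm{mix}}_\b(\e)=H_2$ uniformly in $\e\in(0,1)$. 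If you want to keep your route, you must either restrict to $\e<1/2$ or supply a separate argument for large $\e$.

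Two smaller points. First, in the test-function bound, the claim that $\mathrm{Var}_{\mu_\b}(\mathds{1}_{C^*})$ is ``of order one'' is wrong in the unique-stable-state case: then $\mu_\b(C^*)\to 0$, and there is no union of cycles of the same depth you could substitute. The argument still works, but for a different reason: both the Dirichlet form and the variance decay, at rates $e^{-\b(H(\cF(\pa C^*))-H(s))}$ and $e^{-\b(H(\cF(C^*))-H(s))}$ respectively, and only the \emph{ratio} is $O(e^{-\b\G(C^*)})$; you need $1-\mu_\b(C^*)$ bounded below (which holds since $s\notin C^*$), not $\mu_\b(C^*)$. Second, the well-definedness of $\tG(\cX\setminus\{s\})$ independently of $s\in\ss$, and the ``Poincar\'e-type inequality $\rho_\b^{-1}\le C\max_x\E\,\t^x_{\{s\}}$'', are both asserted without proof or a precise reference; the paper explicitly delegates the first to~\cite[Theorem 5.1]{C99}, and the second is not a standard off-the-shelf inequality in the form you state it (the standard reversible bound involves $\max_{x,y}\E_x\t_y$, and to descend to a single target $s$ you need $\mu_\b(s)$ bounded below and a further argument). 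Your first alternative for the lower bound (renormalization of cycles from~\cite{C99}) effectively converges back onto the paper's citation-based strategy and is the safer choice.
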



\section{Proof of results for general Metropolis Markov chain}
\label{sec4}
In this section we prove the results presented in Section~\ref{sec3} for a Metropolis Markov chain $\xtbb$ with energy landscape $(\cX,H,q)$ and inverse temperature $\b$. For compactness, we will suppress the implicit dependence on the parameter $\b$ in the notation.


\subsection{Proof of Lemma~\ref{lem:equivopt}}
If $\o \in \Opt$, then trivially $\o \in \Omega_{x,\cA}$. Moreover, we claim that $\o \in \Opt$ implies $\o \subseteq \cp$. Indeed, by definition of an optimal path and inequality~\eqref{eq:c+phi}, it follows that an optimal path cannot exit from $\cp$ since 
\[
\Phi_\o = \Phi(x,A) < H(\cF(\pa \cp)).
\]
The reverse implication follows from the minimality of $\cp$, which guarantees that $\Phi(x,A)=\max_{z \in \cp} H(z)$. \qed

\subsection{Proof of Proposition~\ref{prop:plowup1}}
We first prove the lower bound~\eqref{eq:plow1} and, in the second part of the proof, the upper bound~\eqref{eq:pup1}.\\

Consider the event $\{\tha < e^{\b(\Dm -\e)}\}$ first. There are two possible scenarios: Either the process exits from the cycle $\cp$ before hitting $A$ or not. Hence,
\begin{align}
\label{eq:lowtwoterms}
\pr{\tha &< e^{\b(\Dm -\e)}}  = \nonumber \\
&=\pr{\tha < e^{\b(\Dm -\e)}, \tha < \t^x_{\pa \cp}} +\pr{\t^x_{\pa \cp} \leq \tha < e^{\b(\Dm -\e)}} \nonumber\\
&\leq \pr{\tha < e^{\b(\Dm -\e)}, \tha < \t^x_{\pa \cp}} +\pr{\t^x_{\pa \cp} < e^{\b(\Dm -\e)}}.
\end{align}
The quantity $\pr{\t^x_{\pa \cp} < e^{\b(\Dm -\e)}}$ is exponentially small in $\b$ for $\b$ sufficiently large, thanks to Theorem~\ref{thm:exitcycle}(i) and to the fact that $\Dm < \G(\cp)$. In order to derive an upper bound for the first term in the right-hand side of~\eqref{eq:lowtwoterms}, we introduce the following set
\[
\cZ_{\mathrm{opt}}:=\{ z \in \cR \setminus A \st \G(z,A) \geq \Dm \}.
\]
By definition~\eqref{eq:defDm} of $\Dm$, every optimal path $\o \in \Opt$ must inevitably visit a cycle of depth not smaller than $\Dm$ and therefore it has to enter the subset $\cZ_{\mathrm{opt}}$ before hitting $A$. Hence, for every $z \in \cZ_{\mathrm{opt}}$, conditioning on the event $\{ \tha < \t^x_{\pa \cp}, \, X_{\t^x_{\cZ_{\mathrm{opt}}}} = z\}$, we can write
\[
\tha \ed \t^x_{z} + \t^{z}_{A},
\]
and, in particular, $\tha \geq_\textrm{st} \t^{z}_{A}$. Using this fact, we get that there exists some $k_2>0$ such that for $\b$ sufficiently large 
\begin{align}
\pr{&\tha < e^{\b(\Dm -\e)}, \tha < \t^x_{\pa \cp}} = \nonumber \\
& =  \pr{\tha < \t^x_{\pa \cp} } \pr{\tha < e^{\b (\Dm-\e)} \st \tha < \t^x_{\pa \cp}} \nonumber \\
& \leq \pr{\tha < \t^x_{\pa \cp}} \sum_{z \in \cZ_{\mathrm{opt}}} \pr{ \tha < e^{\b (\Dm-\e)} \st \tha < \t^x_{\pa \cp}, \, X_{\t^x_{\cZ_{\mathrm{opt}}}} = z} \pr{X_{\t^x_{\cZ_{\mathrm{opt}}}} = z} \nonumber\\
& \leq \pr{\tha < \t^x_{\pa \cp}} \sum_{z \in \cZ_{\mathrm{opt}}} \pr{ \t^{z}_{A} < e^{\b (\Dm-\e)}} \pr{X_{\t^x_{\cZ_{\mathrm{opt}}}} = z} \nonumber\\
& \leq \pr{\tha < \t^x_{\pa \cp}} \sum_{z \in \cZ_{\mathrm{opt}}} \pr{ \t^{z}_{A} < e^{\b (\G(z,A)-\e)}} \pr{X_{\t^x_{\cZ_{\mathrm{opt}}}} = z} \nonumber\\
& \leq \pr{\tha < \t^x_{\pa \cp}} \sum_{z \in \cZ_{\mathrm{opt}}} \pr{ \t^{z}_{\pa C_{A}(z)} < e^{\b (\G(z,A)-\e)}} \pr{X_{\t^x_{\cZ_{\mathrm{opt}}}} = z} \nonumber\\
& \leq \pr{\tha < \t^x_{\pa \cp}} \sum_{z \in \cZ_{\mathrm{opt}}} e^{-k_2 \b} \cdot \pr{X_{\t^x_{\cZ_{\mathrm{opt}}}} = z} = \pr{\tha < \t^x_{\pa \cp}} \cdot e^{-k_2 \b} \nonumber\\
& \leq e^{-k_2 \b},
\label{eq:Zopt}
\end{align}
where we used Theorem~\ref{thm:exitcycle}(i) and the facts that $\tau^z_A \geq \tau^z_{\pa C_A(z)}$ and that $\G(C_A(z)) = \G(z,A) \geq \Dm$ for every $z \in \cZ_{\mathrm{opt}}$.\\

For the upper bound, we can argue that
\begin{align*}
\pr{\tha &> e^{\b(\DM +\e)}} =\\
&= \pr{\tha > e^{\b(\DM +\e)}, \tha < \t^x_{\pa \cp}} +\pr{\tha > e^{\b(\DM +\e)}, \t^x_{\pa \cp} \leq \tha }\\
&\leq \pr{\tha > e^{\b(\DM +\e)}, \tha < \t^x_{\pa \cp}} +\pr{\t^x_{\pa \cp} \leq \tha}.
\end{align*}
The second term is exponentially small in $\b$ thanks to Theorem~\ref{thm:exitcycle}(iii) applied to the cycle $\cp$, to which both $x$ and at least one state of $A$ belong. We now turn our attention to the first term.\\
If the Markov chain $\xtn$ hits the target set $A$ before exiting from the cycle $\cp$, then it has been following an optimal path and, in particular, before hitting $A$ it can have visited only states in the set $\cR \setminus A$. Consider a state $z \in \cR \setminus A$. By definition of $\cR$, $z$ can be reached from $x$ by means of an optimal path, i.e.~there exists a path $\o^* : z \to x$ such that $\Phi_{\o^*} \leq \Phi(x,A)$. This fact implies that $\Phi(z,A) \leq \Phi(x,A)$ and thus for every path in $\o \in \Omega^{\mathrm{opt}}_{z,A}$, we can obtain a path that belongs to $\Opt$ by concatenating $\o^*$ and $\o$. Hence,
\begin{equation}
\label{eq:PsizPsix}
\Psi_{\mathrm{max}}(z,A) \leq \DM.
\end{equation}
Lemma~\ref{lem:exitvtjcp} guarantees the existence of a cycle-path $C_1,\dots,C_n$ {\rm vtj}-connected to $A$ such that $z \in C_1$ and $C_1, \dots C_n \in \cM(\cX \setminus A)$. From the fact that this cycle-path is {\rm vtj}-connected and Lemma~\ref{lem:equivvtj}, it follows that $H(\cF(\pa C_i)) \leq \Phi(x,A)$. Definition~\eqref{eq:defDM}, inclusion~\eqref{eq:vtjopt} and inequality~\eqref{eq:PsizPsix} imply that
\[
\G(C_i)\leq \DM, \quad i=1,\dots,n.
\]
For every $i = 2,\dots, n$ take a state $y_{i} \in \cF(\pa C_{i-1}) \cap C_{i}$. Furthermore, take $y_{1} = z$ and $y_{n+1} \in \cF(\pa C_n)\cap A$. Consider the set of paths
\begin{equation*}
\mathcal E_{\e,z,A} := \mathcal E_{\e,z,A} \left (y_1,C_1,y_2,C_2,\dots,y_n,C_n,y_{n+1}\right ) 
\end{equation*}
consisting of the paths constructed by the concatenation of any $n$--tuple of paths $\o^{(1)},\o^{(2)},\dots,\o^{(n)}$ satisfying the following conditions:
\begin{itemize}[align=left]
\item[(1)] The path $\o^{(i)}$ has length $|\o^{(i)}| \leq e^{\b(\DM+\e/4)}$, for any $i=1,\dots,n$;
\item[(2)] The path $\o^{(i)}$ joins $y_{i}$ to $y_{i+1}$, i.e.~$\o^{(i)} \in \Omega_{y_{i},y_{i+1}}$, for any $i=1,\dots,n$;
\item[(3)] All the states $\o^{(i)}_{j}$ belong to $C_{i}$ for any $j=1,\dots,|\o^{(i)}|-1$, for any $i=1,\dots,n$.
\end{itemize}
We stress that the first condition restricts the set $\mathcal E_{\e,z,A}$ to paths that spend less than $e^{\b(\DM+\e/4)}$ time in cycle $C_{i}$, for every $i =1,\dots n$. Note that the length of any path $\o \in \mathcal E_{\e,z,A}$ satisfies the upper bound $|\o| \leq |\cX| e^{\b(\DM+\e/4)}$. Moreover, since the state space $\cX$ is finite, for $\b$ sufficiently large 
\[
|\o| \leq |\cX| e^{\b(\DM+\e/4)} \leq e^{\b(\DM+\e/2)} \quad \forall \, \o \in \mathcal E_{\e,z,A}.
\]
Therefore, for every $z \in \cR \setminus A$
\[
\pr{\t^z_{A} \leq e^{\b(\DM+\e/2)} } \geq \pr{\t^z_{A} \leq e^{\b(\DM+\e/2)}, (X_m)_{m=1}^{\tha} \in \mathcal E_{\e,z,A} } = \pr{(X_m)_{m=1}^{\t^z_{A}} \in \mathcal E_{\e,z,A}}.
\] 
Using the Markov property, we obtain that for any $\e'>0$ and $\b$ sufficiently large
\[
\pr{(X_m)_{m=1}^{\tha} \in \mathcal E_{\e,z,A}} = \prod_{i=1}^{n} \pr{\t^{y_{i}}_{\pa C_{i}} \leq e^{\b(\DM + \e/4)}, X^{y_{i}}_{\t^{y_i}_{\pa C_{i}}} = y_{i+1} } \geq e^{-\b \e' n} \geq e^{-\b  \e'|\cX|},
\]
where the second last inequality follows from Theorem~\ref{thm:exitcycle}(v). Since $e^{-\b  \e'|\cX|}$ does not depend on the initial state $z$,
\[
\inf_{ z \in \cR \setminus A} \pr{\t^z_{A} \leq e^{\b(\DM+\e/2)} } \geq e^{-\b  \e'|\cX|}.
\]
Applying iteratively the Markov property at the times $k e^{\b(\DM + \e/2)}$, with $k=1,\dots, e^{\b \e /2}$, we obtain that
\begin{align*}
\pr{\tha > e^{\b(\DM+\e)},\tha <\t^x_{\pa \cp}}
& \leq  \Big(\sup_{z \in \cR \setminus A}  \pr{\t^z_{A} > e^{\b(\DM+\e/2)} } \Big)^{e^{\b \e/2}} \\
& \leq \left ( 1 - e^{-\b \e' |\cX|} \right )^{e^{\b \e/2}} \leq e^{ - e^{\b (\e/2-\e'|\cX|)}}.
\end{align*}
We remark that we can take the supremum over the states in $\cR \setminus A$, since all the other states in $\cp \setminus \cR$ cannot be reached by means of an optimal path (i.e.~without exiting from $\cp$) before visiting the target subset $A$. Choosing $\e' > 0$ small enough and $\b$ sufficiently large, we get that $e^{ -e^{ \b(\e/2-\e' |\cX|)}} \leq e^{-k\b}$ for any $k>0$. \qed


\subsection{Proof of Lemma~\ref{lem:equivvtj}}
Take a path $\o \in \Ome$ and consider the corresponding cycle-path $\cC_\o=(C_1,\dots,C_{m(\o)})$. We will show that 
\[
\o \not\in \Vtj \quad \Longleftrightarrow \quad \exists \, i ~:~ \Phi(\o_{i+1},A) > \Phi(\o_{i},A).
\]
If $\o \not\in \Vtj$, then the cycle-path $\cC_\o=(C_1,\dots,C_{m(\o)})$ is not {\rm vtj}-connected to $A$, i.e.~there exists an index $1 \leq k \leq m(\o)$ such that $\pa C_{k} \cap C_{k+1} \neq \emptyset$, but $\cF(\pa C_{k}) \cap C_{k+1} = \emptyset$. Take the corresponding index $i$ in the path $\o$ such that $\o_i \in C_k$ and $\o_{i+1} \in \pa C_k \cap C_{k+1}$. Since $\o_{i+1} \not\in \cF(\pa C_k)$,
\[
\Phi(\o_{i+1},A) \geq H(\o_{i+1}) > H(\cF(\pa C_k)) = \Phi(\o_i,A),
\]
where the last equality follows from the fact that $C_k=\cC_A(\o_i)$.

For the converse implication, suppose the path $\o$ contains two consecutive states $\o_i$ and $\o_{i+1}$ such that $\Phi(\o_{i+1},A) > \Phi(\o_{i},A)$. Consider the index $k$ such that $\o_i \in C_k$. The states $\o_i$ and $\o_{i+1}$ must be contained in two different cycles, otherwise if $\cC_A(\o_i) =C_k = \cC_A(\o_{i+1})$, one would have $\Phi(\o_{i+1},A) = \Phi(\o_{i},A)$, which is a contradiction. Assume then that $\o_i \in C_k$ for some $k$, and thus $\o_{i+1} \in \pa C_k \cap C_{k+1}$ by definition of a cycle-path. The state $\o_{i+1}$ cannot belong to $\cF(\pa C_k)$, since otherwise
\[
\Phi(\o_{i+1},A) \leq \Phi(\cF(\pa C_k),A) = H(\cF(\pa C_k))=\Phi(\o_i,A),
\]
where we have used the fact that $C_k = C_A(\o_i)$. Hence $\o_{i+1} \not\in \cF(\pa C_k)$ and thus $\o \not\in \Vtj$. \qed


\subsection{Proof of Lemma~\ref{lem:exittube}}

In~\eqref{eq:boundaryTha} we have used the fact that the only way to exit from the tube $\Tha$ without having hit the subset $A$ first is to exit from the non-principal boundary of a cycle $C \in \FT$. Therefore
\begin{align*}
\pr{\t^x_{\pa \Tha} < \tha}
& = \sum_{C \in \FT } \pr{\t^x_{\pa \Tha} < \tha, \, X_{\t^x_{\pa \Tha}-1} \in C,  \, X_{\t^x_{\pa \Tha}} \not\in \cF(\pa C)}\\
& = \sum_{C \in \FT } \sum_{z \in C} \pr{\t^x_{\pa \Tha} < \tha, \, X_{\t^x_{\pa \Tha}-1}=z,  \, X_{\t^x_{\pa \Tha}} \not\in \cF(\pa C)}\\
& \leq \sum_{C \in \FT } |C| \sup_{z \in C} \pr{X_{\t^z_{\pa C}} \not\in \cF(\pa C) } \leq \sum_{C \in \FT } |C| e^{-k_C \b} < e^{- \kappa \b},
\end{align*}
for some $\kappa > 0$ and $\b$ sufficiently large. The second last inequality follows from Theorem~\ref{thm:exitcycle}(iv). Thanks to the definition~\eqref{eq:defTha} of the typical tube, $\pr{\t^x_{\pa \Tha} = \tha}=0$, since all the states of the target state $A$ that can be hit starting from $x$ by means of a typical path belong to $\Tha$ and not to $\pa \Tha$. \qed


\subsection{Proof of Proposition~\ref{prop:plowup2}}

As mentioned in Subection~\ref{sub34}, this proposition is a refinement of Proposition~\ref{prop:plowup1}, so instead of giving a full proof, we will just describe the necessary modifications.\\

We first prove~\eqref{eq:plow2}. Consider the event $\{\tha < e^{\b(\Tm -\e)}\}$ first. There are two possible scenarios: Either the process exits the tube $\Tha$ of typical paths before hitting $A$ or it stays in $\Tha$ until it hits $A$. Hence,
\begin{align}
\label{eq:lowTha}
\pr{\tha &< e^{\b(\Tm -\e)}}  \nonumber \\
&= \pr{\tha < e^{\b(\Tm -\e)}, \tha < \t^x_{\pa \Tha}} +\pr{\t^x_{\pa \Tha} \leq \tha < e^{\b(\Tm -\e)}} \nonumber\\
&\leq \pr{\tha < e^{\b(\Tm -\e)}, \tha < \t^x_{\pa \Tha}} +\pr{\t^x_{\pa \Tha} \leq \tha}.
\end{align}
Lemma~\ref{lem:exittube} implies that the second term in the right-hand side of~\eqref{eq:lowTha} is exponentially small in $\b$. In order to derive an upper bound for the first term in~\eqref{eq:lowTha}, we introduce the set
\[
\cZ_{\mathrm{vtj}}:=\{ z \in \Tha \setminus A \st \G(z,A) \geq \Tm \}.
\]
By definition~\eqref{eq:defTm} of $\Tm$, every typical path $\o \in \Vtj$ must inevitably visit a cycle of depth not smaller than $\Tm$ and therefore has to enter the subset $\cZ_{\mathrm{vtj}}$ before hitting $A$. Hence, for every $z \in \cZ_{\mathrm{vtj}}$, conditioning on the event $\{ \tha < \t^x_{\pa \Tha}, \, X_{\t^x_{\cZ_{\mathrm{vtj}}}} = z\}$, we can write
\[
\tha \ed \t^x_{z} + \t^{z}_{A},
\]
and in particular we have that $\tha >_\textrm{st} \t^{z}_{A}$. Using this fact and arguing like in~\eqref{eq:Zopt}, we can prove that there exists $\kappa >0$ such that $\b$ sufficiently large such that
\[
\pr{\tha < e^{\b(\Tm -\e)}, \tha < \t^x_{\pa \Tha}} \leq e^{-\kappa \b}.
\]

We now turn our attention to the proof of the upper bound~\eqref{eq:pup2}. First note that
\begin{align}
\pr{\tha &> e^{\b(\TM+\e)} } = \nonumber\\
& = \pr{\tha > e^{\b(\TM+\e)} , \tha <\t^x_{\pa \Tha}}  +\pr{\tha > e^{\b(\TM+\e)}, \t^x_{\pa \Tha} \leq \tha} \nonumber\\
& \leq \pr{\tha > e^{\b(\TM+\e)} , \tha <\t^x_{\pa \Tha}}  +\pr{\t^x_{\pa \Tha} \leq \tha},
\label{eq:upTha}
\end{align}
where the the latter term is exponentially small in $\b$ for $\b$ sufficiently large, thanks to Lemma~\ref{lem:exittube}. For the first term in~\eqref{eq:upTha}, we refine the argument given in the second part of the proof of Proposition~\ref{prop:plowup1}. Consider a state $z \in \Tha \setminus A$. Since $\mathrm{T}_{A}(z) \subseteq \Tha$, it follows from~\eqref{eq:TMequiv} that
\begin{equation}
\label{eq:TzTx}
\Theta_{\mathrm{max}}(z,A) \leq \TM.
\end{equation}
Thanks to Lemma~\ref{lem:exitvtjcp}, there exists a cycle-path of maximal cycles $C_1,\dots,C_n \subset$ in $\cX \setminus A$ that is {\rm vtj}-connected to $A$ and such that $z\in C_1$. The definition of {\rm vtj}-connected cycle-path, Lemma~\ref{lem:equivTM} and inequality~\eqref{eq:TzTx} imply that
\begin{equation}
\G(C_i)\leq \TM, \quad \forall \, i=1,\dots,n.
\label{eq:GCiTM}
\end{equation}
For each $i =2, \dots, n$, take a state $y_{i} \in \cF(\pa C_{i-1}) \cap C_{i}$. Furthermore, take $y_{1} = z$ and $y_{n+1} \in \cF(\pa C_n)\cap A$. We consider the collection of paths
\[
\mathcal E^*_{\e,z,A} := \mathcal E^*_{\e,z,A} \left (y_1,C_1,y_2,C_2,\dots,y_n,C_n,y_{n+1}\right), 
\]
which consists of all paths obtained by concatenating any $n$--tuple of paths $\o^{(1)},\o^{(2)},\dots,\o^{(n)}$ satisfying the following conditions:
\begin{itemize}[align=left]
\item[(1)] The path $\o^{(i)}$ has length $|\o^{(i)}| \leq e^{\b(\TM+\e/4)}$, for any $i=1,\dots,n$;
\item[(2)] The path $\o^{(i)}$ joins $y_{i}$ to $y_{i+1}$, i.e.~$\o^{(i)} \in \Omega_{y_{i},y_{i+1}}$, for any $i=1,\dots,n$;
\item[(3)] All the states $\o^{(i)}_{j}$ belong to $C_{i}$ for any $j=1,\dots,|\o^{(i)}|-1$, for any $i=1,\dots,n$.
\end{itemize}
This collection is similar to the collection $\mathcal E_{\e,z,A}$ described in the proof of Proposition~\ref{prop:plowup1}, but condition~(1) here is stronger. Using~\eqref{eq:GCiTM} and arguing as in the proof of Proposition~\ref{prop:plowup1}, we obtain that
\[
\pr{\t^z_{A} \leq e^{\b(\TM+\e/2)} } \geq \pr{(X_m)_{m=1}^{\tha} \in \mathcal E^*_{\e,z,A}} \geq e^{-\b  \e'|\cX|}.
\]
Since $e^{-\b  \e'|\cX|}$ does not depend on the initial state $z$, we get for any $\e'>0$ and $\b$ sufficiently large
\[
\inf_{ z \in \Tha} \pr{\t^z_{A} \leq e^{\b(\TM+\e/2)} } \geq e^{-\b  \e'|\cX|},
\]
and thus
\begin{align}
\pr{\tha > e^{\b(\TM+\e)},\tha <\t^x_{\pa \Tha}}
& \leq  \Big(\sup_{z \in \Tha \setminus A}  \pr{\t^z_{A} > e^{\b(\TM+\e/2)} } \Big)^{e^{\b \e/2}} \nonumber \\
& \leq \left ( 1 - e^{-\b \e' |\cX|} \right )^{e^{\b \e/2}} \leq e^{ - e^{\b (\e/2-\e'|\cX|)}}, \label{eq:firstterm} 
\end{align} 
by applying iteratively the Markov property at the times $k e^{\b(\TM + \e/2)}$, with $k=1,\dots, e^{\b \e /2}$. Choosing $\e' > 0$ small enough and $\b$ sufficiently large, we get that the right-hand side of inequality~\eqref{eq:firstterm} is super-exponentially small in $\b$, which completes the proof of the upper bound~\eqref{eq:pup2}. \qed


\subsection{Proof of Theorem~\ref{thm:l1}}

Since Assumption (A1) holds, we set $\Theta(x,A) = \Tm = \TM$. The starting point of the proof is the following technical lemma.

\begin{lem}[Uniform integrability] \label{lem:ui}
If Assumption {\rm (A2)} holds, then for any $\e >0$ the variables $Y^x_{A}(\b) := \tha e^{-\b(\Theta(x,A) +\e)}$ are uniformly integrable, i.e.~there exists $\b_0 >0$ such that for any $\d>0$ there exists $K \in (0,\infty)$ such that for any $\b > \b_0$
\[
\E \left (Y^x_{A}(\b) \mathds{1}_{\{Y^x_{A}(\b) >K\}} \right )< \d.
\]
\end{lem}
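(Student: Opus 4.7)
The plan is to derive a geometric tail estimate for $\tha$ that holds uniformly in $\b$ above some threshold $\b_0$, and then integrate to obtain uniform integrability. The first step is to exploit Assumption (A2) to get a uniform hitting-time bound, irrespective of the starting state: since $\Theta_{\max}(z,A) \leq \Theta(x,A)$ for every $z \in \cX \setminus A$, applying the upper bound~\eqref{eq:pup2} of Proposition~\ref{prop:plowup2} to the pair $(z,A)$ for each $z \in \cX \setminus A$ and taking the minimum of the finitely many constants involved yields $\kappa>0$ and $\b_0$ such that for every $\b \geq \b_0$,
\[
\sup_{z \in \cX \setminus A} \pr{\tau^{z}_{A} > e^{\b(\Theta(x,A)+\e/2)}} \leq e^{-\kappa \b}.
\]

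Next, I would iterate this bound via the strong Markov property applied at multiples of $T_\b := e^{\b(\Theta(x,A)+\e/2)}$. On $\{\tha > n T_\b\}$, the chain at time $n T_\b$ lies in $\cX \setminus A$, so by induction
\[
\pr{\tha > n T_\b} \leq e^{-n \kappa \b}, \qquad n \in \N.
\]
Since $\{Y^x_A(\b) > K\} = \{\tha > K e^{\b \e/2} T_\b\}$, setting $n = \lfloor K e^{\b \e/2}\rfloor$ gives $\pr{Y^x_A(\b) > K} \leq e^{-\kappa \b \lfloor K e^{\b\e/2} \rfloor}$. After possibly enlarging $\b_0$ so that $e^{\b_0 \e/2} \geq 4$, one has $\lfloor K e^{\b \e/2} \rfloor \geq 2K$ for every $K \geq 1$ and $\b \geq \b_0$, hence
\[
\pr{Y^x_A(\b) > K} \leq e^{-2\kappa \b_0 K} \qquad \forall\, K \geq 1,\ \b \geq \b_0.
\]

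Finally, I would close the argument via the layer-cake identity
\[
\E \bigl( Y^x_A(\b) \mathds{1}_{\{Y^x_A(\b) > K\}} \bigr) = K \pr{Y^x_A(\b) > K} + \int_K^\infty \pr{Y^x_A(\b) > t}\, dt,
\]
which combined with the tail bound above yields, for every $K \geq 1$ and $\b \geq \b_0$,
\[
\E \bigl( Y^x_A(\b) \mathds{1}_{\{Y^x_A(\b) > K\}} \bigr) \leq K e^{-2\kappa \b_0 K} + \frac{1}{2\kappa \b_0} e^{-2\kappa \b_0 K}.
\]
The right-hand side is independent of $\b$ and tends to $0$ as $K \to \infty$, so given any $\d>0$ one can pick $K$ large enough to make it smaller than $\d$, uniformly in $\b \geq \b_0$, which is precisely what is required.

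The main obstacle is not conceptual but rather the bookkeeping that ensures $\kappa$ and $\b_0$ are genuinely uniform: this works because $\cX \setminus A$ is a finite set, so the minimum of the finitely many constants produced by the pointwise application of Proposition~\ref{prop:plowup2} remains strictly positive, and similarly the maximum of the corresponding thresholds is finite.
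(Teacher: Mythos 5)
Your proof is correct and follows essentially the same strategy as the paper's: invoke Assumption (A2) together with Proposition~\ref{prop:plowup2} to get a tail bound on $\tau^z_A$ uniform over $z \in \cX \setminus A$ (using finiteness of $\cX$ to take a single $\kappa$ and $\b_0$), then iterate the Markov property to obtain a geometric tail and integrate. The paper compresses the final step to the remark that a geometric tail $\pr{Y^x_A(\b) > n} \le a^n$ with $a<1$ suffices; you spell out the layer-cake computation and introduce the $\e/2$ headroom to handle the conversion between $K$ and multiples of the time step cleanly, which is a valid and slightly more careful variant of the same argument.
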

\begin{proof}
The proof is similar to that of~\cite[Corollary 3.5]{MNOS04}. It suffices to have exponential control of the tail of the random variable $Y^x_{A}(\b)$ for $\b$ sufficiently large,~i.e.
\[
\pr{Y^x_{A}(\b) > n } = \pr{ \tha e^{-\b(\Theta(x,A) +\e)} > n} \leq a^n,
\]
with $a<1$. Assumption (A2) implies that $\Theta_{\mathrm{max}}(z,A) \leq \Theta(x,A)$ for every $z \in \cX \setminus A$. Then, iteratively using the Markov property gives
\[
\pr{ \tha  > n e^{-\b(\Theta(x,A) +\e)}} 
\leq \Big (\sup_{z \not\in A} \pr{ \t^{z}_{A}  > e^{\b(\Theta(x,A) +\e)}}\Big )^n \leq \Big (\sup_{z \not\in A} \pr{ \t^{z}_{A}  > e^{\b(\Theta_{\mathrm{max}}(z,A) +\e)}}\Big )^n,
\]
and the conclusion follows from Proposition~\ref{prop:plowup2}. \qed
\end{proof}
Proposition~\ref{prop:plowup2} implies that the random variable $Y^x_{A}(\b) := \tha e^{-\b(\Theta(x,A) +\e)}$ converges to $0$ in probability as $\binf$. Lemma~\ref{lem:ui} guarantees that the sequence $(Y^x_{A}(\b))_{\b \geq \b_0}$ is also uniformly integrable and thus $\limb \E |Y^x_{A}(\b)| = 0$. Therefore, for any $\e>0$ we have that for $\b$ sufficiently large $\E \tha < e^{\b(\Theta(x,A) +\e)}$.
As far as the lower bound is concerned, for any $\e>0$ Proposition~\ref{prop:plowup2} and the identity $\Theta(x,A)=\Tm$ yield 
\[
\E \tha > e^{\b(\Theta(x,A) -\e/2)} \pr{ \tha > e^{\b(\Theta(x,A) -\e/2)}} \geq e^{\b(\Theta(x,A) -\e/2)} (1-e^{-\kappa \b}) \geq e^{\b(\Theta(x,A) -\e)}.
\]
Since $\e$ is arbitrary, the conclusion follows. \qed


\subsection{Proof of Theorem~\ref{thm:ae}}
As mentioned before, the strategy is to show that the Markov chain $\xtn$ satisfies the assumptions of \cite[Theorem 2.3]{FMNS14}, which for completeness we reproduce here. For $R > 0$ and $r \in (0,1)$, we say that the pair $(x,A)$ with $A \subset \cX$ satisfies $\textrm{Rec}(R, r)$ if
\[
\sup_{z \in \cX} \pr{ \tau^z_{\{x,A\}}>R } \leq r.
\]
The quantities $R$ and $r$ are called \textit{recurrence time} and \textit{recurrence error}, respectively.

\begin{thm}{\rm \cite[Theorem 2.3]{FMNS14}}
Consider a nonempty subset $A \subset \cX$ and $x \not\in A$ such that $\mathrm{Rec}(R(\b),r(\b))$ holds and 
\begin{itemize}[align=left]
\item[{\rm (i)}] $\limb R(\b) /\E \tha (\b) = 0$,
\item[{\rm (ii)}] $\limb r(\b)=0$. 
\end{itemize}
Then there exist two functions $k_1(\b)$ and $k_2(\b)$ with $\limb k_1(\b)=0$ and $\limb k_2(\b)=0$ such that for any $s>0$
\begin{equation}\label{eq:aerec}
\Big | \pr{\frac{ \tha }{\E \tha} > s} - e^{-s} \Big | \leq k_1(\b) e^{-(1-k_2(\b))s}.
\end{equation}
\end{thm}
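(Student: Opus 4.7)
The goal is a quantitative ``memoryless'' statement for $\tha/\E\tha$ with errors controlled by $R(\b)/\E\tha(\b)$ and $r(\b)$. The natural strategy is to turn the recurrence hypothesis $\mathrm{Rec}(R,r)$ into two-sided approximate multiplicativity of the survival function $F(u):=\pr{\tha>u}$, use the normalisation $\int_0^\infty F(u)\,du=\E\tha$ to identify the exponential rate, and finally compare $F(sT)$ to $e^{-s}$ with $T:=\E\tha$.

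First I would fix $s,t>0$ and apply the strong Markov property at the stopping time $\sigma:=\inf\{u\geq sT : X_u\in\{x\}\cup A\}$. On $\{\tha>sT\}$ the recurrence gives $\mathbb{P}(\sigma-sT>R\mid\mathcal{F}_{sT})\leq r$ and $X_\sigma\in\{x\}\cup A$. Decomposing according to whether $X_\sigma=x$ or $X_\sigma\in A$ and applying strong Markov at $\sigma$, I would derive both sides of
\[
F(sT)\,F(tT+R)-r\;\leq\;F(sT+tT)\;\leq\;F(sT)\bigl(F(tT-R)+r\bigr),
\]
the upper bound absorbing $\{X_\sigma\in A\}$ into the $r$ error (it contributes only when $\tha\leq sT+R$), the lower bound paying an $r$ loss for the chance that the first re-entry to $\{x\}\cup A$ lands in $A$ and for the shortfall of $R$ in the residual lifetime from $x$.

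Writing $\epsilon(\b):=R/T+r\to 0$ and $h(s):=F(sT)$, this reads as near-multiplicativity $h(s+t)\approx h(s)h(t)$ up to an argument shift of order $\epsilon$ and an additive $r$. Iterating the upper inequality along a fixed grid of spacing $\Delta$ yields $h(n\Delta)\leq h(\Delta-R/T)^n+O(n r)$, which together with Markov's inequality $h(s)\leq 1/s$ produces a uniform geometric upper tail $h(s)\leq C(\b)\,\rho(\b)^{s}$ with $\rho(\b)<1$; the super-multiplicative estimate supplies a matching lower envelope. The integral constraint $\int_0^\infty h(s)\,ds=1$ then forces the effective rate $\lambda(\b):=-\log\rho(\b)$ to converge to $1$ as $\binf$, with a modulus controlled by $\epsilon(\b)$.

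To convert the rate estimate into the target bound $|h(s)-e^{-s}|\leq k_1(\b)\,e^{-(1-k_2(\b))s}$, I would split $|h(s)-e^{-s}|\leq|h(s)-e^{-\lambda(\b)s}|+|e^{-\lambda(\b)s}-e^{-s}|$, taking $k_2(\b):=1-\lambda(\b)$ for the second piece and reading off $k_1(\b)$ from the multiplicative closeness of $h$ to its best exponential fit $e^{-\lambda(\b)s}$ obtained by iterating the near-multiplicativity. The main obstacle is ensuring that the compounded error stays of the required form \emph{uniformly in $s>0$}: iterating on a uniform grid of spacing $\Delta$ gives an error linear in $s/\Delta$, which is too crude at large $s$. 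The fix is to iterate along a geometric time grid so that only $O(\log s)$ compositions are needed to reach scale $s$, forcing the accumulated error to be absorbed into the exponential prefactor $e^{-(1-k_2(\b))s}$ rather than growing polynomially in $s$.
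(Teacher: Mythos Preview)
The paper does not prove this theorem at all: it is quoted verbatim from~\cite[Theorem~2.3]{FMNS14} and used as a black box to deduce Theorem~\ref{thm:ae}. The only work the paper does around this statement is to verify, under Assumption~B, that the hypotheses $\mathrm{Rec}(R(\b),r(\b))$, (i), and (ii) are satisfied with $R(\b)=e^{\b(\tG(\cX\setminus(A\cup\{x\}))+\e)}$ and $r(\b)=e^{-e^{\kappa\b}}$. So there is no ``paper's own proof'' to compare against; you have written a proof sketch for a result the authors simply import.

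That said, your strategy---strong Markov at the first return to $\{x\}\cup A$ after time $sT$, approximate multiplicativity of $F$, then identification of the rate via $\int F=1$---is indeed the standard route and is essentially how the result is proved in the cited reference. A couple of points in your sketch would need care if you were to make it rigorous. First, your lower bound $F(sT)F(tT+R)-r\le F((s+t)T)$ is not obviously correct as written: once $X_\sigma=x$ with $\sigma\in[sT,sT+R]$, the residual requirement is that a fresh $\tha$ exceed $(s+t)T-\sigma\le tT$, so the natural factor is $F(tT)$, not $F(tT+R)$; moreover the additive loss should be $rF(sT)$ rather than a bare $r$, and you also need to control $\mathbb P(X_\sigma\in A\mid\tha>sT)$, which is not bounded directly by $r$ but requires a separate (easy) argument. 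Second, your fix for the large-$s$ regime via a geometric time grid is the right instinct, but the actual argument in~\cite{FMNS14} avoids this by first establishing a uniform geometric tail $h(s)\le C\rho^s$ from a single-step iteration and then feeding that back into the multiplicativity estimate, so the accumulated error is already exponentially damped; you might find that cleaner than tracking $O(\log s)$ compositions.
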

Since $\tG(\cX \setminus (A\cup \{x\})) < \Theta(x,A)$ by assumption, we can take $\e>0$ small enough such that $\tG(\cX \setminus (A\cup \{x\})) +\e < \Theta(x,A)$. Proposition~\ref{prop:pup} implies that there exists $\kappa>0$ such that the pair $(x,A)$ satisfies $\textrm{Rec}(e^{\b \tG(\cX \setminus (A\cup \{x\}))+\e)}, e^{-\kappa \b})$ for $\b$ sufficiently large, since
\[
\sup_{z \in \cX} \pr{ \t^z_{\{x,A\}}> e^{\b (\tG(\cX \setminus (A\cup \{x\}))+\e)} } \leq e^{-e^{\kappa \b}}.
\]
Clearly $r(\b)=e^{-e^{\kappa \b}} \to 0$ as $\binf$ and thus assumption (ii) holds. Assumption (i) is also satisfied, since
\[
\limb \frac{1}{\b} \log R(\b) = \tG(\cX \setminus (A\cup \{x\})) +\e < \Theta(x,A)=\limb \frac{1}{\b} \log \E \tha. \eqno \qed
\]

\subsection{Proof of Proposition~\ref{prop:mix}}
The two limits in~\eqref{eq:mixrho} are an almost immediate consequence of~\cite[Theorem 5.1]{C99} and~\cite[Proposition 2.1]{Miclo2002}. Indeed, we just need to show that the critical depths $H_2$ and $H_3$ (see below for their definitions) that appear in these two results are equal to $\tG( \cX \setminus \{s\})$, for any $s \in \ss$. The critical depth $H_2$ is equal to $\tG( \cX \setminus \{s\})$ by definition, see~\cite{C99}. Note that this quantity is well defined, since its value is independent of the choice of $s$, as stated in~\cite[Theorem 5.1]{C99}. This critical depth is also known in the literature as \textit{maximal internal resistance} of the state space $\cX$, see~\cite[Remark 4.4]{MNOS04}.

The definition of the critical depth $H_3$ is more involved and we need some further notation. Consider the two-dimensional Markov chain $(X_t,Y_t)_{t \geq 0}$, where $X_t$ and $Y_t$ are two independent Metropolis Markov chains on the same energy landscape $(\cX,H,q)$ and indexed by the same inverse temperature $\b$. In other words, $(X_t,Y_t)_{t \geq 0}$ is the Markov chain on $\cX \times \cX$ with transition probabilities $P_\b^{\otimes 2}$ given by
\[
P_\b^{\otimes 2}\Big ( (x,y),(w,z) \Big) = P_\b(x,w)P_\b(y,z) \quad \forall \, (x,y),(w,z) \in \cX^2.
\]
The critical depth $H_3$ is then defined as
\[
H_3:= \tG( \cX \times \cX \setminus D),
\]
where $D := \{ (x,x) \st x \in \cX\}$. Consider the \textit{null-cost graph} on the set of stable states, i.e.~the directed graph $(V,E)$ with vertex set $V=\ss$ and edge set 
\[
E=\Big \{ (s,s') \in \ss \times \ss ~\Big | ~ \limb - \frac{1}{\b} \log P_\b(s,s') = 0 \Big \}.
\]
\cite[Theorem 5.1]{C99} guarantees that $H_2 \leq H_3$ and states that if the null-cost graph has an aperiodic component, then $H_2=H_3$. We claim that this condition is always satisfied by a Metropolis Markov chain with energy landscape $(\cX,H,q)$ with a non-constant energy function $H$. It is enough to show that for any such a Markov chain there exists at least one stable state $s \in \ss$ such that 
\[
\limb - \frac{1}{\b} \log P_\b(s,s) = 0.
\]
The subset $\cX \setminus \ss$ is a non-empty set, since $H$ is non-constant. Since $q$ is irreducible, there exists a state $s \in \ss$ and $x \in \cX \setminus \ss$ such that $q(s,x) >0$. Furthermore, we can choose $s \in \ss$ and $x \in \cX \setminus \ss$ such that the difference $H(x)-H(s)$ is minimal. For this stable state $s$, the transition probability towards itself reads
\begin{align*}
P_\b(s,s) &=1 - \sum_{y \neq s} q(s,y)e^{-\b (H(y)-H(s))^+} \\
& = 1 - \sum_{s' \in \ss, \, s'\neq s} q(s,s') - \sum_{y \in \cX \setminus \ss} q(s,y) e^{-\b (H(y)-H(s))^+}\\
& \geq 1 - \sum_{s' \in \ss, \, s'\neq s} q(s,s') -  e^{-\b (H(x)-H(s))^+} \sum_{y \in \cX \setminus \ss} q(s,y) \\
&  \geq 1 - \sum_{s' \in \ss, \, s'\neq s} q(s,s') -  e^{-\b (H(x)-H(s))^+}.
\end{align*}
Since $q$ is a stochastic matrix, it follows that $1- \sum_{s' \in \ss, \, s'\neq s} q(s,s') >0$ independently of $\b$ and thus
\[
\limb - \frac{1}{\b} \log P_\b(s,s) = 0,
\]
since for every $\e>0$ there exists $\b_0$ such that $P_\b(s,s) \geq 1 - \sum_{s' \in \ss, \, s'\neq s} q(s,s') - e^{-\b (H(x)-H(s))^+} > e^{- \b \e}$ for $\b > \b_0$.\\
Finally, the bounds~\eqref{eq:rho} follow immediately from~\cite[Theorem 2.1]{HS88}, since the quantity $m$ which appears there is equal to $\tG( \cX \setminus \{s\})$ thanks to Lemma~\ref{lem:GAequiv1}. \qed


\section{Energy landscape analysis for hard-core grid models}
\label{sec5}
This section is devoted to the analysis of the energy landscapes corresponding to the hard-core dynamics on the three different types of grids presented in Section~\ref{sec2}. Starting from geometrical and combinatorial properties of the admissible configurations, we prove some structural properties of the energy landscapes for $\cX_{T_{K,L}}, \cX_{G_{K,L}}$ and $\cX_{C_{K,L}}$. These results are precisely the model-dependent characteristics that are needed to exploit the general framework developed in Section~\ref{sec3} to obtain the main results for the hard-core model on grids presented in Subsection~\ref{sub23}. These structural properties are stated in the next three theorems and the rest of this section is devoted to their proofs.

\begin{thm}[Structural properties of $\cX_{T_{K,L}}$] \label{thm:toricel}
Let $\L$ be the $K\times L$ toric grid $T_{K,L}$. Then
\begin{itemize}[align=left]
\item[{\rm (a)}] $\tG(\cX \setminus \{\ee,\oo\}) \leq \min\{K,L\}$, 
\item[{\rm (b)}] $\G(\ee,\{\oo\}) = \min\{K,L\}+1 = \tG(\cX \setminus \{\oo\})$.
\end{itemize}
\end{thm}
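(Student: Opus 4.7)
Without loss of generality take $L \leq K$, so $\min\{K,L\}=L$. The theorem splits into three sub-claims: part (a); the communication-height identity $\G(\ee,\{\oo\})=L+1$; and the depth identity $\tG(\cX\setminus\{\oo\})=L+1$. The last follows from the first two and Lemma~\ref{lem:partition}, since $\cM(\cX\setminus\{\oo\})$ decomposes into the initial cycle $C_{\{\oo\}}(\ee)$, of depth $\G(\ee,\{\oo\})=L+1$, together with cycles contained in $\cX\setminus\{\ee,\oo\}$, each of depth at most $L$ by part (a).

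\textbf{Lower bound $\G(\ee,\{\oo\})\geq L+1$.} Given any path $\omega:\ee\to\oo$, let $t^*$ be the first time at which some row or some column of $\omega_{t^*}$ has all its odd sites occupied (so by admissibility its even sites are also empty, i.e.\ it is of ``$\oo$-type''); such $t^*$ exists because $\oo$ has this property. Assume first that a row $i^*$ becomes $\oo$-type via the addition of an odd particle $(i^*,j^*)$ at step $t^*$. At $t^*-1$, row $i^*$ has $L/2-1$ particles, the four sites $(i^*\pm 1,j^*)$ and $(i^*,j^*\pm 1)$ are forced empty by admissibility of the move, and by minimality of $t^*$ no row or column is $\oo$-type. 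A parity check on the occupancies of $(i^*,j)$ and of $(i^*\pm 1,j^*)$ rules out any column being $\ee$-type as well. Since an admissible column independent set of size $K/2$ must coincide with one of the two maximum independent sets of the column cycle---that is, be either $\ee$- or $\oo$-type---this forces $n_j\leq K/2-1$ for every column, giving $|\omega_{t^*}|\leq KL/2-L$ and hence $|\omega_{t^*-1}|\leq KL/2-L-1$, so $H(\omega_{t^*-1})\geq H(\ee)+L+1$. The delicate sub-case is when the addition simultaneously completes a column $j^*$ to $\oo$-type: the direct column-count then degrades by one unit, but the symmetric row-count yields $|\omega_{t^*-1}|\leq KL/2-K$, and $K\geq L$ closes the gap strictly when $K>L$. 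The boundary case $K=L$ requires a further argument tracking the path's history: the $L-1$ distinct odd particles forced to lie in $\omega_{t^*}$ have even-neighbor sets collectively covering $V_e$, and a combinatorial counting of the required ordering of removals and insertions shows the path must have dipped to at most $KL/2-L-1$ particles earlier in its trajectory.

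\textbf{Upper bound and part (a).} For the upper bound $\G(\ee,\{\oo\})\leq L+1$ I would construct an explicit path $\omega^*:\ee\to\oo$ as a ``two-interface sweep''. It first empties two opposite rows of the torus (removing $L$ particles and raising the energy to $H(\ee)+L$), creating a configuration in which the $\ee$-block is sandwiched between two empty rows. The sweep then propagates one of these empty rows across the torus one row at a time: at each stage, to convert the next $\ee$-row into a $\oo$-row, one removes a single even particle from the adjacent $\ee$-row and immediately inserts the corresponding odd particle in the previously empty row, keeping the energy oscillating between $H(\ee)+L$ and $H(\ee)+L+1$. A final cleanup fills the last two rows with odd particles, dropping the energy back to $H(\ee)$, so the overall peak is exactly $H(\ee)+L+1$. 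For part (a), Lemma~\ref{lem:GAequiv1} reduces $\tG(\cX\setminus\{\ee,\oo\})\leq L$ to showing $\Phi(\sigma,\{\ee,\oo\})-H(\sigma)\leq L$ for every $\sigma\notin\{\ee,\oo\}$; I would prove this by a combinatorial recursion on a defect functional counting rows and columns of $\sigma$ that match neither the $\ee$- nor the $\oo$-pattern, each such defect being resolvable at communication cost at most $L$ by sweeping an empty interface of length $L$ across the grid.

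\textbf{Main obstacle.} The technical heart of the proof is the simultaneous row-and-column sub-case of the lower bound in the boundary regime $K=L$: the $t^*-1$ counting comes up exactly one unit short, and the sharp exponent $\min\{K,L\}+1$ is recovered only via a combinatorial analysis of how the path must have inserted the $L-1$ odd particles present in $\omega_{t^*}$, exploiting that their even-neighbor sets collectively span $V_e$ but overlap too little to allow the path to stay above $KL/2-L-1$ particles throughout.
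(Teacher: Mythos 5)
Your overall decomposition matches the paper's: establish $\Phi(\ee,\oo)-H(\ee)=\min\{K,L\}+1$ via a lower bound and an explicit reference path, prove (a) by a sweeping path, and combine with Lemma~\ref{lem:partition} to get the $\tG$ identity. Your lower-bound argument in the \emph{non-cross} case is in fact a valid and arguably cleaner variant of the paper's: you count particles column-by-column at time $t^*$ (no column can be a maximum independent set on its cycle, by Lemma~\ref{lem:rw}), whereas the paper sums per-row energy wastages for $\o_{m^*-1}$ and needs a slightly delicate argument that the bridge row has wastage at least $2$. Both close.

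The cross case is where the proposal has a genuine gap. Your column (or row) count at $t^*-1$ gives only $U(\o_{t^*-1})\geq \min\{K,L\}$ when $K=L$, and the patch you sketch does not work: the claim that the $L-1$ odd particles of the cross in $\o_{t^*}$ have even-neighbor sets ``collectively covering $V_e$'' is false (those sets have size at most $4(L-1)$, which is far smaller than $|V_e|=L^2/2$ already for moderate $L$), and the subsequent ``combinatorial counting of the required ordering of removals and insertions'' is not an argument. The paper's resolution is short and uniform in $K,L$: if $U(\o_{m^*-1})=L$ exactly, then every column has wastage exactly $1$; look one step further back at $\o_{m^*-2}$, which differs from $\o_{m^*-1}$ in a single site, so $U(\o_{m^*-2})=L\pm1$. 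The value $L-1$ would force a zero-wastage column, i.e.\ a vertical bridge by Lemma~\ref{lem:rw}; an odd one contradicts minimality of $m^*$, and an even one cannot become the perpendicular odd bridge of $\o_{m^*}$ in only two single-site updates. Hence $U(\o_{m^*-2})=L+1$. This ``look at $\o_{m^*-2}$'' step is the key idea you are missing.

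Part (a) is also essentially unargued. Saying each ``defect'' is ``resolvable at communication cost at most $L$'' begs the question: the reduction sweep you describe already costs $\min\{K,L\}$ just to open the initial two-wide empty interface, so without more it would only give $\tG(\cX\setminus\{\ee,\oo\})\leq \min\{K,L\}+1$, not the required $\leq\min\{K,L\}$. The paper's argument uses the hypothesis $\s\notin\{\ee,\oo\}$ in a concrete way: since $\s\neq\ee$, some vertical stripe has $b\leq \min\{K,L\}-1$ particles on even sites; one empties just those $b$ sites (energy goes up by $b$), and then the reduction algorithm reaches $\oo$ with a further rise of only $1$, giving $\Phi(\s,\{\ee,\oo\})-H(\s)\leq b+1\leq\min\{K,L\}$. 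That use of the sub-maximal stripe is what makes the bound one unit better than the naive sweep, and it is absent from your sketch.
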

Theorem~\ref{thm:toricel} implies that conditions~\eqref{eq:suffcondA} and~\eqref{eq:suffcondB} hold for the pair $(\ee,\{\oo\})$ in the energy landscape $(\cX_{T_{K,L}},H,q)$. Hence Assumptions A and B are satisfied and the statements of Theorems~\ref{thm:teo} and~\ref{thm:gamma} for a toroidal grid $T_{K,L}$ follow from Corollary~\ref{cor:cp} and Theorems~\ref{thm:l1} and~\ref{thm:ae}, respectively.

\begin{thm}[Structural properties of $\cX_{G_{K,L}}$] \label{thm:openel}
Let $\L$ be a $K\times L$ open grid $G_{K,L}$. If $KL \equiv 0 \pmod 2$, then
\begin{itemize}[align=left]
\item[{\rm (a)}] $\tG(\cX \setminus \{\ee,\oo\}) \leq \min\{\lceil K/2 \rceil, \lceil L/2 \rceil  \}$, 
\item[{\rm (b)}] $\G(\ee,\{\oo\}) = \min\{\lceil K/2 \rceil, \lceil L/2 \rceil  \}+1 = \tG(\cX \setminus \{\oo\})$.
\end{itemize}
If instead $KL \equiv 1 \pmod 2$, then
\begin{itemize}[align=left]
\item[{\rm (a*)}] $\tG(\cX \setminus \{\ee,\oo\}) < \min\{\lceil K/2 \rceil, \lceil L/2 \rceil \}$, 
\item[{\rm (b*)}] $\G(\ee,\{\oo\}) = \min\{\lceil K/2 \rceil, \lceil L/2 \rceil  \}+1 = \tG(\cX \setminus \{\oo\})$ and $\G(\oo,\{\ee\}) = \min\{\lceil K/2 \rceil, \lceil L/2 \rceil  \}= \tG(\cX \setminus \{\ee\})$.
\end{itemize}
\end{thm}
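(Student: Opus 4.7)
The plan is to handle both subcases simultaneously by establishing three ingredients: an upper bound $\Phi(\ee,\oo)\leq H(\ee)+m+1$ via an explicit path, where $m:=\min\{\lceil K/2\rceil,\lceil L/2\rceil\}$; a matching lower bound $\Phi(\ee,\oo)\geq H(\ee)+m+1$ via a \emph{critical-slice} combinatorial lemma; and a bound on the depth of every maximal cycle in $\cX\setminus\{\ee,\oo\}$ (respectively $\cX\setminus\{\ee\}$ in the odd case) obtained by showing that from any non-stable configuration $\s$ one can reach a state of strictly lower energy by a path whose maximum elevation lies at most $m$ above $H(\s)$. Assume throughout, without loss of generality, that $\lceil K/2\rceil\leq\lceil L/2\rceil$, so that the most efficient ``cut'' in the grid is along a column of length $K$.

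For the upper bound I would exhibit a reference path $\omega^\star:\ee\to\oo$ that flips the grid one column at a time, maintaining a single empty \emph{buffer column} sweeping from left to right. Concretely, first empty column~$0$ by removing its $\lceil K/2\rceil=m$ even-parity particles, reaching elevation $H(\ee)+m$; then interleave the removal of the even-parity particles in column~$1$ with additions of odd-parity particles on the freshly emptied even-row sites of column~$0$. One checks that each removal can be matched against a subsequent admissible insertion in the neighbouring column, so that the running energy oscillates between $H(\ee)+m$ and $H(\ee)+m+1$ as the buffer sweeps across the grid until $\oo$ is reached. For the case $KL\equiv 1\pmod 2$, the same construction yields $\Phi(\oo,\ee)-H(\oo)=m$, which is exactly the second identity in (b*).

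For the lower bound, the key lemma is that any path $\omega:\ee\to\oo$ must visit a \emph{critical-slice configuration}: let $t^\star$ be the first time at which some entire row or column of $\omega_{t^\star}$ coincides with $\oo$ on that line. I would prove the combinatorial claim that $H(\omega_{t^\star-1})\geq H(\ee)+m+1$. Indeed, at time $t^\star-1$ there exist two parallel lines (rows or columns) such that one has produced a particle on an odd site while the other still hosts a particle on an even site; the hard-core constraints then force an entire intermediate line of length at least $K$ to be (almost) empty, which already produces a deficit of $\lceil K/2\rceil$ particles, and a parity check on the open boundary rows of $G_{K,L}$ shows that in addition at least one further particle must be missing elsewhere, yielding the extra $+1$. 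The open-boundary feature of $G_{K,L}$ is precisely what replaces the toroidal bound $\min\{K,L\}$ by the smaller $\min\{\lceil K/2\rceil,\lceil L/2\rceil\}$: on a column of length $K$ the maximal packing contains only $\lceil K/2\rceil$ sites of a given parity.

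For part (a)/(a*), given $\s\notin\{\ee,\oo\}$ (or $\s\neq\ee$ in the odd case), I would classify $\s$ according to whether it is a local minimum. If it is not, a single admissible site update decreases $H$, so $\G(\s,\{\ee,\oo\})=0$. If $\s$ is a local minimum distinct from $\ee$ and $\oo$, then it must contain at least one ``defect line'': a column or row whose occupation pattern matches neither checkerboard. One can then construct a descending path by locally emptying and refilling that column; by the same accounting as above, such a local rearrangement never raises the energy by more than $\lceil K/2\rceil=m$ before strictly decreasing the particle count, which proves $\G(\s,\{\ee,\oo\})\leq m$. In the odd-$KL$ case the slightly stronger strict inequality (a*) follows because the defect line can be refilled in the parity that gains an extra particle, so the elevation barrier is at most $m-1$ on at least one such descent.

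The main obstacle will be the combinatorial lemma underpinning step~(ii): one must carefully count how many particles a column can hold when its two neighbouring columns impose incompatible parity constraints, and prove that the resulting deficit is exactly $m+1$ rather than only $m$. The crucial ``$+1$'' stems from a parity argument involving the two extremal rows of $G_{K,L}$ (which have a single vertical neighbour) combined with the hard-core exclusion across the interface. An induction on the width of the empty buffer, together with a separate analysis of the boundary rows, appears to be the cleanest route; propagating this single unit through the three different boundary conditions without losing tightness is what makes the grid case substantially more delicate than the $K$-partite example of Subsection~\ref{sub37}.
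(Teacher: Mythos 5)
Your three-ingredient skeleton (explicit reference path for the upper bound, a combinatorial lower bound at the first ``odd bridge'' time, and a depth bound for all other cycles) matches the paper's overall plan, and the reference path you describe is essentially the paper's reduction algorithm. However, the two pivotal combinatorial steps are not sketched correctly, and as written they contain genuine gaps.

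First, the lower bound. You argue that at time $t^\star-1$ ``the hard-core constraints force an entire intermediate line of length at least $K$ to be (almost) empty,'' producing a deficit of $\lceil K/2\rceil$ in one column, with a parity check supplying the extra $+1$. This is not a valid deduction: a particle on an odd site in one column and a particle on an even site in another column do not force any intervening column to be empty, because the hard-core exclusion is purely local (nearest-neighbour) and does not propagate a vacancy across a line. The deficit at the critical time is in fact \emph{distributed}, not concentrated. The paper instead proves a structural lemma (energy wastage zero on a horizontal $2\times L$ stripe iff the stripe is a horizontal double bridge, and, when $K$ is odd, zero wastage on the top row iff it agrees with $\ee$) and then shows that at time $m^*-1$ \emph{every} horizontal stripe has wastage $\geq 1$, with the stripe (or top row) containing the critical site contributing $\geq 2$; summing these gives $\lceil K/2\rceil + 1$. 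You would need this stripe-by-stripe accounting, not a single almost-empty column, to make the argument go through. Moreover your claim that $H(\omega_{t^\star-1})\geq H(\ee)+m+1$ always is too strong: in the odd-cross subcase the paper can only guarantee $U(\o_{m^*-1})\geq \lceil L/2\rceil$, and has to step back to $\o_{m^*-2}$ to extract the $+1$.

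Second, parts (a)/(a*). You propose that a non-stable local minimum has a ``defect line'' and that ``locally emptying and refilling that column'' gives a descending path with elevation $\leq m$. A local rearrangement of one column does not in general strictly decrease the particle count -- the defect can simply slide to an adjacent column, leaving the energy unchanged. The paper instead uses the reduction algorithm \emph{globally}: starting from any $\s\neq\ee,\oo$ one reads off $b=\sum_{v\in c_0\cap V_o}\s(v)\leq\lfloor K/2\rfloor$, either applies the algorithm directly towards $\oo$ (if $b=\lfloor K/2\rfloor$) or first empties those $b$ odd sites and then runs the algorithm towards $\ee$, obtaining $\Phi(\s,\{\ee,\oo\})-H(\s)\leq b+1\leq\lfloor K/2\rfloor$. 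Since $\lfloor K/2\rfloor<\lceil K/2\rceil$ precisely when $K$ is odd, this single bound gives (a) and the strict inequality (a*) simultaneously -- your separate ``refill in the gaining parity'' heuristic is neither needed nor justified as stated. Finally, the second identity of (b*), $\G(\oo,\{\ee\})=\lceil K/2\rceil$, does not require a separate construction: it follows from $\Phi(\oo,\ee)=\Phi(\ee,\oo)$ and $H(\oo)=H(\ee)+1$, a simpler argument than rederiving a reference path from $\oo$.
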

We remark that in the case $KL \equiv 1 \pmod 2$, inequality (a*) is strict, while inequality (a) is not, and this fact is crucial in order to conclude that $\oo$ is the unique metastable state of the state space $\cX_{G_{K,L}}$ when $KL \equiv 1 \pmod 2$. Using Theorem~\ref{thm:openel}, we can check that the pair $(\ee,\{\oo\})$ satisfies both Assumptions A and B (since both conditions~\eqref{eq:suffcondA} and~\eqref{eq:suffcondB} hold) and thus prove the asymptotic properties for the hitting times $\teo$ and $\toe$ illustrated in the statements of Theorems~\ref{thm:teo} and~\ref{thm:gamma} for an open grid $G_{K,L}$.

\begin{thm}[Structural properties of $\cX_{C_{K,L}}$] \label{thm:cycleel}
Let $\L$ be a $K\times L$ cylindrical grid $C_{K,L}$. Then
\begin{itemize}[align=left]
\item[{\rm (a)}] $\tG(\cX \setminus \{\ee,\oo\}) \leq \min\{K/2,L\}$,
\item[{\rm (b)}] $\G(\ee,\{\oo\}) = \min\{K/2,L\}+1 = \tG(\cX \setminus \{\oo\})$.
\end{itemize}
\end{thm}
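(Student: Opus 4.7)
The proof follows the combinatorial blueprint outlined in Subsection~\ref{sub23}, adapted to the cylindrical geometry in which the $K$-direction is periodic and the $L$-direction is open. I would prove (b) first, via matching upper and lower bounds on $\G(\ee,\{\oo\})$, and then (a); the second equality in (b) will follow by combining (a) with Lemma~\ref{lem:GAequiv1} and the fact that, since $\ee$ is a stable state, the initial cycle $C_{\{\oo\}}(\ee)$ has depth $\G(\ee,\{\oo\})$.

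For the upper bound in (b), I would exhibit two explicit paths from $\ee$ to $\oo$ and keep the cheaper. In the \emph{row strategy}, one builds the $\oo$-row at the boundary $v_2=0$ particle by particle: since this row is a cycle of length $K$ containing $K/2$ odd sites, the first odd particle costs three neighbour removals, each intermediate one costs two (one even neighbour is shared with the preceding insertion), and the last one costs only one thanks to the periodic closure of the row; this produces a peak of $H(\ee)+K/2+1$, after which subsequent rows can be filled sequentially without ever exceeding this peak. In the \emph{column strategy}, one builds the $\oo$-column at a fixed $v_1$ from $v_2=0$ downwards: because of periodicity in $K$, each interior odd particle has four neighbours to clear, yielding a peak of $H(\ee)+L+1$ (the saving of one removal at the open boundary $v_2=L-1$ absorbs the parity discrepancy between even and odd $L$). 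Hence $\G(\ee,\{\oo\})\leq\min\{K/2,L\}+1$.

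For the lower bound in (b), take any path $\omega=(\omega_0,\dots,\omega_T)$ from $\ee$ to $\oo$ and let $t^*$ be the first index such that some row or column of $\omega_{t^*}$ coincides with $\oo$ on that entire line; such $t^*$ exists because $\oo$ has this property on every row and column while $\ee$ has it on none. A combinatorial count then yields $H(\omega_{t^*}) \geq H(\ee)+\min\{K/2,L\}$: if the critical line is a row at height $h$, its $K/2$ odd sites are occupied and the $K/2$ sites of each adjacent row directly above/below these particles must be empty, so even after packing the remaining rows optimally there is a deficit of at least $K/2$ particles relative to $\ee$; if instead the critical line is a column, the periodicity in $K$ forces both neighbouring columns to be cleared at sites facing the odd particles of that column, giving a deficit of $L$. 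Finally, the move $\omega_{t^*-1}\to\omega_{t^*}$ must \emph{add} a particle (the last missing odd site of the critical line; a removal is excluded by the hard-core constraint, which forbids an extra even particle inside a $\oo$-line), so $H(\omega_{t^*-1})=H(\omega_{t^*})+1 \geq H(\ee)+\min\{K/2,L\}+1$, and minimising over $\omega$ gives the matching lower bound.

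For part (a), I would show that from every $x\in\cX\setminus\{\ee,\oo\}$ one can reach $\{\ee,\oo\}$ along a path of maximum height at most $H(x)+\min\{K/2,L\}$, by descending towards whichever of $\ee,\oo$ is ``closer'' to $x$. The key observation is that such an $x$ is already past the bottleneck of the $\ee\to\oo$ transition, so the row-by-row or column-by-column sweep of the upper-bound construction can be initialised at $x$ without incurring the extra $+1$ spike. A case analysis on whether $x$ already contains a full $\ee$- or $\oo$-line, or neither, completes the argument: in the first two cases one extends the existing line to the whole grid by repeated sweeps, and in the third case one iterates elementary moves that strictly decrease $H$ while the intermediate climbs stay bounded by $\min\{K/2,L\}$. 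This step is the main technical obstacle. Combining (a) with the bounds on $\G(\ee,\{\oo\})$: the cycle $C_{\{\oo\}}(\ee)$ has depth exactly $\min\{K/2,L\}+1$, while every other maximal cycle in $\cX\setminus\{\oo\}$ lies in $\cX\setminus\{\ee,\oo\}$ and thus has depth at most $\min\{K/2,L\}$ by (a), so Lemma~\ref{lem:GAequiv1} yields $\tG(\cX\setminus\{\oo\})=\min\{K/2,L\}+1$, concluding the proof.
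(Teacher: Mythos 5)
Your high-level plan (two reference paths for the upper bound, a ``first odd bridge'' argument for the lower bound, a sweep argument for part (a), then the cycle-theoretic bookkeeping for the second equality in (b)) matches the paper's strategy, and the observation that the move $\omega_{t^*-1}\to\omega_{t^*}$ must be a particle \emph{addition}, so that $H(\omega_{t^*-1})=H(\omega_{t^*})+1$, is a genuine and clean shortcut around the $\o_{m^*-2}$ analysis the paper needs for the odd-cross case. However, there are three substantive problems.

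First, the geometry is backwards throughout. In the paper's conventions the cylinder $C_{K,L}$ is periodic in the \emph{vertical} direction: a column $c_j$ has $K$ sites and is a cycle, while a row $r_i$ has $L$ sites and is a path with open ends. You assert that ``this row is a cycle of length $K$ containing $K/2$ odd sites'' and then count deficits of size $K/2$ and $L$ attached to the wrong objects. Every appeal to ``the periodic closure of the row,'' to ``each adjacent row directly above/below,'' to ``the open boundary $v_2=L-1$'' (there is no such boundary: $v_2$ ranges over $\{0,\dots,K-1\}$) needs rows and columns swapped. With the swap your ``row strategy'' is the paper's open-grid reference path (clear the $K/2$ even sites of $c_0$, cost $K/2$, then sweep columns; peak $K/2+1$). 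But your ``column strategy'' does not achieve peak $L+1$: building a single interior $\oo$-column at cost ``four neighbours per interior particle'' produces a peak on the order of $K+1$, not $L+1$. The $L+1$ bound requires clearing an entire horizontal stripe ($L$ even sites) and then sweeping rows, i.e.\ the toric reduction algorithm rotated by $90^\circ$, which is what the paper points to when $K/2>L$.

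Second, the lower-bound ``combinatorial count'' is not justified. The statement $U(\omega_{t^*})\geq\min\{K/2,L\}$ is true, but the reason is \emph{not} that the two adjacent lines have $K/2$ (or $L$) blocked sites: adjacent columns have all their even sites blocked but can still carry up to $K/2$ odd particles, so the naive blocked-site count gives only a constant deficit, not one growing with $K$ or $L$. What actually yields the bound is the cylinder analogue of Lemmas~\ref{lem:rw} and~\ref{lem:bw}: a column (a $K$-cycle) has zero column-wastage if and only if it is a vertical $\ee$- or $\oo$-bridge, and a horizontal stripe (a $2\times L$ open strip) has zero stripe-wastage if and only if it is a horizontal double bridge. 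Once you have those, you argue: if the first-appearing odd bridge is vertical, every horizontal stripe is not a double bridge (even ones conflict with the odd column, odd ones would contradict the minimality of $t^*$), so each of the $K/2$ stripes wastes $\geq 1$; if it is horizontal, every column is not a vertical bridge, so each of the $L$ columns wastes $\geq 1$; in the odd-cross case you must take the better of these two counts (one column has zero wastage, but all $K/2$ stripes waste $\geq 1$), and check that this still dominates $\min\{K/2,L\}$. None of this is in your proposal, and the deficit heuristic you give would not survive the odd-cross case or the $K/2\leq L$ case without it.

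Third, part (a) is not a proof. ``Descending towards whichever of $\ee,\oo$ is closer,'' and ``iterates elementary moves that strictly decrease $H$ while the intermediate climbs stay bounded'' is a restatement of what must be shown, not an argument. The claim that ``such an $x$ is already past the bottleneck of the $\ee\to\oo$ transition'' is false for $x$ close to $\ee$ or $\oo$. The paper resolves this concretely with the reduction algorithm: for every $\s\neq\ee,\oo$ there is a column (or, in the $K/2>L$ regime, a horizontal stripe) in which $\s$ is missing at least one particle of the right parity, and the path first clears the remaining $b<K/2$ (resp. $b<L$) particles there (a climb of exactly $b$) and then invokes the reduction sweep, which never climbs more than one above its starting level; this gives $\Phi(\s,\{\ee,\oo\})-H(\s)\leq b+1\leq\min\{K/2,L\}$, which is the bound you need before Lemma~\ref{lem:GAequiv1} can be applied. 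Until you make part (a) concrete in this way the second equality in (b) does not follow.
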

Using Theorem~\ref{thm:cycleel}, we can check that Assumptions A and B are satisfied by the pair $(\ee,\{\oo\})$, and then the statements of Theorems~\ref{thm:teo} and~\ref{thm:gamma} for a cylindrical grid $C_{K,L}$ follow from Corollary~\ref{cor:cp} and Theorems~\ref{thm:l1} and~\ref{thm:ae}.

The ideas behind the proofs of these three theorems are similar, but for clarity we present them separately in Subsections~\ref{sub51},~\ref{sub52} and~\ref{sub53}.\\

Denote $\G(\L):= \tG(\cX \setminus \{\ee\})$, where $(\cX,H,q)$ is the energy landscape corresponding to the hard-core model on the grid $\L$. In the case $\L = G_{K,L}$ with $KL \equiv 1 \pmod 2$, Theorem~\ref{thm:openel} gives that $\G(\L)=\min \{ \lceil K/2 \rceil, \lceil L/2 \rceil \}$. In all the other cases by symmetry we have $\tG(\cX \setminus \{\ee\}) = \tG(\cX \setminus \{\oo \})$ and hence, from Theorems~\ref{thm:toricel},~\ref{thm:openel} and~\ref{thm:cycleel} it then follows that
\[
\G(\L)=
\begin{cases}
\min \{ K, L\} +1 & \text{ if } \L = T_{K,L},\\
\min \{ \lceil K/2 \rceil, \lceil L/2 \rceil \} +1 & \text{ if } \L = G_{K,L} \text{ and } KL \equiv 0 \pmod 2,\\
\min \{ \lceil K/2 \rceil, \lceil L/2 \rceil \} & \text{ if } \L = G_{K,L} \text{ and } KL \equiv 1 \pmod 2,\\
\min \{ K/2, L\} +1 & \text{ if } \L = C_{K,L}.
\end{cases}
\]
Besides appearing in the two main theorems (Theorems~\ref{thm:teo} and~\ref{thm:gamma}), the exponent $\G(\L)$ also characterizes the asymptotic order of magnitude of the mixing time $t^{\mathrm{mix}}_\b(\epsilon,\L)$ and of the spectral gap $\rho_\b(\L)$ of the hard-core dynamics $\xtn$ on $\L$ (see Subsection~\ref{sub38}), as illustrated in the next theorem.

\begin{thm}[Mixing time and spectral gap]\label{thm:mixgap}
For any rectangular grid $\L$ and for any $0 < \epsilon < 1$, 
\[
\limb \frac{1}{\b} \log t^{\mathrm{mix}}_\b(\epsilon,\L) = \G(\L) = \limb -\frac{1}{\b} \log \rho_\b(\L).
\]
Furthermore, there exist two constants $c_1,c_2$ independent of $\b$ such that
\[
c_1 e^{-\b \G(\L)} \leq \rho_\b(\L) \leq c_2 e^{-\b \G(\L)}.
\]
\end{thm}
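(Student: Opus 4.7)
The plan is to apply Proposition~\ref{prop:mix} directly, with the choice of stable state $s=\ee$. First I would verify that $\ee\in\ss$ in all four cases covered by the theorem. Recalling from Subsection~\ref{sub22} that $H(\ee)=-|V_e|$ and $H(\oo)=-|V_o|$: for $\L\in\{T_{K,L},C_{K,L}\}$ and for $\L=G_{K,L}$ with $KL$ even one has $|V_e|=|V_o|$, so $\ss=\{\ee,\oo\}$; for $\L=G_{K,L}$ with $KL$ odd one has $|V_e|>|V_o|$, so $\ss=\{\ee\}$. In every case $\ee$ is a valid choice of distinguished stable state.

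The second step is to invoke Proposition~\ref{prop:mix} with $s=\ee$. This immediately produces
\[
\limb \frac{1}{\b}\log t^{\mathrm{mix}}_\b(\epsilon,\L)=\tG(\cX\setminus\{\ee\})=\limb -\frac{1}{\b}\log \rho_\b(\L),
\]
together with the two-sided bound $c_1 e^{-\b\tG(\cX\setminus\{\ee\})}\le \rho_\b(\L)\le c_2 e^{-\b\tG(\cX\setminus\{\ee\})}$ for some constants $0<c_1\le c_2<\infty$ independent of $\b$. Since, by the definition introduced at the start of this section, $\G(\L):=\tG(\cX\setminus\{\ee\})$, substituting this identity into the previous display yields precisely the statements of Theorem~\ref{thm:mixgap}.

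I do not anticipate any substantial obstacle. The analytical work has already been carried out elsewhere: Proposition~\ref{prop:mix} is a model-independent statement, proved in Section~\ref{sec4} by reduction to~\cite{C99,HS88,Miclo2002}, while the combinatorial Theorems~\ref{thm:toricel},~\ref{thm:openel},~\ref{thm:cycleel} identify $\tG(\cX\setminus\{\ee\})$ with the explicit closed-form expression for $\G(\L)$ listed in Theorem~\ref{thm:gamma}. The only minor point worth flagging is consistency across the cases with two stable states: Proposition~\ref{prop:mix} guarantees that the value of $\tG(\cX\setminus\{s\})$ is independent of $s\in\ss$, so running the argument with $s=\oo$ instead of $s=\ee$ gives the same answer, which is consistent with the symmetry $\tG(\cX\setminus\{\ee\})=\tG(\cX\setminus\{\oo\})$ already noted at the start of Section~\ref{sec5}.
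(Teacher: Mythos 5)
Your proposal is correct and takes essentially the same route as the paper, which dispatches this theorem in one line: invoke Proposition~\ref{prop:mix} together with Theorems~\ref{thm:toricel},~\ref{thm:openel},~\ref{thm:cycleel}. You simply spell out the two steps the paper leaves implicit, namely that $\ee\in\ss$ in all four cases so that Proposition~\ref{prop:mix} applies with $s=\ee$, and that $\G(\L)$ in Theorem~\ref{thm:mixgap} is defined as $\tG(\cX\setminus\{\ee\})$ at the start of Section~\ref{sec5}. One small caveat: your closing remark that the combinatorial theorems identify $\tG(\cX\setminus\{\ee\})$ with the expression in Theorem~\ref{thm:gamma} is not quite right in the case $\L=G_{K,L}$ with $KL\equiv 1\pmod 2$, where $\tG(\cX\setminus\{\ee\})=\min\{\lceil K/2\rceil,\lceil L/2\rceil\}$ drops the $+1$ present in Theorem~\ref{thm:gamma}; but since you (correctly) anchor the argument on the Section~\ref{sec5} definition of $\G(\L)$ rather than on Theorem~\ref{thm:gamma}, the proof itself is unaffected.
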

The proof readily follows from the properties of the energy landscapes illustrated in Theorems~\ref{thm:toricel}, \ref{thm:openel} and~\ref{thm:cycleel} and by applying Proposition~\ref{prop:mix}.\\

We next introduce some notation and definitions for grid graphs. Recall that $\L$ is a $K \times L$ grid graph with $K,L \geq 2$ which has $N = KL$ sites in total. We define \textit{energy wastage of a configuration} $\s \in \cX$ on the grid graph $\L$ as the difference between its energy and the energy of the configuration $\ee$, i.e.
\begin{equation}\label{eq:delta}
U(\s) := H(\s) - H(\ee).
\end{equation}
Since $H(\ee) =- \lceil N/2 \rceil$, we have that
\[
U(\s) =  H(\s) + \lceil N/2 \rceil =  \lceil N/2 \rceil - \sum_{v \in \L} \s(v).
\]
Moreover, since $\ee$ is a stable state, $U(\s) \geq 0$. The function $U: \cX \to \R_+ \cup \{0\}$ is usually called \textit{virtual energy} in the literature~\cite{C99,CNS14b} and satisfies the following identity 
\[
U(\s) = - \limb \frac{1}{\b} \log \mu_\b(\s),
\]
where $\mu_\b$ is the Gibbs measure~\eqref{eq:gibbs} of the Markov chain $\xtn$.\\

We denote by $c_j$, $j=0,\dots,L-1$, the $j$-th column of $\L$, i.e.~the collection of sites whose horizontal coordinate is equal to $j$, and by $r_i$, $i=0,\dots,K-1$, the $i$-th row of $\L$, i.e.~the collection of sites whose vertical coordinate is equal to $i$. In addition, define the \textit{$i$-th horizontal stripe}, with $i=1,\dots,\lfloor K/2 \rfloor$, as 
\[
S_i:=r_{2i-2} \cup r_{2i-1},
\]
and the \textit{$j$-th vertical stripe}, with $j=1,\dots,\lfloor L/2 \rfloor$ as
\[
C_i:=c_{2j-2} \cup c_{2j-1}.
\]

An important feature of the energy wastage $U$ for grid graphs, is that it can be seen as the sum of the energy wastages on each row (or on each horizontal stripe). More precisely, let $U_j(\s)$ be the energy wastage of a configuration $\s \in \cX$ in the $i$-th row, i.e.
\begin{equation}\label{eq:deltai}
U_i(\s) := \lceil L/2 \rceil - \sum_{v \in r_i} \s(v).
\end{equation}
Similarly, let $U^S_i(\s)$ be the energy wastage of a configuration $\s \in \cX$ on the $i$-th horizontal stripe, i.e.
\begin{equation}\label{eq:tdeltai}
U^S_i(\s) := L - \sum_{v \in S_i} \s(v) = U_{2i-2}(\s) + U_{2i-1}(\s).
\end{equation}
Then, we can rewrite the energy wastage of a configuration $\s \in \cX$ as
\begin{equation}\label{eq:deltaid}
U(\s) = \sum_{i=1}^K U_i(\s) = \sum_{i=1}^{\lceil K/2 \rceil} U^S_i(\s).
\end{equation}
Given two configurations $\s,\s' \in \cX$ and a subset of sites $W \subset \L$, we write
\[
\s_{|W} = \s'_{|W} \quad \Longleftrightarrow \quad \s(v) = \s'(v) \quad \forall \, v \in W.
\]
We say that a configuration $\s \in \cX$ has a \textit{vertical odd (even) bridge} if there exists a column in which configuration $\s$ perfectly agrees with $\oo$ (respectively $\ee$), i.e.~if there exists an index $0 \leq j \leq L-1$ such that
\[
\s_{|c_j} = \oo_{|c_j} \quad (\text{respectively } \s_{|c_j} = \ee_{|c_j}).
\]
We define \textit{horizontal odd and even bridges} in an analogous way and we say that a configuration $\s \in \cX$ has an \textit{odd (even) cross} if it has both vertical and horizontal odd (even) bridges. 
\begin{figure}[h!]
\centering
\subfigure[Horizontal odd bridge]{\includegraphics[scale=1]{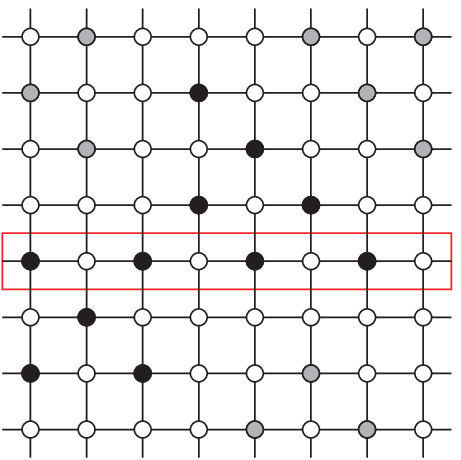}}
\hspace{0.4cm}
\subfigure[Two vertical odd bridges]{\includegraphics[scale=1]{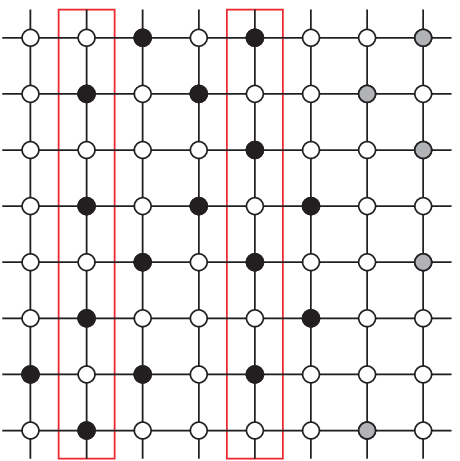}}
\hspace{0.4cm}
\subfigure[Odd cross]{\includegraphics[scale=1]{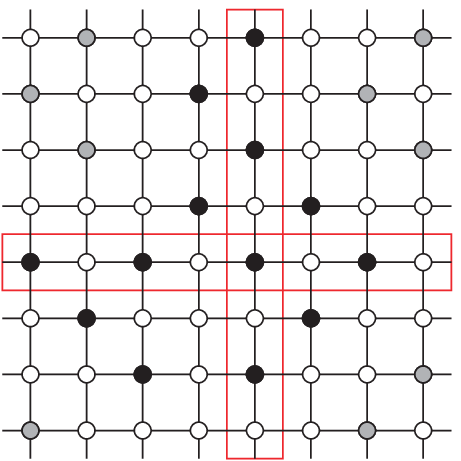}}
\caption{Examples of configurations on the $8 \times 8$ toric grid displaying odd bridges or crosses}
\label{fig:bridgesandcross}
\end{figure}
\FloatBarrier

We remark that the structure of the grid graph $\L$ and the hard-core constraints prohibit the existence of two perpendicular bridges of different parity, e.g.~a vertical odd bridge and a horizontal even bridge. Bridges and crosses are the geometric feature of the configurations which will be crucial in the following subsections to prove Theorems~\ref{thm:toricel},~\ref{thm:openel} and~\ref{thm:cycleel}.


\subsection{Energy landscape analysis for toric grids (Proof of Theorem~\ref{thm:toricel})}
\label{sub51}
This subsection is devoted to the proof of Theorem~\ref{thm:toricel} valid for a toric grid $T_{K,L}$. Without loss of generality, we assume henceforth that $K \leq L$. Recall that by construction of the toric grid $\L$, both $K$ and $L$ are even integers. In the remainder of the section we will write $\cX$ instead of $\cX_{T_{K,L}}$ to keep the notation light. 

We first introduce a \textit{reduction algorithm}, which is used to construct a specific path in $\cX$ from any given state in $\cX \setminus \{ \ee,\oo\}$ to the subset $\{\ee,\oo\}$ and to show that
\begin{equation}
\label{eq:ndwtoric}
\tG(\cX \setminus \{\ee,\oo\}) \leq K,
\end{equation}
which proves Theorem~\ref{thm:toricel}(a). Afterwards, we show in Proposition~\ref{prop:lowertg} that
\[
\Phi(\ee,\oo) -H(\ee) \geq K +1,
\]
by giving lower bounds on the energy wastage along every path $\ee \to \oo$. The reduction algorithm is then used again in Proposition~\ref{prop:refpatht} to build a reference path $\o^*: \ee \to \oo$ which shows that the lower bound is sharp and hence
\[
\Phi(\ee,\oo) -H(\ee) = K +1,
\]
which, together with~\eqref{eq:ndwtoric}, proves Theorem~\ref{thm:toricel}(b).\\

The starting point of the energy landscape analysis is a very simple observation: A configuration in $\cX$ has zero energy wastage in a given row (column) if and only if it has an odd or even horizontal (vertical) bridge. The following lemma formalizes this property. We give the statement and the proof only for rows, since those for columns are analogous.

\begin{lem}[Energy efficient rows are bridges]\label{lem:rw}
For any $\s \in \cX$ and any $i = 0, \dots, K-1$,
\[
U_i(\s)=0 \quad \Longleftrightarrow \quad \s_{|r_i}=\ee_{|r_i} \quad \mathrm{or} \quad \s_{|r_i}=\oo_{|r_i}.
\]
\end{lem}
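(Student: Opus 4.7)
\textbf{Proof plan for Lemma~\ref{lem:rw}.}
The plan is to translate the energy condition $U_i(\s)=0$ into a combinatorial statement about maximum independent sets of a cycle. First I would establish the easy implication ($\Leftarrow$) by direct computation: since $L$ is even in the toric setting $\L = T_{K,L}$, one has $\lceil L/2 \rceil = L/2$, and each row $r_i$ contains exactly $L/2$ even sites and $L/2$ odd sites; hence if $\s_{|r_i} = \ee_{|r_i}$ (resp. $\oo_{|r_i}$), then $\sum_{v \in r_i} \s(v) = L/2$ and so $U_i(\s) = 0$.

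For the nontrivial implication ($\Rightarrow$), the key observation is that in $T_{K,L}$ each row $r_i$, together with the horizontal edges of $\L$, forms an induced cycle of even length $L$ in the grid graph, because of the toric wrap-around. Since $\s$ is admissible, its restriction $\s_{|r_i}$ is an independent set of this cycle. The condition $U_i(\s) = 0$ forces $\sum_{v \in r_i} \s(v) = L/2$, so $\s_{|r_i}$ is an independent set of size $L/2$ in a cycle of length $L$.

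The main (and only) nontrivial ingredient is then the classical fact that a cycle $C_L$ of even length $L$ has independence number $L/2$ and admits exactly two maximum independent sets, namely the set of vertices at even positions and the set of vertices at odd positions along the cycle. I would invoke this fact (either as well-known or by a short inductive argument: any maximum independent set must contain exactly one endpoint of every edge, which forces alternation). Under the coordinate convention of Subsection~\ref{sub22}, these two maximum independent sets of $r_i$ coincide with $\ee_{|r_i}$ and $\oo_{|r_i}$ respectively, completing the proof.

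I do not expect a real obstacle here; the only mildly delicate point is ensuring that the row really is a cycle (not a path) in $T_{K,L}$, which is where toroidal boundary conditions and the parity assumption on $L$ are used. In the analogous statement for open or cylindrical grids one would have to argue slightly differently since a row of $G_{K,L}$ is a path rather than a cycle, but that is outside the scope of the present lemma.
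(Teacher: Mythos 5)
Your proposal is correct and follows essentially the same route as the paper: observe that row $r_i$ is an even cycle graph in $T_{K,L}$, that $U_i(\s)=0$ forces an independent set of size $L/2$, and that an even cycle has exactly two maximum independent sets, namely the alternating ones corresponding to $\ee_{|r_i}$ and $\oo_{|r_i}$.
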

\begin{proof}
The $i$-th row of the toric grid graph $\L$ is a cycle graph with $L/2$ even sites and $L/2$ odd sites. If $\s_{|r_i}=\ee_{|r_i}$ or $\s_{|r_i}=\oo_{|r_i}$, then trivially there are $L/2$ occupied sites and hence $U_i(\s)=0$. Noticing that the configurations $\ee_{|r_i}$ and $\oo_{|r_i}$ on row $i$ correspond to the only two maximum independent sets of the cycle graph $r_i$ proves the converse implication. \qed
\end{proof}


\subsubsection*{Reduction algorithm for toric grids}
We now describe an iterative procedure which builds a path $\o$ in $\cX$ from an initial configuration $\h$ (with specific properties, see below) to state $\oo$. We call it \textit{reduction algorithm}, because along the path it creates the even clusters are gradually reduced and they eventually disappear, since the final configuration is $\oo$.

The algorithm cannot be initialized in all configurations $\s \in \cX \setminus \{ \oo\}$. Indeed, we require that the initial configuration $\s$ is such that there are no particles in the even sites of the first vertical stripe $C_1$, i.e.
\begin{equation}
\label{eq:racond}
\sum_{v \in C_1 \cap V_e} \s(v) =0.
\end{equation}
This technical assumption is required because the algorithm needs ``some room'' to start working, as will become clear later.

The path $\o$ is the concatenation of $L$ paths $\o^{(1)}, \dots, \o^{(L)}$. Path $\o^{(j)}$ goes from $\s_{j}$ to $\s_{j+1}$, where we set $\s_1=\s$ and we define recursively configuration $\s_{j+1}$ from configuration $\s_{j}$ as follows.
\[
\s_{j+1}(v) := \begin{cases}
\s_j(v) & \text{ if } v \in \L \setminus (c_j \cup c_{j+1}),\\
\oo(v) & \text{ if } v \in c_j,\\
\s_j(v) & \text{ if } v \in c_{j+1} \cap V_o,\\
0 & \text{ if } v \in c_{j+1} \cap V_e.
\end{cases}
\]
Clearly, due to the periodic boundary conditions, the column index should be taken modulo $L$. It can be checked that indeed $\s_{L+1}=\oo$. We now describe in detail how to construct each of the paths $\o^{(j)}$ for $j=1,\dots,L$. We build a path $\o^{(j)}=(\o^{(j)}_{0}, \o^{(j)}_{1}, \dots, \o^{(j)}_{K+1})$ of length $K+1$ (but possibly with void moves), with $\o^{(j)}_{0}=\s_j$ and $\o^{(j)}_{K+1}=\s_{j+1}$. We start from configuration $\o^{(j,0)}=\s_j$ and we repeat iteratively the following procedure for all $i=0,\dots,K-1$:
\begin{itemize}
\item If $i \equiv 0 \pmod 2$, consider the even site $v=(j+1, i+(j+1 \pmod 2))$.
	\begin{itemize}
	\item[-] If $\o^{(j)}_{i}(v)=0$, we set $\o^{(j)}_{i+1}=\o^{(j)}_{i}$ and thus $H(\o^{(j)}_{i+1}) = H(\o^{(j)}_{i})$.
	\item[-] If $\o^{(j)}_{i}(v)=1$, then we remove from configuration $\o^{(j)}_{i}$ the particle in $v$ increasing the energy by $1$ and obtaining in this way configuration $\o^{(j)}_{i+1}$, which is such that $H(\o^{(j)}_{i+1}) = H(\o^{(j)}_{i})+1$.
	\end{itemize}
\item If $i \equiv 1 \pmod 2$, consider the the odd site $v=(j,i-1+(j+1 \pmod 2))$.
	\begin{itemize}
	\item[-] If $\o^{(j)}_{i}(v)=1$, we set $\o^{(j)}_{i+1}=\o^{(j)}_{i}$ and thus $H(\o^{(j)}_{i+1}) = H(\o^{(j)}_{i})$.
	\item[-] If $\o^{(j)}_{i}(v)=0$, then we add to configuration $\o^{(j)}_{i}$ a particle in site $v$ decreasing the energy by $1$. We obtain in this way a configuration $\o^{(j)}_{i+1}$, which is admissible because by construction all the first neighboring sites of $v$ are unoccupied. In particular, the particle at its right (i.e.~that at the site $v+(1,0)$) may have been removed exactly at the previous step. The new configuration has energy $H(\o^{(j)}_{i+1}) = H(\o^{(j)}_{i})-1$.
	\end{itemize}
\end{itemize}
Note that for the last path $\o^{(L)}$ all the moves corresponding to even values of $i$ are void (there are no particles in the even sites of $c_0$). The way the path $\o^{(j)}$ is constructed shows that for every $j=1,\dots,L$,
\[
H(\s_{j+1}) \leq H(\s_j),
\]
since the number of particles added in (the odd sites of) column $c_j$ is greater than or equal to the number of particles removed in (the even sites of) column $c_{j+1}$. Moreover,
\[
\Phi_{\o^{(j)}} \leq H(\s_j) + 1
\]
since along the path $\o^{(j)}$ every particle removal (if any) is always followed by a particle addition. These two properties imply that the path $\o: \s \to \oo$ created by concatenating $\o^{(1)},\dots,\o^{(L)}$ satisfies
\[
\Phi_{\o} \leq H(\s) + 1.
\]

\noindent \textit{Proof of }{\rm Theorem~\ref{thm:toricel}(a).}
It is enough to show that for every $\s \in \cX \setminus \{\ee,\oo\}$
\[
\Phi(\s,\oo) -H(\s) \leq K,
\]
since inequality~\eqref{eq:ndwtoric} then follows the equivalent characterization of $\tG$ given in Lemma~\ref{lem:GAequiv1}. To prove such an inequality, we have to exhibit for every $\s \in \cX \setminus \{\ee,\oo\}$ a path $\o: \s \to \oo$ in $\cX$ such that $\Phi_{\o}=\max_{\h \in \o} H(\h) \leq H(\s)+K$. We construct such a path $\o$ as the concatenation of two shorter paths, $\o^{(1)}$ and $\o^{(2)}$, where $\o^{(1)}: \s \to \s'$, $\o^{(2)}: \s' \to \oo$ and $\s'$ is a suitable configuration which depends on $\s$ (see definition below).

Since $\s \neq \ee$ by assumption, the configuration $\s$ must have a vertical stripe with \textit{strictly} less than $K$ even occupied sites. Without loss of generality (modulo a cyclic rotation of column labels) we can assume that this vertical stripe is the first one, $C_1$, and we define
\begin{equation}
\label{eq:tb}
b := \sum_{v \in C_1 \cap V_e} \s(v) \leq K-1.
\end{equation}
Define $\s'$ as the configuration that differs from $\s$ only in the even sites of the first vertical stripe, i.e.
\[
\s'(v):=
\begin{cases}
\s(v) & \text{ if } v \in \L \setminus (C_1 \cap V_e),\\
0 & \text{ if } v \in C_1 \cap V_e.
\end{cases}
\]
The path $\o^{(1)}=(\o^{(1)}_1, \dots, \o^{(1)}_{b+1})$, with $\o^{(1)}_{1}=\s$ and $\o^{(1)}_{b+1}=\s'$ can be constructed as follows. For $i=1,\dots,b$, in step $i$ we remove from configuration $\o^{(1)}_{i}$ the first particle in $C_1 \cap V_e$ in lexicographic order obtaining in this way configuration $\o^{(1)}_{i+1}$, increasing the energy by $1$. Therefore the configuration $\s'$ is such that $H(\s')-H(\s) = b$ and 
\[
\Phi_{\o^{(1)}} = \max_{\h \in \o^{(1)}} H(\h) \leq H(\s)+b.
\]
The path $\o^{(2)}: \s' \to \oo$ is then constructed by means of the reduction algorithm described earlier, choosing $\s'$ as initial configuration and $\oo$ as target configuration. The reduction algorithm guarantees that
\[
\Phi_{\o^{(2)}}=\max_{\h \in \o^{(2)}} H(\h) \leq H(\s')+1.
\]
The concatenation of the two paths $\o^{(1)}$ and $\o^{(2)}$ gives a path $\o: \s \to \oo$ which satisfies the inequality $\Phi_\o \leq H(\s)+b+1$ and therefore
\[
\Phi(\s,\oo)-H(\s) \leq b+1 \stackrel{\eqref{eq:tb}}{\leq} K. \eqno \qed
\]

\begin{prop}[Lower bound for $\Phi(\ee,\oo)$]\label{prop:lowertg}
\[
\Phi(\ee,\oo) - H(\ee) \geq K+1.
\]
\end{prop}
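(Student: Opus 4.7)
Along any path $\omega\colon\ee\to\oo$, I would pick out the critical configuration $\s^*\in\omega$ defined as the first one on $\omega$ that contains a row or a column whose restriction coincides with that of $\oo$ (i.e., possessing an \emph{odd bridge}), and let $\s^{**}$ be its immediate predecessor. Since $\s^{**}$ has no odd bridges and differs from $\s^*$ by a single-site update, an elementary hard-core check (any particle removed from $\s^*$ to obtain $\s^{**}$ would have to sit at an even site of the bridged row, adjacent to a completely occupied odd sublattice of that row) rules out the removal direction, forcing $\s^{**}\to\s^*$ to be the addition of an odd particle at some site $(j_0,i_0)$. In particular $U(\s^{**})=U(\s^*)+1$. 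Moreover, because the removal of that single particle from $\s^*$ must destroy \emph{every} odd bridge of $\s^*$, the configuration $\s^*$ contains at most one horizontal and at most one vertical odd bridge.

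The geometric ingredient I would use is that a horizontal odd bridge on $r_{i_0}$ empties every even site of $r_{i_0\pm1}$ by hard-core and, combined with Lemma~\ref{lem:rw} and its column analogue, prevents any column of $\s^*$ from being an even bridge; the symmetric statement holds for vertical odd bridges. Using the column decomposition $U(\sigma)=\sum_{j=0}^{L-1}U^c_j(\sigma)$, one then splits into two cases. In \emph{Case A}, where $\s^*$ has an odd bridge in only one of the two directions (say horizontal), no column of $\s^*$ is a bridge of either parity, so $U^c_j(\s^*)\ge 1$ for every $j$; summing yields $U(\s^*)\ge L$ and hence
\[
U(\s^{**}) \;=\; U(\s^*)+1 \;\ge\; L+1 \;\ge\; K+1,
\]
which immediately delivers the desired lower bound for $\omega$. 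The symmetric situation with only a vertical odd bridge is identical after exchanging rows and columns.

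In \emph{Case B}, where $\s^*$ has both a horizontal odd bridge $r_{i_0}$ and a vertical one $c_{j_0}$, the added particle sits at their intersection $(j_0,i_0)$; then $U^c_{j_0}(\s^{**})=1$ while every other column is still a non-bridge, so $U^c_j(\s^{**})\ge 1$ for $j\ne j_0$, giving $U(\s^{**})\ge L$. Whenever $L>K$ this already exceeds $K$ by at least one, so $U(\s^{**})\ge K+1$ and we are done. The only delicate subcase is the balanced square $L=K$, where the column count delivers only $U(\s^{**})\ge K$, matching but not exceeding the target bound.

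The principal technical obstacle I anticipate is precisely this balanced subcase. There I would exploit that the equality $U(\s^{**})=K$ forces the very rigid structure $U_i(\s^{**})=1$ for every row $i$ and $U^c_j(\s^{**})=1$ for every column $j$; in particular $\s^{**}$ must carry exactly $K/2-1$ odd particles on each row and on each column, with no even particle anywhere. Any path in $\cX$ from $\ee$ (which is purely even) to such a rigid $\s^{**}$ (which is purely odd) must then visit, somewhere strictly before $\s^{**}$, a configuration with $U\ge K+1$: the intuition is that when the shrinking even region inherited from $\ee$ and the odd pattern needed near $(j_0,i_0)$ collide, the hard-core constraints force one extra empty site on the interface beyond the $K$ sites accounted for by the row/column counts. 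Making this interface bookkeeping quantitative -- i.e., extracting the last missing unit of energy wastage from the hard-core exclusion between the even and odd sublattices -- is where I expect the proof to require the most care.
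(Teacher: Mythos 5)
Your strategy is fundamentally the same as the paper's: identify the first configuration $\s^*=\o_{m^*}$ along the path that displays an odd bridge, bound the energy wastage near it, and split into cases according to whether it is a single bridge or a cross. Your Case~A matches the paper's cases~(a)--(b) with a minor variation: where the paper counts row-by-row on $\o_{m^*-1}$ and shows the row of $v^*$ already has wastage $\geq 2$, you count column-by-column on $\s^*$ and then pass to the predecessor $\s^{**}=\o_{m^*-1}$ to pick up the extra unit via $U(\s^{**})=U(\s^*)+1$. Both variants are correct.

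The genuine gap is in Case~B (odd cross) when $L=K$. Your column count gives $U(\s^{**})\geq L$, which suffices when $L>K$ but leaves you one unit short in the balanced square case. Your proposed repair rests on a rigidity claim (``no even particle anywhere'') that does not follow from $U_i(\s^{**})=1$ and the column analogue alone -- those equalities only fix the \emph{number} of particles per row and column, not their sublattice, so $\s^{**}$ could in principle carry even particles away from the cross -- and on an ``interface bookkeeping'' heuristic that you acknowledge is not a proof. The paper closes this case with a cleaner idea you did not find: when $U(\o_{m^*-1})=L$, step back one more move to $\o_{m^*-2}$, whose wastage is $L\pm1$. It cannot be $L-1$, since then $\o_{m^*-2}$ would have a zero-wastage column, hence a vertical bridge by Lemma~\ref{lem:rw}; an odd one contradicts the minimality of $m^*$, and an even one cannot turn into the horizontal odd bridge present in $\o_{m^*}$ in only two single-site updates (one removal and two additions are needed). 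Hence $U(\o_{m^*-2})=L+1\geq K+1$. Your sketch never invokes $\o_{m^*-2}$, and that is precisely the missing step.
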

\begin{proof}
We need to show that in every path $\o: \ee \to \oo$, there is at least one configuration with energy wastage greater than or equal to $K+1$. Take a path $\o =(\o_1,\dots, \o_n) \in \Omega_{\ee,\oo}$. Since $\ee$ has no odd bridge and $\oo$ does, at some point along the path $\o$ there must be a configuration $\o_{m^*}$ which is the first to display an odd bridge, horizontal or vertical, or both simultaneously. In symbols
\[
m^* := \min\{ m \leq n \st \exists \, i ~:~ (\o_m)_{|r_i} = \oo_{|r_i} \quad \mathrm{or} \quad \exists \, j ~:~ (\o_m)_{|c_j} = \oo_{|c_j}\}.
\]
Clearly $m^*>2$. We claim that $U(\o_{m^*-1}) \geq K+1$ or $U(\o_{m^*-2}) \geq L+1$. We distinguish the following three cases:
\begin{itemize}
\item[(a)] $\o_{m^*}$ displays an odd vertical bridge only;
\item[(b)] $\o_{m^*}$ displays an odd horizontal bridge only;
\item[(c)] $\o_{m^*}$ displays an odd cross.
\end{itemize}
These three cases cover all the possibilities, since the addition of a single particle cannot create more than one bridge in each direction.

For case (a), let $v^* \in \L$ be the unique site where configurations $\o_{m^*-1}$ and $\o_{m^*}$ differ and assume that $v^* \in r_{i^*}$ for some $0 \leq i^*\leq K-1$. By construction, $v^*$ must be an odd site and $\o_{m^*-1}(v^*) = 0$ and $\o_{m^*}(v^*)=1$. This means that two neighboring sites at the left and at the right of $v^*$ must be unoccupied for both configurations, in particular $\o_{m^*-1}(v^*+(1,0)) = 0$ and $\o_{m^*-1}(v^*+(-1,0)) = 0$, otherwise the addition of a particle in $v^*$ would not be allowed. Moreover, there must be another odd unoccupied site in the same $r_{i^*}$, otherwise the addition of a particle in $v^*$ would create also an odd horizontal bridge, which we assumed $\o_{m^*}$ does not have. Having at least two unoccupied sites both in $r_{i^*} \cap V_e$ and in $r_{i^*} \cap V_o$ implies that
\begin{equation}\label{eq:ineqrowj}
U_{i^*}(\o_{m^*-1}) \geq 2.
\end{equation}
Moreover we claim that the energy wastage in every row which does not contain site $v^*$ is also greater than or equal to $1$. Indeed configuration $\o_{m^*-1}$ cannot display any odd row (by definition of $m^*$) and neither an even row, since $\o_{m^*-1}(v^*+(1,0)) = 0$ and $\o_{m^*-1}(v^*+(-1,0)) = 0$. Therefore for every $i=0,\dots,K-1$ and $i \neq i^*$ we have $(\o_{m^*})_{|r_i} \neq \oo_{|r_i}, \ee_{|r_i}$ and hence $U_i(\o_{m^*}) \geq 1$ by Lemma~\ref{lem:rw}. This, together with inequality~\eqref{eq:ineqrowj}, implies
\[
U(\o_{m^*-1}) \geq \sum_{i=0}^{K-1} U_i(\o_{m^*})  \geq K+1.
\]

For case (b) we can argue as in case (a), but interchanging the role of rows and columns, and obtain that 
\[
U(\o_{m^*-1}) \geq L+1 \geq K+1.
\]

For case (c), the vertical and horizontal odd bridges that $\o_{m^*}$ has must necessarily meet in the odd site $v^*$. Having an odd cross, $\o_{m^*}$ cannot have any horizontal or vertical even bridge. Consider the previous configuration $\o_{m^*-1}$ along the path $\o$, which can be obtained from $\o_{m^*}$ by removing the particle in $v^*$. From these considerations and from the definition of $m^*$ it follows that $\o_{m^*-1}$ has no vertical bridge (neither odd or even) and thus, by Lemma~\ref{lem:rw}, it has energy wastage at least one in every column, which amounts to
\[
U(\o_{m^*-1}) \geq L.
\]
If there is at least one column in which $\o_{m^*-1}$ has energy wastage strictly greater than one, we get 
\[
U(\o_{m^*-1}) \geq L+1,
\]
and the claim is proved. Consider now the other scenario, in which the configuration $\o_{m^*-1}$ has energy wastage exactly one in every column, which means $U(\o_{m^*-1}) = L$. Consider its predecessor in the path $\o$, namely the configuration $\o_{m^*-2}$. We claim that 
\[
U(\o_{m^*-2}) = L+1.
\]
By construction, configuration $\o_{m^*-2}$ must differ in exactly one site from $\o_{m^*-1}$ and therefore 
\[
U(\o_{m^*-2}) = U(\o_{m^*-1}) \pm1.
\]
Consider the case where $U(\o_{m^*-2}) = U(\o_{m^*-1})-1 = L -1$. In this case the configuration $\o_{m^*-2}$ must have a zero-energy-wastage column and by Lemma~\ref{lem:rw} it would be a vertical bridge. If it was an odd vertical bridge, the definition of $m^*$ would be violated. If it was an even vertical bridge, it would be impossible to obtain the odd horizontal bridge (which $\o_{m^*}$ has) in just two single-site updates, since three is the minimum number of single-site updates needed. Therefore
\[
U(\o_{m^*-2}) = U(\o_{m^*-1}) + 1 = L +1. \eqno \qed
\]
\end{proof}

\begin{prop}[Reference path]\label{prop:refpatht}
There exists a path $\o^*:\ee \to \oo$ in $\cX_{T_{K,L}}$ such that 
\[
\Phi_{\o^*} - H(\ee) = K+1.
\]
\end{prop}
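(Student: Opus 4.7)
The plan is to construct $\o^*$ in two phases: a short preparatory leg that empties the even sites of the first vertical stripe $C_1$, followed directly by an application of the reduction algorithm developed above.

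In the preparatory phase, starting from $\ee$ I would remove the particles on the $K$ sites of $C_1 \cap V_e$ one at a time, in any order. Each removal preserves admissibility (deleting particles can never violate the hard-core constraint) and raises the energy by exactly one, producing a path $\o^{(1)}:\ee\to\s'$ of length $K+1$ with $\Phi_{\o^{(1)}}=H(\s')=H(\ee)+K$, where $\s'$ agrees with $\ee$ outside $C_1 \cap V_e$ and is empty on $C_1 \cap V_e$. By construction $\s'$ satisfies the reduction algorithm's input requirement $\sum_{v \in C_1 \cap V_e}\s'(v)=0$, and $\s'\neq\oo$.

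The second phase feeds $\s'$ into the reduction algorithm to produce a path $\o^{(2)}:\s'\to\oo$ with $\Phi_{\o^{(2)}}\le H(\s')+1 = H(\ee)+K+1$; the algorithm's energy bound has already been established in the paragraph describing it, since every particle removal performed inside any sub-path $\o^{(j)}$ is immediately compensated by a particle addition one step later. Concatenating $\o^{(1)}$ with $\o^{(2)}$ yields a path $\o^*:\ee\to\oo$ with
\[
\Phi_{\o^*}=\max\{\Phi_{\o^{(1)}},\Phi_{\o^{(2)}}\}\le H(\ee)+K+1.
\]
Combined with the matching lower bound $\Phi(\ee,\oo)-H(\ee)\ge K+1$ from Proposition~\ref{prop:lowertg}, this forces $\Phi_{\o^*}-H(\ee)=K+1$, which is the claim.

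I do not anticipate any genuine obstacle: the construction is mechanical once the reduction algorithm is in hand. The one delicate point worth highlighting is the calibration of the preparatory leg: removing fewer than $K$ even particles from $C_1$ would leave an occupied even site blocking the first odd-site additions performed by the algorithm on $c_1$, while removing more particles (or hitting an intermediate peak higher than $H(\ee)+K$) would push $\Phi_{\o^*}$ strictly above $K+1$. Removing exactly the $K$ sites of $C_1\cap V_e$ is thus the unique amount of lifting that is both sufficient to initialise the algorithm and consistent with the optimal barrier dictated by Proposition~\ref{prop:lowertg}.
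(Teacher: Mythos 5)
Your proof is correct and takes essentially the same route as the paper: the same two-phase decomposition into a preparatory leg that empties $C_1 \cap V_e$ (the paper does it in lexicographic order, yours in any order, which is immaterial since the preparatory path is monotone in energy) followed by the reduction algorithm. The only minor difference is that the paper asserts the equality $\Phi_{\o^{(2)}} = H(\s^*)+1$ directly from the algorithm's behaviour, while you more cautiously use only the algorithm's inequality $\Phi_{\o^{(2)}} \leq H(\s')+1$ and then invoke the lower bound of Proposition~\ref{prop:lowertg} together with $\Phi_{\o^*} \geq \Phi(\ee,\oo)$ to force the equality — a slightly cleaner closing step that avoids having to argue that some removal in the reduction actually occurs.
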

\begin{proof}
We construct such a path $\o^*$ as the concatenation of two shorter paths, $\o^{(1)}$ and $\o^{(2)}$, where $\o^{(1)}: \ee \to \s^*$ and $\o^{(2)}: \s^* \to \oo$, and prove that $\Phi_{\o^{(1)}} = H(\s^*) = H(\s) + K$ and that $\Phi_{\o^{(2)}} = H(\s^*) + 1$ are satisfied, so that $\Phi_{\o^*} = \max_{\h \in \o^*} H(\h) = H(\ee)+ K+1$ as desired. The reason why $\o$ is best described as the concatenation of two shorter paths is the following: The reduction algorithm cannot in general be started directly from $\ee$ and the path $\o^{(1)}$ indeed leads from $\ee$ to $\s^*$, which is a suitable configuration to initialize the reduction algorithm. The configuration $\s^*$ differs from $\ee$ only in the even sites of the first vertical stripe:
\[
\s^*(v):=
\begin{cases}
\ee(v) & \text{ if } v \in \L \setminus C_1,\\
0 & \text{ if } v \in C_1.
\end{cases}
\]
The path $\o^{(1)}=(\o^{(1)}_{1},\dots,\o^{(1)}_{K+1})$, with $\o^{(1)}_{1}=\ee$ and $\o^{(1)}_{K+1}=\s^*$ can be constructed as follows. For $i=1,\dots,K$, at step $i$ we remove from configuration $\o^{(1)}_{i}$ the first particle in $C_1 \cap V_e$ in lexicographic order, increasing the energy by $1$ and obtaining in this way configuration $\o^{(1)}_{i+1}$. Therefore the configuration $\s'$ is such that $H(\s^*)-H(\ee) = K$ and $\Phi_{\o^{(1)}} = H(\ee)+K$. The second path $\o^{(2)}: \s^* \to \oo$ is then constructed by means of the reduction algorithm, which can be used since the configuration $\s^*$ satisfies condition~\eqref{eq:racond} and hence is a suitable initial configuration for the algorithm. The algorithm guarantees that $\Phi_{\o^{(2)}} = H(\s^*) +1$ and thus the conclusion follows. \qed
\end{proof}


\subsection{Energy landscape analysis for open grids (Proof of Theorem~\ref{thm:openel})}
\label{sub52}
We now prove Theorem~\ref{thm:openel} valid for an open grid $G_{K,L}$. Also in this case, we assume without loss of generality that $K \leq L$. Recall that $K$ and $L$ are positive integers, not necessarily even as in the previous subsection. In the remainder of the section we will write $\cX$ instead of $\cX_{G_{K,L}}$.

We first introduce a modification of the previous reduction algorithm tailored for open grids. The scope of this reduction algorithm is twofold. It is used first to build a specific path in $\cX$ from any given state in $\cX \setminus \{ \ee,\oo\}$ to the subset $\{\ee,\oo\}$ and to prove that if $KL \equiv 0 \pmod2$, then
\begin{equation}
\label{eq:ndwopen}
\tG(\cX \setminus \{\ee,\oo\}) \leq \lceil K/2 \rceil,
\end{equation}
which is Theorem~\ref{thm:openel}(a). The same argument also shows that if $KL \equiv 1 \pmod2$, then
\begin{equation}
\label{eq:ndwopenstrict}
\tG(\cX \setminus \{\ee,\oo\}) < \lceil K/2 \rceil,
\end{equation}
and also Theorem~\ref{thm:openel}(a*) is proved. By giving a lower bound on the energy wastage along every path $\ee \to \oo$, we show in Proposition~\ref{prop:lowerog} that
\[
\Phi(\ee,\oo) -H(\ee) \geq \lceil K/2 \rceil +1.
\]
Then, using again the reduction algorithm for open grids, we construct a reference path $\o^*: \ee \to \oo$ which proves that the lower bound above is sharp and hence
\begin{equation}
\label{eq:eeoo}
\Phi(\ee,\oo) -H(\ee) = \lceil K/2 \rceil +1.
\end{equation}
In the special case $KL \equiv 1 \pmod 2$, since $\Phi(\oo,\ee)=\Phi(\ee,\oo)$ and $H(\oo)=H(\ee)+1$, we can easily derive from the last equality that 
\begin{equation}
\label{eq:ooee}
\Phi(\oo,\ee) -H(\oo) = \lceil K/2 \rceil.
\end{equation}
Lastly, we combine inequality~\eqref{eq:ndwopen} and equation~\eqref{eq:eeoo} to obtain
\[
\tG(\cX \setminus \{\oo\}) =  \lceil K/2 \rceil+1,
\]
which concludes the proof of Theorem~\ref{thm:openel}(b). In the special case $KL \equiv 1 \pmod 2$, inequality~\eqref{eq:ndwopenstrict} and equation~\eqref{eq:ooee} prove Theorem~\ref{thm:openel}(b*), since they yield that
\[
\tG(\cX \setminus \{\ee\})= \lceil K/2 \rceil.
\]
We need one additional definition: Say that a configuration in $\cX$ displays an \textit{odd (even) vertical double bridge} if there exists at least one vertical stripe $S_i$ in which configuration $\s$ perfectly agrees with $\oo$ (respectively $\ee$), i.e.~if there exists an index $1 \leq j \leq \lfloor L/2 \rfloor$ such that 
\[
\s_{|C_j} = \oo_{|C_j} \quad (\text{respectively } \s_{|C_j} = \ee_{|C_j}).
\]
An \textit{odd (even) horizontal double bridge} is defined analogously. The two types of double bridges are illustrated in Figure~\ref{fig:doublebridges}.

Observe that an admissible configuration on the open grid has zero energy wastage in a horizontal (vertical) stripe if and only if it has an odd or even horizontal (vertical) bridge in that stripe. The next lemma formalizes this property. We give the statement and the proof only for horizontal stripes, since those for vertical stripes are analogous. In the special case of an open grid where $KL \equiv 1 \pmod 2$, the topmost row and the leftmost column need special treatment, since they do not belong to any stripe. The second part of the following lemma shows that an admissible configuration has zero energy wastage in that row/column if and only if they agree perfectly with $\ee$ therein. Again we will state and prove the result for the topmost row, the result for the leftmost column is analogous.

\begin{figure}[!h]
\centering
\subfigure[Odd horizontal double bridge]{\includegraphics[scale=1]{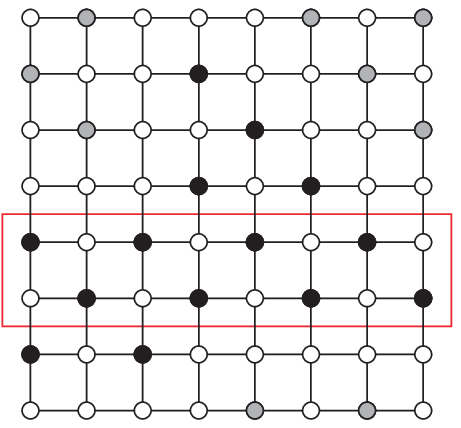}}
\hspace{2cm}
\subfigure[Odd vertical double bridges]{\includegraphics[scale=1]{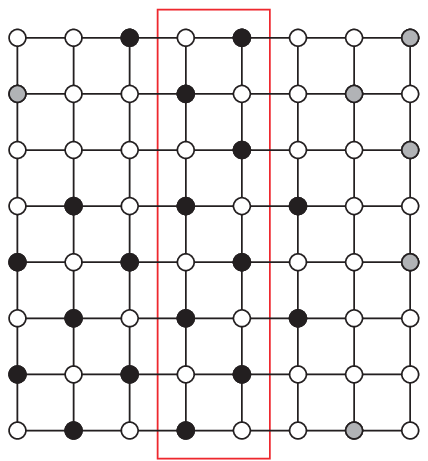}}
\caption{Examples of configurations on the $8 \times 8$ open grid displaying an odd double bridge}
\label{fig:doublebridges}
\end{figure}
\FloatBarrier

\begin{lem}[Energy efficient stripes are double bridges]\label{lem:bw}
Consider a configuration $\s \in \cX$.
\begin{itemize}
\item[(a)] For any $i = 0, \dots, \lfloor K /2 \rfloor -1$, the energy wastage $U^S_i(\s)$ in horizontal stripe $S_i$ satisfies
\[
U^S_i(\s)=0 \quad \Longleftrightarrow \quad \s_{|S_i}=\ee_{|S_i}\quad \mathrm{or} \quad \s_{|S_i}=\oo_{|S_i}.
\]
\item[(b)] If additionally $KL \equiv 1 \pmod 2$, then the energy wastage in the topmost row $U_{K-1}(\s)$ satisfies
\[
U_{K-1}(\s)=0 \quad \Longleftrightarrow \quad \s_{|r_{K-1}}=\ee_{|r_{K-1}}.
\]
\end{itemize}
\end{lem}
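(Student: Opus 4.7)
Both parts reduce to characterizing the maximum independent sets of a very simple graph, and I would handle (a) and (b) separately.

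\textbf{Part (a).} The stripe $S_i$ induces a $2\times L$ grid subgraph of $\Lambda$, which has $L$ even sites and $L$ odd sites. Recalling $U^S_i(\s) = L - \sum_{v \in S_i}\s(v)$ from~\eqref{eq:tdeltai}, the equivalence amounts to showing that the only maximum independent sets of this $2\times L$ subgraph are $\ee_{|S_i}$ and $\oo_{|S_i}$, and that they have cardinality exactly $L$. The plan is a column-by-column argument. Since the two sites in any column of $S_i$ are vertical neighbors, at most one of them can be occupied in any admissible $\s$; summing over the $L$ columns gives $\sum_{v \in S_i}\s(v) \leq L$, hence $U^S_i(\s)\geq 0$, and equality forces every column of $S_i$ to contain exactly one particle. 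A one-step induction on the column index then pins down the pattern: once the row (top or bottom of $S_i$) containing the particle in the leftmost column is fixed, the horizontal hard-core constraint forbids the next column from having its particle in the same row, so the occupied row must alternate from column to column. This leaves exactly two checkerboard configurations on $S_i$, which, after matching the parities of the sites, are precisely $\ee_{|S_i}$ and $\oo_{|S_i}$. The converse direction is immediate, since both $\ee_{|S_i}$ and $\oo_{|S_i}$ contain exactly $L$ particles.

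\textbf{Part (b).} When $KL\equiv 1\pmod 2$, both $K$ and $L$ are odd. The topmost row $r_{K-1}$ induces a path graph on $L$ vertices, for which the definition~\eqref{eq:deltai} gives $U_{K-1}(\s)=\lceil L/2\rceil - \sum_{v\in r_{K-1}}\s(v)$. I would show that a path on an odd number $L$ of vertices has a \emph{unique} maximum independent set, of size $(L+1)/2=\lceil L/2\rceil$, consisting of the vertices in horizontal positions $0,2,4,\dots,L-1$. This is a one-line induction on $L$: in any maximum independent set, the endpoint at position $0$ must be included (otherwise we could add it and enlarge the set, or swap it in), after which the constraint at position $1$ is vacuous and the remaining problem is the same on a path of length $L-2$. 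Since $K-1$ is even, the sites at horizontal positions $0,2,\dots,L-1$ in $r_{K-1}$ are precisely the even sites, so the unique maximum independent set coincides with $\ee_{|r_{K-1}}$. Thus $U_{K-1}(\s)=0$ forces $\s_{|r_{K-1}}=\ee_{|r_{K-1}}$, and the reverse implication is trivial.

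The only part that requires any care is the alternation argument in (a), and even that is easy because each column of $S_i$ contains just two sites; everything else is an immediate translation between energy wastage and the cardinality of an independent set in an elementary graph.
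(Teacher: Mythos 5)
Your proof is correct but takes a genuinely different route from the paper's. For both parts the paper proceeds by bookkeeping: it introduces separate counters for the number of occupied even sites and odd sites (split by top/bottom row in part (a)), and then argues that whenever some but not all even sites are occupied, each even particle ``blocks'' a distinct odd site, so the total occupancy falls short of the maximum. Your argument instead characterizes the maximum independent sets of the induced subgraph directly: in (a), the vertical constraint forces exactly one particle per column and the horizontal constraint then forces alternation, yielding exactly two maximum independent sets of the $2\times L$ stripe; in (b), you invoke the fact that a path on an odd number of vertices has a \emph{unique} maximum independent set. Both approaches are sound, but yours is arguably cleaner and more robust: the paper's blocking argument in (b) (``each particle residing in an even site blocks the odd site at its left'') requires a small repair for the particle at horizontal position $0$, which has no left neighbor, whereas your uniqueness-of-max-independent-set argument sidesteps this edge case entirely. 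The one point where you are a bit loose is the ``or swap it in'' parenthetical in (b): swapping vertex $0$ for vertex $1$ produces a priori another maximum independent set, and one must finish the induction to see this leads to a contradiction (the swapped set forces the original to contain two adjacent vertices). A cleaner version is to note that if vertex $0$ is excluded, the entire set lies in a path on $L-1$ vertices, whose maximum independent set has size $(L-1)/2 < (L+1)/2$. This is minor and does not affect correctness.
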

\begin{proof}
(a) Consider the rectangular $2 \times L$ grid graph induced by the horizontal stripe $S_i$: It has $L$ even sites and $L$ odd sites. If $\s_{|S_i}=\ee_{|S_i}$ or $\s_{|S_i}=\oo_{|S_i}$, trivially $U^S_i(\s)=0$. Let us prove the converse implication. Denote by $\ev_t$ ($\ev_b$) the number of particles present in even sites in the top (bottom) row of stripe $S_i$. Analogously, define $\ov_t$ ($\ov_b$) as the number of particles present in odd sites in the top (bottom) row of stripe $S_i$. We will show that:
\begin{itemize}
\item[(i)] $U^S_i(\s)=0$ and $\ev_t+\ev_b=0$ $\Longleftrightarrow$ $\ov_t+\ov_b=L$;
\item[(ii)] $U^S_i(\s)=0$ and $\ev_t+\ev_b>0$ $\Longleftrightarrow$ $\ov_t+\ov_b=L$.
\end{itemize}
Cases (i) and (ii $\Leftarrow$) are immediate. Thus we focus on the implication (ii $\Rightarrow$).\\
Note that if $\ev_t+\ev_b \in [1, L -1]$ particles are present in even sites, then they block at least $\ev_t+\ev_b +1$ odd sites, which must then be unoccupied. 
Indeed in the top row each of the $\ev_t$ particles blocks the odd node at its right and in the bottom row each of the $\ev_b$ particles blocks the odd node at its left. In one of the two rows, say the top one, there is at least one even unoccupied site and consider the even site at its right where a particle resides. This particle blocks also the odd site at its left. Hence $\ov_t+\ov_b \leq L-(\ev_t+\ev_b+1)$, which gives $U^S_i(\s) = L - (\ev_t+\ev_b+\ov_t+\ov_b) >0$.\\

(b) The topmost row has $\frac{L+1}{2}$ even sites and $\frac{L-1}{2}$ odd sites. Denote by $\ev$ (respectively $\ov$) the number of particles present in even (respectively odd) sites in row $r_{K-1}$. The energy wastage of $\s$ on this row can be computed as $U_{K-1}(\s)=\frac{L+1}{2}-\ev-\ov$. Trivially, if $\s_{|r_{K-1}}=\ee_{|r_{K-1}}$, then $\ev=\frac{L+1}{2}$ and thus $U_{K-1}(\s)=0$. Let us prove the opposite implication. Assume that $\s_{|r_{K-1}} \neq \ee_{|r_{K-1}}$, i.e.~$\ev<\frac{L+1}{2}$. If $\ev=0$, then $U_{K-1}(\s)\geq 1$, since $\ov \leq \frac{L-1}{2}$. If instead $\ev \in [1,\frac{L+1}{2}-1]$, then each particle residing in an even site blocks the odd site at its left, therefore $\ov \leq \frac{L-1}{2} - \ev$, which implies
\[
U_{K-1}(\s) = \frac{L+1}{2}-\ev-\ov \geq \frac{L+1}{2} -\ev - \left (\frac{L-1}{2} - \ev\right )\geq 1. \eqno \qed
\]
\end{proof}


\subsubsection*{Reduction algorithm for open grids}
We now describe the \textit{reduction algorithm for open grids}, which is a modification of the reduction algorithm for toric grids that builds a path $\o$ in $\cX$ from a given initial configuration $\s$ to \textit{either} $\oo$ or $\ee$. The reduction algorithm for open grids takes two inputs instead of one: The initial configuration $\s$ and the target state which is either $\oo$ or $\ee$. This is the first crucial difference with the corresponding algorithm for toric grid, where the target configuration was always $\oo$. In the following, we first assume that the target state is $\oo$ and illustrate the procedure in this case. The necessary modifications when the target state is $\ee$ are presented later.

The initial configuration $\s$ for the reduction algorithm must be such that there are no particles in the even sites of the first column $c_0$, i.e.
\begin{equation}
\label{eq:ogracond}
\sum_{v \in c_0 \cap V_e} \s(v) =0.
\end{equation}
This condition ensures that the algorithm has enough ``room'' to work properly. Note that condition~\eqref{eq:ogracond} is different from condition~\eqref{eq:racond} for the reduction algorithm for toric grids, which requires instead that the even sites of \textit{both} the first two columns $c_0$ and $c_1$ should be empty.

The path $\o$ is the concatenation of $L$ paths $\o^{(1)}, \dots,\o^{(L)}$. Path $\o^{(j)}$ goes from $\s_{j}$ to $\s_{j+1}$, where we set $\s_1=\h$ and define recursively configuration $\s_{j+1}$ from configuration $\s_{j}$  for every $j=1, \dots, L$ as
\[
\s_{j+1}(v)
= \begin{cases}
\s_j(v) & \text{ if } v \in \L \setminus (c_j \cup c_{j+1}),\\
\oo(v)  & \text{ if } v \in c_j,\\
\s_j(v) & \text{ if } v \in c_{j+1} \cap V_o,\\
0 		& \text{ if } v \in c_{j+1} \cap V_e.
\end{cases}
\]
This procedure guarantees that $\s_{L+1}=\oo$. The path $\o^{(j)}$ for $j=1,\dots,L$ is constructed exactly as the path $\o^{(j)}$ for the reduction algorithm for toric grids. Since their construction is identical, every path $\o^{(j)}$ enjoys the same properties as those of the original reduction algorithm, namely 
\[
H(\s_{j+1}) \leq H(\s_j) \quad \text{ and } \quad \Phi_{\o^{(j)}} \leq H(\s_j) + 1.
\]
This means that the path $\o: \s \to \oo$ created by their concatenation satisfies
\[
\Phi_{\o} \leq H(\s) + 1.
\]

In the scenario where the target state is $\ee$, three modifications are needed. First the initial state $\s$ must be such that there are no particles in the \textit{odd} sites of the first column $c_0$, i.e.
\[
\sum_{v \in c_0 \cap V_e} \s(v) =0.
\]
Secondly, the sequence of intermediate configurations $\s_j$, $j=1,\dots, L$ must be modified as follows: We set $\s_1=\s$ and we define recursively $\s_{j+1}$ from $\s_{j}$ as
\[
\s_{j+1}(v) = \begin{cases}
\s_j(v) & \text{ if } v \in \L \setminus (c_j \cup c_{j+1}),\\
\ee(v) & \text{ if } v \in c_j,\\
\s_j(v) & \text{ if } v \in c_{j+1} \cap V_e,\\
0 & \text{ if } v \in c_{j+1} \cap V_o.
\end{cases}
\]
Lastly, for step $i$ of path $\o^{(j)}$, we need a different offset to select the site $v$, namely $v=(j,i+(j \pmod 2))$ when $i \equiv 0 \pmod 2$ and $v=(j,i-1+(j \pmod 2))$ when $i \equiv 1 \pmod 2$. One can check that the resulting path $\o: \s \to \ee$ satisfies the inequality
\[
\Phi_{\o} \leq H(\s) + 1.
\]

\noindent \textit{Proof of }{\rm Theorem~\ref{thm:openel}(a) and~(b).}
It is enough to prove that for every $\s \in \cX \setminus \{\ee,\oo\}$
\[
\Phi(\s,\{\ee,\oo\}) -H(\s) \leq  \lfloor K/2 \rfloor.
\]
Indeed, this claim, together with the equivalent characterization of $\tG$ given in Lemma~\ref{lem:GAequiv1}, proves simultaneously inequality~\eqref{eq:ndwopen} when $KL \equiv 0 \pmod 2$ and the strict inequality~\eqref{eq:ndwopenstrict} when $KL \equiv 1 \pmod 2$, since in this case $ \lfloor K/2 \rfloor < \lceil K/2 \rceil$. To prove such an inequality, we have to exhibit for every $\s \in \cX \setminus \{\ee,\oo\}$ a path $\o: \s \to \{\ee,\oo\}$ in $\cX$ such that $\Phi_{\o}=\max_{\h \in \o} H(\h) \leq H(\s)+\lfloor K/2 \rfloor$. \\
Let $b$ be the number of particles present in configuration $\s$ in the odd sites of the leftmost column of $\L$, i.e.
\[
b:=\sum_{v \in c_0 \cap V_o} \s(v).
\]
Every column in $\L$ has $\lfloor K/2 \rfloor$ odd sites, and hence $ 0 \leq b \leq \lfloor K/2 \rfloor$. Differently from the proof of Theorem~\ref{thm:toricel}(a), here the value of $b$ determines how the path $\o$ will be constructed. We distinguish two cases: (a) $b = \lfloor K/2 \rfloor$ and (b) $b < \lfloor K/2 \rfloor$.\\

(a) Assume that $b = \lfloor K/2 \rfloor$. In this case, we construct a path $\o: \s \to \oo$ by means of the reduction algorithm for open grids, choosing as initial configuration $\s$ and as target configuration $\oo$. The way this path is built guarantees that $\Phi_{\o} \leq H(\s) + 1$, which implies that
\[
\Phi(\s,\oo)-H(\s) = 1 \leq \lfloor K/2 \rfloor.
\]

(b) Assume that $b < \lfloor K/2 \rfloor $. In this case we create a path $\o: \s \to \ee$ as the concatenation of two shorter paths, $\o^{(1)}$ and $\o^{(2)}$, where $\o^{(1)}: \s \to \s'$, $\o^{(2)}: \s' \to \ee$ and $\s'$ is a suitable configuration which depends on $\s$ (see definition below). The reason why $\o$ is best described as concatenation of two shorter paths is the following: Since $b < \lfloor K/2 \rfloor$, the reduction algorithm can not be started directly from $\s$ and the path $\o^{(1)}$ indeed leads from $\s$ to $\s'$, which is a suitable configuration to initialize the reduction algorithm for open grids. The configuration $\s'$ differs from $\s$ only in the odd sites of the first column, that is
\[
\s'(v):=
\begin{cases}
\s(v) & \text{ if } v \in \L \setminus (c_0 \cap V_o),\\
0 & \text{ if } v \in c_0 \cap V_o.
\end{cases}
\]
The path $\o^{(1)}=(\o^{(1)}_{1},\dots,\o^{(1)}_{b+1})$, with $\o^{(1)}_{1}=\s$ and $\o^{(1)}_{b+1}=\s'$, can be constructed as follows. For $i=1,\dots,b$, at step $i$ we remove from configuration $\o^{(1)}_{i}$ the topmost particle in $c_0 \cap V_o$ increasing the energy by $1$ and obtaining in this way configuration $\o^{(1)}_{i+1}$,. Therefore the configuration $\s'$ is such that $H(\s')-H(\s) = b$ and 
\[
\Phi_{\o^{(1)}} = \max_{\h \in \o^{(1)}} H(\h) \leq H(\s)+b.
\]
The path $\o^{(2)}: \s' \to \ee$ is then constructed by means of the reduction algorithm for open grids described earlier, using $\s'$ as initial configuration and $\ee$ as target configuration. The reduction algorithm guarantees that
\[
\Phi_{\o^{(2)}} = \max_{\h \in \o^{(2)}} H(\h) \leq H(\s') + 1.
\]
The concatenation of the two paths $\o^{(1)}$ and $\o^{(2)}$ gives a path $\o: \s \to \ee$ which satisfies the inequality $\Phi_{\o} \leq H(\s)+ b + 1$ and therefore
\[
\Phi(\s,\ee)-H(\s) = b+1 \leq \lfloor K/2 \rfloor. \eqno \qed
\]

\begin{prop}[Lower bound for $\Phi(\ee,\oo)$]\label{prop:lowerog}
\[
\Phi(\ee,\oo) - H(\ee) \geq \lceil K/2 \rceil +1.
\]
\end{prop}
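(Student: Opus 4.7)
The approach mirrors Proposition~\ref{prop:lowertg}, with stripes replacing single rows and columns, and Lemma~\ref{lem:bw} replacing Lemma~\ref{lem:rw}. Given any path $\o:\ee\to\oo$ in $\cX$, I would let $m^*$ denote the smallest index at which $\o_{m^*}$ exhibits an odd double bridge (horizontal or vertical); such an index exists because $\ee$ has no odd double bridge, whereas every stripe of $\oo$ is one. Let $v^*$ be the unique site in which $\o_{m^*-1}$ and $\o_{m^*}$ differ; necessarily $v^*$ is an odd site and the update adds a particle there.

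The proof then splits into three cases according to the type of odd double bridge created at time $m^*$, mirroring the case analysis of Proposition~\ref{prop:lowertg}. In case (a), only a vertical odd double bridge appears, say in $C_{j^*}$. In $\o_{m^*-1}$ all odd sites of $C_{j^*}$ except $v^*$ are occupied, which by adjacency within $C_{j^*}$ forces every even site of $C_{j^*}$ to be empty. By Lemma~\ref{lem:bw} each horizontal stripe $S_i$ then contributes $U^S_i(\o_{m^*-1})\geq 1$, since it is neither an odd double bridge (by minimality of $m^*$) nor an even one (some even site in $S_i\cap C_{j^*}$ is empty). For the stripe $S_{i^*}$ containing $v^*$ one sharpens this to $U^S_{i^*}\geq 2$: the only maximum independent sets on the $2\times(L-2)$ ladder $S_{i^*}\setminus C_{j^*}$ are the ``all odd'' and ``all even'' patterns, and both are excluded — the former would make $\o_{m^*}$ into a horizontal odd double bridge (contradicting case (a)), while the latter would include the horizontal neighbor of $v^*$, which must be empty. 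When $K$ is odd, Lemma~\ref{lem:bw}(b) adds $U_{K-1}(\o_{m^*-1})\geq 1$ for the leftover row, which cannot agree with $\ee$ there because the even site of $C_{j^*}$ in $r_{K-1}$ is empty. Summing these contributions yields $U(\o_{m^*-1})\geq \lceil K/2\rceil+1$. Case (b) is the dual of (a) obtained by interchanging rows and columns, giving $U(\o_{m^*-1})\geq \lceil L/2\rceil+1\geq \lceil K/2\rceil+1$ since $K\leq L$. In case (c) the added particle $v^*$ simultaneously completes a vertical and a horizontal odd double bridge, which must share $v^*$; a roll-back to $\o_{m^*-2}$ (combined with the fact that no odd double bridge exists there either) reproduces the argument of case (c) in Proposition~\ref{prop:lowertg} and gives $U(\o_{m^*-2})\geq \lceil L/2\rceil+1$.

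The main obstacle I anticipate lies in the subcase of (a) where $v^*$ sits in the leftover row $r_{K-1}$ (necessarily $K$ odd), so no stripe $S_{i^*}$ exists and the ``extra unit'' of wastage must be extracted from $r_{K-1}$ itself. Here one shows that $v^*$ together with its horizontal neighbors in $r_{K-1}$ form a block of consecutive empty columns that chops the row-path $r_{K-1}$ into short intervals whose combined maximum independent set is at most $\lceil L/2\rceil-2$, so that $U_{K-1}(\o_{m^*-1})\geq 2$. The boundary variant in which $v^*$ has only one horizontal neighbor inside the grid is more delicate and is handled by the same double roll-back to $\o_{m^*-2}$ as in case (c). Once these refinements are in place, combining the three cases yields $\Phi(\ee,\oo)-H(\ee)\geq \lceil K/2\rceil+1$, as required.
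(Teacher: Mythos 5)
Your overall architecture — pick the first configuration $\o_{m^*}$ that creates the relevant geometric obstruction, split into cases (a)/(b)/(c), and lower-bound $U(\o_{m^*-1})$ stripe by stripe — is the same as the paper's. But you defined $m^*$ as the first time an odd \emph{double} bridge appears, whereas the paper uses the first odd \emph{single} bridge (one row or one column matching $\oo$). That is a genuine variation, and while it can in principle be made to work, it introduces gaps exactly where you departed from the simpler contradiction arguments.

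The main gap is the sharpening $U^S_{i^*}(\o_{m^*-1}) \geq 2$ in case (a). You argue via maximum independent sets of the ``$2\times(L-2)$ ladder'' $S_{i^*}\setminus C_{j^*}$. But when $C_{j^*}$ is not an extremal vertical stripe, $S_{i^*}\setminus C_{j^*}$ is a \emph{disjoint union} of two ladders, so its maximum independent sets are not only ``all odd'' and ``all even'': one piece can be all-odd while the other is all-even. Your exclusion argument (odd pattern gives a horizontal odd double bridge; even pattern occupies a neighbor of $v^*$) does not handle these mixed patterns. Moreover the exclusion of the all-even pattern uses the ``horizontal neighbor of $v^*$ in $S_{i^*}\setminus C_{j^*}$'', which need not exist when $v^*$ sits in the boundary column of the grid. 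There is a cleaner route which the paper takes and which survives your change of $m^*$: if $U^S_{i^*}(\o_{m^*-1})=1$, then adding the particle at $v^*\in S_{i^*}$ forces $U^S_{i^*}(\o_{m^*})=0$, whence by Lemma~\ref{lem:bw} $\o_{m^*}$ agrees on $S_{i^*}$ with $\oo$ (not $\ee$, since $v^*$ is odd and occupied); that is a horizontal odd double bridge, contradicting case (a). The ladder detour is both unnecessary and, as written, incorrect.

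The second gap is the $K$-odd subcase with $v^*\in r_{K-1}$, which you acknowledge as ``delicate'' but leave unresolved; the ``block of consecutive empty columns chopping the row-path'' idea and the ``double roll-back for the boundary variant'' are not arguments, only sketches of intent. Here too the paper's contradiction is direct: if $U_{K-1}(\o_{m^*-1})=1$ then $U_{K-1}(\o_{m^*})=0$, so by Lemma~\ref{lem:bw}(b) the topmost row of $\o_{m^*}$ agrees with $\ee$, impossible since $v^*\in r_{K-1}$ is an occupied odd site. Finally, case (c) is merely asserted to ``reproduce'' the toric argument; since you changed the meaning of $m^*$ from single to double bridges, that claim needs to be re-verified rather than cited. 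Filling these holes essentially brings you back to the paper's proof, just with $m^*$ defined via double rather than single bridges.
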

\begin{proof}
It is enough to show that in every path $\o: \ee \to \oo$ there is at least one configuration with energy wastage greater than or equal to $\lceil K/2 \rceil+1$. Take a path $\o=(\o_1,\dots, \o_n) \in \Omega_{\ee,\oo}$. Since $\ee$ does not have an odd bridge while $\oo$ does, at some point along the path $\o$ there must be a configuration $\o_{m^*}$ which is the first to display an odd bridge, horizontal or vertical, or both simultaneously. In symbols
\[
m^* := \min\{ m \leq n \st \exists \, i ~:~(\o_m)_{|r_i} = \oo_{|r_i} \quad \mathrm{or} \quad \exists \, j ~:~(\o_m)_{|c_j} = \oo_{|c_j}\}.
\]
Clearly $m^*>2$. We claim that $U(\o_{m^*-1}) \geq \lceil K/2 \rceil +1$ or $U(\o_{m^*-2}) \geq \lceil L/2 \rceil+1$. We distinguish the following three cases:
\begin{itemize}
\item[(a)] $\o_{m^*}$ displays an odd vertical bridge only;
\item[(b)] $\o_{m^*}$ displays an odd horizontal bridge only;
\item[(c)] $\o_{m^*}$ displays an odd cross.
\end{itemize}
These three cases cover all possibilities, since the addition of a single particle cannot create more than one bridge in each direction. Let $v^* \in \L$ be the unique site where configuration $\o_{m^*-1}$ and $\o_{m^*}$ differ.\\

For case (a), assume first that $v^*$ belong to the $i^*$-th horizontal stripe, i.e.~$v^* \in S_{i^*}$ for some $0 \leq i^*\leq \lfloor K/2 \rfloor -1$. By construction, $v^*$ must be an odd site and $\o_{m^*-1}(v^*) = 0$ and $\o_{m^*}(v^*)=1$ and thus $U^S_{i^*}(\o_{m^*-1}) \geq 1$. We claim that in fact
\[
U^S_{i^*}(\o_{m^*-1}) \geq 2.
\]
It is enough to show that $U^S_{i^*}(\o_{m^*-1}) \neq 1$. Suppose by contradiction that $U^S_{i^*}(\o_{m^*-1}) =1$, then it must be the case that $U^S_{i^*}(\o_{m^*}) =0$, due the addition of a particle in $v^*$, and by Lemma~\ref{lem:bw} the horizontal stripe $S_{i^*}$ must agree fully with $\oo$ ($\o_{m^*}\neq \ee$, since it has a particle residing in $v^*$ which is an odd site). This fact would imply that $\o_{m^*}$ has an odd horizontal bridge, which contradicts our assumption for case (a).\\

Assume instead that $K$ is odd and that $v^*$ does not belong to any horizontal stripe and belongs instead to the topmost row, i.e.~$v^* \in r_{K-1}$. By construction, $v^*$ must be an odd site and $\o_{m^*-1}(v^*) = 0$ and $\o_{m^*}(v^*)=1$ and thus $U_{K-1}(\o_{m^*-1}) \geq 1$. We claim that in fact 
\[
U_{K-1}(\o_{m^*-1}) \geq 2.
\]
It is enough to show that $U_{K-1}(\o_{m^*-1}) \neq 1$. Suppose by contradiction that $U_{K-1}(\o_{m^*-1}) =1$, then it must be $U_{K-1}(\o_{m^*}) =0$, due to the addition of a particle in $v^*$. By Lemma~\ref{lem:bw} $\o_{m^*}$ must agree fully with $\ee$ on this topmost row, but this cannot be the case since $\o_{m^*}$ has a particle residing in $v^*$ which is an odd site.\\

Moreover, we claim that the energy wastage in every horizontal stripe that does not contain site $v^*$ (and in the topmost row if $KL \equiv 1 \pmod 2$ and $v^* \not\in r_{K-1}$) is also greater than or equal to $1$. Indeed, configuration $\o_{m^*-1}$ cannot display any horizontal odd bridge (by definition of $i^*$) and neither a horizontal even bridge, since $\o_{m^*-1}(v^*+(1,0)) = 0$ and $\o_{m^*-1}(v^*+(-1,0)) = 0$. Therefore for every $i=1,\dots,\lfloor K/2 \rfloor$ such that $v^* \not\in S_j$ we have $(\o_{m^*})_{|S_i} \neq \oo_{|S_i}, \ee_{|S_i}$ and hence, by Lemma~\ref{lem:bw}
\[
U^S_i(\o_{m^*}) \geq 1.
\]
If $K$ is odd, then the topmost row $r_{K-1}$ cannot be a horizontal odd bridge (our assumption would be violated) and neither a horizontal even bridge (it would be impossible to obtain the horizontal odd bridge which $\o_{m^*}$ has in a single step, the minimum number of steps needed is two). Therefore, by Lemma~\ref{lem:bw},
\[
U_{K-1}(\o_{m^*-1}) \geq 1.
\]
There are three possible scenarios:
\begin{itemize}
\item $K$ even: There are $K/2-1$ horizontal stripes with positive energy wastage and $U^S_{i^*}(\o_{m^*-1}) \geq 2$;
\item $K$ odd and $v^* \not\in r_{K-1}$: There are $\lfloor K/2 \rfloor -2$ horizontal stripes with positive energy wastage, $U_{K-1}(\o_{m^*-1}) \geq 1$ and $U^S_{i^*}(\o_{m^*-1}) \geq 2$;
\item $K$ odd and $v^* \in r_{K-1}$: There are $\lfloor K/2 \rfloor -1$ horizontal stripes with positive energy wastage and $U_{K-1}(\o_{m^*-1}) \geq 2$.
\end{itemize}
In all three scenarios, by summing the energy wastage of the horizontal stripes (and possibly that of the topmost row) we obtain
\[
U(\o_{m^*-1}) \geq \lceil K/2 \rceil +1.
\]

For case (b) we can argue in a similar way, but interchanging the roles of rows and columns, and obtain that 
\[
U(\o_{m^*-1}) \geq \lceil L/2 \rceil +1 \geq \lceil K/2 \rceil +1.
\]

For case (c), the vertical and horizontal odd bridges that $\o_{m^*}$ has must necessarily meet in the odd site $v^*$. Having an odd cross, $\o_{m^*}$ cannot display any horizontal or vertical even bridge. Consider the previous configuration $\o_{m^*-1}$ along the path $\o$, which can be obtained from $\o_{m^*}$ by removing the particle in $v^*$. From these considerations and from the definition of $m^*$ it follows that $\o_{m^*-1}$ has no vertical bridge (neither odd or even) and thus, by Lemma~\ref{lem:bw}, it has energy wastage at least one in each of the $\lfloor L/2 \rfloor$ vertical stripes and possibly in the leftmost column, if $L$ is odd. In both cases, we have
\[
U(\o_{m^*-1}) \geq \lceil L/2 \rceil.
\]
If there is at least one column in which $\o_{m^*-1}$ has energy wastage strictly greater than one, then the proof is concluded, since
\[
U(\o_{m^*-1}) \geq \lceil L/2 \rceil+1 \geq \lceil K/2 \rceil+1.
\]
Consider now the other scenario, in which the configuration $\o_{m^*-1}$ has energy wastage exactly one in every vertical stripe (and possibly in the leftmost column, if $L$ is odd), which means $U(\o_{m^*-1}) = \lceil L/2 \rceil$. Consider its predecessor in the path $\o$, namely the configuration $\o_{m^*-2}$. We claim that 
\[
U(\o_{m^*-2}) = \lceil L/2 \rceil +1.
\]
Indeed, by construction, configuration $\o_{m^*-2}$ must differ in exactly one site from $\o_{m^*-1}$ and therefore 
\[
U(\o_{m^*-2})=U(\o_{m^*-1})\pm1.
\]
Consider the case where $U(\o_{m^*-2}) = U(\o_{m^*-1})-1 = \lceil L/2 \rceil -1$. In this case the configuration $\o_{m^*-2}$ must have a zero-energy-wastage vertical stripe and by Lemma~\ref{lem:bw} it would be a vertical double bridge. If it was a vertical odd double bridge, the definition of $m^*$ would be violated. If it was an even vertical double bridge, it would be impossible to obtain the horizontal odd bridge (which $\o_{m^*}$ has) in just two single-site updates, since three is the minimum number of single-site updates needed. Therefore
\[
U(\o_{m^*-2})=U(\o_{m^*-1}) + 1 = \lceil L/2 \rceil +1. \eqno \qed
\]
\end{proof}

\begin{prop}[Reference path]\label{prop:refpatho}
There exists a path $\o^*:\ee \to \oo$ in $\cX_{G_{K,L}}$ such that 
\[
\Phi_{\o^*} - H(\ee) = \lceil K/2 \rceil +1.
\]
\end{prop}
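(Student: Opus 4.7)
My plan is to mirror the argument in Proposition~\ref{prop:refpatht}, exploiting the crucial difference that the reduction algorithm for open grids (as described above) requires a weaker initialization condition than its toric counterpart: only the even sites of the leftmost column $c_0$ need to be empty, not those of the entire first vertical stripe $C_1$. This is precisely the source of the improved saddle energy $\lceil K/2 \rceil + 1$ in place of $K+1$.

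Concretely, I would construct $\o^*$ as the concatenation of a preparation path $\o^{(1)} : \ee \to \s^*$ and a main path $\o^{(2)} : \s^* \to \oo$ produced by the reduction algorithm for open grids, where
\[
\s^*(v) := \begin{cases} \ee(v) & \text{if } v \in \L \setminus (c_0 \cap V_e),\\ 0 & \text{if } v \in c_0 \cap V_e. \end{cases}
\]
Since $c_0$ contains exactly $\lceil K/2 \rceil$ even sites, all of which are occupied in $\ee$, we have $H(\s^*) - H(\ee) = \lceil K/2 \rceil$. The path $\o^{(1)}$ would then be built by removing these $\lceil K/2 \rceil$ particles one at a time in lexicographic order; each removal increases the energy by one and yields an admissible configuration, so $\Phi_{\o^{(1)}} = H(\s^*) = H(\ee) + \lceil K/2 \rceil$.

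Since $\s^*$ satisfies the initialization condition~\eqref{eq:ogracond}, applying the reduction algorithm for open grids with initial configuration $\s^*$ and target $\oo$ produces a path $\o^{(2)} : \s^* \to \oo$ with $\Phi_{\o^{(2)}} \leq H(\s^*) + 1 = H(\ee) + \lceil K/2 \rceil + 1$. The concatenation then satisfies $\Phi_{\o^*} = \max\{\Phi_{\o^{(1)}}, \Phi_{\o^{(2)}}\} \leq H(\ee) + \lceil K/2 \rceil + 1$, and the matching lower bound furnished by Proposition~\ref{prop:lowerog} forces equality. I do not anticipate a real obstacle here: both building blocks are direct analogues of their toric counterparts, and the one small item that deserves a sentence of justification is the parity-uniform count $|c_0 \cap V_e| = \lceil K/2 \rceil$, which holds because $c_0$ always begins at the even site $(0,0)$ and (when $K$ is odd) also ends at an even site.
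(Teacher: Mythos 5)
Your construction matches the paper's exactly: the same $\s^*$, the same preparation path $\o^{(1)}$ of $\lceil K/2 \rceil$ removals, and the same invocation of the open-grid reduction algorithm for $\o^{(2)}$. The only (minor, and arguably cleaner) difference is that you close the argument by pairing the upper bound $\Phi_{\o^*} \leq H(\ee)+\lceil K/2 \rceil+1$ with the lower bound from Proposition~\ref{prop:lowerog}, whereas the paper asserts $\Phi_{\o^{(2)}} = H(\s^*)+1$ directly; your route avoids having to justify that the reduction algorithm actually attains, and not merely stays below, that level.
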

\begin{proof}
We describe just briefly how the reference path $\o^*$ is constructed, since it is very similar to the one given in the proof of Proposition~\ref{prop:refpatht}. Also in this case, the path $\o^*$ is the concatenation of two shorter paths, $\o^{(1)}$ and $\o^{(2)}$, where $\o^{(1)}: \ee \to \s^*$ and $\o^{(2)}: \s^* \to \oo$, where $\s^*$ is the configuration that differs from $\ee$ only in the even sites of the leftmost column:
\[
\s^*(v):=
\begin{cases}
\ee(v) & \text{ if } v \in \L \setminus c_0,\\
0 & \text{ if } v \in c_0.
\end{cases}
\]
The path $\o^{(1)}$ consists of $\lceil K/2 \rceil$ steps, at each of which we remove the first particle in $c_0 \cap V_e$ in lexicographic order from the previous configuration. The last configuration is precisely $\s^*$, which has energy $H(\s^*)=H(\ee)+ \lceil K/2 \rceil$, and, trivially, $\Phi_{\o^{(1)}} = H(\ee)+ \lceil K/2 \rceil$. The second path $\o^{(2)}: \s^* \to \oo$ is then constructed by means of the reduction algorithm, which can be used since configuration $\s^*$ is a suitable initial configuration for it, satisfying condition~\eqref{eq:ogracond}. The algorithm guarantees that $\Phi_{\o^{(2)}} = H(\s^*) +1$ and thus the concatenation of the two paths $\o^{(1)}$ and $\o^{(2)}$ yields a path $\o^*$ with $\Phi_{\o^*} = \max_{\h \in \o} H(\h) = H(\ee) + \lceil K/2 \rceil +1$ as desired. \qed
\end{proof}

\subsection{Energy landscape analysis for cylindrical grids (Proof of Theorem~\ref{thm:cycleel})}
\label{sub53}
In this subsection we briefly describe how to proceed to prove Theorem~\ref{thm:cycleel}. The cylindrical grid $C_{K,L}$ is a hybrid between the toric grid and the open grid, since the columns of $C_{K,L}$ have the same structure as the columns of the toric grid $T_{K,L}$, while the horizontal stripes of $C_{K,L}$ enjoy the same structural properties of those of the open grid $G_{K,L}$. Along the lines of Lemmas~\ref{lem:rw} and~\ref{lem:bw} we can prove that the only columns with zero energy wastage are vertical bridges and the only horizontal stripes with zero energy wastage are horizontal double bridges.

In order to prove that 
\[
\Phi(\ee,\oo)-H(\ee) \geq \min\{K/2, L\} +1,
\]
one can argue in a similar way as was done for the other two types of grids. Also for the cylindrical grid, in any path $\o: \ee \to \oo$ there must be a configuration $\o_{m^*}$ which is the first to display a horizontal odd bridge or a vertical odd bridge or both simultaneously, i.e.
\[
m^* := \min\{ m \leq n \st \exists \, i ~:~ (\o_m)_{|r_i} = \oo_{|r_i} \quad \mathrm{or} \quad \exists \, j ~:~ (\o_m)_{|c_j} = \oo_{|c_j}\}.
\]
One can prove that 
\[
\max\{ U(\o_{m^*-1}),U(\o_{m^*-2})\} \geq \min\{K/2, L\} +1.
\]
We distinguish two cases, depending on whether $K/2 \geq L$ or $K/2 < L$. In these two cases, the proof can be obtained by studying the energy wastage either in the columns or in the horizontal stripes, in the same spirit as for the toric and open grids in Subsections~\ref{sub51} and~\ref{sub52}, respectively. Moreover, depending on whether $K/2 \geq L$ or $K/2 < L$, we can take the reference path $\o^*$ to be the same as in Subsections~\ref{sub51} and~\ref{sub52}, respectively. Lastly, one can show that
\[
\tG(\cX \setminus \{\ee,\oo\}) \leq \min \{ K/2, L\},
\]
by exploiting what has been done in Subsection~\ref{sub51}, if $K/2 \geq L$, and the strategy adopted in Subsection~\ref{sub52}, otherwise.


\section{Conclusions}
\label{sec6}
We have studied the first hitting times between maximum-occupancy configurations and mixing times for the hard-core interaction of particles on grid graphs. In order to do so, we extended the framework~\cite{MNOS04} for reversible Metropolis Markov chains. We expect that similar results for the first hitting time $\tha$ with a general initial state $x$ and target subset $A$ can be proved for irreversible Markov chains that satisfy the Friedlin-Wentzell condition~\eqref{eq:fw}. Furthermore, we developed a novel combinatorial method for grid graphs, valid for various boundary conditions, which shows that the energy landscape corresponding to hard-core dynamics on grid graphs has no deep wells and yields the minimum energy barrier between the two chessboard configurations $\ee$ and $\oo$. We obtained in this way results for the asymptotic behavior of the first hitting time $\teo$ in the low-temperature regime. We expect that our combinatorial approach can be exploited to prove similar results for other graphs which can be embedded in a grid graph (e.g.~triangular or hexagonal lattice) or for the hard-core model where there are two or more types of particles and the hard-core constraints exist only between particles of different type. The study of the critical configurations and of the minimal gates along the transition from $\ee$ to $\oo$ was beyond the scope of this paper and will be the focus of future work. 

\section*{Acknowledgments}
A. Zocca wishes to thank J. Sohier for the helpful discussions. This work was financially supported by The Netherlands Organization for Scientific Research (NWO) through the TOP-GO grant 613.001.012.

\bibliographystyle{plain}
\bibliography{HCG.bbl}
\end{document}